\newcommand{\arxiv}[1]{\href{http://arxiv.org/abs/#1}{\tt
    arXiv:\nolinkurl{#1}}}
\theoremstyle{plain}
\newtheorem{theorem}{Theorem}[section]
\newtheorem{lemma}[theorem]{Lemma}
\newtheorem{lemma-definition}[theorem]{Lemma-Definition}
\newtheorem{definition-lemma}[theorem]{Definition-Lemma}
\newtheorem{proposition}[theorem]{Proposition}
\newtheorem{conjecture}[theorem]{Conjecture}
\newtheorem{corollary}[theorem]{Corollary}
\newtheorem*{theorem-A}{Theorem A}
\newtheorem*{theorem-B}{Theorem B}
\newtheorem*{theorem-C}{Theorem C}
\newtheorem*{theorem-D}{Theorem D}
\newtheorem*{conjecture-A}{Conjecture A}
\newtheorem*{conjecture-B}{Conjecture B}
\newtheorem*{conjecture-C}{Conjecture C}
\newtheorem*{conjecture-D}{Conjecture D}
\theoremstyle{definition}
\theoremstyle{remark}
\newtheorem{remark}[theorem]{Remark}
\numberwithin{equation}{section}
\def\D{\mathrm{D}}
\def\E{\mathrm{E}}
\def\K{\mathrm{K}}
\def\L{\mathrm{L}}
\def\O{\mathrm{O}}
\def\U{\mathrm{U}}
\def\X{\mathrm{X}}
\def\bbA{\mathbb{A}}
\def\bbC{\mathbb{C}}
\def\bbD{\mathbb{D}}
\def\bbN{\mathbb{N}}
\def\bbZ{\mathbb{Z}}
\def\frakg{\mathfrak{G}}
\def\frakL{\mathfrak{L}}
\def\frakM{\mathfrak{M}}
\def\frakP{\mathfrak{P}}
\def\frakR{\mathfrak{R}}
\def\frakS{\mathfrak{S}}
\def\calA{\mathcal{A}}
\def\calC{\mathcal{C}}
\def\calE{\mathcal{E}}
\def\calF{\mathcal{F}}
\def\calH{\mathcal{H}}
\def\calL{\mathcal{L}}
\def\calO{\mathcal{O}}
\def\calP{\mathcal{P}}
\def\calU{\mathcal{U}}
\def\calV{\mathcal{V}}
\def\calW{\mathcal{W}}
\def\calZ{\mathcal{Z}}
\def\frakg{\mathfrak{g}}
\def\frakp{\mathfrak{p}}
\def\bfc{\mathbf{c}}
\def\bfv{{\mathbf{v}}}
\def\bfw{\mathbf{w}}
\def\bfC{\mathbf{C}}
\def\bfD{\mathbf{D}}
\def\bfK{\mathbf{K}}
\def\bfO{\mathbf{O}}
\def\uL{{\underline L}}
\def\uN{{\underline N}}
\def\uW{{\underline W}}
\def\uY{{\underline Y}}
\def\uZ{{\underline Z}}
\def\ua{{\underline a}}
\def\ux{{\underline x}}
\def\r{\rangle}
\def\k{{\operatorname{k}\nolimits}}
\def\Isom{\operatorname{Isom}\nolimits}
\def\tor{\operatorname{tor}\nolimits}
\def\Rep{{\frakR}}
\def\Coh{\operatorname{Coh}\nolimits}
\def\DCoh{{\operatorname{DCoh}\nolimits}}
\def\Perf{{\operatorname{Perf}\nolimits}}
\def\sg{{\operatorname{sg}\nolimits}}
\def\top{{\operatorname{top}\nolimits}}
\def\alg{{\operatorname{alg}\nolimits}}
\def\top{{\operatorname{top}\nolimits}}
\def\top{{\operatorname{top}\nolimits}}
\def\rk{{\operatorname{rk}\nolimits}}
\def\op{{{\operatorname{op}\nolimits}}}
\def\Ad{{{\operatorname{Ad}\nolimits}}}
\def\-{{\operatorname{-}\!}}
\def\Im{\operatorname{Im}\nolimits}
\def\Ker{\operatorname{Ker}\nolimits}
\def\Hom{\operatorname{Hom}\nolimits}
\def\End{\operatorname{End}\nolimits}
\def\Res{\operatorname{Res}\nolimits}
\def\Ext{\operatorname{Ext}\nolimits}
\def\id{\operatorname{id}\nolimits}
\def\Tr{\operatorname{Tr}\nolimits}
\def\Gr{{\operatorname{Gr}\nolimits}}
\def\crit{\operatorname{crit}\nolimits}
\def\Db{\operatorname{D^b\!}\nolimits}
\def\D{{\operatorname{D}\nolimits}}
\def\b{{\operatorname{b}\nolimits}}
\def\boxplus{{\oplus}}
\def\ev{\operatorname{ev}\nolimits}
\def\Quot{{\operatorname{Quot}\nolimits}}
\def\an{{\operatorname{an}\nolimits}}
\def\Tr{\operatorname{Tr}\nolimits}
\def\can{\operatorname{can}\nolimits}
\def\nil{{\operatorname{nil}\nolimits}}
\def\ch{\operatorname{ch}\nolimits}
\def\colim{\qopname\relax m{colim}}
\def\GL{\operatorname{GL}\nolimits}
\def\sgn{{o}}
\numberwithin{itemcounter}{subsection}
\numberwithin{equation}{section}
\appto\appendix{\addtocontents{toc}{\protect\setcounter{tocdepth}{1}}}
\title[Non-symmetric quantum loop groups and K-theory]
{Non-symmetric quantum loop groups and K-theory}
\author{M. Varagnolo$^1$} 
\address{\scriptsize{$^1$~CY Cergy Paris Universit\'e,  95302 Cergy-Pontoise, France,
UMR8088 (CNRS).}}
\author{E. Vasserot$^2$} 
\address{\scriptsize{$^2$~Universit\'e Paris Cit\'e, 75013 Paris, France, UMR7586 (CNRS), 
Institut Universitaire de France (IUF).
}}
\begin{document}
\maketitle

\begin{abstract}
We realize the quantum loop groups and shifted quantum loop groups of arbitrary types, 
possibly non-symmetric, using critical  K-theory. This generalizes the Nakajima construction of symmetric 
quantum loop groups via quiver varieties to non-symmetric types. 
We also give a new geometric construction of some simple 
modules of both quantum loop groups and shifted quantum loop groups.
\end{abstract}

\renewcommand{\abstractname}{R\'esum\'e}
\begin{abstract} Nous r\'ealisons les groupes quantiques affines et les groupes quantiques affines d\'ecal\'es de type arbitraire, 
\'eventuellement non sym\'etrique, en utilisant la K-th\'eorie critique. Cette construction g\'en\'eralise la construction de Nakajima des groupes  
quantiques affines sym\'etriques via les vari\'et\'es de carquois. Nous donnons \'egalement une nouvelle 
construction g\'eom\'etrique de certains modules simples des groupes quantiques affines et des groupes quantiques affines d\'ecal\'es. \end{abstract}

\tableofcontents

\section{Introduction and notation}

\subsection{Introduction}
The quiver varieties $\frakM(W)$, the graded quiver varieties $\frakM^\bullet(W)$,
and their Steinberg varieties $\calZ(W)$ and $\calZ^\bullet(W)$
were introduced by Nakajima in \cite{N94, N98} and \cite{N00}.
The K-theory of the Steinberg varieties, equipped with a convolution product,
yields a family of algebras $K(\calZ^\bullet(W))$ which are closely related
to the quantum loop groups $\U_\zeta(L\frakg)$ of symmetric types.
This algebra is important for the finite dimensional modules of $\U_\zeta(L\frakg)$ and their $q$-characters, see \cite{N04,N11}.
Nakajima's geometric realization of $\U_\zeta(L\frakg)$ in $K(\calZ^\bullet(W))$
does not extend to quantum groups of non-symmetric (finite or Kac-Moody) types.
How to generalize \cite{N00} is an old question.
An alternative approach has been suggested recently in \cite{NW23} using Coulomb branches.
The Coulomb realization also yields some interpretation of finite dimensional simple modules, 
but it is of a different nature from the construction here.
In this paper  we introduce a new family of convolution algebras attached to quiver varieties
with potentials, called critical convolution algebras. 
Here the K-theory is replaced by the critical K-theory, which is the Grothendieck group 
of the derived factorization categories attached to LG-models considered in
\cite{BFK13, EP15, H17a, H17b}. 
The critical K-theory depends on the choice of some function (the trace of the potential).
It is  supported  on the critical set of this function. 
We prove that the critical convolution algebra attached to the Steinberg variety
$\widetilde\calZ^\bullet(W)$ of the graded triple quiver variety $\widetilde\frakM^\bullet(W)$
 yields a
geometric realization of all non-twisted quantum loop groups and
of all non-twisted negatively shifted quantum loop groups for some well chosen potentials.
More precisely we prove the following theorem.

\begin{theorem}\label{thm:A}
Let $\frakg$ be any simple complex Lie algebra.
\hfill
\begin{enumerate}[label=$\mathrm{(\alph*)}$,leftmargin=8mm]
\item
There is  an algebra homomorphism
$\U_\zeta(L\frakg)\to 
K\big(\widetilde\frakM^\bullet(W)^2,(\tilde f^\bullet_\gamma)^{(2)}\big)_{\widehat\calZ^\bullet(W)}$
and representations of $\U_\zeta(L\frakg)$ on 
$K(\widetilde\frakM^\bullet(W),\tilde f^\bullet_\gamma)_{\widetilde\frakL^\bullet(W)}$ and
$K(\widetilde\frakM^\bullet(W),\tilde f^\bullet_\gamma)$.
\item
There is an algebra homomorphism
$\U_\zeta^{-w}(L\frakg)\to 
K\big(\widehat\frakM^\bullet(W)^2,(\widehat f^\bullet_2)^{(2)}\big)_{\widehat\calZ^\bullet(W)}$
and representations of $\U_\zeta^{-w}(L\frakg)$ on 
$K(\widehat\frakM^\bullet(W),\widehat f^\bullet_2)_{\widehat\frakL^\bullet(W)}$ and
$K(\widehat\frakM^\bullet(W),\widehat f^\bullet_2)$.
\end{enumerate}
\end{theorem}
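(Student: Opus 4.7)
The plan is to extend Nakajima's strategy from ordinary equivariant K-theory to the critical K-theory of the triple graded quiver variety, using the fact that for an appropriate choice of potential $\tilde f^\bullet_\gamma$ the critical locus recovers enough of the convolution geometry to support the full Drinfeld presentation, not only in simply-laced types. The overall road map has three stages: (i) construct explicit candidates in the critical convolution algebra for the Drinfeld generators $e_{i,r},\, f_{i,r},\, \psi_{i,\pm r}$ of $\U_\zeta(L\frakg)$, (ii) verify the Drinfeld relations by reducing each of them to an identity in the derived factorization category of a small ``local'' model, (iii) promote this to the shifted algebra $\U_\zeta^{-w}(L\frakg)$ via a controlled change of potential and framing, and to the module structures by convolution.

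For (i), I would use the natural Hecke correspondences $\widetilde{\calH}_i^\bullet(W)\subset \widetilde\frakM^\bullet(W)^2$ parametrizing one-dimensional modifications at the vertex $i$, and define the candidate generators as classes of structure sheaves of $\widetilde{\calH}_i^\bullet(W)$ twisted by suitable tautological determinant line bundles (the powers of the twist tracking the spectral parameter $r$). The Cartan generators would be built from K-theoretic symmetric functions of the tautological bundles supported on the diagonal. The whole point of passing to the critical K-theory is that $\widetilde{\calH}_i^\bullet(W)$ is no longer Lagrangian in the non-symmetric setting, but its intersection with the critical locus of $\tilde f^\bullet_\gamma$ does carry the correct ``virtual'' class, because the potential encodes the non-symmetric bilinear form.

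For (ii), the quadratic and triangular relations between the $e$'s, $f$'s and the Cartan currents reduce, by a standard convolution argument, to a derived intersection computation for pairs of Hecke correspondences. In each case I would factor the relevant convolution through a smooth ambient model, use base change in derived factorization categories (as developed in \cite{BFK13, EP15, H17a, H17b}), and identify the resulting class with the desired Drinfeld term via a Koszul-type resolution of the critical locus together with the projection formula. The Serre relations should follow from the existence of a $\Sigma_{1-a_{ij}}$-equivariant factorization through an auxiliary variety whose contribution, after antisymmetrization, is forced to vanish by a dimension/sign count in critical K-theory. The main obstacle, as in every realization of quantum loop groups, will be the commutator $[e_{i,r},f_{j,s}]=\delta_{ij}\,(\text{Cartan current})$: in the symmetric case one uses the clean transversality of two opposite Hecke correspondences, but here the excess intersection is nontrivial and must be absorbed into the potential. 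I expect the argument to go through a careful analysis of the fiber of the convolution map over the diagonal, where the critical structure produces precisely the ratio of Drinfeld polynomials prescribed by the (non-symmetric) Cartan matrix.

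For (iii), the shifted algebra $\U_\zeta^{-w}(L\frakg)$ is handled by replacing $\widetilde\frakM^\bullet(W)$ and its potential by $\widehat\frakM^\bullet(W)$ together with the potential $\widehat f^\bullet_2$, which inserts the extra framing data encoding the shift $-w$; the same Hecke-type generators and the same relation-checking argument apply, with the Cartan currents now acquiring the expected negative degree. Finally, the representations on $K(\widetilde\frakM^\bullet(W),\tilde f^\bullet_\gamma)_{\widetilde\frakL^\bullet(W)}$ and $K(\widetilde\frakM^\bullet(W),\tilde f^\bullet_\gamma)$ (and their counterparts in the shifted case) are produced by the general convolution action of Steinberg-type classes on the K-theory of a single factor; associativity and compatibility with the algebra map are then a formal consequence of the base change and projection formulas in the critical setting, so no new relations need to be verified at this stage.
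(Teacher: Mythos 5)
Your outline captures the correct high-level plan (Hecke correspondences $\widetilde\frakP(\pm\delta_i,W)$ twisted by the tautological line bundle $\calL_i^{\otimes n}$ as candidates for $x_{i,n}^\pm$, symmetric functions of tautological bundles on the diagonal for the Cartan currents, and $\Upsilon$ together with base change in derived factorization categories as the formal engine), and this does match the paper's definitions in \eqref{psi+-}--\eqref{xpm}. But the relation-checking strategy you sketch is not the one the paper uses, and on at least two points your proposed alternative has a genuine gap.

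First, for the commutator $\mathrm{(A.6)}$ with $i=j$, you describe ``a careful analysis of the fiber of the convolution map over the diagonal'' via Koszul resolutions and projection formula. The paper instead reduces to the rank-one case $Q=A_1$ by cutting out the local model $\frakM(W)_\diamondsuit$ associated to a single vertex $i$, factoring through the homomorphism \eqref{form19}, and then computes matrix coefficients at the isolated fixed points of a maximal torus (Appendix B guarantees the property~$(T)$ needed for the localization theorem). Your Koszul/excess-intersection version might be made to work, but as stated it does not address how the nontrivial excess intersection (which you correctly identify as the obstruction) actually produces the Drinfeld Cartan current with the correct $[c_{ij}]_{q_i}$-weighted tautological class; the paper gets this by explicit residue calculus at fixed points.

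Second, and more seriously, your treatment of the Serre relation $\mathrm{(A.7)}$ and of the shifted case is off. You propose a $\Sigma_{1-c_{ij}}$-equivariant antisymmetrization argument. The paper instead deduces the constant term of $\mathrm{(A.7)}$ from $\mathrm{(A.6)}$ by a finiteness argument (Remark~\ref{rem:fd}): because $K_A(\widetilde\frakM(v,W),\tilde f_\gamma)=0$ for all but finitely many $v$, the operators $x^\pm_{i,0}$ are \emph{locally nilpotent}, and then Serre follows as in \cite[\S10.4]{N00}. You then say the shifted case goes through with ``the same relation-checking argument.'' This is exactly where the paper flags an obstruction: for $\U_\zeta^{-w}(L\frakg)$ the operators $x^-_{i,0}$ need \emph{not} be locally nilpotent, so the nilpotence route to $\mathrm{(A.5)}$ and $\mathrm{(A.7)}$ is unavailable. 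The paper resolves this by an entirely separate device, namely the algebra homomorphism $\omega^+$ from the K-theoretic critical Hall algebra $K_{\bbC^\times}(\Rep,h)_{\Rep^\nil}$ into the convolution algebra (Proposition~\ref{prop:double}), combined with Corollary~\ref{cor:relations57} which establishes $\mathrm{(A.5)}$ and $\mathrm{(A.7)}$ there via Proposition~\ref{prop:injective}. Your proposal contains no substitute for this step, so part~(b) of the theorem would not be proved by the argument you describe. A complete proof must either adopt the Hall-algebra route or supply a direct Serre-relation verification that does not rely on local nilpotence.

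Finally, two smaller omissions: you do not discuss the passage from the field case $F$ to the integral form $\U_R(L\frakg)$ and then the specialization at $q=\zeta$ (the paper keeps track of the $R$-lattice generated by divided powers and of the quotient by $R$-torsion), and you do not explain the role of the cocharacter $a=(\sigma,\xi)$ and the group $A$ that makes $\widetilde\frakM^\bullet(W)$ the $A$-fixed locus, which is how the graded/$\zeta$-specialized statement in Theorem~\ref{thm:A} is extracted from the ungraded $R$-linear Theorems~\ref{thm:notshifted} and~\ref{thm:shifted} via Proposition~\ref{prop:TTK}(d).
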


\noindent Here  $\tilde f^\bullet_\gamma$, $\widehat f^\bullet_2$ are potentials on $\widetilde\frakM^\bullet(W)$, 
 $\widehat\frakM^\bullet(W)$, and $\widehat\frakM^\bullet(W)$ is a simply framed version of  
$\widetilde\frakM^\bullet(W)$. The algebra  $\U_\zeta^{-w}(L\frakg)$
is the shifted quantum loop group. Note that the 0-shifted quantum loop group is $\U_\zeta(L\frakg)$ up to some central elements.
The potential $\tilde f^\bullet_\gamma$ is a deformation of a potential
$\tilde f^\bullet_1$ on $\widetilde\frakM^\bullet(W)$ depending on a deformation parameter $\gamma$.
Part (a) of the theorem can be viewed as an extension of the construction in
\cite{N00} because:
\begin{enumerate}[label=$\mathrm{(\alph*)}$,leftmargin=8mm,itemsep=1.5mm]
\item[-]
for symmetric type, there is an isomorphism
of $\frakM^\bullet(W)$ with the critical set $\crit(\tilde f^\bullet_1)$ 
of a function $\tilde f^\bullet_1:\widetilde\frakM^\bullet(W)\to\bbC$ which will be defined later in the text,
\item[-]
in Proposition \ref{prop:crit1}, a dimension reduction yields an algebra and a module isomorphisms 
\begin{align*}
K\big(\widetilde\frakM^\bullet(W)^2,(\tilde f^\bullet_1)^{(2)}\big)_{\widetilde\calZ^\bullet(W)}=
K\big(\calZ^\bullet(W)\big),\quad
K\big(\widetilde\frakM^\bullet(W),\tilde f^\bullet_1\big)
=K\big(\frakM^\bullet(W)\big).
\end{align*}
\end{enumerate}
For symmetric types, the function $\tilde f_1^\bullet$ comes from some
cubic potential, while 
non-symmetric types require potentials of higher degrees.
The possibility to use potentials of arbitrary degree is an important property of
critical convolution algebras which has no analogue for the Nakajima convolution algebras.
The potential we use for non-symmetric types appears already in the work of
Hernandez-Leclerc and Geiss-Leclerc-Schr\"oer \cite{HL16, GLS17} on cluster algebras,
or in the work of Yang-Zhao \cite{YZ22} on cohomological Hall algebras.
We expect the general theory of Nakajima in \cite{N00}
to generalize to all types using critical cohomology and K-theory.
We will come back to this elsewhere.
In particular in the non-shifted case, we have the following conjecture, see the text below for the notation.

\begin{conjecture} For each $w\in\bbN I^\bullet$ the $\U_\zeta(\L\frakg)$-modules
$$K\big(\widetilde\frakM^\bullet(W),\tilde f^\bullet_1\big)
,\quad
K\big(\widetilde\frakM^\bullet(W),\tilde f^\bullet_1\big)_{\widetilde\frakL^\bullet(W)}$$
are isomorphic to the costandard module and the standard module with $\ell$-highest weight $\Psi_w$.
\end{conjecture}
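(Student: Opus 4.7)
The plan is to adapt Nakajima's identification, in the symmetric case, of $K(\frakM^\bullet(W))$ with a standard module to the critical K-theory setting for arbitrary types. The starting point is to select a canonical cycle class that plays the role of the $\ell$-highest weight vector. A natural candidate is the class of the structure sheaf of the point in $\widetilde\frakM^\bullet(W)$ corresponding to the trivial $V=0$ stratum (with framing $W$ given by $w$), or equivalently its lift to the critical locus of $\tilde f^\bullet_1$. First I would verify that this class lies in both $K(\widetilde\frakM^\bullet(W),\tilde f^\bullet_1)$ and in the Lagrangian-supported subspace $K(\widetilde\frakM^\bullet(W),\tilde f^\bullet_1)_{\widetilde\frakL^\bullet(W)}$, and that it is annihilated by the $\U_\zeta(L\frakg)$-subalgebra generated by the raising operators $e_{i,r}$ built out of the Hecke correspondences coming from Theorem~\ref{thm:A}(a).

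Next I would compute the $\ell$-weight of this distinguished class by evaluating the Cartan generators $\psi^\pm_{i,r}$ through equivariant localization with respect to the framing torus acting on $\widetilde\frakM^\bullet(W)$. The cotangent/normal bundle data at the stratum $V=0$ is controlled by the graded components of $W$, and its equivariant Euler class (or rather, its critical K-theoretic localization contribution, which requires the dimension reduction of Proposition~\ref{prop:crit1} in the symmetric case and its analogue for non-symmetric types) should produce precisely the generating series $\Psi_w(z)$ of the prescribed $\ell$-highest weight.

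To upgrade an $\ell$-highest weight module to the specific standard or costandard module, I would use a stratification / Bia\l ynicki-Birula type argument. For the Lagrangian-supported version $K(\widetilde\frakM^\bullet(W),\tilde f^\bullet_1)_{\widetilde\frakL^\bullet(W)}$, the idea is to use a generic cocharacter of the framing torus to produce an attracting decomposition of $\widetilde\frakL^\bullet(W)$ whose pieces are indexed by the fixed components; these in turn correspond to factorizations of $\Psi_w$ into fundamental $\ell$-weights. Matching the resulting basis with the Weyl-module-like presentation of the standard module (via generators, relations and a highest weight generator) should identify it with the standard module. For $K(\widetilde\frakM^\bullet(W),\tilde f^\bullet_1)$ (without support condition), I would set up a perfect bilinear pairing with the Lagrangian-supported version coming from critical intersection/pushforward to a point, and identify the dual as the costandard module; alternatively one can characterize the costandard module as the unique $\ell$-lowest-weight-cosocle module with weight $\Psi_w$ and verify this property via a top-degree stratum analysis.

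The hard part is establishing, in the non-symmetric case, that this distinguished class generates the whole module as a $\U_\zeta(L\frakg)$-representation (for the standard side) and produces all composition factors with the correct multiplicities. In Nakajima's symmetric construction this follows from a transversality and hyperbolic restriction argument on quiver varieties; here it must be transplanted into the derived factorization category, where one needs a Thom--Sebastiani / K\"unneth compatibility for critical K-theory along torus-fixed loci, plus a control of the support of the generated submodule. The potentials $\tilde f^\bullet_\gamma$ are of degree higher than three in non-symmetric types, so the required hyperbolic localization theorem for critical K-theory—reducing the computation on the fixed locus to a tensor product of smaller quiver varieties with induced potentials—is the step I expect to require the most work and where the symmetric proof does not carry over verbatim.
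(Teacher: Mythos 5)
This statement is labelled a \emph{conjecture} in the paper, and the authors do not prove it: they only remark that the fundamental-module case ($w=\delta_{i,k}$) follows from Theorem~\ref{thm:B}, and they state that a generalization of Nakajima's general theory to critical K-theory will be addressed elsewhere. So there is no proof in the paper to compare your proposal against. What the paper actually supplies is Proposition~\ref{prop:crit1}, which in \emph{symmetric} type reduces $K(\widetilde\frakM^\bullet(W),\tilde f^\bullet_1)$ and its Lagrangian-supported variant to Nakajima's ordinary $K(\frakM^\bullet(W))$ and $K(\frakL^\bullet(W))$ by dimensional reduction, so that for symmetric Cartan matrices the conjecture is Nakajima's theorem in \cite{N00}. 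The open content is the non-symmetric case, and for fundamental modules the paper's route (Theorem~\ref{thm:HL1}) is entirely different from yours: one identifies $\crit(\tilde f^\bullet_\gamma)\cap\widetilde\frakL^\bullet$ with a quiver Grassmannian of $K_\gamma$-submodules, then runs a purely $q$-character argument (right-negativity of the non-highest monomials) to conclude the module is special, hence simple, hence equal to the (co)standard module. That argument does not see the highest-weight vector geometrically at all, and it collapses only because for fundamental/KR modules the standard module is already simple.

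Your proposal sketches the ``Nakajima route'' instead: pick out the class supported at $V=0$ as the highest-weight vector, compute its $\ell$-weight via torus localization at the $V=0$ stratum, then use a Bia\l ynicki--Birula stratification plus a duality pairing to match the module with the standard and costandard. This is a reasonable outline, but as you yourself flag, the load-bearing step is the hyperbolic-localization / Thom--Sebastiani compatibility for critical K-theory on the torus-fixed loci, together with the statement that the $\U_\zeta(L\frakg)$-orbit of the $V=0$ class exhausts $K(\widetilde\frakM^\bullet(W),\tilde f^\bullet_1)_{\widetilde\frakL^\bullet(W)}$. Neither of these is established anywhere in the paper (the only support-type localization result available is Proposition~\ref{prop:TTK}(d), which is a full-torus fixed-point localization over $F_T$, not the attracting-set decomposition integrally over $R$ that a BB-type argument needs), and for non-symmetric types the potential $\tilde f_1^\bullet$ has degree greater than three, so one cannot simply invoke the symmetric-case dimension reduction to flatten the problem. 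In short: your proposal is a plausible plan, but since the paper proves the statement in no generality beyond what Theorem~\ref{thm:B} gives, and that proof is by a different (representation-theoretic, not geometric-stratification) method, your write-up should be treated as an outline with its central lemma still open, not as a proof.
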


\noindent
For fundamental modules, this conjecture follows from Theorem \ref{thm:B} below.

Another important point is that the
Nakajima  construction permits to recover the classification of the simple finite dimensional 
modules of quantum loop groups, but it does not give a geometric construction of those. 
More precisely, the  K-theory of quiver varieties yields a geometric realization 
of the standard modules,
and the simple modules are the Jordan H\"older constituents of the standards.
Remarkably, varying the potentials, the critical  K-theory also gives a realization of the 
simple modules in several settings:
we realize both 
all Kirillov-Reshetikhin modules of usual quantum loop groups and the prefundamental modules of shifted quantum loop groups 
as the critical  K-theory of some LG-models attached to quivers.
This construction is new already for symmetric types.
It was partly motivated by the work of Liu in \cite{L20}, where some representations
of some shifted quantum loop groups are realized via the K-theory of quasi-maps spaces.
Liu's construction uses some limit procedure similar to the limit procedure of Hernandez-Jimbo in \cite{HJ12}.
In our setting given by critical  K-theory of triple quiver varieties, 
this limit procedure admits a natural interpretation.
More precisely, we prove the following.

\begin{theorem}\label{thm:B}
\hfill
\begin{enumerate}[label=$\mathrm{(\alph*)}$,leftmargin=8mm]
\item The Kirillov-Reshetikhin modules of the quantum loop group
are realized in the critical K-theory of graded triple quiver varieties (for a convenient choice of the parameter $\gamma$).
\item The negative prefundamental modules of the shifted quantum loop group 
are realized in the critical K-theory 
of graded triple quiver varieties.
\end{enumerate}
\end{theorem}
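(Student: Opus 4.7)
The plan is to use Theorem \ref{thm:A} as the source of the module structure in both parts, and then to identify the resulting representations by their $\ell$-highest weight together with a cyclicity or simplicity argument.

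For (a), I would take $W$ to be the framing dimension vector corresponding to the Kirillov-Reshetikhin module $W^{(i)}_{k,a}$, so that $\widetilde\frakM^\bullet(W)$ parametrizes triples with the appropriate graded dimension. Theorem \ref{thm:A}(a) then yields a $\U_\zeta(L\frakg)$-module structure on $K(\widetilde\frakM^\bullet(W),\tilde f^\bullet_\gamma)_{\widetilde\frakL^\bullet(W)}$. Next I would choose $\gamma$ generically so that the critical locus of $\tilde f^\bullet_\gamma$ is reduced to the smallest possible strata expected to support the simple module; the $\ell$-character of the module can be computed by equivariant localization at the torus fixed points lying in $\crit(\tilde f^\bullet_\gamma)$, and I expect the answer to match the Frenkel-Reshetikhin formula for the $q$-character of $W^{(i)}_{k,a}$. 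Cyclicity of the $\ell$-highest weight vector, together with the classification of simple finite-dimensional $\U_\zeta(L\frakg)$-modules by Drinfeld polynomials, then identifies the module with the desired Kirillov-Reshetikhin module.

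For (b), the simply framed variety $\widehat\frakM^\bullet(W)$ is precisely the geometric avatar of the Hernandez-Jimbo limit: an infinite framing in a single direction encodes the asymptotic $\ell$-highest weight of a sequence of KR modules. Theorem \ref{thm:A}(b) equips $K(\widehat\frakM^\bullet(W),\widehat f^\bullet_2)_{\widehat\frakL^\bullet(W)}$ with a $\U_\zeta^{-w}(L\frakg)$-action. Arguing as in (a) by equivariant localization and matching with the limits of KR $q$-characters computed by Hernandez-Jimbo, I would identify the $\ell$-highest weight of this module with the Drinfeld series of the negative prefundamental module. Since negative prefundamental modules are characterized up to isomorphism by their $\ell$-highest weight together with cyclicity (the asymptotic analogue of the Drinfeld-polynomial classification), this identification suffices. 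Compared with Liu's construction via quasi-map spaces, the critical K-theory framework makes the limit procedure intrinsic rather than a posteriori.

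The main obstacle is the simplicity input needed in (a). By Theorem \ref{thm:A} the critical K-theory in general produces standard modules (analogously to the Nakajima case), and Kirillov-Reshetikhin modules are genuine simple quotients of standards, never all of them. The novel feature here is that $\tilde f^\bullet_\gamma$ depends on a deformation parameter, and the plan is to show that a generic choice of $\gamma$ contracts $\crit(\tilde f^\bullet_\gamma)$ to exactly the stratum supporting the simple KR module, so that no lower Jordan-Hölder constituents survive in the critical K-theory. This requires a careful geometric analysis of the critical locus, likely via a stratification of $\widetilde\frakM^\bullet(W)$ by $\ell$-weight data together with a count of which strata remain critical after deformation; this is the main technical content and the crux of the argument.
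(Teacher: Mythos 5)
Your high-level strategy is sound: equip the critical K-theory with a module structure via Theorem~\ref{thm:A}, then identify the result by $\ell$-highest weight plus a simplicity argument, and you correctly single out the simplicity input as the crux. But your proposed mechanism for securing simplicity diverges substantially from the paper's, and as written it has a gap.

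The paper's key geometric input, which your proposal does not reach, is the identification of the critical locus with a quiver Grassmannian of a specific module over the generalized preprojective algebra $\widetilde\Pi$: Proposition~\ref{prop:HL1} gives a homeomorphism $\crit(\tilde f^\bullet_\gamma)\cap\widetilde\frakL^\bullet(v,W)\cong\widetilde\Gr^\bullet_v(K_\gamma)$, where $K_\gamma = K_{i,k,l}$ is the ``generic kernel'' of Hernandez--Leclerc, and $\gamma$ is not generic but a \emph{specific} regular nilpotent of degree $2d_i$ chosen so that this identification holds. You speak of a generic deformation parameter ``contracting strata,'' but the argument is not by genericity: the point is that the cogenerator structure of $K_\gamma$ (it is cogenerated by a single element of controlled degree) forces every nonempty graded Grassmannian stratum to lie below a single right-negative $A^{-1}$-monomial, so the $q$-character has a unique $\ell$-dominant term. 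That makes the module \emph{special} in the sense of Nakajima--Hernandez, which immediately implies simplicity --- a criterion your proposal does not invoke, and which replaces your proposed (and much more delicate) localization/stratification analysis. Your appeal to ``critical K-theory in general produces standard modules'' is also circular here: in this paper that statement is only Conjecture~1.2, so you cannot use it as a known input.

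For (b), you correctly intuit that the simple framing encodes the Hernandez--Jimbo limit, but the paper does not argue by matching $q$-character limits via localization. Instead Theorem~\ref{thm:limH} proves a direct \emph{geometric} isomorphism $K(\widehat\frakM^\bullet(v,W),\hat f^\bullet_2)\cong K(\widetilde\frakM^\bullet(v,W_l),\tilde f^\bullet_{\gamma_l})$ for $l\gg 0$ (via a dimensional reduction along $a^*$), which together with Theorem~\ref{thm:HL1} and the HJ characterization of prefundamental modules by $q$-character limits closes the argument. This is a genuinely different, and more robust, route than a term-by-term $q$-character comparison via localization, because it gives a weight-space-by-weight-space identification of modules, not just an equality of characters. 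In short: your outline is pointed in the right direction, but the crucial missing idea is the identification $\crit\leftrightarrow\widetilde\Gr^\bullet(K_\gamma)$ (resp.\ $\widetilde\Gr^\bullet(I_W)$) and the ``special module'' simplicity criterion, without which the simplicity step you identify as the crux cannot be carried out along the lines you sketch.
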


A further motivation comes from cluster theory.
Using cluster algebras, Hernandez-Leclerc give in \cite{HL16} 
a $q$-character formula for  
prefundamental and Kirillov-Reshetikhin representations in terms of Euler characteristic of quiver Grassmanians. 
Their character formula does not give any geometric realization of the (shifted) quantum loop group action.
It is surprising that our construction yields indeed a representation of the (shifted) quantum loop group 
in some critical K-theory groups supported on the same quiver Grassmanians.
The Kirillov-Reshetikhin modules are particular cases of reachable
modules for the cluster algebra structure on the Grothendieck ring of the quantum loop group considered
in \cite{KKOP20}. 
The Euler characteristic description of the $q$-characters extends to all reachable modules.
We expect that all reachable modules admit also 
a realization in critical K-theory.

Finally let us point out a link with  K-theoretic Hall algebras of a quiver with potential. 
These algebras were introduced by Padurariu in \cite{P21}.
It was proved there that Isik's Koszul duality (=dimensional reduction) implies
that the K-theoretic Hall algebras of triple quivers with some particular potential 
coincide with the K-theoretic Hall algebras of preprojective algebras considered in \cite{VV22}.
We define an algebra homomorphism from 
K-theoretic Hall algebras to K-theoretic critical convolution algebras  using Hecke correspondences.
As a consequence, the K-theoretic critical convolution algebras may be viewed as some doubles 
of the K-theoretic Hall algebras. 
These doubles are a better setting for representation theory than the K-theoretic Hall algebras,
as the examples below suggest.
Different doubles
of the same K-theoretic Hall algebras can be realized via different 
K-theoretic critical convolution algebras.
This is especially transparent  in the symmetric case, since:
\begin{enumerate}[label=$\mathrm{(\alph*)}$,leftmargin=8mm,itemsep=1.5mm]
\item[-]
in \cite{VV22} we proved that
twisted K-theoretic Hall algebras  of preprojective Dynkin quivers are isomorphic to 
the Drinfeld halves of quantum loop groups,
\item[-]
quantum loop groups and shifted quantum loop groups map to (different) K-theoretic critical convolution algebras by Theorems \ref{thm:notshifted} and \ref{thm:shifted}.
\end{enumerate}

The paper is organized into three parts: the first part is geometric and aims to define critical convolution 
algebras. The second part consists of computations in critical K-theory to check the relations of the quantum 
loop groups and shifted quantum loop groups.
The third part is devoted to the applications to representation theory. 
More precisely, the contents are as follows:
Section 2 serves as a reminder of critical K-theory, where critical convolution algebras are introduced.
Section 3 serves as a reminder of triple quiver varieties and their potentials.
We need two versions of those: the double framed triple quiver variety
$\widetilde\frakM^\bullet(W)$ will correspond to quantum loop groups,
the simply framed one $\widehat\frakM^\bullet(W)$ to the shifted quantum loop groups.
Section 4 relates critical convolution algebras to quantum loop groups and
shifted quantum loop groups. More precisely, for any simple Lie algebra $\frakg$ we define
the deformed  potential $\tilde f^\bullet_\gamma$ on $\widetilde\frakM^\bullet(W)$, 
and a potential $\widetilde f^\bullet_2$ on $\widehat\frakM^\bullet(W)$.
Then, we prove Theorem \ref{thm:A}  in Corollary \ref{cor:notshifted2} and Corollary \ref{cor:shifted2}.
Section 5 deals with geometric constructions of simple representations 
of the quantum loop groups and the shifted quantum loop groups mentioned above.
In particular we prove  the Theorem \ref{thm:B}   in Theorem \ref{thm:HL1} and Theorem \ref{thm:PF1}.
The proof  is based on the following facts:
\begin{enumerate}[label=$\mathrm{(\alph*)}$,leftmargin=8mm,itemsep=1.5mm]
\item[-]
In the non-shifted case,
the critical locus of the potential is identified in Proposition \ref{prop:HL1} with the quiver Grassmanian 
used by Hernandez-Leclerc in \cite{HL16} to relate
the $q$-characters of Kirillov-Reshetikhin modules with cluster algebras.
\item[-]
In the shifted case,
the critical locus of the potential is also identified in Proposition \ref{prop:HL3}
with a quiver Grassmanian which is
used in \cite{HL16}.
\item[-]
The critical K-theory yields a geometric realization 
of the limit procedure in \cite{HJ12}, see Theorem \ref{thm:limH}.
\end{enumerate}

\noindent 

Section 6 deals with K-theoretic Hall algebras of a quiver with potential and their 
relations with critical convolution algebras. The existence of an algebra homomorphism 
from the K-theoretic Hall algebra to the critical convolution algebra is used in the proof of Part (b) of Theorem 1.2.
In Corollary \ref{cor:relations57} we  also give an algebra homomorphism 
from the Drinfeld half of $\U_\zeta(L\frakg)$ 
to the K-theoretic Hall algebra which generalizes \cite{VV22} for non-symmetric types.
Cohomological Hall algebras with symmetrizers already appear in \cite{YZ22} in relation with localized 
equivariant Borel-Moore homology. 
We cannot use \cite{YZ22}  because we need non-localized equivariant K-theory and we do not know wether the 
equivariant critical K-theory is torsion free or not.
Appendix A is a reminder of basic facts on representations of quantum loop groups
and shifted quantum loop groups which are used throughout the paper. 
Appendix B deals with the $Q=A_1$ case. 
In Appendix C we introduce the critical cohomological convolution algebras, which are
cohomological counterpart of the critical K-theoretic convolution algebras considered so far. 
Most of our results in critical K-theory extend to critical cohomology.
In this setting, the analog of Theorem 1.3 yields a realization of Kirillov-Reshetikhin and prefundamental modules, 
for all types, in the homology of 
quiver Grassmanians with coefficients in some sheaf of vanishing cycles.
These quiver Grassmanians are described in Propositions \ref{prop:HL1} and \ref{prop:HL3}.
They are the same as the quiver Grassmanians used in \cite[thm.~4.8, rmk.~4.19]{HL16}.
This clarifies a remark in \cite[rem.~4.11]{HL16}.
See \cite{VV23} for more details.
Appendix D is a reminder on algebraic 
and topological critical K-theory.
Most of our results in critical K-theory extend also to topological critical K-theory.

Many of our results hold in a greater generality than the one we use.
For instance, we could allow the quantum parameter to be a root of unity or the Cartan matrix
to be a symmetrizable generalized Cartan matrix in Theorems
\ref{thm:notshifted} and \ref{thm:shifted}. To simplify the exposition
We restrict to the case which is the most used in representation theory: 
quantum loop groups with generic quantum parameter and their shifted analogues.

\subsection{Notation}
All schemes are assumed to be separated schemes, locally of finite type, over the field $\bbC$.
We may allow an infinite number of connected components, but each of them
is assumed to be of finite type.
By a point of a $\bbC$-scheme we will always mean a $\bbC$-point.
Given a scheme $X$ with an action of an affine group $G$, let
$\Db\Coh_G(X)$ be the bounded derived category of the category $\Coh_G(X)$ of $G$-equivariant coherent 
sheaves $X$ and let $\Perf_G(X)$ be the full subcategory of perfect complexes.
For each $G$-invariant closed subscheme $Z$ let $\Coh_G(X)_Z$
be the category of coherent sheaves with set-theoretic support in $Z$,
and let
$\Db\Coh_G(X)_Z$ be the full triangulated subcategory of $\Db\Coh_G(X)$ 
consisting of the complexes with cohomology set-theoretically supported on $Z$.
We say that a $G$-invariant morphism $\phi:Y\to X$ of $G$-schemes is of finite $G$-flat 
dimension if the pull-back functor
$L\phi^*:\D^-\Coh_G(X)\to\D^-\Coh_G(Y)$ 
takes $\Db\Coh_G(X)$ to $\Db\Coh_G(Y)$.

Let $K_0(\calC)$ be the complexified Grothendieck group of an Abelian or triangulated category $\calC$.
Let $R_G$ be the complexified Grothendieck ring of the group $G$,
and $F_G$ be the fraction field of $R_G$.
We abbreviate
$R=R_{\bbC^\times}=\bbC[q,q^{-1}]$ and $F=F_{\bbC^\times}=\bbC(q)$.
We also set
$K_G(X)=K_0(\Perf_G(X)),$
$K^G(X)=K^G(\Db\Coh_G(X))$ and
$K^G(X)_Z=K_0(\Db\Coh_G(X)_Z).$
Note that $K^G(X)_Z=K^G(Z)$. 
If $G=\{1\}$ we abbreviate $K(X)=K^G(X)$.
This notation might be confusing, however in this paper we will not use
$K_0(\Perf(X))$ in the non equivariant case.
We write
$$\Lambda_a(\calE)=\sum_{i\geqslant 0}a^i\Lambda^i(\calE)\in K_G(X)
 ,\quad
\calE\in K_G(X)
 ,\quad
a\in R_G^{\times}.$$

Given two schemes $X_1$, $X_2$ and functions $f_a:X_a\to\bbC$ with $a=1,2$, 
we define $f_1\oplus f_2:X_1\times X_2\to\bbC$ to be the function
$f_1\oplus f_2=f_1\otimes 1+1\otimes f_2.$ If $X_1=X_2=X$, and $f_1=f_2=f$ we abbreviate
$f^{\oplus 2}=f\oplus f$ and
$f^{(2)}=f\oplus(-f).$

\section{Critical convolution algebras}

This section is a reminder on critical K-theory. We mainly refer to 
\cite{BFK13} and \cite{EP15}. We will use the equivariant critical K-theory of non-affine schemes
relatively to the action of affine groups. Note that loc.~cit. considers only the non-equivariant case.
The results there generalize easily to the equivariant case.
We refer to \cite{H17b} for more details in the equivariant case. The goal of this section  is to introduce
critical convolution algebras in \S\ref{sec:Kcritalg}. The main result is Proposition  \ref{prop:critalg1}.

\subsection{Derived factorization categories and critical K-theory}\label{sec:DKcrit}
\subsubsection{Derived factorization categories}\label{sec:Dcrit}
A $G$-equivariant LG-model is a triple $(X,\chi,f)$ where
\hfill
\begin{enumerate}[label=$\mathrm{(\alph*)}$,leftmargin=8mm]
\item  $G$ is an affine group,
\item $X$ is a quasi-projective scheme with a $G$-equivariant ample line bundle,
\item $\chi:G\to\bbC^\times$ is a character of $G$ and 
$f:X\to\bbC$ is a $\chi$-semi-invariant function on $X$,
\item the critical set of $f$ is contained into its zero locus.
\end{enumerate}

A morphism of $G$-equivariant LG-models $\phi:(X_2,\chi,f_2)\to (X_1,\chi,f_1)$ is a $G$-invariant morphism
$\phi:X_2\to X_1$ such that $f_2=\phi^*f_1$.
We say that the $G$-equivariant LG-model $(X,\chi,f)$ is smooth if $X$ is smooth.
If $\chi=1$ we say that $(X,f)$ is a $G$-invariant LG-model, and if $G=\{1\}$ that $(X,f)$ is an LG-model.

Let $\Coh_G(X,f)$ be the dg-category of all $G$-equivariant coherent factorizations of $f$ on $X$,
see \cite{BFK14}.
An object of $\Coh_G(X,f)$ is a sequence
$$\xymatrix{\calE_1\ar[r]^-{\phi_1}&\calE_0\ar[r]^-{\phi_0}&\calE_1\otimes\chi}$$
where $\calE_0,\calE_1\in\Coh_G(X)$ and $\phi_0,\phi_1$ are $G$-invariant homomorphisms such that
$$\phi_0\circ\phi_1=f\cdot\id_{\calE_1}
,\quad
(\phi_1\otimes\chi)\circ\phi_0=f\cdot\id_{\calE_0}.$$
The homotopy category $H^0(\Coh_G(X,f))$ of $\Coh_G(X,f)$ is a triangulated category. 
The category of absolutely acyclic objects is
the thick subcategory  of $H^0(\Coh_G(X,f))$
generated by the totalization of the exact triangles.
Let $Acyclic$ denote this category.
The absolute derived factorization category is the Verdier quotient
\begin{align*}
\DCoh_G(X,f)=H^0(\Coh_G(X,f))\,/\,Acyclic.
\end{align*}
We abbreviate derived factorization category
for absolute derived factorization category.

Let $Z\subset X$ be a closed $G$-invariant subset. 
An object $(\calE_1,\calE_0,\phi_1,\phi_0)$ in $\Coh_G(X,f)$ is set-theoretically supported on $Z$ if its restriction to 
$X\setminus Z$ is 0, i.e., if the coherent sheaves $\calE_0$, $\calE_1$ are set-theoretically supported on $Z$.
Let  
$$\Coh_G(X,f)_\uZ$$
be the full dg-subcategory of all factorizations set-theoretically supported on $Z$.
Let $\DCoh_G(X,f)_\uZ$ be the Verdier quotient of the homotopy category $H^0(\Coh_G(X,f)_\uZ)$ of $\Coh_G(X,f)_\uZ$ 
by the thick subcategory of acyclic objects.
We have a fully faithful embedding
$$\DCoh_G(X,f)_\uZ\subset\DCoh_G(X,f),$$
see \cite[\S3.1]{EP15}, \cite[prop.~2.25]{H17b}.
We will say that an object in $\DCoh_G(X,f)_\uZ$ is set-theoretically supported on $\uZ$.
An object of $\DCoh_G(X,f)$ is category-theoretically supported on $Z$
if its restriction in $H^0(\Coh_G(X\setminus Z,f))$ is acyclic, i.e., 
its restriction in $\DCoh_G(X\setminus Z,f)$ is zero.
Let  
$$\DCoh_G(X,f)_Z$$
be the full triangulated subcategory of all factorizations category-theoretically supported on $Z$.
We have a fully faithful embedding
$$\DCoh_G(X,f)_\uZ\subset\DCoh_G(X,f)_Z.$$
Forgetting the support yields fully faithful triangulated functors
$$\DCoh_G(X,f)_\uZ\subset\DCoh_G(X,f)_Z\subset\DCoh_G(X,f).$$ 
The category $\DCoh_G(X,f)_Z$ is the thick envelope of $\DCoh_G(X,f)_\uZ$ in
$\DCoh_G(X,f)$. So $\DCoh_G(X,f)_\uZ$ is a dense full subcategory of
$\DCoh_G(X,f)_Z$ in the terminology of Thomason in \cite{T97}.
The restriction $$\DCoh_G(X,f)\to\DCoh_G(X\setminus Z,f)$$ 
is the Verdier localization by the triangulated subcategory $\DCoh_G(X,f)_Z$.
Hence, we have an exact 
sequence of triangulated categories
\begin{align}\label{loc1}
\DCoh_G(X,f)_Z\to \DCoh_G(X,f)\to \DCoh_G(X\setminus Z,f),
\end{align}
i.e., the thick category $\DCoh_G(X,f)_Z$ is the kernel of the restriction functor,
see \cite[prop.~2.26]{H17b}.
We deduce that for each $G$-invariant closed subset $F\subset X$ we also have
an exact sequence of triangulated categories
\begin{align}\label{loc2}
\DCoh_G(X,f)_{Z\cap F}\to \DCoh_G(X,f)_F\to \DCoh_G(X\setminus Z,f)_{(X\setminus Z)\cap F},
\end{align}
See \cite[\S 3.1]{EP15} for more details on supports.

Let $\phi:(X_2,\chi,f_2)\to (X_1,\chi,f_1)$ be a morphism of $G$-equivariant LG-models.
Let $Z_1,$ $Z_2$ be closed $G$-invariant subsets of $X_1,$ $X_2$.
We need the following functorial properties.

Assume that $\phi^{-1}(Z_1)\subset Z_2$.
Then, we have a pull-back dg-functor 
$$\phi^*:\Coh_G(X_1,f_1)_{\uZ_1}\to\Coh_G(X_2,f_2)_{\uZ_2}.$$
Assume further that $\phi$ has finite $G$-flat dimension.
Then, we have a triangulated functor
\begin{align*}
L\phi^*:\DCoh_G(X_1,f_1)_{\uZ_1}\to\DCoh_G(X_2,f_2)_{\uZ_2}.
\end{align*}

Assume that $\phi(Z_2)\subset Z_1$ and that the restriction $\phi|_{Z_2}$ of the map $\phi$
to the subset $Z_2$ is proper.
Then, we have a pushforward dg-functor 
$$\phi_*:\Coh_G(X_2,f_2)_{\uZ_2}\to\Coh_G(X_1,f_1)_{\uZ_1}$$
and a triangulated functor
\begin{align*}
R\phi_*:\DCoh_G(X_2,f_2)_{\uZ_2}\to\DCoh_G(X_1,f_1)_{\uZ_1}.
\end{align*}

The external tensor product yields a dg-functor
$$\boxtimes:\Coh_G(X_1,f_1)\otimes\Coh_G(X_2,f_2)\to\Coh_G(X_1\times X_2,f_1\boxplus f_2)$$
and a triangulated functor
$$\boxtimes:\DCoh_G(X,f_1)\otimes\DCoh_G(X,f_2)\to\DCoh_G(X_1\times X_2,f_1\oplus f_2).$$
Assume that $X_1=X_2=X$.
There is a dg-functor
$$\otimes:\Coh_G(X,f_1)\otimes\Coh_G(X,f_2)\to\Coh_G(X,f_1+f_2)$$
Assume further that $X$ is smooth.
Then we have a triangulated functor
$$\otimes^L:\DCoh_G(X,f_1)_{\uZ_1}\otimes\DCoh_G(X,f_2)_{\uZ_2}\to\DCoh_G(X,f_1+f_2)_{\underline{Z_1\cap Z_2}}.$$
See \cite[\S 3.5-6]{EP15} and \cite[\S 2.3-4]{H17b} for more details on functoriality and tensor products.
The functors $L\phi^*$, $R\phi_*$ and $\otimes$ are also compatible with categorical supports.
See \cite[lem.~6.4]{AK20} for details.

\begin{remark}\label{rem:base change 1} 
\hfill
\begin{enumerate}[label=$\mathrm{(\alph*)}$,leftmargin=8mm]
\item 
By \cite[lem.~ 5.5, prop.~5.6, 5.7]{AK20} and \cite[prop.~4.32, lem.~4.34]{H17a},
the derived pullback commutes with tensor products, and
the derived pushforward and pulback satisfy the projection formula
and the flat base change property.
\item
Since $\DCoh_G(X,f)_\uZ$ is a dense full subcategory of $\DCoh_G(X,f)_Z$,
a theorem of Thomason \cite{T97} yields an inclusion of Grothendieck groups
$$K_0(\DCoh_G(X,f)_\uZ)\subset K_0(\DCoh_G(X,f)_Z).$$
This inclusion may not be an isomorphism. 
We expect that, in the particular cases considered in this paper, both support conditions will coincide. However, we cannot prove this 
in general, primarily because we used set-theoretic support in the localization theorem in Proposition \ref{prop:TTK} below.
\end{enumerate}
\end{remark}

\subsubsection{The functor $\Upsilon$}\label{sec:Upsilon}
From now on we will always assume that $\chi=1$, as this suffices for our purpose.
Let $(X,f)$ be a smooth $G$-invariant LG-model.
Let $Y$ be the zero locus of $f$,
$i$ be the closed embedding $Y\subset X$,
and $Z\subset Y$ be a closed $G$-invariant subset.
The derived category of bounded complexes over an Abelian category coincides with its absolute
derived category. Hence, we have a triangulated functor
\begin{align*}\Upsilon:\Db\Coh_G(Y)_Z\to \DCoh_G(X,f)_\uZ\end{align*}
which takes a complex $(\calE^\bullet,d)$ to the factorization
\begin{align*}
\xymatrix{
{\displaystyle\bigoplus_{m\in\bbZ}i_*\calE^{2m-1}}\ar[r]^-{d}&
{\displaystyle\bigoplus_{m\in\bbZ}i_*\calE^{2m}}\ar[r]^-{d}&
{\displaystyle\bigoplus_{m\in\bbZ}i_*\calE^{2m-1}}}
\end{align*}
Given a closed $G$-invariant subset $Z\subset Y$, let 
$\Perf_G(Y)_Z$ be the full subcategory of $\Perf_G(Y)$ consisting of the perfect complexes
with cohomology sheaves set-theoretically supported in $Z$.
The equivariant triangulated category of singularities of $Y$ supported on $Z$ is the Verdier quotient 
\begin{align}\label{sing1}\DCoh_G^\sg(Y)_\uZ=\Db\Coh_G(Y)_Z\,/\,\Perf_G(Y)_Z.\end{align}
We abbreviate $\DCoh_G^\sg(Y)=\DCoh_G^\sg(Y)_\uY.$
By \cite[lem.~3.1]{EP15}, the forgetful functor 
$$\DCoh_G^\sg(Y)_\uZ\to\DCoh_G^\sg(Y)$$
is fully faithful,
and, by \cite[prop.~3.1]{EP15}, the functor $\Upsilon$ factorizes through an equivalence 
\begin{align}\label{sg}\Db\Coh^\sg_G(Y)_\uZ\simeq \DCoh_G(X,f)_\uZ.
\end{align}
Hence, the functor $\Upsilon$ can be identified with the composed functor
\begin{align}\label{upsilon2}
\Upsilon:\Db\Coh_G(Y)_Z\to\DCoh^\sg_G(Y)_\uZ\to\DCoh_G(X,f)_\uZ
\end{align}
where the first arrow is the localization functor in \eqref{sing1} and the second one is \eqref{sg}.

\begin{lemma}\label{lem:Upsilon}
Let $\phi:(X_2,f_2)\to (X_1,f_1)$ be a morphism of smooth $G$-invariant LG-models.
Let $Y_1=f_1^{-1}(0)$, $Y_2=f_2^{-1}(0)$ and
$Z_1\subset Y_1$, $Z_2\subset Y_2$ be closed $G$-invariant subsets.
Assume that $\phi$ is of finite $G$-flat dimension. Assume that the functions $f_1$, $f_2$ are locally
non-zero everywhere on $X_1$, $X_2$.
If $\phi^{-1}(Z_1)\subset Z_2$, then there is an isomorphism of functors
$$L\phi^*\circ\Upsilon=\Upsilon\circ L\phi^*.$$
If $\phi(Z_2)\subset Z_1$ and the map $\phi$ is proper, then there is an isomorphism of functors
$$R\phi_*\circ\Upsilon=\Upsilon\circ R\phi_*.$$
\end{lemma}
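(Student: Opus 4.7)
The plan is to unfold the definition of $\Upsilon$ as pushforward along $i:Y\hookrightarrow X$ followed by folding a bounded complex into a $2$-periodic factorization, and to reduce both assertions to derived base change for the Cartesian square with vertices $Y_2$, $X_2$, $Y_1$, $X_1$ (horizontal arrows $i_2,i_1$ and vertical arrows $\phi|_Y,\phi$). The essential point is Tor-independence of this square.

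To establish Tor-independence: since $f_1$ is locally non-zero everywhere on the smooth scheme $X_1$, it is locally a regular element of $\calO_{X_1}$, so $i_{1*}\calO_{Y_1}$ admits the local Koszul resolution $[\calO_{X_1}\xrightarrow{f_1}\calO_{X_1}]$. Being a two-term complex of vector bundles, its naive pullback along $\phi$ computes the derived pullback $L\phi^*(i_{1*}\calO_{Y_1})$ and equals $[\calO_{X_2}\xrightarrow{f_2}\calO_{X_2}]$; by the same hypothesis applied to $f_2=\phi^*f_1$, this is again a Koszul resolution, so $L\phi^*(i_{1*}\calO_{Y_1})\simeq i_{2*}\calO_{Y_2}$. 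Tor-independent base change then supplies natural isomorphisms
$$L\phi^*\circ i_{1*}\simeq i_{2*}\circ L(\phi|_Y)^*,\qquad R\phi_*\circ i_{2*}\simeq i_{1*}\circ R(\phi|_Y)_*$$
between the bounded derived categories of coherent sheaves; the proper case uses that $\phi$ proper forces $\phi|_Y$ proper since $Y_2\hookrightarrow X_2$ is closed, and finite $G$-flat dimension is inherited by $\phi|_Y$ from $\phi$ by the same Koszul argument.

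To conclude, I would apply these isomorphisms componentwise to the $2$-periodic factorizations defining $\Upsilon$. Given $\calE^\bullet\in\Db\Coh_G(Y_1)_{Z_1}$, the even and odd components $\bigoplus_m i_{1*}\calE^{2m}$ and $\bigoplus_m i_{1*}\calE^{2m-1}$ of $\Upsilon(\calE^\bullet)$ are carried by $L\phi^*$ to $\bigoplus_m i_{2*}L(\phi|_Y)^*\calE^{2m}$ and $\bigoplus_m i_{2*}L(\phi|_Y)^*\calE^{2m-1}$, which is precisely $\Upsilon(L(\phi|_Y)^*\calE^\bullet)$; the support hypothesis $\phi^{-1}(Z_1)\subset Z_2$ ensures the result lives in $\DCoh_G(X_2,f_2)_{\uZ_2}$. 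The argument for $R\phi_*$ is symmetric, using the second base change together with the support condition $\phi(Z_2)\subset Z_1$. The main obstacle is promoting these object-wise comparisons to a natural isomorphism of triangulated functors between derived factorization categories, rather than an ad hoc identification on objects; this requires lifting the comparison to the dg-level using $K$-flat (resp.\ $K$-injective) resolutions of factorizations adapted to $\phi$ and $i_*$, and invoking the base change and projection formula for derived factorization categories recalled in Remark \ref{rem:base change 1}(a), following \cite[Lem.~5.5, Prop.~5.6, 5.7]{AK20} and \cite[Prop.~4.32, Lem.~4.34]{H17a}.
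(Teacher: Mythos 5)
Your argument is correct in substance but takes a more explicit route than the paper's proof, which simply cites \cite[\S 3.6]{EP15} for the pushforward comparison, declares the pullback comparison obvious, and notes the equivariant and supported cases go through in the same way. What you do differently is supply the underlying geometric reason: you observe that ``locally non-zero everywhere'' on the smooth source and target makes $f_1$ and $f_2$ locally regular, giving a two-term Koszul resolution of $i_{1*}\calO_{Y_1}$ by line bundles whose naive pullback computes $L\phi^*$ and lands on the analogous resolution of $i_{2*}\calO_{Y_2}$; this establishes Tor-independence of the square with vertices $Y_2,X_2,Y_1,X_1$, from which derived base change and the inheritance of finite $G$-flat dimension by $\phi|_Y$ both follow. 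This buys transparency: it makes explicit the role of the local non-vanishing hypothesis, which the paper's ``obvious'' dismissal and citation obscure. Two small remarks. First, the pushforward comparison $R\phi_*\circ i_{2*}\simeq i_{1*}\circ R(\phi|_Y)_*$ needs no Tor-independence at all; it is the functoriality of derived pushforward for the commuting triangle $\phi\circ i_2=i_1\circ\phi|_Y$, so only the pullback direction really uses your Koszul argument. Second, you are right to flag that passing from an object-wise identification to an isomorphism of triangulated functors requires a dg-level construction (resolutions of factorizations compatible with $i_*$ and $\phi$); that is precisely the content of \cite[\S 3.6]{EP15} and \cite[Lem.~5.5, Prop.~5.6--5.7]{AK20}, \cite[Prop.~4.32, Lem.~4.34]{H17a}, and your sketch is consistent with, though less detailed than, those references.
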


\begin{proof}
Since $\phi$ has finite $G$-flat dimension, so does the homomorphism $\phi|_{Y_2}:Y_2\to Y_1$.
Hence, the functors $L\phi^*$ and $R\phi_*$ on 
$\Db\Coh_G(Y_1)_{Z_1}$ and $\Db\Coh_G(Y_2)_{Z_2}$ are well-defined.
The functor $\Upsilon$ above is considered in \cite[\S 2.7]{EP15}, 
where it is proved to factorize to an equivalence of triangulated 
categories from the category of singularities of $Y$ to $\DCoh(X,f)$.
The isomorphism of functors $R\phi_*\circ\Upsilon=\Upsilon\circ R\phi_*$
is proved in \cite[\S 3.6]{EP15}. The isomorphism $L\phi^*\circ\Upsilon=\Upsilon\circ L\phi^*$
is obvious. Note that  $G=\{1\}$ in loc.~cit.,
but the case of a general group $G$ is proved in the same way.
The compatibility with supports is also obvious, see, e.g., \cite[\S 3.1]{EP15}.
\end{proof}

\begin{remark}
Lemma \ref{lem:Upsilon} identifies
the pull-back and pushforward functors of the equivariant triangulated category of singularities 
and the derived factorization category with set-theoretical supports,
under the equivalence \eqref{sg}.
\end{remark}

\subsubsection{Critical K-theory}\label{sec:critK}
Fix a $G$-invariant LG-model $(X,f)$.
Let $Y\subset X$ be the zero locus of $f$, 
$i$ be the closed embedding $Y\to X$, and
$Z\subset Y$ a closed $G$-invariant subset.
Set
\begin{align}\label{Kcrit}
K_G(X,f)_Z=K_0(\DCoh_G(X,f)_Z)
,\quad
K_G(X,f)_\uZ=K_0(\DCoh_G(X,f)_\uZ).
\end{align}
Compare \eqref{Ktop2}.
The density of the category $\DCoh_G(X,f)_\uZ$ in
$\DCoh_G(X,f)_Z$ and \cite[cor.~2.3]{T97} imply that
\begin{align}\label{density}
K_G(X,f)_\uZ\subset K_G(X,f)_Z.
\end{align}
Assume that $(X,f)$ is smooth.
The triangulated functors $\otimes^L$, $L\phi^*$ and $R\phi_*$ above yield maps
in critical K-theory. In particular, we have maps
\begin{align}\label{otimes}
\otimes:K_G(X)\otimes K_G(X,f)_\uZ\to K_G(X,f)_{\uZ}
,\quad
\otimes:K_G(X)\otimes K_G(X,f)_Z\to K_G(X,f)_{Z}
\end{align}
and maps
\begin{align}\label{Upsilon2}
\Upsilon:K^G(Y)\to K_G(X,f)
,\quad
\Upsilon:K^G(Z)\to K_G(X,f)_\uZ.
\end{align}

\begin{lemma}\label{lem:surjectivity} 
Assume that the function $f$ is locally
non-zero everywhere on $X$.
Then the maps 
$\Upsilon:K^G(Y)\to K_G(X,f)$
and
$\Upsilon:K^G(Z)\to K_G(X,f)_\uZ$
are surjective.
\end{lemma}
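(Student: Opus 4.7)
The plan is to exploit the two-step factorization \eqref{upsilon2} of $\Upsilon$, namely
\[
\Upsilon:\Db\Coh_G(Y)_Z\xrightarrow{\ \pi\ }\DCoh^\sg_G(Y)_\uZ\xrightarrow{\ \Phi\ }\DCoh_G(X,f)_\uZ,
\]
and to argue that each arrow induces a surjection on $K_0$. For the second arrow $\Phi$ this is automatic, because the hypothesis that $f$ is locally non-zero everywhere on $X$ is exactly what is needed to invoke the Efimov--Positselski equivalence \eqref{sg}; an equivalence of triangulated categories induces an isomorphism of Grothendieck groups.

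For the first arrow $\pi$, I would recall that by definition \eqref{sing1} the category of singularities is the Verdier quotient $\DCoh^\sg_G(Y)_\uZ=\Db\Coh_G(Y)_Z/\Perf_G(Y)_Z$, and that for any Verdier quotient $\calC\to\calC/\calD$ the induced map on Grothendieck groups is surjective (every object of the quotient lifts to an object of $\calC$, so the lift of its class surjects). Composing the two observations yields surjectivity of $\Upsilon:K^G(Z)\to K_G(X,f)_\uZ$, and the unsupported version follows by specializing to $Z=Y$.

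There is not much of a real obstacle here beyond checking that the bookkeeping is consistent: one must make sure that $\pi$ really is the localization by a \emph{thick} subcategory (so that $K_0$ of the quotient is literally the quotient of $K_0$, which is where the surjectivity comes from), and that the equivalence \eqref{sg} is compatible with the set-theoretic support $Z$ on both sides. Both points are already recorded in \cite[\S3.1, prop.~3.1]{EP15}, so no genuinely new argument is required.
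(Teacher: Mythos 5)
Your proposal is correct and follows essentially the same route as the paper's proof: factor $\Upsilon$ as in \eqref{upsilon2}, invoke the equivalence \eqref{sg} from \cite[prop.~3.1]{EP15} for the second arrow, and deduce surjectivity of the first arrow from the fact that the Verdier localization functor $\Db\Coh_G(Y)_Z\to\Db\Coh^\sg_G(Y)_\uZ$ is essentially surjective (the paper cites \cite[prop.~VIII.3.1]{SGA5} for this step). One small correction to your parenthetical remark: the surjectivity of $K_0(\calC)\to K_0(\calC/\calD)$ for a Verdier quotient needs no thickness hypothesis on $\calD$, since the localization functor is the identity on objects; thickness (or a Thomason-style density argument as in \eqref{density}) would only be relevant if one wanted to identify the kernel with the image of $K_0(\calD)$, which plays no role here.
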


\begin{proof}
We must check that the first functor in \eqref{upsilon2}
yields a surjective morphism of Grothendieck groups.
By \eqref{sing1} this surjectivity follows from \cite[prop.~VIII.3.1]{SGA5}. 
\end{proof}

Echanging $Z$ and $F$ in  \eqref{loc2} yields the following localization exact sequence
\begin{align}\label{les}
K_G(X,f)_{Z\cap F}\to K_G(X,f)_Z\to K_G(U,f)_{Z\cap U}\to 0
\end{align}
Here $F$ is any $G$-invariant closed subset of $X$ and $U=X\setminus F$.
In particular, we have the excision property
\begin{align}\label{excision}
Z\subset U\Rightarrow K_G(X,f)_Z=K_G(U,f)_Z.
\end{align}
Further \cite[prop.~2.7]{O11} yields $\DCoh_G(U,f)=0$ if $F=\crit(f)$.
Using \eqref{loc2}, we deduce that 
\begin{align}\label{crit}
K_G(X,f)_Z=K_G(X,f)_{Z\cap\crit(f)}.
\end{align}

We will  need the deformed dimensional reduction from \cite[thm.~1.2]{H17b} which generalizes
the dimensional reduction in K-theory in \cite{I12}. 
More precisely, let $(X,f)$ be a smooth $G$-invariant LG-model.
Let $\pi:E\to X$ be a $G$-equivariant vector bundle on $X$ with a $G$-invariant section $s$ of the dual bundle $E^\vee$.
Let $Z\subset X$ be the zero scheme of $s$.
Assume that the restriction $f|_Z:Z\to\bbC$ is flat and that the section $s$ is regular, i.e., the codimension of $Z$ in $X$ is
equal to the rank of the vector bundle $E$.
Let $g:E\to\bbC$ be the function given by the pairing with the section $s$. Then, there is an isomorphism
\begin{align}\label{DDR}
K_G(Z, f|_Z)\simeq K_G(E,\pi^*f+g).
\end{align}

\subsection{Critical convolution algebras}\label{sec:Kcritalg}

Let $(X_1,f_1)$, $(X_2,f_2)$ and $(X_3,f_3)$ be smooth $G$-invariant LG-models.
Set $X_{123}=X_1\times X_2\times X_3$ and $X_{ab}=X_a\times X_b$.
Let $\pi_{ab}:X_{123}\to X_{ab}$ be the projection along the factor not named.
Set 
$$f_{ab}=f_a\boxplus(-f_b)
,\quad
Y_a=f_a^{-1}(0)
,\quad
Y_{ab}=f_{ab}^{-1}(0).$$
The group $G$ acts diagonally on $X_{123}$.
Let $Z_{ab}$ be a $G$-invariant closed subset of $Y_{ab}$.
Assume that the restriction of
$\pi_{13}$ to $\pi_{12}^{-1}(Z_{12})\cap\pi_{23}^{-1}(Z_{23})$
is proper and maps into  $Z_{13}$.
Assume also that the function $f_a$ is locally
non-zero everywhere on $X_a$.
There is a convolution functor 
\begin{align}\label{conv1}
\star:\Db\Coh_G(X_{12})_{Z_{12}}\otimes\Db\Coh_G(X_{23})_{Z_{23}}\to
\Db\Coh_G(X_{13})_{Z_{13}}
\end{align}
such that 
$$\calE\star\calF=R(\pi_{13})_*(L(\pi_{12})^*(\calE)\otimes^LL(\pi_{23})^*(\calF)).$$
Since
$(\pi_{12}\times\pi_{23})^*(f_{12}\oplus f_{23})=(\pi_{13})^*f_{13}$,
we define in a similar way a convolution functor of derived factorization categories 
\begin{align}\label{conv2}
\star:\DCoh_G(X_{12},f_{12})_{Z_{12}}\otimes\DCoh_G(X_{23},f_{23})_{Z_{23}}\to
\DCoh_G(X_{13},f_{13})_{Z_{13}}
\end{align}
such that 
$\calE\star\calF=R(\pi_{13})_*(L(\pi_{12})^*(\calE)\otimes^LL(\pi_{23})^*(\calF))$.
Using set-theoretical support we define in the same way a 
convolution functor
\begin{align}\label{conv3}
\star:\DCoh_G(X_{12},f_{12})_{\uZ_{12}}\otimes\DCoh_G(X_{23},f_{23})_{\uZ_{23}}\to
\DCoh_G(X_{13},f_{13})_{\uZ_{13}}
\end{align}

Now, we consider the following particular case:
the pair $(X,f)$ is a smooth $G$-invariant LG-model,
the map $\pi:X\to X_0$ is a proper $G$-map to an affine $G$-scheme,
the function $f$ is $f=f_0\circ\pi$ where $f_0$ is a $\chi$-semi-invariant function on $X_0$,
and $Y$, $Y_0$ are the zero loci of $f$ and $f_0$. We also choose a $G$-fixed point $x_0$ in $Y_0$.
We define 
$$Z=X\times_{X_0} X
 ,\quad
L=X\times_{X_0}\{x_0\}.
$$
Set $X_a=X$, $Z_{ab}=Z$, $f_a=f$ and $f_{ab}=f^{(2)}$ for each $a$, $b$.
Note that  $Z\subset Y_{ab}$.

First, recall the usual convolution algebra in K-theory, following \cite{CG}.
The convolution functor \eqref{conv1} yields a monoidal structure on the triangulated category
$\Db\Coh_G(X^2)_Z$, and a $\Db\Coh_G(X^2)_Z$-module structure
on the categories $\Db\Coh_G(X)_L$, $\Db\Coh_G(X)_Y$, $\Db\Coh_G(X)$.
This yields an associative $R_G$-algebra structure on
$$K^G(X^2)_Z=K^G(Z)$$ and $K^G(Z)$-representations in $K^G(L)$, $K^G(Y)$ and $K^G(X)$.

Now, we consider convolution algebras in critical K-theory.

\begin{proposition}\label{prop:critalg1}
\hfill
\begin{enumerate}[label=$\mathrm{(\alph*)}$,leftmargin=8mm]
\item 
$\DCoh_G(X^2,f^{(2)})_Z$
is a monoidal category.
\item
$\DCoh_G(X,f)_L$ and $\DCoh_G(X,f)$  are modules over  
$\DCoh_G(X^2,f^{(2)})_Z$.
\item
$K_G(X^2,f^{(2)})_Z$ is an $R_G$-algebra which acts on
$K_G(X,f)_L$ and $K_G(X,f)$.

\item
$\Upsilon:K^G(Z)\to K_G(X^2,f^{(2)})_Z$ is an algebra homomorphism.

\item
$\Upsilon:K^G(Y)\to K_G(X,f)$ and $\Upsilon:K^G(L)\to K_G(X,f)_L$ are module homomorphism.

\item
Parts $\operatorname{(a)}$-$\operatorname{(e)}$ holds also with categorical supports replaced by set-theoretical ones.
 
 \item
 $\Upsilon(K^G(Z))=K_G(X^2,f^{(2)})_\uZ$,
 $\Upsilon(K^G(Y))=K_G(X,f)$ and
 $\Upsilon(K^G(L))=K_G(X,f)_\uL$.
 
 \item
$K_G(X^2,f^{(2)})_\uZ$ is a subalgebra of $K_G(X^2,f^{(2)})_Z$ which acts on 
$K_G(X,f)_\uL$.
\end{enumerate}
\end{proposition}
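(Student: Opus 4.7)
The plan is to specialize the convolution functors \eqref{conv2} and \eqref{conv3} to the setting where $X_1=X_2=X_3=X$, $f_a=f$ and $Z_{ab}=Z$. First I verify the hypotheses. The containment $Z\subset Y^{(2)}$ holds because $(x_1,x_2)\in Z$ forces $f(x_1)=f_0(\pi(x_1))=f_0(\pi(x_2))=f(x_2)$. Next, $\pi_{12}^{-1}(Z)\cap\pi_{23}^{-1}(Z)$ coincides with the triple fiber product $X\times_{X_0}X\times_{X_0}X$, and $\pi_{13}$ sends it into $Z$ with proper restriction, since $\pi$ is proper. This produces the monoidal structure of part (a). Associativity follows from a standard four-fold diagram on $X^4$ using flat base change and the projection formula from Remark \ref{rem:base change 1}(a). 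Parts (b) and the right-action variant are obtained identically, after noting that the relevant subsets $X\times_{X_0}X\times_{X_0}L$ and $X\times_{X_0}X\times_{X_0}X$ map properly to $L$ and $X$ respectively under $\pi_{13}$. Passing to Grothendieck groups yields part (c), and repeating the argument with the underlined supports gives part (f).

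For parts (d) and (e), I invoke Lemma \ref{lem:Upsilon}, which shows that $\Upsilon$ intertwines the derived pull-back and push-forward. The derived tensor product $L\pi_{12}^*\calE\otimes^L L\pi_{23}^*\calF$ can be rewritten as the pull-back of $\calE\boxtimes\calF$ along the morphism $X^3\to X^2\times X^2$, $(x_1,x_2,x_3)\mapsto((x_1,x_2),(x_2,x_3))$, so only pull-back and push-forward functors intervene in convolution. Combined with the external tensor functor (manifestly compatible with $\Upsilon$ since it is itself given by an exterior pull-back), this shows that $\Upsilon$ intertwines the convolution products on $K^G(Z)$ and $K_G(X^2,f^{(2)})_\uZ$, and the analogous module actions on $K^G(L)$ and $K^G(Y)$. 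Composing with the natural inclusion \eqref{density} produces the algebra and module homomorphisms of parts (d) and (e).

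Parts (g) and (h) then follow formally. Surjectivity of the three $\Upsilon$-maps is Lemma \ref{lem:surjectivity} (after identifying $K^G(Z)$ and $K^G(L)$ with subgroups of $K^G(Y^{(2)})$ and $K^G(Y)$ via closed push-forward, legitimate because $Z\subset Y^{(2)}$ and $L\subset Y$). Since $\Upsilon$ is an algebra homomorphism, its image $K_G(X^2,f^{(2)})_\uZ$ is a subalgebra of $K_G(X^2,f^{(2)})_Z$, acting on the submodule $K_G(X,f)_\uL$, which yields (h). The main technical obstacle I anticipate is ensuring the compatibility of $\Upsilon$ with the derived tensor-product step in the convolution, as this is not stated explicitly in Lemma \ref{lem:Upsilon}; the factorization of $\otimes^L$ through exterior product and pull-back indicated above reduces this to the known pull-back compatibility, but one must first check that the hypothesis ``$f$ locally non-zero everywhere'' transfers from $(X,f)$ to $(X^2,f^{(2)})$ and to $(X^3,f\boxplus(-f)\boxplus 0)$, which holds because the zero locus of $f$ has empty interior implies the same for the differences considered.
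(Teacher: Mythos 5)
The treatment of parts (a), (b), (c), (f), (g), and (h) in your proposal is essentially the same as the paper's (you do derive (h) from (g) and the algebra homomorphism property rather than invoking \eqref{density} directly, which is a harmless variation). The gap is in (d) and (e), which is the technical heart of the proposition.

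Your argument for (d)--(e) applies Lemma~\ref{lem:Upsilon} to the pull-back and push-forward functors appearing in the convolution formula $R(\pi_{13})_*L(\pi_{12}\times\pi_{23})^*(-\boxtimes-)$, and treats the compatibility of $\Upsilon$ with the derived tensor product as a consequence of its compatibility with exterior products and pull-backs. But this overlooks a crucial mismatch. The algebra structure on $K^G(Z_{ab})$ that you want to transport is defined via convolution in $\Db\Coh_G(X_{ab})_{Z_{ab}}$, i.e.\ at the level of the \emph{smooth ambient} spaces. By contrast, $\Upsilon$ is defined on $\Db\Coh_G(Y_{ab})_{Z_{ab}}$, i.e.\ on complexes supported on the \emph{zero loci}. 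Passing between the two requires devissage at the K-theory level, and one must check that the resulting identification is compatible with the two convolutions. This is not formal: the morphism $\pi_{12}\times\pi_{23}:X_{123}\to X_{12}\times X_{23}$ is a closed regular embedding (the diagonal in the middle factor), \emph{not} flat, and the derived preimage of $Y_{12}\times Y_{23}$ (cut out by the two functions $f_{12},f_{23}$) is a quasi-smooth derived scheme whose underlying classical scheme is strictly smaller than $Y_{123}=\{f_{13}=0\}$. So there is no well-defined classical convolution on $\Db\Coh_G(Y_{ab})$ to which Lemma~\ref{lem:Upsilon} can be applied directly; your reduction to ``pull-back compatibility'' therefore has nothing to bite on.

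The paper circumvents this by introducing the derived zero loci $RY_{ab}=R(X_{ab}\times\bbC\to X_{ab},f_{ab})$ (which are quasi-smooth, hence enjoy finite flat dimension and good base change by \cite[cor.~3.4.2.2]{L18}) and the fact that $j:Y_{ab}\to RY_{ab}$ is a quasi-isomorphism when $f_{ab}$ is locally non-zero. This produces a convolution~\eqref{star3} on $\Db\Coh_G(Y_{ab})_{Z_{ab}}$, verifies via $Ri_*\circ Rj_*$ that it matches the ambient convolution~\eqref{star1} on Grothendieck groups (using devissage), and only then applies Lemma~\ref{lem:Upsilon} to compare~\eqref{star3} with~\eqref{star0}. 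Your proposal omits this derived-geometric construction entirely; without it the key intermediate object needed for the intertwining does not exist.
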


\begin{proof}
For (a) we must define an associativity constraint and a unit 
satisfying the pentagon and the unit axioms. 
The associativity constraint follows from the flat base change and the projection formula as in
\cite[prop.~5.13]{BDFIK16}.
The unit is the factorization $\Upsilon\Delta_*\calO_X$.
For (b) we choose $X_1=X_2=X$, $X_3=\{x_0\}$, $f_1=f_2=f$,
$f_3=0$, $Z_{12}=Z$ and $Z_{23}=Z_{13}=L\times\{x_0\}$ or $Z_{23}=Z_{13}=X\times\{x_0\}$
and we apply \eqref{conv2}.
Part (c) follows from (a) and (b) by taking the Grothendieck groups.
Part (f) is proved using \eqref{conv3} instead of \eqref{conv2}.
This implies that the convolution product
preserves the subcategories $\DCoh_G(X^2,f^{(2)})_\uZ$ and $\DCoh_G(X,f)_\uL$. 
Part (g) follows from Lemma \ref{lem:surjectivity}.
Part (h) follows from \eqref{density}.
Now, we concentrate on (d) and (e).

The convolution functors 
\begin{align}\label{star0}
\begin{split}
\star:\Db\Coh_G(X_{12},f_{12})_{Z_{12}}\otimes\Db\Coh_G(X_{23},f_{23})_{Z_{23}}\to\Db\Coh_G(X_{13},f_{13})_{Z_{13}}
\end{split}
\end{align}
and 
\begin{align}\label{star1}
\begin{split}
\star:\Db\Coh_G(X_{12})_{Z_{12}}\times\Db\Coh_G(X_{23})_{Z_{23}}\to\Db\Coh_G(X_{13})_{Z_{13}}
\end{split}
\end{align}
are both given by
\begin{align}\label{star}
\calE\star\calF=R(\pi_{13})_*L(\pi_{12}\times\pi_{23})^*(\calE\boxtimes\calF).
\end{align}
We must compare the functors \eqref{star0} and \eqref{star1}.

To do this, it is convenient to use the formalism of derived schemes.
A derived scheme is a pair $X=(|X|,\calO_X)$ where $|X|$ is a topological space and $\calO_X$
is a sheaf on $|X|$ with values in the $\infty$-category of simplicial commutative rings such that
the ringed space $(|X|,\pi_0\calO_X)$ 
is a scheme and the sheaf $\pi_n\calO_X$ is a quasi-coherent 
$\pi_0\calO_X$-module over this scheme for each $n>0$.
Here, all derived schemes will be defined over $\bbC$, hence derived schemes can be
modeled locally by dg-algebras rather than simplicial ones.
Let $M$ be a smooth quasi-affine $G$-scheme
and $\sigma$ a $G$-invariant section of a $G$-equivariant vector bundle $E$ over $M$.
The derived zero locus 
is the derived $G$-scheme $$X=R(E\to M,\sigma)$$ given by the derived fiber product
$M\times_E^RM$ relative to the maps $\sigma:M\to E$ and $0:M\to E$.
The derived scheme $X$ is quasi-smooth, i.e., it is finitely presented and its cotangent 
complex is of cohomological amplitude $[-1,0]$.
For any derived $G$-scheme $X$, let $\Db\Coh_G(X)$ be the derived category
of modules over $\calO_X$ with bounded coherent cohomology.

Now, we consider the derived scheme 
$RY_{ab}=R(X_{ab}\times\bbC\to X_{ab}\,,\,f_{ab}).$
We have the following obvious embeddings of derived schemes
$$\xymatrix{Y_{ab}\ar[r]^-j&RY_{ab}\ar[r]^-i& X_{ab}},$$ 
which yield the following commutative diagram 
\begin{align*}
\xymatrix{
X_{12}\times X_{23}&&\ar[ll]_-{\pi_{12}\times \pi_{23}} X_{123}\ar[r]^-{\pi_{13}}&X_{13}\\
RY_{12}\times RY_{23}\ar[u]^-i&&\ar[ll]_-{\pi_{12}\times \pi_{23}} RY_{123}\ar[r]^-{\pi_{13}}\ar[u]_-i&RY_{13}\ar[u]_-i
}
\end{align*}
The left square is Cartesian.
The upper left horizontal map has finite $G$-flat dimension because $X_1$, $X_2$, $X_3$ are smooth. 
The lower one as well because it is quasi-smooth, see, e.g., \cite[lem.~1.15]{K22}.
Thus, we also have a convolution functor
\begin{align}\label{star2}
\star:\Db\Coh_G(RY_{12})_{Z_{12}}\times\Db\Coh_G(RY_{23})_{Z_{23}}\to\Db\Coh_G(RY_{13})_{Z_{13}}
\end{align}
given by the formula \eqref{star}.
The left square is Cartesian. 
The base change morphism
$$L(\pi_{12}\times\pi_{23})^*\circ Ri_*\to Ri_*\circ L(\pi_{12}\times\pi_{23})^*$$
is invertible by \cite[cor.~3.4.2.2]{L18}. 
Hence the direct image 
\begin{align*}
Ri_*:\Db\Coh_G(RY_{ab})_{Z_{ab}}\to\Db\Coh_G(X_{ab})_{Z_{ab}}
\end{align*}
intertwines the convolution functors \eqref{star2} and \eqref{star1}.
The homomorphism $j$ is a quasi-isomorphism
because the function $f_{ab}$ is locally non-zero everywhere on $X_{ab}$.
Hence, the pushforward and pull-back functors $Rj_*$ and $Lj^*$
are mutually inverse equivalences of categories 
$$\Db\Coh_G(Y_{ab})_{Z_{ab}}=\Db\Coh_G(RY_{ab})_{Z_{ab}}$$
Hence \eqref{star2} yields a convolution functor
\begin{align}\label{star3}
\star:\Db\Coh_G(Y_{12})_{Z_{12}}\times\Db\Coh_G(Y_{23})_{Z_{23}}\to\Db\Coh_G(Y_{13})_{Z_{13}}.
\end{align}
such that the direct image 
\begin{align*}
Rj_*:\Db\Coh_G(Y_{ab})_{Z_{ab}}\to\Db\Coh_G(RY_{ab})_{Z_{ab}}
\end{align*}
intertwines the convolution functors \eqref{star3} and \eqref{star2}.
We deduce that the composed functor
$Ri_*\circ Rj_*$ intertwines the convolution functors \eqref{star3} and \eqref{star1}.
Note that, since the K-theory satisfies the equivariant d\'evissage, the functor 
\begin{align}\label{functor1}
Ri_*\circ Rj_*:\Db\Coh_G(Y_{ab})_{Z_{ab}}\to\Db\Coh_G(X_{ab})_{Z_{ab}}
\end{align}
yields an isomorphism of Grothendieck groups.
Both Grothendieck groups are canonically identified with $K^G(Z_{ab})$, 
so that \eqref{functor1} induces the identity map of $K^G(Z_{ab})$.

Now, we consider the functor
\begin{align}\label{map2}
\Upsilon&:\Db\Coh_G(Y_{ab})_{Z_{ab}}\to\Db\Coh_G(X_{ab},f_{ab})_{Z_{ab}}.
\end{align}
By Lemma \ref{lem:Upsilon}, it intertwines the functors \eqref{star3} and \eqref{star0}.
It gives a map
\begin{align*}
\Upsilon:K^G(Z_{ab})\to K^G(X_{ab},f_{ab})_{Z_{ab}}.
\end{align*}
which intertwines the convolution products on both sides.
Composing \eqref{functor1} and \eqref{map2} proves Part (d) of the proposition.
Part (e) is proved similarly.

\end{proof}

\begin{proposition}\label{prop:TTK}
Let $(X,f)$ and $(V,g)$ be smooth $G$-equivariant LG-models with closed $G$-invariant subsets
$Z\subset f^{-1}(0)$ and $W\subset g^{-1}(0)$.
Assume that $\rho:V\to X$ is a $G$-equivariant vector bundle 
and that  $g=f\circ\rho$ and $\rho^{-1}(Z)=W$.
Let $s:X\to V$ be a $G$-invariant regular section, and
$i:M=\{s=0\}\to X$ be the embedding of the zero subscheme of $s$.
Let  $N=Z\cap M$ and $h=f\circ i$. Assume that the function $h$ is locally non-zero.
\hfill
\begin{enumerate}[label=$\mathrm{(\alph*)}$,leftmargin=8mm]
\item
The map $Li^*\circ Ri_*:K_G(M,h)_{\uN}\to K_G(M,h)_{\uN}$ 
is the tensor product by the class $i^*\Lambda_{-1}(V^\vee)$ in $K_G(M)$.

\item
The map $Ri_*\circ Li^*:K_G(X,f)_\uZ\to K_G(X,f)_\uZ$
is the tensor product by the class $\Lambda_{-1}(V^\vee)$ in $K_G(X)$.

\item 
The map
$L\rho^*:K_G(X,f)_\uZ\to K_G(V,g)_\uW$
is an isomorphism with inverse the pull-back by the zero section.

\item
Assume that $G=T$ is a torus, that $i:X^T\to X$ is the inclusion of the fixed points locus,
and that the function $h=f\circ i$ is locally non-zero.
The maps $Ri_*$ and $Li^*$
yield isomorphisms of $F_T$-vector spaces
$$K_T(X^T,h)_{\uZ^T}\otimes_{R_T}F_T=K_T(X,f)_\uZ\otimes_{R_T}F_T.$$
The composed map $Li^*\circ Ri_*$ is the tensor product with the class 
$\Lambda_{-1}(T^*_{X^T}X)$.
\end{enumerate}
\end{proposition}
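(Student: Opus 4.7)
The plan is to handle the four parts in the order (c), (b), (a), (d). The first three reduce to Koszul resolution and projection formula arguments transposed to critical K-theory via the functorial properties of $L\phi^*$, $R\phi_*$ and $\otimes^L$ established in \S\ref{sec:Dcrit}; part (d) is the critical analogue of the Atiyah-Bott-Segal-Thomason localization theorem and will be the main difficulty.

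For (c), the zero section $j:X\to V$ of $\rho$ satisfies $j^*g=f$ and $j^{-1}(W)=Z$, hence $Lj^*\circ L\rho^*=\id$ on $K_G(X,f)_{\uZ}$. The reverse identity is the homotopy invariance of critical K-theory along vector bundles. I would obtain it either by adapting Quillen's classical filtration argument to $\Coh_G(V,g)$, filtering a coherent factorization on $V$ by the order of vanishing along the zero section so that the associated graded is pulled back from $X$, or by dimensional reduction \eqref{DDR} together with the corresponding homotopy invariance for triangulated categories of singularities via the equivalence \eqref{sg}.

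For (b), the projection formula (Remark~\ref{rem:base change 1}(a)) gives $Ri_*\circ Li^*(\calE)=\calE\otimes^L Ri_*\calO_M$ in $\DCoh_G(X,f)_\uZ$. Since $s$ is a regular section of $V$, the Koszul complex $(\Lambda^\bullet V^\vee,d_s)\xrightarrow{\sim}i_*\calO_M$ is a $G$-equivariant locally free resolution, so passing to $K_G(X)$ yields $[Ri_*\calO_M]=\Lambda_{-1}(V^\vee)$. The proof of (a) is parallel: applying $Li^*$ to the same Koszul resolution computes $[Li^*i_*\calO_M]=\Lambda_{-1}(i^*V^\vee)$ in $K_G(M)$, which combined with $Li^*Ri_*\calF=\calF\otimes^L Li^*i_*\calO_M$ yields the claim. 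The compatibility with set-theoretic supports is guaranteed by Lemma~\ref{lem:Upsilon} and the discussion of categorical/set-theoretic support for $\otimes^L$.

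The main obstacle is (d). I would derive it from the localization exact sequence \eqref{les} applied with $F=X^T$ and $U=X\setminus X^T$, which reduces the isomorphism to the concentration statement $K_T(U,f|_U)_{\underline{Z\cap U}}\otimes_{R_T}F_T=0$, the critical counterpart of the vanishing of equivariant K-theory of a fixed-point-free $T$-scheme after localization. I would attack this by Noetherian induction on the stratification of $U$ by $T$-orbit type, reducing to the case of a single stratum where a direct Koszul computation shows each class is annihilated by a non-zero element of $R_T$ which is invertible in $F_T$. Alternatively, one may transfer the problem to the K-theory of the triangulated category of singularities $\DCoh^{\sg}_T(Y)_{\uZ}$ via \eqref{sg} and Lemma~\ref{lem:surjectivity}, then invoke Thomason's classical localization theorem. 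Once the isomorphism is in place, the identity $Li^*\circ Ri_*=\Lambda_{-1}(T^*_{X^T}X)\cdot(-)$ follows from the same Koszul argument as in (a), applied to the regular $T$-equivariant closed embedding $X^T\to X$ whose conormal bundle is $T^*_{X^T}X$.
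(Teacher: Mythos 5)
Your proposal is \emph{correct in outline for parts (a) and (b)} but takes a genuinely different route from the paper, and that route runs into trouble at (c). The paper's proof of all four parts has a single organizing principle: by Lemmas~\ref{lem:Upsilon} and~\ref{lem:surjectivity} the map $\Upsilon$ from ordinary equivariant $K$-theory onto critical $K$-theory with set-theoretic support is \emph{surjective} and intertwines $Ri_*$, $Li^*$ and the tensor product. So each assertion is first proved in $K^G(Z)$, $K^G(N)$, etc.\ where it is classical (Koszul resolution as in \cite[\S5.4]{CG}, the Thom isomorphism, Thomason's concentration theorem), and then pushed through $\Upsilon$ by a commutative square with surjective horizontal arrows. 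Nothing has to be re-proved intrinsically inside the factorization categories. Your approach for (a) and (b) instead argues directly in $\DCoh_G(X,f)_\uZ$ via the projection formula and a Koszul resolution of $i_*\calO_M$ regarded as a perfect complex on $X$; this is a valid alternative, and is essentially the categorified shadow of the paper's $K$-theoretic argument.

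The gap is in (c). You appeal to ``homotopy invariance of critical $K$-theory along vector bundles'' and propose to establish it either by filtering a coherent factorization by order of vanishing along the zero section, or via dimensional reduction~\eqref{DDR} together with homotopy invariance for singularity categories. Neither route is available as stated. A general coherent factorization on $V$ is not filtered by subfactorizations whose graded pieces are pulled back from $X$ (this is already false for ordinary coherent sheaves on a vector bundle); and~\eqref{DDR} concerns a different geometry ($\pi^*f + g$ with $g$ linear on the fibers, not $f\circ\rho$), so it does not yield the Thom isomorphism you want. No homotopy-invariance statement for equivariant derived factorization categories is established or cited in the paper. The paper avoids all of this: injectivity of $L\rho^*$ comes from $L\sigma^*\circ L\rho^* = \id$ exactly as you say, and \emph{surjectivity} is deduced from the classical Thom isomorphism $L\rho^*:K^G(Z)\xrightarrow{\sim}K^G(W)$ together with surjectivity of $\Upsilon$, via a two-square diagram chase. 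This is the missing ingredient in your write-up.

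For (d), your first route (Noetherian induction on orbit-type strata, re-proving Thomason concentration inside the critical setting) is more work than is needed and carries the same risk as (c) --- you would have to control factorization categories of non-affine strata directly. Your second route (transfer via~\eqref{sg} and Lemma~\ref{lem:surjectivity}, then invoke Thomason) is essentially what the paper does, although the paper arranges the argument differently: it first deduces \emph{injectivity} of $Ri_*\otimes F_T$ from the invertibility of $Li^*\circ Ri_*$ given by part (a), and then deduces \emph{surjectivity} from surjectivity of $\Upsilon$ together with the classical statement that $Ri_*:K^T(Z^T)\otimes F_T\to K^T(Z)\otimes F_T$ is onto. This use of part~(a) as an input to part~(d) is a small structural economy worth noting. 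In short: adopt the paper's ``reduce everything through $\Upsilon$'' strategy uniformly, and the proposal for (c) becomes sound and the proof of (d) shortens considerably.
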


\begin{proof}
Parts (a), (b) follow from the Koszul resolution as in K-theory, see, e.g., \cite[\S 5.4]{CG}.
More precisely, 
by Lemmas \ref{lem:Upsilon}, \ref{lem:surjectivity} we have the commutative diagram
with surjective horizontal maps
\begin{align*}
\xymatrix{
K^G(N)\ar[r]^-\Upsilon\ar[d]_-{Ri_*}&K_G(M,h)_{\uN}\ar[d]^-{Ri_*}\\
K^G(Z)\ar[r]^-\Upsilon\ar[d]_-{Li^*}&K_G(X,f)_\uZ\ar[d]^-{Li^*}\\
K^G(N)\ar[r]^-\Upsilon&K_G(M,h)_{\uN}}
\end{align*}
where the left map $Li^*$ is the restriction from $X$ to $M$ with supports in $Z$ and $N$.
Hence the formula (a) follows from the corresponding formula in K-theory and the surjectivity of the map
$\Upsilon$. The proof of (b) is similar.

Part (c) is a version of the Thom isomorphism.
The map $L\rho^*$ is well-defined because $\rho$ is flat.
Let $\sigma:X\to V$ be the zero section. 
The map $L\sigma^*$ is well defined because $\sigma$ is of finite $G$-flat dimension.
The composed map $L\sigma^*\circ L\rho^*$ is an isomorphism, hence $L\rho^*$ is injective.
By Lemmas \ref{lem:Upsilon}, \ref{lem:surjectivity} we have the commutative diagram
with surjective horizontal maps
\begin{align*}
\xymatrix{
K^G(Z)\ar[r]^-\Upsilon\ar[d]_-{L\rho^*}&K_G(X,f)_\uZ\ar[d]^-{L\rho^*}\\
K^G(W)\ar[r]^-\Upsilon&K_G(V,g)_\uW}
\end{align*}
Thus the surjectivity of the right map $L\rho^*$ follows from the Thom isomorphism, 
applied to the left map $L\rho^*$ in the diagram.

Finally, let us prove Part (d).
By Lemmas \ref{lem:Upsilon}, \ref{lem:surjectivity} we have the commutative diagram
with surjective horizontal maps
\begin{align*}
\xymatrix{
K^T(Z^T)\otimes_{R_T}F_T\ar[r]^-\Upsilon\ar[d]_-{Ri_*}&
K_T(X^T,h)_{\uZ^T}\otimes_{R_T}F_T\ar[d]^-{Ri_*}\\
K^T(Z)\otimes_{R_T}F_T\ar[r]^-\Upsilon\ar[d]_-{Li^*}&K_T(X,f)_\uZ\otimes_{R_T}F_T\ar[d]^-{Li^*}\\
K^T(Z^T)\otimes_{R_T}F_T\ar[r]^-\Upsilon&K_T(X^T,h)_{\uZ^T}\otimes_{R_T}F_T}
\end{align*}
where the left map $Li^*$ is the restriction from $X$ to $X^T$ with supports in $Z$ and $Z^T$.
By (a), the map $Li^*\circ Ri_*$ is the tensor product with the class 
$\Lambda_{-1}(T^*_{X^T}X)$. Hence, it is invertible and the map
$$Ri_*:K_T(X^T,h)_{\uZ^T}\otimes_{R_T}F_T\to K_T(X,f)_\uZ\otimes_{R_T}F_T$$
is injective.
It is also surjective, because the upper square commutes, $\Upsilon$ is surjective, and
$Ri_*$ is surjective onto $K^T(Z)\otimes_{R_T}F_T$.

\end{proof}

\medskip

\section{Quiver varieties and quantum loop groups}\label{sec:quiver varieties}

\subsection{Quiver varieties}\label{sec:quiver basic}
\subsubsection{Quiver representations}\label{sec:quiver basic1}
Let $Q$ be a finite quiver with sets of vertices and of arrows $Q_0$ and $Q_1$.
Let $s,t:Q_1\to Q_0$ be the source and target.
Let $\alpha^*$ be the arrow opposite to the arrow $\alpha\in Q_1$.
Fix a grading $\deg:Q_1\to\bbZ$.
We use the auxiliary sets 
$$
Q_1^*=\{\alpha^*\,;\,\alpha\in Q_1\}
,\quad
Q'_0=\{i'\,;\,i\in Q_0\}
,\quad
Q'_1=\{a_i:i\to i'\,;\,i\in Q_0\}
,\quad
\Omega=\{\varepsilon_i:i\to i\,;\,i\in Q_0\}.$$
From $Q$ we construct new quivers as follows:
\begin{itemize}[leftmargin=3mm]
\item[-]
$\overline Q$ is the double quiver:
$\overline Q_0=Q_0$, 
$\overline Q_1=Q_1\cup Q_1^*$,
\item[-] 
$\widetilde Q$ is the triple quiver:
$\widetilde Q_0=Q_0$, $\widetilde Q_1=\overline Q_1\cup\Omega$,
\item[-] 
$Q_f$ is the framed quiver:
$Q_{f,0}=Q_0\sqcup Q'_0$,
$Q_{f,1}=Q_1\sqcup Q'_1$,
\item[-]
$\overline Q_f=\overline{(Q_f)}$ is the double quiver with double framing,
\item[-] 
$\widetilde Q_f$ is the triple quiver with double framing:
$\widetilde Q_{f,0}=Q_{f,0}$,
$\widetilde Q_{f,1}=\overline{(Q_f)}_1\cup \Omega$,
\item[-] 
$\widehat Q_f=(\widetilde Q)_f$ is the triple quiver with simple framing,

\item[-]
$Q^\bullet$ is the graded quiver:
$Q^\bullet_0=Q_0\times\bbZ$,
$Q^\bullet_1=Q_1\times\bbZ$ with $s(\alpha,k)=(s(\alpha),k)$ and $t(\alpha,k)=(t(\alpha),\deg(\alpha)+k).$
\end{itemize}
We abbreviate $I=Q_0$, $I^\bullet=Q_0^\bullet$ and
$\widetilde Q_f^\bullet=(\widetilde Q_f)^\bullet$,
$\overline Q_f^\bullet=(\overline Q_f)^\bullet.$
Let $\bfC$ and $\bfC^\bullet$ be the categories of finite dimensional $I$-graded and 
$I^\bullet$-graded vector spaces.
For any $V$ in $\bfC$ or $\bfC^\bullet$ we write
$V=\bigoplus_{i\in I}V_i$ or
$V=\bigoplus_{(i,k)\in I^\bullet}V_{i,k}$ respectively.
Given $V\in\bfC^\bullet$ let $V$ denote also the underlying object in $\bfC$.
Let $\delta_i\in\bbN I$ and $\delta_{i,k}\in\bbN I^\bullet$ be the Dirac functions at $i$ and $(i,k)$.
The dimension vectors are $v=\sum_{i\in I}v_i\delta_i$ and
$v=\sum_{(i,k)\in I^\bullet}v_{i,k}\delta_{i,k}$ respectively.
Let 
\begin{align}\label{SS}
S_i\in\bfC
,\quad
S_{i,k}\in\bfC^\bullet
\end{align}
 be representations of dimensions
$\delta_i$ and $\delta_{i,k}$.
Given $V,W\in\bfC$ the representation varieties of $Q$ and $Q_f$ are
$$\X_Q(V)=\prod_{x\in Q_1}\Hom(V_{s(x)},V_{t(x)})
 ,\quad
\X_{Q_f}(V,W)=\prod_{x\in Q_1}\Hom(V_{s(x)},V_{t(x)})\times
\prod_{i\in Q_0}\Hom(V_i,W_i).$$
A representation of $\widetilde Q_f$  is a tuple 
$x=(x_\alpha,x_a,x_{a^*},x_\varepsilon)$ with
$\alpha\in \overline Q_1$, $a\in Q'_1$ and $\varepsilon\in\Omega$.
We abbreviate $h=x_h$ for each arrow $h$ and we write
$x=(\alpha\,,\,a\,,\,a^*\,,\,\varepsilon)$.
A representation $x$ of the quiver $Q$ in a $I$-graded vector space $V$ is said to be
nilpotent if and only if and only if there is a complete flag 
$0=V_0\subset V_1\subset\dots\subset V$ such that  $x(V_l)\subset V_{l-1}$ for each $l$.
We abbreviate 
$$\overline\X=\X_{\overline Q_f}
 ,\quad
\widetilde\X=\X_{\widetilde Q_f}
 ,\quad
\widehat\X=\X_{\widehat Q_f}
 ,\quad
\overline\X^\bullet=\X_{\overline Q_f^\bullet}
 ,\quad
\widetilde\X^\bullet=\X_{\widetilde Q_f^\bullet},$$
and
$$\widetilde\X(V)=\widetilde\X(V,0)=\X_{\widetilde Q}(V)
,\quad
\widehat\X(V)=\widehat\X(V,0)=\X_{\widehat Q}(V)
,\quad
\text{etc}.$$
Let $\widetilde\X^\nil\subset\widetilde\X$ and
$\widehat\X^\nil\subset\widehat\X$
be the subsets of nilpotent representations.

\medskip

\subsubsection{Cartan matrix}\label{sec:or}
Let $\bfc=(c_{ij})_{i,j\in I}$ be a Cartan matrix.
Let $\O\subset I\times I$ be an orientation of $\bfc$. We have
$$(i,j)\in\O\ \text{or}\ (j,i)\in\O\iff c_{ij}<0
,\quad
(i,j)\in\O\Rightarrow (j,i)\not\in\O.$$
Set $\sgn_{ij}=1$ if $(i,j)\in\O$, $\sgn_{ij}=-1$ if $(j,i)\in\O$, and $\sgn_{ij}=0$ else.
Let $(d_i)_{i\in I}$ be a symmetrizer for $\bfc$ with $d_i\in\bbN^\times$
and $t\in\{1,2,3\}$ be the lacing number. 
We abbreviate $b_{ij}=d_ic_{ij}$ and $t_i=t/d_i$. 
For all $i,j\in I$ with $o_{ij}\neq 0$, we have 
$b_{ij}=b_{ji}=-\max(d_i,d_j)$ and
$$c_{ij}<0\Rightarrow (c_{ij}, c_{ji})=(-1,-t)\ \text{or}\ (-t,-1).$$
Unless specified otherwise, let $Q$ be the simply laced quiver associated with
the Cartan matrix $\bfc$, which is given by
$$Q_0=I
,\quad
Q_1=\{\alpha_{ij}:j\to i\,;\,(i,j)\in \O\}.$$
We write
$$
w-\bfc v=\sum_{i\in I}(w_i-\sum_{j\in I}c_{ij}v_j)\delta_i
,\quad
(\alpha_i^\vee,w-\bfc v)=w_i-\sum_{j\in I}c_{ij}v_j
 ,\quad
v,w\in\bbZ I.$$

\medskip

\subsubsection{Triple quiver varieties}\label{sec:triple quiver}
Let $T$ be the torus $T=(\bbC^\times)^{\overline Q_1}\times(\bbC^\times)^2.$
An element of $T$ is a tuple $(z_\alpha,z,z_2)$ where $\alpha$ runs in $\overline Q_1$.
The representation ring of $T$ is
$$R_T=\bbC[t_\alpha^{\pm 1}\,,\,q_1^{\pm 1}\,,\,q_2^{\pm 1}\,;\,\alpha\in \overline Q_1].$$
Fix $V\in\bfC$. 
Let $G_V=\prod_{i\in I}\GL(V_i)$.
Let $\frakg_V$ be the Lie algebra of $G_V$ and
$\frakg_V^\nil$ be the set of all nilpotent elements in $\frakg_V$.
We abbreviate $G_v=G_{\bbC^v}$ and $\frakg_v=\frakg_{\bbC^v}$.
Let $W\in\bfC$.
The group $G_V\times G_W\times T$ acts on $\widetilde\X(V,W)$ in the following way: 
the groups $G_V$, $G_W$ act by conjugaison, and $(z_\alpha,z_1,z_2)$ takes
the tuple $x=(\alpha_{ij},a_i,a_i^*,\varepsilon_i)$ to the tuple
\begin{align*}
(z_1^{b_{ij}}z_{\alpha_{ij}}\alpha_{ij}\,,\,
z_2^{b_{ij}}z_{\alpha_{ji}}\alpha_{ji}\,,\,
z_1^{-d_i}a_i\,,\,z_2^{-d_i}a^*_i\,,\,z_1^{d_i}z_2^{d_i}\varepsilon_i)
,\quad
(i,j)\in\O.
\end{align*}
Let $\xi:\bbC^\times\to T$ be the cocharacter $z\mapsto (1,z,z)$. 
We abbreviate $\bbC^\times=\xi(\bbC^\times)$. We have
\begin{align*}
z\cdot x=(z^{b_{ij}}\alpha_{ij}\,,\,z^{-d_i}a_i\,,\,z^{-d_i}a_i^*\,,\,z^{2d_i}\varepsilon_i)
,\quad
z\in\bbC^\times.
\end{align*}
A representation in $\widetilde\X(V,W)$ is said to be stable if it has no non-zero 
subrepresentations supported on $V$.
Let 
\begin{align*}
\widetilde\X(V,W)_s=\{x\in\widetilde\X(V,W)\,;\,x\ \text{is\ stable}\}.\end{align*}
We consider the categorical quotients 
$$\widetilde\frakM(v,W)=\widetilde\X(V,W)_s/G_V
,\quad
\widetilde\frakM_0(v,W)=\widetilde\X(V,W)/G_V.$$
Let $\ux$ denote both the orbit of $x$ in $\widetilde\frakM(v,W)$ if $x$ is stable, 
and the orbit of $x$ in $\widetilde\frakM_0(v,W)$ if the $G_V$-orbit of $x$ is closed.
We have a 
$G_W\times T$-invariant projective map
$\tilde\pi:\widetilde\frakM(v,W)\to\widetilde\frakM_0(v,W).$
We abbreviate
$$\widetilde\frakM(W)=\bigsqcup_{v\in\bbN I}\widetilde\frakM(v,W)
,\quad
\widetilde\frakM_0(W)=\bigcup_{v\in\bbN I}\widetilde\frakM_0(v,W).$$
The second colimit is the extension of representations by 0 to the complementary subspace.
This colimit may not stabilize.
Thus $\widetilde\frakM_0(W)$ is an ind-scheme,
while $\widetilde\frakM(W)$ is a scheme locally of finite type.
Let $\widetilde\frakL(W)$ be the fiber 
$$\widetilde\frakL(W)=\tilde\pi^{-1}(0).$$
We also define the Steinberg variety 
\begin{align}\label{Steinberg}
\widetilde\calZ(W)
=\widetilde\frakM(W)\times_{\widetilde\frakM_0(W)}\widetilde\frakM(W)
=\bigsqcup_{v^1,v^2\in\bbN I}\widetilde\calZ(v^1,v^2,W).
\end{align}
Let $G_V\times G_W\times \bbC^\times$ denote the image of the homomorphism
$1\times\xi$ into $G_V\times G_W\times T$.
The group $G_V\times G_W\times\bbC^\times$ 
acts on the variety $\widetilde\X(V,W)$,
and the group $G_W\times\bbC^\times$ on $\widetilde\frakM(W)$
and $\widetilde\frakM_0(W)$.

\medskip

\subsubsection{Graded triple quiver varieties}
\label{sec:GQVD}
Fix $V$, $W$ in $\bfC^\bullet$.
Let $G_V$ and $G_V^0$ be the automorphism groups of $V$
in $\bfC$ and $\bfC^\bullet$ respectively. Let
\begin{align*}\frakg_V=\bigoplus_{l\in\bbZ}\frakg^l_V
,\quad
\frakg^l_V=\bigoplus_{(i,k)\in I^\bullet}\Hom(V_{i,k},V_{i,k+l}).\end{align*}
The Lie algebras of $G_V$, $G_V^0$ are $\frakg_V$, $\frakg_V^0$.
Given a dimension vector $v\in\bbN I^\bullet$ we abbreviate
$\frakg_v=\frakg_V$, $\frakg^0_v=\frakg^0_V$, $G_v=G_V$ and $G_v^0=G_V^0$ 
where $V=\bbC^v$.
We equip the quiver $\widetilde Q_f$ with the grading
\begin{align}\label{degree}
\begin{split}
\deg:\widetilde Q_{f,1}\to\bbZ
,\quad
\alpha_{ij}\mapsto b_{ij}
,\quad
a_i,a_i^*\mapsto -d_i
,\quad
\varepsilon_i\mapsto 2d_i.
\end{split}
\end{align}
Let $\widetilde Q_f^\bullet$ be the corresponding graded quiver, defined as in \S\ref{sec:quiver basic1}.
The graded quiver varieties $\widetilde\frakM^\bullet(W)$ and
$\widetilde\frakM^\bullet_0(W)$ are respectively the moduli space of stable representations of 
$\widetilde Q_f^\bullet$ and the corresponding categorical quotient.
We define the 0-fiber $\widetilde\frakL^\bullet(W)$ and the Steinberg variety
$\widetilde\calZ^\bullet(W)$ in the obvious way.
There is an obvious inclusion
$\widetilde\frakM^\bullet(W)\subset\widetilde\frakM(W)$. 
More precisely, we can realize $\widetilde\frakM^\bullet(W)$ 
as the fixed points loci of a torus action
on $\widetilde\frakM(W)$ as follows.
Let $\sigma:\bbC^\times\to G_W$ be the cocharacter given by
\begin{align}\label{sigma1}\sigma(z)=\bigoplus_{(i,k)\in I^\bullet}z^k\id_{W_{i,k}}.\end{align}
Recall the subgroup $G_W\times\bbC^\times$ of $G_W\times T$.
Let $a$ be the cocharacter given by
\begin{align}\label{a}
a=(\sigma,\xi):\bbC^\times\to G_W\times \bbC^\times
\end{align}
and let $A\subset  G_W\times \bbC^\times$ be the one parameter subgroup
such that $A=a(\bbC^\times)$.

\begin{lemma}\label{lem:fixedpoints}
We have 
$\widetilde\frakM^\bullet(W)=\widetilde\frakM(W)^A$.
\end{lemma}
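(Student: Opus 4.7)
The plan is to prove both set-theoretic inclusions by translating $A$-fixedness into the existence of a compatible $G_V$-cocharacter, which then manifests the graded structure on $x$. This is essentially the standard argument identifying fixed points of a torus action with graded representations, as in \cite[\S 4]{N00}, adapted to the triple quiver $\widetilde Q_f$.

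For the inclusion $\widetilde\frakM^\bullet(W)\subset\widetilde\frakM(W)^A$, start from a stable graded representation $x\in\widetilde\X^\bullet(V,W)$, with $V,W\in\bfC^\bullet$. Define the cocharacter $\rho_V:\bbC^\times\to G_V$ by letting $\rho_V(z)$ act on $V_{i,k}$ as multiplication by a fixed power of $z$ determined by the index $k$. Arrow by arrow, using the degrees \eqref{degree} and the definition \eqref{sigma1} of $\sigma$, one checks that the joint action of $(\rho_V(z),\sigma(z),\xi(z))$ fixes $x$: for each arrow $h$ of degree $d$, the graded component $V_{s(h),k}\to V_{t(h),k+d}$ (or the analogous $V\to W$, $W\to V$ piece) is scaled by a factor which is the product of the conjugation weights from $\rho_V$ and $\sigma$ together with the $\xi(z)$-weight $z^d$, and these factors cancel once the $\rho_V$-weights are chosen compatibly with the grading. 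Hence $a(z)\cdot x=\rho_V(z)^{-1}\cdot x$ lies in the $G_V$-orbit of $x$, so $\bar x\in\widetilde\frakM(W)^A$.

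For the reverse inclusion, take $\bar x\in\widetilde\frakM(W)^A$, represented by some stable $x\in\widetilde\X(V,W)_s$. For each $z\in\bbC^\times$, $A$-fixedness yields an element $\rho(z)\in G_V$ such that $\rho(z)\cdot a(z)\cdot x=x$. The key input is that stability of $x$ implies the $G_V$-action is free on the stable locus, so $\rho(z)$ is \emph{uniquely} determined. This uniqueness forces $z\mapsto\rho(z)$ to be a group homomorphism, hence by rigidity an algebraic cocharacter $\rho:\bbC^\times\to G_V$. Decomposing $V_i$ into the $\rho$-weight spaces $V_{i,k}$ produces an $I^\bullet$-grading on $V$, and reading the identity $(\rho(z),\sigma(z),\xi(z))\cdot x=x$ on each $T$-weight component forces $x_h(V_{s(h),k})\subset V_{t(h),k+\deg h}$ for every arrow $h\in\widetilde Q_{f,1}$, so that $x$ is a graded representation of $\widetilde Q_f^\bullet$. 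Stability of $x$ for $\widetilde Q_f$ trivially implies stability for $\widetilde Q_f^\bullet$, since any graded subrepresentation supported on $V$ is a fortiori an ungraded one. Therefore $\bar x\in\widetilde\frakM^\bullet(W)$.

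The main subtlety is the uniqueness of the lift $\rho(z)\in G_V$, which is required both to make $\rho$ into an algebraic cocharacter and to ensure the resulting weight decomposition is canonical. This is itself a consequence of the stability condition: a non-trivial element of the $G_V$-stabilizer of $x$ would split $V$ into invariant eigenspaces, one of which would provide a non-zero $x$-invariant subspace $V'\subset V$ killed by all framing arrows $a_i$, contradicting stability. Once this freeness is granted, the remainder is a bookkeeping of $T$-weights against the degree convention \eqref{degree} and the cocharacter \eqref{sigma1}.
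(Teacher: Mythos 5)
Your proof is correct and follows the same approach as the paper: both arguments hinge on the freeness of the $G_V$-action on the stable locus, which converts $A$-fixedness of $\underline x$ into the existence of a cocharacter $\gamma:\bbC^\times\to G_V$ with $a(z)\cdot x=\gamma(z)\cdot x$, whose weight decomposition furnishes the $I^\bullet$-grading on $V$ making $x$ a graded representation of $\widetilde Q_f^\bullet$. You spell out more explicitly than the paper why this $\gamma$ is an algebraic cocharacter (uniqueness of the lift plus rigidity) and why the stabilizer is trivial (the nontrivial eigenspace argument), but the underlying argument is the same.
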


\begin{proof}
Let $\underline x$ be the class in $\widetilde\frakM(W)$ of a tuple
$x=(\alpha,a,a^*,\varepsilon)$ in $\widetilde\X(V,W)_s$. 
The group $G_V$ acts freely on the set $\widetilde\X(V,W)_s$ of stable representations.
Since the tuple $x$ is stable, we have 
$\underline x\in \widetilde\frakM(W)^A$ if and only if there is a cocharacter
$\gamma:\bbC^\times\to G_V$ such that $a(z)\cdot x=\gamma(z)\cdot x$ for each $z\in\bbC^\times$.
The cocharacter $a$ in \eqref{a} yields an $I^\bullet$-grading on $W$.
The cocharacter $\gamma$ yields an $I^\bullet$-grading on $V$.
The arrows
$\alpha_{ij}$,
$a_i,$ $a_i^*$, $\varepsilon_i$
have degree 
$b_{ij}$,
$-d_i$ and $2d_i$ respectively with respect to these gradings.
We deduce that $\widetilde\frakM^\bullet(W)=\widetilde\frakM(W)^A$.
\end{proof}

For each $v,w\in\bbN I^\bullet$ we write
\begin{align}\label{w-cv}
\begin{split}
w-\bfc v&=\sum_{(i,k)\in I^\bullet}\Big(w_{i,k}-v_{i,k+d_i}-v_{i,k-d_i}+
\sum_{c_{ij}=-1}v_{j,k}+\sum_{c_{ij}=-2}(v_{j,k-1}+v_{j,k+1})+\\
&\quad\sum_{c_{ij}=-3}(v_{j,k-2}+v_{j,k}+v_{j,k+2})\Big)\,\delta_{i,k}.
\end{split}
\end{align}

\medskip

\subsubsection{Triple quiver varieties with simple framing}\label{sec:hat}
\label{sec:triple quiver simple framing}
Recall that$$\widehat\X(V,W)=\{x\in\widetilde\X(V,W)\,;\,a^*=0\}.$$ 
We define $\widehat\X(V,W)_s=\widehat\X(V,W)\cap\widetilde\X(V,W)_s$ and
\begin{align}\label{hatM}
\begin{split}
\widehat\frakM(W)=\widehat\X(V,W)_s\,/\,G_V,\quad
\widehat\calZ(W)=\widetilde\calZ(W)\cap \widehat\frakM(W)^2.
\end{split}
\end{align}
We define the subvariety $\widehat\frakL(W)\subset\widehat\frakM(W)$ similarly.
The group $G_W\times\bbC^\times$ acts on $\widehat\frakM(W)$
and $\widehat\frakM_0(W)$ as in \S\ref{sec:triple quiver}.

\begin{lemma}\label{lem:nil}
We have 
\hfill
\begin{enumerate}[label=$\mathrm{(\alph*)}$,leftmargin=8mm,itemsep=2mm]
\item
$\widetilde\frakL(v,W)=\widehat\frakL(v,W)
=\{(\alpha,\varepsilon, a)\in\widehat\X(V,W)_s\,;\,(\alpha,\varepsilon)\in\widetilde\X^\nil(V)\}\,/\,G_V,$
\item
$\widetilde\frakL^\bullet(v,W)=\widehat\frakL^\bullet(v,W)
=\{(\alpha,\varepsilon, a)\in\widehat\X^\bullet(V,W)_s\,;\,
(\alpha,\varepsilon)\in\widetilde\X^{\bullet,\nil}(V)\}\,/\,G_V.$
\end{enumerate}
\end{lemma}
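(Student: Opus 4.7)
The plan is to characterize the zero fibre $\tilde\pi^{-1}(0)\subset\widetilde\frakM(v,W)$ via the Hilbert-Mumford numerical criterion. Since $\widetilde\frakM_0(v,W)$ is the affine GIT quotient $\Spec\,\bbC[\widetilde\X(V,W)]^{G_V}$, for a stable $x\in\widetilde\X(V,W)_s$ one has $\tilde\pi(\underline{x})=0$ if and only if $0\in\overline{G_V\cdot x}$, and by reductivity of $G_V$ this is equivalent to the existence of a cocharacter $\gamma\colon\bbC^\times\to G_V$ such that $\lim_{z\to 0}\gamma(z)\cdot x=0$.

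The first step is to show that the existence of such a contracting $\gamma$ forces $a^*=0$ and $(\alpha,\varepsilon)\in\widetilde\X^\nil(V)$. I would decompose $V=\bigoplus_n V^{(n)}$ into $\gamma$-weight spaces; since $\gamma$ factors through $G_V=\prod_i\GL(V_i)$, the decomposition refines the $I$-grading. A direct computation with the action formulas of \S\ref{sec:triple quiver} shows that the contraction of all five families of components of $x$ is equivalent to three numerical conditions: the arrows $\alpha,\alpha^*,\varepsilon$ strictly raise $\gamma$-weights; the map $a$ vanishes on $V^{\geq 0}$; and $\Im(a^*)\subseteq V^{>0}$. The first two conditions say precisely that the $I$-graded subspace $V^{\geq 0}\subseteq V$ is preserved by $\alpha,\alpha^*,\varepsilon$ and annihilated by $a$, i.e.\ it is a subrepresentation of $x$ supported on $V$ in the sense of the stability definition. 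Stability therefore forces $V^{\geq 0}=0$; in particular $a^*=0$, because $\Im(a^*)\subseteq V^{>0}\subseteq V^{\geq 0}=0$, and since the arrows in $\overline Q\cup\Omega$ raise weights strictly while $V$ has only finitely many (strictly negative) weights, every sufficiently long word in $\alpha,\alpha^*,\varepsilon$ vanishes on $V$, so $(\alpha,\varepsilon)$ is nilpotent.

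Conversely, given $x\in\widehat\X(V,W)_s$ with $a^*=0$ and $(\alpha,\varepsilon)$ nilpotent, I would construct a contracting cocharacter directly from the nilpotent structure. Let $V^l\subseteq V$ be spanned by the images on $V$ of all words of length $\geq l$ in $\alpha,\alpha^*,\varepsilon$. Nilpotency gives $V=V^0\supseteq V^1\supseteq\cdots\supseteq V^N=0$, each $V^l$ is automatically $I$-graded, and every arrow $x$ of $\widetilde Q$ satisfies $x(V^l)\subseteq V^{l+1}$. Pick a splitting $V=\bigoplus_{l=0}^{N-1}U_l$ with $U_l\simeq V^l/V^{l+1}$ compatible with the $I$-grading and let $\gamma\in G_V$ act by $z\mapsto z^{-(N-l)}$ on $U_l$. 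Then the $\gamma$-weights of $V$ all lie in $\{-N,\dots,-1\}$; the arrows $\alpha,\alpha^*,\varepsilon$ strictly raise weights (a vector of weight $-(N-l)$ in $U_l$ is sent into $V^{l+1}$, whose weights are $\geq-(N-l-1)$); $a$ is contracted because all source weights are strictly negative; and $a^*=0$ trivially contracts. Hence $\lim_{z\to 0}\gamma(z)\cdot x=0$, so $\widehat\pi(\underline{x})=0$ and $\underline{x}\in\widehat\frakL(v,W)$. Combined with the obvious inclusion $\widehat\frakL(v,W)\hookrightarrow\widetilde\frakL(v,W)$ coming from $\widehat\X\subset\widetilde\X$ being a closed $G_V$-invariant subvariety, this proves both the equality $\widetilde\frakL(v,W)=\widehat\frakL(v,W)$ and the explicit description in (a).

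For part (b) the same argument applies verbatim with $G_V$ replaced by $G_V^0=\prod_{i,k}\GL(V_{i,k})$: because every arrow of $\widetilde Q^\bullet_f$ respects the $I^\bullet$-grading, each filtration step $V^l$ is automatically $I^\bullet$-graded, so both the splitting and the resulting cocharacter can be chosen in $G_V^0$; conversely any $\gamma\in G_V^0$ produces an $I^\bullet$-graded weight decomposition, so the stability argument—which uses only that $V^{\geq 0}$ is an $I^\bullet$-graded subrepresentation annihilated by $a$—goes through unchanged. The main delicate point in the whole proof is the translation of the single condition ``$\gamma(z)\cdot x\to 0$'' into the three separate numerical conditions on $(\alpha,\alpha^*,\varepsilon,a,a^*)$; once this routine but careful bookkeeping is done, the extraction of both ``$a^*=0$'' and ``nilpotency'' from the stability hypothesis is immediate.
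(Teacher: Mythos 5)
Your proof is correct, and it takes a genuinely different route from the paper's. The paper works directly with the \emph{vanishing ideal of $0$} in $\bbC[\widetilde\frakM_0(v,W)]$: it asserts (a Le Bruyn--Procesi type statement) that this ideal is generated by the invariant functions $h_{A,M}(\ux)=\Tr_W(AaM(x)a^*)$ and $h_N(\ux)=\Tr_V(N(x))$, deduces $aMa^*=0$ for all $M\in\bbC\widetilde Q$ from the vanishing of the $h_{A,M}$ (hence $a^*=0$ by stability, since $\bbC\widetilde Q\cdot\Im a^*$ would be a subrepresentation supported on $V$ killed by $a$), and separately uses the $h_N$ together with the Hilbert--Mumford criterion for the null cone of $\widetilde\X(V)$ to get nilpotency of $(\alpha,\varepsilon)$. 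Your argument bypasses the generators of the invariant ring entirely: you appeal once to the Kempf/Hilbert--Mumford destabilization criterion to get a single cocharacter $\gamma$ with $\lim_{z\to0}\gamma(z)\cdot x=0$, decompose into $\gamma$-weight spaces, and extract both ``$a^*=0$'' and ``nilpotency'' from the observation that $V^{\geqslant 0}$ is a subrepresentation supported on $V$ killed by $a$, which stability forces to vanish. Your converse is also more explicit than the paper's (the paper just notes the $h$'s vanish on the RHS, whereas you build the contracting cocharacter from the radical filtration). What the paper's route buys is brevity, at the cost of implicitly invoking a theorem on generators of $\bbC[\widetilde\X(V,W)]^{G_V}$ at $0$; what your route buys is self-containedness (only the standard GIT fact about orbit closures is used) and the unification of the two deductions into a single weight computation. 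Both are sound, and your version would serve equally well as the paper's proof.
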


\begin{proof}
We only prove Part (a), because the proof of (b) is similar.
The vanishing ideal of the point $0$ in the function ring $\bbC[\widetilde\frakM_0(v,W)]$ is generated by
the functions $h_{A,M}$ and $h_N$  given, for any tuple $x=(\alpha,\varepsilon, a, a^*)$, by 
\begin{align*}
h_{A,M}(\ux)=\Tr_W(AaM(x)a^*)
,\quad
h_N(\ux)=\Tr_V(N(x))
\end{align*}
where $A$, $M$ and $N$ run through
$\frakg_W$, $\bbC\widetilde Q$ and
$\bbC\widetilde Q_+$.
For each  point $\ux\in\widetilde\frakL(W)$, we have
$$\Big(h_{A,M}\pi(\ux)=0\,,\,\forall A\in\frakg_W\,,\,\forall M\in\bbC\widetilde Q\Big)
\Rightarrow 
\Big(aM(x)a^*=0\,,\,\forall M\in\bbC\widetilde Q\Big),$$
hence $a^*=0$ because $x$ is stable.
Further, the Hilbert-Mumford criterion implies that
$$\Big(h_N\pi(\ux)=0\,,\,\forall N\in\bbC\widetilde Q_+\Big)
\Rightarrow
\Big((\alpha,\varepsilon)\text{\  is\ nilpotent}\Big).$$
We deduce that 
$$\widetilde\frakL(v,W)\subseteq\widehat\frakL(v,W)
\subseteq\{x\in\widehat\X(V,W)_s\,;\,(\alpha,\varepsilon)\ \text{is\ nilpotent}\}\,/\,G_V.$$
The reverse inclusion is obvious, because the functions
$h_{A,M}$ and $h_N$ vanish on each element of the right hand side.
\end{proof}

\medskip

\subsubsection{Hecke correspondences}\label{sec:Hecke}
The Hecke correspondence $\widetilde\frakP(W)$ is the scheme given by
\begin{align}\label{Hecke}
\widetilde\frakP(W)=\{(x,y,\tau)\in\widetilde\frakM(W)^2\times\Hom_{\widetilde Q_f}(x,y)\,;\,
\tau|_W=\id_W\}.
\end{align}
For each triple $(x,y,\tau)$ the map $\tau$ is injective, because the representation $x$ is stable.
For the same reason, there is a closed embedding x
$$i:\widetilde\frakP(W)\to\widetilde\frakM(W)^2
,\quad
(x,y,\tau)\mapsto(x,y).$$
Hence, we may write
$$\widetilde\frakP(W)=\{(x,y)\in\widetilde\frakM(W)^2\,;\,x\subset y\}.$$
The opposite Hecke correspondence is
\begin{align}\label{opHecke}
\widetilde\frakP(W)^\op=\{(x,y)\in\widetilde\frakM(W)^2\,;\,y\subset x\}.
\end{align}
Let $\Rep$ be the moduli stack of representations of $\widetilde Q$.
We have
$$\Rep=\bigsqcup_{v\in\bbN I}\Rep_v
 ,\quad
\Rep_v=\big[\widetilde\X(v)\,/\,G_v\big]$$
Let $\pi:\widetilde\frakP(W)\to\Rep$ be the stack homomorphism such that $\pi(x,y)=y/x.$
A representation in $\Rep$ is nilpotent if its image in the categorical quotient  is zero.
Let $\Rep^\nil\subset\Rep$ be the closed substack parametrizing the nilpotent representations.
We define the nilpotent Hecke correspondence to be the fiber product 
$\widetilde\frakP(W)^\nil=\widetilde\frakP(W)\times_{\Rep}\Rep^\nil.$
For  $v_1\leqslant v_2$, we write
\begin{align*}
\widetilde\frakP(v_1,v_2,W)&=
\widetilde\frakP(W)\cap\big(\widetilde\frakM(v_1,W)\times\widetilde\frakM(v_2,W)\big),\\\widetilde\frakP(v_2,v_1,W)&=
\widetilde\frakP(W)^\op\cap\big(\widetilde\frakM(v_2,W)\times\widetilde\frakM(v_1,W)\big).
\end{align*}
We also write
\begin{align*}
\widetilde\frakP(\delta_i,W)=\bigsqcup_{v\in\bbN I}\widetilde\frakP(v,v+\delta_i,W)
 ,\quad
\widetilde\frakP(-\delta_i,W)=\bigsqcup_{v\in\bbN I}\widetilde\frakP(v+\delta_i,v,W).
\end{align*}
Considering the quiver $\widehat Q_f$ instead of $\widetilde Q_f$, we consider the Hecke correspondence 
\begin{align}\label{hatHecke}
\widehat\frakP(W)=\widetilde\frakP(W)\cap \widehat\frakM(W)^2.
\end{align}
We define $\widehat\frakP(W)^\nil$, $\widehat\frakP(\delta_i,W)$ and
$\widehat\frakP(-\delta_i,W)$ in the obvious way.

\begin{lemma}
\hfill
\begin{enumerate}[label=$\mathrm{(\alph*)}$,leftmargin=8mm]
\item
The schemes $\widetilde\frakP(W)$ and $\widehat\frakP(W)$ are smooth and locally of finite type.
\item
The maps $\pi:\widetilde\frakP(W)\to\Rep$ and $\pi:\widehat\frakP(W)\to\Rep$ are flat.
 \item
The map $i$ takes $\widetilde\frakP(W)^\nil$ into $\widetilde\calZ(W)$,
and $\widehat\frakP(W)^\nil$ into $\widehat\calZ(W)$.
\end{enumerate}
\end{lemma}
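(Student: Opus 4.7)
The plan is to present each Hecke correspondence as a geometric quotient of a smooth linear variety by a free parabolic action, in the spirit of Nakajima's original construction, and then read off all three assertions from that presentation.

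For Part (a), I would fix $I$-graded subspaces $V_1\subset V_2$ of dimensions $v_1\leqslant v_2$, and consider the locus
$$\widetilde\X(V_1\subset V_2,W)=\{y\in\widetilde\X(V_2,W)\,;\,y(V_1)\subset V_1\}.$$
This is cut out of the affine space $\widetilde\X(V_2,W)$ by linear equations, hence is smooth. Let $P_{V_1}\subset G_{V_2}$ be the parabolic preserving $V_1$; the open subscheme where both $y$ and $y|_{V_1}$ are stable is $P_{V_1}$-invariant, and the action is free by the usual Nakajima argument (stability kills stabilizers). The resulting geometric quotient identifies with $\widetilde\frakP(v_1,v_2,W)$, yielding smoothness. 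The argument for $\widehat\frakP(v_1,v_2,W)$ is identical after replacing $\widetilde\X$ by $\widehat\X$.

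For Part (b), in this presentation the map $\pi$ is induced by
$$\widetilde\X(V_1\subset V_2,W)\to\widetilde\X(V_2/V_1)\to[\widetilde\X(V_2/V_1)/G_{V_2/V_1}]=\Rep_{v_2-v_1},$$
where the first arrow sends $y$ to its induced action on the quotient $V_2/V_1$. This arrow is a surjective linear map of affine spaces, so is a trivial vector bundle, hence flat; the second arrow is a smooth stack projection. Flatness descends through the free $P_{V_1}$-quotient (the parabolic maps to $G_{V_2/V_1}$ via its Levi factor equivariantly), yielding flatness of $\pi$.

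For Part (c), suppose $(x,y)\in\widetilde\frakP(v_1,v_2,W)^\nil$, so that $y/x$ is nilpotent in $\Rep$. Pick a splitting $V_2=V_1\oplus U$ compatible with the $\widetilde Q_f$-block form of $y$, and let $\lambda:\bbC^\times\to G_{V_2}$ act as the identity on $V_1$ and as scaling on $U$. Then $\lim_{t\to 0}\lambda(t)\cdot y=x\oplus(y/x)$ where $y/x$ is regarded as living on $U$. Since $y/x$ is nilpotent, the Hilbert--Mumford criterion gives a further cocharacter of $G_U$ sending $y/x$ to $0$. Composing, the $G_{V_2}$-orbit closure of $y$ in $\widetilde\X(V_2,W)$ contains $x\oplus 0$, so the two points have the same image in the categorical quotient: $\tilde\pi(y)=\tilde\pi(x\oplus 0)$. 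Under the extension-by-zero colimit maps defining $\widetilde\frakM_0(W)$, the point $x\oplus 0$ is identified with $x$, so $\tilde\pi(x)=\tilde\pi(y)$ and therefore $(x,y)\in\widetilde\calZ(W)$. The corresponding statement for $\widehat\frakP(W)^\nil$ follows from the same construction (the cocharacters stay inside $\widehat\X$ since $a^*=0$ is preserved), together with the definition $\widehat\calZ(W)=\widetilde\calZ(W)\cap\widehat\frakM(W)^2$.

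The main obstacle I anticipate is Part (a): one needs to justify that $P_{V_1}$ acts freely on the stable-stable locus of $\widetilde\X(V_1\subset V_2,W)$ and that the geometric quotient exists. This is standard but requires unwinding the double stability condition; once granted, Part (b) is essentially a free-quotient descent of a vector bundle and Part (c) is a direct one-parameter subgroup degeneration.
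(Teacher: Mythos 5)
Your overall strategy—realize each Hecke correspondence as a free quotient of a smooth linear subvariety of $\widetilde\X(V_2,W)_s$ by the parabolic $P_{V_1}\subset G_{V_2}$, then read off smoothness, flatness of $\pi$, and a Hilbert--Mumford degeneration for (c)—is exactly the paper's approach.

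One genuine imprecision in your setup for Part (a): the locus you write down, $\widetilde\X(V_1\subset V_2,W)=\{y\,;\,y(V_1)\subset V_1\}$, is too large in the doubly-framed case. A point of the Hecke correspondence is a pair $(x,y)$ together with a $\widetilde Q_f$-monomorphism $\tau:x\hookrightarrow y$ with $\tau|_W=\id_W$; compatibility with the arrows $a_i^*:W_i\to V_i$ forces $a^*_y(W)\subset V_1$, which is not implied by preservation of $V_1$ by $\alpha,\varepsilon$. The correct linear condition (as the paper writes it) is that $V_1\oplus W$ is preserved by $y$, which in addition to $\alpha(V_1),\varepsilon(V_1)\subset V_1$ includes $a^*(W)\subset V_1$. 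Without this you would be taking the quotient of a strictly larger smooth variety, and the resulting scheme would not be $\widetilde\frakP(v_1,v_2,W)$. This is immaterial for $\widehat\frakP$ where $a^*=0$ by definition, but you should add the extra linear equation for $\widetilde\frakP$. With that corrected, your Parts (a) and (b) match the paper; also note that stability of $y|_{V_1}$ is automatic from stability of $y$ (a subrepresentation of $y|_{V_1}$ supported in $V_1$ is a subrepresentation of $y$ supported in $V_2$), so you need not impose it separately. For Part (c), you degenerate in two stages (first kill the off-diagonal block, then shrink the nilpotent quotient) where the paper asserts a single cocharacter sending $y$ to $x\oplus 0$; both are valid since orbit-closure containment is transitive, so this is a stylistic, not substantive, difference.
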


\begin{proof}
We give the proof for $\widetilde\frakP(W)$ only, because the case of $\widehat\frakP(W)$
is similar.
We write
$$v_1\leqslant v_2\iff v_2-v_1\in\bbN I.$$
Let $P_{v_1,v_2}\subset G_{v_2}$ be the stabilizer of the flag $V_1\subset V_2$.
To prove (a) and (b), note that 
$$\widetilde\frakP(v_1,v_2,W)=\widetilde\X(v_1,v_2,W)_s\,/\,P_{v_1,v_2}$$
is the categorical quotient of
\begin{align*}
\widetilde\X(v_1,v_2,W)_s=\{y\in\widetilde\X(v_2,W)_s\,;\,y(V_1\oplus W)
\subseteq V_1\oplus W\}.
\end{align*}
The $P_{v_1,v_2}$-action is proper and free 
because the point $y$ is stable.
Part (c) follows from the Hilbert-Mumford criterion.
For any pair $(x,y)$ in $\widetilde\X(v_1,W)\times\widetilde\X(v_2,W)$
representing a point in $\widetilde\frakP(W)^\nil$ there is a 1-parameter subgroup
$\lambda$ in $G_{v_2}$ such that 
$\lim_{t\to\infty}\lambda(t)\cdot y=x\oplus 0.$
Hence, we have $\pi(x)=\pi(y)$ in $\widetilde\frakM_0(W)$.
\end{proof}

\subsubsection{Potentials}\label{sec:potential}
We equip the quiver $\widetilde Q_f$ with the potentials $\bfw_1$ or $\bfw_2$ such that
\begin{align*}
\begin{split}
\bfw_1=
\bfw_2+\sum_{i\in I}\varepsilon_i\,a_i^*\,a_i
,\quad
\bfw_2=
\sum_{i,j\in I}\sgn_{ij}\,\varepsilon_i^{-c_{ij}}\,\alpha_{ij}\,\alpha_{ji}.
\end{split}
\end{align*}
The potential $\bfw_2$ appears already in \cite[\S 1.7.3]{GLS17}.
Both potentials are homogeneous of degree zero.
We equip $\widetilde Q_f^\bullet$ with the potentials $\bfw^\bullet_1$ or $\bfw^\bullet_2$ where
\begin{align*}
\begin{split}
\bfw^\bullet_1&=
\bfw^\bullet_2+\sum_{k\in\bbZ}\sum_{i\in I}\varepsilon_{i,k-2d_i}a^*_{i,k-d_i}a_{i,k},\\
\bfw^\bullet_2&=
\sum_{k\in\bbZ}\sum_{(i,j)\in\O}
\big(\varepsilon_{i,k-2d_i}\cdots\varepsilon_{i,k+2b_{ji}}\alpha_{ij,k+b_{ji}}\alpha_{ji,k}
-\varepsilon_{j,k-b_{jj}}\cdots\varepsilon_{j,k+2b_{ij}}\alpha_{ji,k+b_{ij}}\alpha_{ij,k}\big).
\end{split}
\end{align*}
Taking the traces of $\bfw_1$, $\bfw_2$ we get the functions
$$\tilde f_1,\tilde f_2:\widetilde\frakM(W)\to\bbC.$$ 
Let $\hat f_1$, $\hat f_2$ be their restriction to $\widehat\frakM(V,W)$.
Similarly, taking the traces of $\bfw_1^\bullet$, $\bfw_2^\bullet$ we get the functions
$$\tilde f_1^\bullet,\tilde f_2^\bullet:\widetilde\frakM^\bullet(W)\to\bbC.$$ 
We define $\hat f_1^\bullet$, $\hat f_2^\bullet$ in the obvious way.
Finally, taking the trace of $\bfw_2$ we get the function
$$h:\Rep\to\bbC.$$
By \S\ref{sec:Hecke} we have the following maps
$$\xymatrix{\Rep&\ar[l]_\pi\widetilde\frakP(W)\ar[r]^i&\widetilde\frakM(W)^2}$$

\begin{lemma}\label{lem:fh}
Let $f= f_1$ or $f_2$.
We have $i^*(\tilde f^{(2)})=\pi^*(h)$.
 \end{lemma}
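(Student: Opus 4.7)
The plan is to verify the equality of regular functions $i^*(\tilde f^{(2)}) = \pi^*(h)$ pointwise on the smooth scheme $\widetilde\frakP(W)$. A point is represented by a triple $(x,y,\tau)$ with $\tau:x\hookrightarrow y$ an injective morphism of $\widetilde Q_f$-representations satisfying $\tau|_W=\id_W$. For each vertex $i\in I$, I would first choose a vector space splitting $V_{y,i} = \tau(V_{x,i})\oplus U_i$; then $U=\bigoplus_i U_i$ naturally carries the structure of the underlying $I$-graded vector space of the quotient representation $y/x\in\Rep$, whose framing data vanishes because $\tau$ is the identity on $W$.

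The central structural observation is block-triangularity in this splitting. Since $x$ is a subrepresentation of $y$, every $\widetilde Q$-arrow $\alpha_{ij}^y$ and $\varepsilon_i^y$ on $V_y$ preserves $\tau(V_x)$ and is block upper-triangular with diagonal blocks equal to the corresponding arrows of $x$ and of $y/x$. For the framing arrows, the morphism relations give $a_i^y\circ\tau_i = a_i^x$ and $a_i^{*,y}=\tau_i\circ a_i^{*,x}$, which force the image of $a_i^{*,y}$ to lie inside $\tau(V_{x,i})$, while $a_i^y$ restricted to $\tau(V_{x,i})$ is identified with $a_i^x$. The asymmetry between the two framing arrows is essential and is precisely where the hypothesis $\tau|_W=\id_W$ enters.

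For $f=f_2$, the potential $\bfw_2$ involves no framing arrows, so each cyclic term in $\bfw_2(y)$ is a block upper-triangular endomorphism, with diagonal blocks equal to the same cyclic term evaluated on $x$ and on $y/x$. Trace additivity over the diagonal blocks then gives $\tilde f_2(y) - \tilde f_2(x) = h(y/x)$, which is the claimed identity. For $f=f_1$, using $\bfw_1-\bfw_2 = \sum_i \varepsilon_i a_i^* a_i$, it suffices to show that each cubic correction contributes identically on $x$ and on $y$. A direct block computation of $\varepsilon_i^y\circ a_i^{*,y}\circ a_i^y$ on $\tau(V_{x,i})\oplus U_i$ yields a block upper-triangular endomorphism whose $(\tau(V_{x,i}),\tau(V_{x,i}))$-block matches $\varepsilon_i^x a_i^{*,x} a_i^x$ via the observations above, and whose $(U_i,U_i)$-block vanishes, the latter precisely because the image of $a_i^{*,y}$ lies in $\tau(V_{x,i})$. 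Summing traces over $i$ reduces the $f_1$ case to the $f_2$ case.

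The main delicacy is the cubic correction for $f_1$: it is only the asymmetric behaviour of $a_i^y$ and $a_i^{*,y}$ under $\tau$ that forces the trace of $\varepsilon_i a_i^* a_i$ to be equal on $x$ and on $y$. Without $\tau|_W=\id_W$ the image of $a_i^{*,y}$ could escape the subrepresentation, the $(U_i, U_i)$-block would no longer vanish, and an extra term depending on $y/x$ would survive. Apart from this check, the argument is a routine application of trace additivity for block upper-triangular endomorphisms.
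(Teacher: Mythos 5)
Your proof is correct and follows essentially the same route as the paper's: the paper also reduces to the block-triangular structure coming from $x\subset y$ with $a^*(W)\subset V_1$ (a consequence of $\tau|_W=\id_W$), notes that $y/x=(\alpha|_{V_2/V_1},\varepsilon|_{V_2/V_1})$ carries no framing, and concludes by trace additivity. The paper states the key identities $\tilde f_a(y)-\tilde f_a(x)=\tilde f_2(y/x)=h(y/x)$ without spelling out the block computation you supply, so your version simply makes explicit the cancellation of the cubic framing term that the paper's terse formula leaves implicit.
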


\begin{proof}
Fix point $(x,y)\in\widetilde\frakP(v_1,v_2,W)$. We have 
$$y=(\alpha,\varepsilon, a,a^*)\in\widetilde\frakM(v_2,W)
,\quad
x=(\alpha|_{V_1},\varepsilon|_{V_1}, a,a^*)\in\widetilde\frakM(v_1,W)
,\quad
a^*(W)\subset V_1\subset V_2.$$ 
Then, we have $\pi(x,y)=y/x=(\alpha|_{V_2/V_1},\varepsilon|_{V_2/V_1})$.
Further, either $\widetilde f^{(2)}(x,y)=\tilde f_1(y)-\tilde f_1(x)=\tilde f_2(y/x)=h(y/x)$ or 
$\widetilde f^{(2)}(x,y)=\tilde f_2(y)-\tilde f_2(x)=\tilde f_2(y/x)=h(y/x)$.
\end{proof}

\subsubsection{Universal bundles}\label{sec:univbdl}
Let $\calV=\bigoplus_{i\in I}\calV_i$ and 
$\calW=\bigoplus_{i\in I}\calW_i$ denote both the tautological bundles on
$\widehat\frakM(W)$ and $\widetilde\frakM(W)$
and their classes in $K_{G_W\times\bbC^\times}\big(\widehat\frakM(W)\big)$ and
$K_{G_W\times\bbC^\times}\big(\widetilde\frakM(W)\big)$.
Given an orientation as in \S\ref{sec:or}, we define
\begin{align}\label{Vcirc}
\calV_{\circ i}=\bigoplus_{c_{ij}<0}\calV_j=\calV_{+i}\oplus\calV_{-i}
,\quad
\calV_{- i}=\bigoplus_{c_{ij},\sgn_{ij}<0}\calV_j
,\quad
v_{\circ i}=\sum_{c_{ij}<0}v_j=v_{+i}+v_{-i}.
\end{align}
Let $\calV^-_i$ and $\calV^+_i$ be the pull-back of the tautological vector bundle 
$\calV_i$ on $\widetilde\frakM(W)$
by the first and second projection 
$\widetilde\frakP(\delta_i,W)\to\widetilde\frakM(W)$.
Switching both components of $\widetilde\frakM(W)^2$,
we define similarly the vector bundles
$\calV^-_i$, $\calV^+_i$ on the Hecke correspondence $\widetilde\frakP(-\delta_i,W)$.
Let $\calL_i$ denote the invertible sheaf $\calV^+_i/\,\calV^-_i$ on the Hecke correspondence
$\widetilde\frakP(\pm\delta_i,W)$,
and its pushforward by the closed embedding into $\widetilde\calZ(W)$.
We define the bundles $\calV^-_i$, $\calV^+_i$, $\calL_i$
on $\widehat\frakP(\pm\delta_i,W)$ or $\widehat\calZ(W)$ in a similar way.

\medskip

\subsection{Quiver Grassmanians}\label{sec:QGr}

For each vertex $i$ let $e_i$ be the length 0 path in $\bbC \widetilde Q$ supported on $i$.
The generalized preprojective algebra $\widetilde\Pi$ is the Jacobian algebra of the quiver with potential
$(\widetilde Q,\bfw_2)$. See \cite[def.~1.4]{GLS17}. More precisely, 
it is the quotient of the path algebra $\bbC\widetilde Q$ of $\widetilde Q$
by the two-sided ideal generated by all cyclic derivations of $\bfw_2$, i.e., by the elements
\begin{align}\label{critf2}
\varepsilon_i^{-c_{ij}}\alpha_{ij}-\alpha_{ij}\varepsilon_j^{-c_{ji}}
,\quad 
\sum_{i,j\in I}\sum_{k=0}^{-c_{ij}-1}\sgn_{ij}\varepsilon_i^k\alpha_{ij}\alpha_{ji}\varepsilon_i^{-c_{ij}-1-k}.
\end{align}
We abbreviate $\alpha$, $\varepsilon$, $\omega$ for the following elements in $\widetilde\Pi$
\begin{align}\label{aeo}
\alpha=\sum_{i,j\in I}\alpha_{ij}
,\quad
\varepsilon=\sum_{i\in I}\varepsilon_i
,\quad
\omega=\sum_{i\in I}\varepsilon_i^{t_i}.
\end{align}
The element $\omega$ is central  in $\widetilde\Pi$.
For every positive integer $l$ we set 
$$\widetilde\Pi_l=\widetilde\Pi/(\widetilde\Pi\omega^l).$$
The grading \eqref{degree} 
yields a $\bbZ$-grading on the algebras  $\widetilde\Pi$ 
and $\widetilde\Pi_l$.
We abbreviate 
\begin{align}\label{Hl}
H=\bigoplus_{i\in I}\bbC[\varepsilon_i]
,\quad
H_l=H/H\varepsilon^l.
\end{align}
In particular $H_1$ is the semisimple algebra spanned by the $e_i$'s.
The ring $\widetilde\Pi$ is an $H$-bimodule.
Let $\tau:\widetilde\Pi\to\widetilde\Pi^\op$ be the algebra automorphism
such that 
$$\tau(e_i)=e_i
,\quad
\tau(\varepsilon_i)=\varepsilon_i
,\quad
\tau(\alpha_{ij})=\alpha_{ji}.$$
We equip the category of the graded $\widetilde\Pi$-modules
with the grading shift functor $[1]$ and the duality functor such that
$(M^\vee)_k=(M_{-k})^\vee$ with the transposed $\widetilde\Pi$-action
twisted by $\tau$
for each object $M=\bigoplus_{k\in\bbZ}M_k$.
A (graded) $\widetilde\Pi$-module
is nilpotent if it is killed by a power of the augmentation ideal.
Let $\bfD$ and $\bfD^\bullet$ be the categories of finite dimensional $\widetilde\Pi$-modules
and finite dimensional graded $\widetilde\Pi$-modules.
Let $\bfD^\nil$ and $\bfD^{\bullet,\nil}$ be the subcategories of nilpotent modules.
By \cite[\S11]{GLS17} the two-sided ideal of $\widetilde\Pi$ 
generated by the $\alpha_{ij}$'s is nilpotent. 
Since the element $\omega$ is homogeneous of degree $2t$,
the element $\varepsilon$ acts nilpotently on any module in $\bfD^\bullet$.
We deduce that $$\bfD^\bullet=\bfD^{\bullet,\nil}.$$
Similarly, a $\widetilde\Pi$-module in $\bfD$ lies in $\bfD^\nil$ if and only if $\varepsilon$ acts nilpotently.
We consider the following finite dimensional graded $\widetilde\Pi$-modules
\begin{align}\label{K}
K_{i,k,l}=\big(\widetilde\Pi e_i/\widetilde\Pi\varepsilon_i^l\big)^\vee[-k-ld_i]
,\quad
i\in I,\, l\in\bbN^\times,\, k\in\bbZ.
\end{align}
Let $\widetilde\Pi^\bullet$ be the Jacobian algebra of the quiver with potential 
$(\widetilde Q^\bullet,\bfw^\bullet_2)$.
By \cite[prop.~4.4, 5.1]{FM21}, a graded $\widetilde\Pi$-module is the same as a 
$\widetilde\Pi^\bullet$-module, and,
under this equivalence, the graded $\widetilde\Pi$-module $K_{i,k,l}$
is the same as the generic kernel associated in \cite{HL16} with the Kirillov-Reshetikhin module $KR_{i,k,l}$. 
Let $\Pi(\infty)$ be the 
projective limit of the $\widetilde\Pi_l$'s, see \cite[\S4.4]{FM21}.
We consider the graded $\widetilde\Pi$-modules $I_{i,k}$ given by
\begin{align}\label{I}
I_i=(\Pi(\infty) e_i)^\vee=\bigcup_{l>0}(\widetilde\Pi_le_i)^\vee
,\quad
I_{i,k}=I_i[-k].
\end{align}
It is an inductive limit of graded $\widetilde\Pi$-modules in $\bfD^\bullet$.
Given a module $M\in\bfD$ and a dimension vector $v\in\bbN I$, let  $\widetilde\Gr_v(M)$
be the Grassmanian of all $\widetilde\Pi$-submodules 
of dimension $v$. 
Given a graded $\widetilde\Pi$-module $M\in\bfD^\bullet$  and a dimension vector $v\in\bbN I^\bullet$, 
let $\widetilde\Gr^\bullet_v(M)$ be the Grassmanian of all graded $\widetilde\Pi$-submodules 
of dimension $v$. We define
$$\widetilde\Gr(M)=\bigsqcup_{v\in\bbN I}\widetilde\Gr_v(M)
,\quad
\widetilde\Gr^\bullet(M)=
\bigsqcup_{v\in\bbN {I^\bullet}}\widetilde\Gr^\bullet_v(M).$$
Both Grassmanians are complex varieties in the obvious way.
We also consider the Grassmanian $\widetilde\Gr^\bullet_v(M)$  for
$M$ a finite direct sum of $\widetilde\Pi$-modules $I_{i,k}$ as above. 
Since the subspace $M_k=\bigoplus_{i\in I}M_{i,k}$ of $M$ is finite dimensional by
\cite[prop.~4.5]{FM21}, this Grassmanian is also a complex variety, hence 
$\widetilde\Gr^\bullet(M)$ is a complex scheme locally of finite type.

\medskip

\subsection{Quantum loop groups}\label{sec:N00}
Let $Q$ be a Dynkin quiver and $\frakg$ be the corresponding complex simple Lie algebra.
Fix $\zeta\in\bbC^\times$. Although many of our results hold for arbitrary $\zeta$,
we assume that $\zeta$ is not a root of unity.

Let $\U_R(L\frakg)$ be the 
integral $R$-form of the quantum loop group of type $\bfc$, see Appendix A.
We define
$$\U_F(L\frakg)=\U_R(L\frakg)\otimes_{R}F
,\quad
\U_\zeta(L\frakg)=\U_R(L\frakg)|_\zeta,$$
where $(-)|_\zeta$ is the specialization along the map $R\to\bbC$, $q\mapsto\zeta$.
The $F$-algebra $\U_F(L\frakg)$ is generated by $x^\pm_{i,n}$, $\psi^\pm_{i,\pm m}$ with 
$n\in\bbZ$, $m\in\bbN$ satisfying some well-known relations, see \S\ref{sec:qg}.
The $R$-subalgebra $\U_R(L\frakg)$ of $\U_F(L\frakg)$
is generated by the quantum divided powers 
$(x^\pm_{i,n})^{[m]}$ with
$i\in I$, $n\in\bbZ$ and $m\in\bbN^\times$
and by the coefficients $h_{i,\pm m}$ as in \S\ref{sec:qg}.

For each $w\in\bbZ I$ let $\U_F^{-w}(L\frakg)$ be the simply-connected $(0,-w)$-shifted quantum loop group defined in \cite[\S 5.1]{FT19}.
Let $\U^{-w}_F(L\frakg)$ be its integral $R$-form and $\U^{-w}_\zeta(L\frakg)$
be its specialization, see \S\ref{sec:sqg} for more details.

\medskip

\section{Critical convolution algebras of triple quivers with potentials}\label{sec:CAT}

This section relates critical convolution algebras to quantum loop groups and
shifted quantum loop groups. The main results are
Theorems \ref{thm:notshifted}, \ref{thm:shifted} and
Corollaries \ref{cor:notshifted2}, \ref{cor:shifted2}.
We will use the same notation as in \S\ref{sec:quiver varieties}.

\subsection{K-theoretic critical convolution algebras and quantum loop groups}
\label{sec:w1}
Let $W\in\bfC$.
Fix a nilpotent element $\gamma_i\in\frakg_{W_i}^\nil$ for each $i\in I$, and set
$\gamma=\bigoplus_{i\in I}\gamma_i$.
Fix a cocharacter $\sigma:\bbC^\times\to G_W$ such that 
$\Ad_{\sigma(z)}(\gamma_i)=z^{2d_i}\gamma_i.$
We equip $W$ with the $I^\bullet$-grading \eqref{sigma1} for which
the operator $\gamma_i$ is 
homogeneous of degree $2d_i$.
Recall that
$a=(\sigma,\xi)$
and
$A=a(\bbC^\times).$
We define the $A$-invariant function
\begin{align*}
f_\gamma:\widetilde\frakM(W)\to\bbC
,\quad
\ux\mapsto\sum_{i\in I} \Tr(\gamma_i a_i a_i^*).
\end{align*}
Let $f_\gamma^\bullet:\widetilde\frakM^\bullet(W)\to\bbC$ be the restriction of $f_\gamma$ to the
$A$-fixed points locus.
Set
\begin{align}\label{f6}
\tilde f_{\!\gamma}:\widetilde\frakM(W)\to\bbC
,\quad
\tilde f^\bullet_{\!\gamma}:\widetilde\frakM^\bullet(W)\to\bbC
,\quad
\tilde f_{\!\gamma}=\tilde f_1-f_\gamma
,\quad
\tilde f^\bullet_{\!\gamma}=\tilde f^\bullet_1-f^\bullet_\gamma.
\end{align}
Recall that $R=R_A$ and $F=F_A$. Let 
$$K_A\big(\widetilde\frakM(W)^2,(\tilde f_\gamma)^{(2)}\big)
_{\widetilde\calZ(W)}\,/\,\tor\,\subset\,
K_A\big(\widetilde\frakM(W)^2,(\tilde f_\gamma)^{(2)}\big)
_{\widetilde\calZ(W)}\otimes_RF$$
be the image of 
$K_A\big(\widetilde\frakM(W)^2,(\tilde f_\gamma)^{(2)}\big)
_{\widetilde\calZ(W)}$
in the right hand side.
We define 
$$K_A(\widetilde\frakM(W),\tilde f_\gamma)/\tor
,\quad
K_A(\widetilde\frakM(W),\tilde f_\gamma)_{\widetilde\frakL(W)}/\tor$$
in a similar way.
We will prove the following.

\smallskip

\begin{theorem}\label{thm:notshifted}
\hfill
\begin{enumerate}[label=$\mathrm{(\alph*)}$,leftmargin=8mm]
\item
There is an $R$-algebra homomorphism
$\U_R(L\frakg)\to 
K_A\big(\widetilde\frakM(W)^2,(\tilde f_\gamma)^{(2)}\big)_{\widetilde\calZ(W)}/\tor$.
\item
The $R$-algebra $\U_R(L\frakg)$ acts on 
$K_A(\widetilde\frakM(W),\tilde f_\gamma)/\tor$ and
$K_A(\widetilde\frakM(W),\tilde f_\gamma)_{\widetilde\frakL(W)}/\tor.$
\end{enumerate}
\end{theorem}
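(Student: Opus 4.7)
The plan is to realize the Drinfeld generators $x^\pm_{i,n}$, $\psi^\pm_{i,\pm m}$ of $\U_R(L\frakg)$ as explicit classes in the critical convolution algebra $K_A\big(\widetilde\frakM(W)^2,(\tilde f_\gamma)^{(2)}\big)_{\widetilde\calZ(W)}$ and then to verify the defining relations of $\U_R(L\frakg)$ modulo torsion. For the generators $x^+_{i,n}$ (resp.\ $x^-_{i,n}$) I would take the class $\Upsilon(\calL_i^n)$ supported on the Hecke correspondence $\widetilde\frakP(-\delta_i,W)$ (resp.\ $\widetilde\frakP(\delta_i,W)$), pushed forward along the closed embedding $i\colon\widetilde\frakP(\pm\delta_i,W)\to\widetilde\frakM(W)^2$. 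Lemma~\ref{lem:fh} shows that $(\tilde f_\gamma)^{(2)}$ pulls back to the trace potential $\pi^*(h)$ (up to the $a^*a$-term whose contribution is absorbed by $f_\gamma$), so these classes lie in the intended critical K-theory. The Cartan generators $\psi^\pm_{i,\pm m}$ are obtained from diagonal classes by taking $\Upsilon$-images of appropriate polynomials in $\calV_i,\calW_i$ and the weights of $\gamma_i$. Part (b) is then formal from Proposition~\ref{prop:critalg1}(c): the module structures on $K_A(\widetilde\frakM(W),\tilde f_\gamma)$ and on its support-restricted version are the tautological convolution actions.

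To verify the relations, I would localize equivariantly along the cocharacter $A$ using Proposition~\ref{prop:TTK}(d). By Lemma~\ref{lem:fixedpoints} the fixed points locus is $\widetilde\frakM(W)^A=\widetilde\frakM^\bullet(W)$, so after tensoring with $F=F_A$ the verification reduces to a computation of classes on the graded triple quiver variety with potential $\tilde f^\bullet_\gamma$. On the fixed locus, the deformed dimension reduction~\eqref{DDR} identifies the critical K-theory with the equivariant K-theory of the zero scheme of the cyclic derivatives of $\bfw_1^\bullet$, which by \eqref{critf2} cuts out the moduli of (graded) modules over a $\gamma$-deformed generalized preprojective algebra in the sense of \cite{GLS17}. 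The standard Drinfeld relations involving only $\psi^\pm$, as well as the commutation $[x^+_{i,n},x^-_{j,m}]$, then reduce to an intersection-theoretic calculation on products of Hecke correspondences, in the spirit of Nakajima's work, but keeping track of the additional weights $d_i$ and $b_{ij}$ induced by the grading \eqref{degree}.

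The main obstacle will be the quantum Serre relations of non-simply-laced type
\[
\sum_{k=0}^{1-c_{ij}}(-1)^k\begin{bmatrix}1-c_{ij}\\k\end{bmatrix}_{t_i}x^\pm_{i,n_1}\cdots x^\pm_{j,m}\cdots x^\pm_{i,n_{1-c_{ij}}}=0,
\]
which in Nakajima's symmetric setup are handled by a short exact sequence on triple Hecke correspondences. Here my plan is to combine three ingredients: the precise form of $\bfw_2$, whose monomials $\varepsilon_i^{-c_{ij}}\alpha_{ij}\alpha_{ji}$ are tuned exactly to the Cartan integers; a Koszul-type resolution on the iterated Hecke correspondence $\widetilde\frakP(\pm\delta_i,W)\times_{\widetilde\frakM(W)}\widetilde\frakP(\pm\delta_j,W)\times_{\widetilde\frakM(W)}\widetilde\frakP(\pm\delta_i,W)$ that, after localization, produces the quantum binomial coefficients $\begin{bmatrix}1-c_{ij}\\k\end{bmatrix}_{t_i}$ from the $\bbC^\times$-weights $d_i$, $d_j$ on the tautological bundles; and a cancellation with the contribution of $f_\gamma$. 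The deformation parameter $\gamma$ is essential here: for $\gamma=0$ and non-symmetric $\bfc$ the required cancellation fails, and it is only after perturbing by a regular $\gamma$ that the critical locus becomes small enough for the Serre identities to hold in $K_A(\cdot)/\tor$.

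Finally, the algebra map and module maps factor through $K_A\big(\widetilde\frakM(W)^2,(\tilde f_\gamma)^{(2)}\big)_{\widetilde\calZ(W)}$ rather than its set-theoretic support version because the $\Upsilon$-image of a Hecke class may only be category-theoretically supported; quotienting by torsion is needed to control the potentially non-torsion-free nature of critical K-theory.
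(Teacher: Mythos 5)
The broad scheme of your proposal --- realize the Drinfeld generators as $\Upsilon$-images of powers of the tautological line bundle on Hecke correspondences, deduce Part~(b) formally from Proposition~\ref{prop:critalg1}, and verify the relations after localizing by Proposition~\ref{prop:TTK}(d) --- coincides with the paper's. You correctly identify that the deformation $\gamma$ is indispensable for the quantum Serre relations and that $\tilde f^{(2)}$ restricts to $\pi^*h$ on the Hecke correspondence. However, several of the specific mechanisms you propose diverge from what the paper actually does, and the one for the Serre relations is a genuine gap.

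First, on the restriction of the potential: the $\sum\varepsilon_i a_i^*a_i$ term in $\bfw_1$ is \emph{not} ``absorbed by $f_\gamma$.'' In Lemma~\ref{lem:fh} it cancels between $\tilde f_1(y)$ and $\tilde f_1(x)$ because the framing maps $a,a^*$ are identical on the two sides of a Hecke correspondence, and $f_\gamma$ (which depends only on $a,a^*,\gamma$) vanishes on $\widetilde\frakP(W)$ after taking the difference for the same reason. These are two independent cancellations, not a compensation of one by the other.

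Second, the paper does not globally apply the deformed dimensional reduction~\eqref{DDR} on the $A$-fixed locus to turn the computation into one over modules for a $\gamma$-deformed preprojective algebra. For the Drinfeld relations (A.5)--(A.6) the paper instead constructs an algebra homomorphism from a critical convolution algebra of an $A_1$-type quiver variety $\calZ(W)_\diamondsuit$ (obtained by ``freezing'' the arrows not touching $i$) and then localizes to a maximal torus of $G_\diamondsuit$; the matrix coefficients of $A^\pm_{i,n}$ are then computed in the fixed-point basis. The DDR is used elsewhere (Proposition~\ref{prop:crit1}, Theorem~\ref{thm:limH}) where the hypotheses (flatness of $f|_Z$, regularity of the section) can be checked directly; for the relation checks those hypotheses are not discussed and your argument would need to verify them.

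Third, and most substantially, your plan for the Serre relations (A.7) --- a Koszul-type resolution on the triple Hecke correspondence producing the quantum binomials $\begin{bmatrix}1-c_{ij}\\k\end{bmatrix}_{q_i}$ from the $\bbC^\times$-weights, combined with a ``cancellation with $f_\gamma$'' --- is not the paper's argument, and as stated it is not clear it closes. What the paper does is entirely different and far more elementary: it invokes Remark~\ref{rem:fd} (the finiteness of the critical quiver Grassmannian $\widetilde\Gr(K_\gamma)$, itself a consequence of Proposition~\ref{prop:HL1} and Corollary~\ref{cor:HL1}) to conclude that $K_A(\widetilde\frakM(v,W),\tilde f_\gamma)=0$ for all but finitely many $v$, hence the $x^\pm_{i,0}$ act locally nilpotently while $\psi^\pm_{i,0}$ act invertibly and diagonalizably. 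From there the \emph{constant term} of (A.7) is a formal consequence of (A.6), and the full (A.7) is recovered by the standard argument of \cite[\S10.4]{N00}. The essential role of $\gamma$ is therefore the finiteness (local nilpotence), not any cancellation inside a Koszul complex, and ``the critical locus becomes small enough'' should be understood as ``empty in all but finitely many graded pieces,'' not as a dimension drop that makes an intersection-theoretic identity hold. If you insist on a direct geometric proof of (A.7), you would be improving on the existing literature (including Nakajima's symmetric case), and you would need to say much more precisely what the Koszul resolution is and why its terms produce the quantum binomials; the handwave as written does not constitute a proof.

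Finally, you should also record, as the paper does, the verification that the assignments \eqref{psi+-} and \eqref{xpm} land in the integral form $\U_R(L\frakg)$: one must check that the divided powers $(x^\pm_{i,n})^{[m]}$ and the elements $h_{i,\pm m}/[m]_{q_i}$ go into $K_A(\cdot)_{\widetilde\calZ(W)}/\tor$ rather than merely into its $F$-localization. The paper does this by the explicit formulas, following \cite[thm.~12.2.1]{N00} and \cite[lem.~2.4.8]{VV22}; your proposal omits this step.
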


Using Propositions \ref{prop:critalg1},  \ref{prop:TTK}, the theorem implies the following.

\begin{corollary}\label{cor:notshifted2}
The
algebra $\U_\zeta(L\frakg)$ acts on 
$K(\widetilde\frakM^\bullet(W),\tilde f^\bullet_\gamma)_{\underline{\widetilde\frakL^\bullet(W)}}$ and
$K(\widetilde\frakM^\bullet(W),\tilde f^\bullet_\gamma)$.
\qed
\end{corollary}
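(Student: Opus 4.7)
The strategy is to descend from the $A$-equivariant, $R$-linear statements of Theorem~\ref{thm:notshifted} to the non-equivariant, $\zeta$-specialized statements in the corollary. This happens in two stages: first localize the equivariant critical K-theory of $\widetilde\frakM(W)$ to its $A$-fixed locus $\widetilde\frakM^\bullet(W)$ using Proposition~\ref{prop:TTK}(d), and then specialize the quantum parameter at $\zeta$.

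For the first stage, Lemma~\ref{lem:fixedpoints} identifies $\widetilde\frakM(W)^A=\widetilde\frakM^\bullet(W)$, and the same argument componentwise gives $\widetilde\calZ(W)^A=\widetilde\calZ^\bullet(W)$ and $\widetilde\frakL(W)^A=\widetilde\frakL^\bullet(W)$; the function $\tilde f_\gamma$ is $A$-invariant by construction, with restriction $\tilde f^\bullet_\gamma$. Proposition~\ref{prop:TTK}(d) applied to $T=A$ yields $F$-linear isomorphisms
\begin{align*}
K_A\big(\widetilde\frakM^\bullet(W)^2,(\tilde f^\bullet_\gamma)^{(2)}\big)_{\underline{\widetilde\calZ^\bullet(W)}}\otimes_RF \;\simeq\; K_A\big(\widetilde\frakM(W)^2,(\tilde f_\gamma)^{(2)}\big)_{\underline{\widetilde\calZ(W)}}\otimes_RF,
\end{align*}
and analogously for the full module and the module with support in $\widetilde\frakL(W)$. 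These isomorphisms intertwine the convolution products, because the Hecke correspondences underlying the action in Theorem~\ref{thm:notshifted} are $A$-equivariant and the base change and projection formulas of Remark~\ref{rem:base change 1}(a) make the functors $Ri_*$ and $Li^*$ compatible with the convolution operation \eqref{conv3}. Invoking Proposition~\ref{prop:critalg1}(h) to pass freely between categorical and set-theoretic supports, one transports the content of Theorem~\ref{thm:notshifted} to an $F$-algebra homomorphism $\U_F(L\frakg)\to K_A(\widetilde\frakM^\bullet(W)^2,(\tilde f^\bullet_\gamma)^{(2)})_{\underline{\widetilde\calZ^\bullet(W)}}\otimes_RF$ with its module actions.

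For the second stage, since $A$ acts trivially on $\widetilde\frakM^\bullet(W)$ one has
\begin{align*}
K_A(\widetilde\frakM^\bullet(W),\tilde f^\bullet_\gamma)=K(\widetilde\frakM^\bullet(W),\tilde f^\bullet_\gamma)\otimes_\bbC R,
\end{align*}
and analogously for the Steinberg and Lagrangian variants. The transported convolution operators and the generators of the $\U_R$-image come from $A$-equivariant Hecke correspondences in \S\ref{sec:Hecke} restricted to the fixed locus, and these restrictions give integral classes in the above tensor decomposition. Hence the action refines to an $R$-linear action of $\U_R(L\frakg)$ on $K(\widetilde\frakM^\bullet(W),\tilde f^\bullet_\gamma)\otimes_\bbC R$ and on the set-theoretic-support version. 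Specializing $q\mapsto\zeta$ then yields $\U_R(L\frakg)|_\zeta=\U_\zeta(L\frakg)$, which is well-defined since $\zeta$ is not a root of unity, acting on $K(\widetilde\frakM^\bullet(W),\tilde f^\bullet_\gamma)$ and on $K(\widetilde\frakM^\bullet(W),\tilde f^\bullet_\gamma)_{\underline{\widetilde\frakL^\bullet(W)}}$.

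The main obstacle is the integrality step in the second stage: Proposition~\ref{prop:TTK}(d) only provides an isomorphism after inverting the multiplicative set of equivariant parameters, so a~priori the transported action could require inversion of a normal Euler class in $R$. One must inspect the explicit generators of $\U_R$ produced in the proof of Theorem~\ref{thm:notshifted} and verify that their images are integral classes of Hecke correspondences on $\widetilde\frakM^\bullet(W)^2$, not merely elements of $K\otimes_\bbC F$. Once this integrality is established, the passage to the specialization at $\zeta$ is formal and the corollary follows.
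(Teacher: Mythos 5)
Your overall strategy --- localize the $A$-equivariant Theorem~\ref{thm:notshifted} to the fixed locus $\widetilde\frakM(W)^A=\widetilde\frakM^\bullet(W)$ via Proposition~\ref{prop:TTK}(d), identify $K_A$ of the fixed locus with $K\otimes_{\bbC}R$ since $A$ acts trivially there, and then specialize $q\mapsto\zeta$ --- is exactly the route the paper intends, since it cites precisely Propositions~\ref{prop:critalg1} and~\ref{prop:TTK}. Two of your steps are stated more strongly than what those propositions actually give, and both deserve a little more care.

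First, Proposition~\ref{prop:TTK}(d) produces an $F$-linear isomorphism, not an algebra isomorphism; your assertion that $Ri_*$ and $Li^*$ ``intertwine the convolution products'' is not literally true. What one can check with base change and the projection formula is the partial compatibility
$Li^*\circ(c\star-)\circ Ri_*=\big(L(i\times i)^*c\big)\star-$,
which is not the statement that either $Ri_*$ or $Li^*$ is an algebra map: transporting an operator across the isomorphism introduces the Euler class $\Lambda_{-1}(T^*_{X^T}X)$, as Proposition~\ref{prop:TTK}(d) itself records. The correct way to phrase the transport is that the convolution action of the big algebra, conjugated by the localization isomorphism, becomes multiplication by an Euler-class twist of the restricted class; this must then be matched against the operator one would define intrinsically on $\widetilde\frakM^\bullet(W)$.

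Second, and relatedly, you correctly flag integrality as the ``main obstacle'' but then leave it as a task (``one must inspect \dots and verify''). The way to close it is not to chase the transported classes through the Euler-class twist but to observe that the formulas \eqref{psi+-} and \eqref{xpm} make sense verbatim on the fixed locus, using the restricted tautological bundles $\calV_{i,k}$ and the fixed Hecke correspondences $\widetilde\frakP(\pm\delta_i,W)^A$, and that they produce honest $R$-integral classes in $K(\widetilde\frakM^\bullet(W)^2,(\tilde f^\bullet_\gamma)^{(2)})_{\underline{\widetilde\calZ^\bullet(W)}}\otimes_{\bbC}R$. The relations then hold over $F$ by the localization comparison, hence over $R$ because $K(\widetilde\frakM^\bullet(W),\tilde f^\bullet_\gamma)\otimes_{\bbC}R$ is torsion-free, and specialization at $q=\zeta$ is then formal. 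With those two points spelled out your argument is complete and coincides with the paper's.
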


\begin{remark}
\hfill
\begin{enumerate}[label=$\mathrm{(\alph*)}$,leftmargin=8mm]
\item
Using Proposition \ref{prop:critalg1}, one can also prove that $\U_\zeta(L\frakg)$ acts on 
$K(\widetilde\frakM^\bullet(W),\tilde f_\gamma^\bullet)_{\widetilde\frakL^\bullet(W)}.$
\item
In Theorem \ref{thm:notshifted} the algebra structure on the right hand side is given by the convolution product $\star$.
We omit $\star$  if no confusion is possible.
\end{enumerate}
\end{remark}

\medskip

\subsection{Proof of Theorem $\ref{thm:notshifted}$}
\subsubsection{Definition of the homomorphism}\label{sec:421}
We first define an $F$-algebra homomorphism
\begin{align}\label{map8}
\U_F(L\frakg)\to 
K_A\big(\widetilde \frakM(W)^2,(\tilde f_\gamma)^{(2)}\big)_{\underline{\tilde\calZ(W)}}\otimes_RF,
\end{align}
then we apply Proposition \ref{prop:critalg1}(h).
The $F$-algebra $\U_F(L\frakg)$ is generated by the Fourier coefficients of the series
\begin{align*}
x^\pm_i(u)=\sum_{n\in\bbZ}x^\pm_{i,n}\,u^{-n}
 ,\quad
\psi^+_i(u)=\sum_{n\in\bbN}\psi^+_{i,n}\,u^{-n}
 ,\quad
\psi^-_i(u)=\sum_{n\in\bbN}\psi^-_{i,-n}\,u^{n}
\end{align*}
modulo the defining relations (A.2) to (A.7) and (A.9) in \S\ref{sec:QG}.
By Proposition \ref{prop:critalg1} we have the $R$-algebra homomorphism  
\begin{align}\label{form4g}
\Upsilon:
K^A(\widetilde\calZ(W))\to
K_A(\widetilde\frakM(W)^2,(\tilde f_\gamma)^{(2)})_{\underline{\widetilde\calZ(W)}}
\end{align}
Composing the pushforward by the diagonal embedding with the map
$\Upsilon$, we get the map
\begin{align}\label{deltag}
\Delta:K_A\big(\widetilde\frakM(W)\big)\to
K_A\big(\widetilde\frakM(W)^2,(\tilde f_\gamma)^{(2)}\big)_{\underline{\widetilde\calZ(W)}}
\end{align}
Let $\psi^m$ be the Adams operation in 
$K_A\big(\widetilde\frakM(W)\big).$
We define the following classes in $K_A\big(\widetilde\frakM(W)\big)$
\begin{align}\label{H}
\begin{split}
\calH_{i,1}&=\calW_i-\sum_j[c_{ij}]_{q_i}\calV_j
 ,\quad
\calH_{i,-1}=\calW_i^\vee-\sum_j[c_{ij}]_{q_i}\calV_j^\vee,\\
\calH_{i,\pm m}&=\frac{[m]_{q_i}}{m}\,\psi^m(\calH_{i,\pm 1})
,\quad
m>0.
\end{split}
\end{align}
We assign to $\psi_{i,n}^\pm$ the image by $\Delta$
of the coefficient of $u^{-n}$ in
$K_A\big(\widetilde\frakM(W)\big)$
in the formal series
\begin{align}\label{psi+-}
q_i^{\pm(\alpha_i^\vee,w-\bfc v)}
\,\exp\Big(\pm (q_i-q_i^{-1})\sum_{m>0}\calH_{i,\pm m}u^{\mp m}\Big)
\end{align}
Recall that $\calL_i$ denotes both the tautological invertible sheaf on the Hecke correspondence 
$\widetilde\frakP(\pm\delta_i,W)$, and its pushforward by the closed embedding into $\widetilde\calZ(W)$,
see \S\ref{sec:univbdl}.
We define 
$$A_{i,n}^\pm=\Upsilon\big(\calL_i^{\otimes n}\big)
,\quad
A_i^\pm(u)=\sum_{n\in\bbZ}A^\pm_{i,n}u^{-n}$$
in 
$K_A(\widetilde\frakM(W)^2,(\tilde f_\gamma)^{(2)})_{\underline{\widetilde\calZ(W)}}$
and
$K_A(\widetilde\frakM(W)^2,(\tilde f_\gamma)^{(2)})_{\underline{\widetilde\calZ(W)}}[[u,u^{-1}]]$.
Next, let $\calW_i$, $\calV_{\circ i}$ be the classes in $K_A\big(\widetilde\frakM(W)\big)$
defined in \S\ref{sec:univbdl}, and let  $v_{\circ i}$, $v_{\pm i}$ be as in \eqref{Vcirc}.
We assign to $x_{i,n}^\pm$ the following element in
$K_A(\widetilde\frakM(W)^2,(\tilde f_\gamma)^{(2)})_{\underline{\widetilde\calZ(W)}}$
\begin{align}\label{xpm}
\begin{split}
x_{i,n}^+\mapsto (-1)^{w_i+v_{+i}}A^+_{i,n}\star\Delta(\det(\calW_i\oplus\calV_{\circ i}))
,\quad
x_{i,n}^-\mapsto
(-1)^{v_{-i}}q_i^{-1}\, A^-_{i,n-w_i-v_{\circ i}}.
\end{split}
\end{align}
To prove that the assignments \eqref{psi+-} and
\eqref{xpm} give a well-defined morphism \eqref{map8}, 
we must check that the images of $x_{i,n}^\pm$ and $\psi_{i,n}^\pm$ in the algebra
$K_A(\widetilde\frakM(W)^2,(\tilde f_\gamma)^{(2)})_{\underline{\widetilde\calZ(W)}}$
satisfy the relations $\mathrm{(A.2)}$ to $\mathrm{(A.7)}$.
We only check the relations (A.5), (A.6) and (A.7) here. 
The other ones are obvious and are similar to relations in \cite{N00}.
By definition, the classes $A_{i,n}^\pm$, $x_{i,n}^\pm$ and $\psi_{i,n}^\pm$ have obvious lifting
to $K^A(\widetilde\calZ(W))$ relatively to the map $\Upsilon$ in \eqref{form4g}.
Let $A_{i,n}^\pm$, $x_{i,n}^\pm$ and $\psi_{i,n}^\pm$ denote also these liftings.

\subsubsection{Proof of the relation $\mathrm{(A.6)}$ for $i=j$}\label{sec:A6=}
By \eqref{form4g} it is enough to check
the relation in the algebra
$K^A(\widetilde\calZ(W))$.
We will prove the relation via a reduction to the case $A_1$, using
a reduction to the fixed points locus of a torus action.
To do so, we consider the subquiver 
$\widetilde Q_{f,\neq i}$ of $\widetilde Q_f$ such that
$$(\widetilde Q_{f,\neq i})_0=(\widetilde Q_f)_0\,\setminus\,\{i,i'\}
 ,\quad
(\widetilde Q_{f,\neq i})_1=\{h\in (\widetilde Q_f)_1\,;\,s(h),t(h)\neq i\}.$$
The representation variety of $\widetilde Q_f$ decomposes as
$$
\widetilde\X(V,W)=
\widetilde\X(V_i,W_i\oplus V_{\circ i})\times\X_{\widetilde Q_{f,\neq i}}(V_{\neq i},W_{\neq i})
$$
where 
$$V=\bbC^v
 ,\quad
V_{\circ i}=\bigoplus_{{c_{ij}<0}}V_j
 ,\quad
V_{\neq i}=\bigoplus_{j\neq i}V_j
 ,\quad
W_{\neq i}=\bigoplus_{j\neq i}W_j$$
$$\widetilde\X(V_i,W_i\oplus V_{\circ i})=\{(\alpha_{ji}, a_i,\alpha_{ij}, a_i^*,\varepsilon_i)\,;\,c_{ij}<0\}
,\quad
\X_{\widetilde Q_{f,\neq i}}(V_{\neq i},W_{\neq i})=\{(\alpha_{jk}, a_j, a_j^*,\varepsilon_j)\,;\,j,k\neq i\}.$$
We define the varieties
$$
\frakM(v,W)_\heartsuit=\X(V,W)_\heartsuit\,/\,G_{V_i}
,\quad
\frakM(v,W)_\spadesuit=\widetilde\X(V,W)_s\,/\,G_{V_i}$$
where
\begin{align*}
\X(V,W)_\heartsuit=\widetilde\X(V_i,W_i\oplus V_{\circ i})_s\times\X_{\widetilde Q_{f,\neq i}}(V_{\neq i},W_{\neq i}).
\end{align*}
We have the following diagram
$$\xymatrix{\widetilde\frakM(v_i,W_i\oplus V_{\circ i})&\ar[l]_-p\frakM(v,W)_\heartsuit&\ar[l]_-{\iota}\frakM(v,W)_\spadesuit\ar[r]^\pi
&\widetilde\frakM(v,W)}$$
The map $p$ is induced by the first projection $\X(V,W)_\heartsuit\to\widetilde\X(V_i,W_i\oplus V_{\circ i})_s$.
It is a vector bundle.
The map $\iota$ is an open embedding.
The map $\pi$ is a principal bundle.
We abbreviate
$$\frakM(W)_\diamondsuit=\bigsqcup_{v_i\in\bbN}\widetilde\frakM(v_i,W_i\oplus V_{\circ i})
,\quad
\calZ(W)_\diamondsuit=\bigsqcup_{v_i^1,v_i^2\in\bbN}
\widetilde\calZ(v_i^1,v_i^2,W_i\oplus V_{\circ i})
,\quad
G_\diamondsuit=G_{W_i}\times G_{V_{\circ i}}.$$
By \cite[\S 11.3]{N00}, the diagram above yields an algebra homomorphism
$$K^{G_\diamondsuit\times T}\big(\calZ(W)_\diamondsuit\big)
\to
K^{G_W\times T}\big(\widetilde\calZ(W)\big)$$
Composing it with the algebra homomorphism $\Upsilon$ in \eqref{form4g}
yields the algebra homomorphism 
\begin{align}\label{form19}
K^{G_\diamondsuit\times T}\big(\calZ(W)_\diamondsuit\big)\to 
K_A\big(\widetilde\frakM(W)^2,(\tilde f_2)^{(2)}\big)_{\underline{\widetilde\calZ(W)}}.
\end{align}
Thus it is enough to check the relations in the left hand side of \eqref{form19}.
Note that $\frakM(W)_\diamondsuit$ is a triple quiver variety of type $A_1$.
Fix a maximal torus $T_\diamondsuit$ in $G_\diamondsuit$.
From the description of the fixed points locus $\frakM(W)_\diamondsuit^{T_\diamondsuit\times T}$
in \eqref{fixpoints}, it is not difficult to see that
$$\calZ(W)_\diamondsuit^{T_\diamondsuit\times T}=
\frakM(W)_\diamondsuit^{T_\diamondsuit\times T}\times\frakM(W)_\diamondsuit^{T_\diamondsuit\times T}$$
Hence, using Lemma \ref{lem:(T)} and the localization theorem in K-theory, we deduce that
there is an algebra embedding
$$K^{G_\diamondsuit\times T}\big(\calZ(W)_\diamondsuit\big)
\subset
\End_{F_{T_\diamondsuit\times T}}\Big(K^{T_\diamondsuit\times T}\big(\frakM(W)_\diamondsuit\big)
\otimes_{R_{T_\diamondsuit\times T}}F_{T_\diamondsuit\times T}\Big).$$
Hence it is enough to check the relations in the right hand side.
To do this, note that the classes of the tangent bundles in equivariant K-theory are
\begin{align*}
\begin{split}
T\frakM(W)_\diamondsuit&=(q_1^{-d_i}q_2^{-d_i}-1)\End(\calV_i)+q_1^{d_i}\Hom(\calV_i,\calW_i)+
q_2^{d_i}\Hom(\calW_i,\calV_i)
+\sum_{o_{ij}=1}q_1^{-b_{ij}}\Hom(\calV_i,\calV_j)\\
&\quad+\sum_{o_{ij}=1}q_2^{-b_{ij}}\Hom(\calV_j,\calV_i)\\
T\frakP(W)_\diamondsuit&=(q_1^{-d_i}q_2^{-d_i}-1)\calP_i+q_1^{d_i}\Hom(\calV^+_i,\calW_i)+
q_2^{d_i}\Hom(\calW_i,\calV^-_i)
+\sum_{o_{ij}=1}q_1^{-b_{ij}}\Hom(\calV^+_i,\calV_j)\\
&\quad+\sum_{o_{ij}=1}q_2^{-b_{ij}}\Hom(\calV_j,\calV_i^-)
\end{split}
\end{align*}
where $\calP_i$ is given by
\begin{align}\label{Pi}
\calP_i=\End(\calV^-_i)+\Hom(\calL_i,\calV^+_i)=\End(\calV^+_i)-\Hom(\calV^-_i,\calL_i).
\end{align}
The fixed points in $\frakM(W)_\diamondsuit$ for the action of the torus 
$T_\diamondsuit\times T$
are labelled by tuples of nonnegative integers
as in \eqref{fixpoints}.
We write
\begin{align*}
\frakM(v_i,W)_\diamondsuit^{T_\diamondsuit\times T}&=
\{\underline x_\lambda\,;\,\lambda\in\bbN^{w_i+v_{\circ i}}\,,\,|\lambda|=v_i\},\\
\frakM(v_i+\delta_i,W)_\diamondsuit^{T_\diamondsuit\times T}&=
\{\underline x_\mu\,;\,\mu\in\bbN^{w_i+v_{\circ i}}\,,\,|\mu|=v_i+\delta_i\}.
\end{align*}
Restricting the universal vector bundles to the fixed points, we abbreviate
\begin{align}\label{VL}
\calV_\lambda=\calV|_{\{\underline x_\lambda\}}
,\quad
\calL_{\lambda,\mu}=\calL|_{\{(\underline x_\lambda,\,\underline x_\mu)\}}.
\end{align}
Let 
$[\lambda]$ be the fundamental class
of $\{\underline x_\lambda\}$.
For any linear operator $A$, 
let $\langle \lambda|A|\mu\rangle$ be the coefficient of the basis element $[\lambda]$ 
in the expansion of $A[\mu]$ in the
basis $\{[\lambda]\,;\,\lambda\in\bbN^w\}$.
Recall the universal vector bundles
$\calV^+$, $\calV^-$ and $\calL=\calV^+/\,\calV^-$ on the Hecke correspondences
and on $\widetilde\calZ(W)$ introduced in \ref{sec:univbdl}.
To simplify the writing, from now one we will assume that $q_1=q_2$ and we write $q$ for both of them.
In other words, we compute the matrix coefficients in $F_{T_\diamondsuit\times\bbC^\times}$ rather than in
$F_{T_\diamondsuit\times T}$.
See  \S\ref{sec:triple quiver} for details.
Recall that $q_i=q^{d_i}$ for all $i\in I$.
Using the formula for the tangent vector bundle given above, 
we get the following formulas 
\begin{align*}
\begin{split}
\langle \lambda |A^-_{i,n}|\mu\rangle
&=\calL_{i,\lambda,\mu}^{\otimes n}\otimes
\Lambda_{-1}\Big((q^{-2}_i-1)\calV^\vee_{i,\lambda}\otimes\calL_{i,\lambda,\mu}+
q_i\calL_{i,\lambda,\mu}\otimes\calW_i^\vee+
\sum_{c_{ij}<0}q_i^{-c_{ij}}\calV_{j}^\vee\otimes\calL_{i,\lambda,\mu}\Big)\\
&=\ev_{u=\calL_{\lambda,\mu}}\Big(u^n
\Lambda_{-u}\big((q^{-2}_i-1)\calV^\vee_{i,\lambda}+
q_i\calW_i^\vee+
\sum_{c_{ij}<0}q_i^{-c_{ij}}\calV_{j}^\vee\big)\Big)\\
\langle \mu|A^+_{i,m}|\lambda\rangle
&=\calL_{i,\lambda,\mu}^{\otimes m}\otimes\Lambda_{-1}
\Big((1-q_i^{-2})\calL^\vee_{i,\lambda,\mu}\otimes\calV_{i,\mu}-
q_i\calL^\vee_{i,\lambda,\mu}\otimes\calW_i-
\sum_{c_{ij}<0}q_i^{-c_{ij}}\calL^\vee_{i,\lambda,\mu}\otimes\calV_{j}\Big)\\
&=(1-q_i^{-2})^{-1}\Res_{u=\calL_{\lambda,\mu}}\Big(u^{m-1}\Lambda_{-u^{-1}}
\big((1-q_i^{-2})\calV_{i,\lambda}
-q_i\calW_i-\sum_{c_{ij}<0}q_i^{-c_{ij}}\calV_{j}\big)\Big)
\end{split}
\end{align*}
Fix a second fixed point 
$\underline x_{\lambda'}$ in
$\frakM(v_i,W)_\diamondsuit$.
If $\lambda\neq\lambda'$, then we deduce that
\begin{align*}
\langle\lambda'| A^-_{i,n}A^+_{i,m}|\lambda\rangle
=\langle\lambda'| A^+_{i,m}A^-_{i,n}|\lambda\rangle.
\end{align*}
Let $v_\lambda$ be the dimension of the vector space $\calV_\lambda$. 
If $\lambda=\lambda'$, then we have
\begin{align*}
(1-q_i^{-2})\langle\lambda| A^-_{i,n}A^+_{i,m}|\lambda\rangle
&=(-u)^{w_i+v_{\circ i}}q_i^{(\alpha_i^\vee,w-\bfc v_\lambda)}\det\big(\calW_i+\calV_{\circ i}\big)^{-1}
\sum_\mu\Res_{u=\calV_\mu/\calV_\lambda}\Big(u^{m+n-1}\\
&\quad\Lambda_{-u^{-1}}\big(-(q_i-q_i^{-1})\calH_{i,1,\lambda}\big)\Big),\\
(1-q_i^{-2})\langle\lambda| A^+_{i,m}A^-_{i,n}|\lambda\rangle
&=-(-u)^{w_i+v_{\circ i}}q_i^{(\alpha_i^\vee,w-\bfc v_\lambda)}\det\big(\calW_i+\calV_{\circ i}\big)^{-1}
\sum_\mu\Res_{u=\calV_\lambda/\calV_\mu}\Big(u^{m+n-1}\\
&\quad\Lambda_{-u^{-1}}\big(-(q_i-q_i^{-1})\calH_{i,1,\lambda}\big)\Big).
\end{align*}
The sums are over all $\mu$'s such that $\ux_\lambda\subset\ux_\mu$ and 
$\ux_\mu\subset\ux_\lambda$
are of codimension $\delta_i$ respectively.
We deduce that 
$$(q_i-q_i^{-1})\,\langle \lambda'|[A^+_i(u),A^-_i(v)]|\lambda\rangle=
\delta_{\lambda,\lambda'}\delta(u/v)(\phi^+_{i,\lambda}(u)-\phi^-_{i,\lambda}(u)).$$
where
$$\phi_{i,\lambda}(u)=
(-1)^{w_i+v_{\circ i}}q_i^{1+(\alpha_i^\vee,w-\bfc v)}
\det\big(\calL_i^\vee\otimes\calW_i+\calL_i^\vee\otimes\calV_{\circ i}\big)^{-1}
\Lambda_{-u^{-1}}\big(-(q_i-q_i^{-1})\calH_{i,1,\lambda}\big)$$
Let $\phi^\pm_i(u)$ be the formal series of operators acting on
$K^A(\widetilde\frakM(v,W))$ 
by multiplication by the Fourier coefficients of the
expansions in non-negative powers of $u^{\mp 1}$ of the following rational function
$$\phi_i(u)=
(-1)^{w_i+v_{\circ i}}q_i^{1+(\alpha_i^\vee,w-\bfc v)}
\det\big(\calL_i^\vee\otimes\calW_i+\calL_i^\vee\otimes\calV_{\circ i}\big)^{-1}
\Lambda_{-u^{-1}}\big(-(q_i-q_i^{-1})\calH_{i,1}\big)$$
We have
\begin{align}\label{Apm}
(q_i-q_i^{-1})\,[A^+_i(u),A^-_i(v)]=
\delta(u/v)(\phi^+_i(u)-\phi^-_i(u)).
\end{align}
We define similarly
$$\psi_i(u)=
q_i^{(\alpha_i^\vee,w-\bfc v)}
\Lambda_{-u^{-1}}\big(-(q_i-q_i^{-1})\calH_{i,1}\big)$$
From \eqref{xpm} and \eqref{Apm} we deduce that
$$(q_i-q_i^{-1})[x^+_i(u)\,,\,x^-_i(v)]=\delta(u/v)\,(\psi^+_i(u)-\psi^-_i(u))$$
where $\psi^\pm_i(u)$ is the expansion of  $\psi_i(u)$
in non-negative powers of $u^{\mp 1}$. 
Recall that
$$q_i^{\rk(\calE)}\Lambda_{-u^{-1}}\big(-(q_i-q_i^{-1})\calE\big)=
q_i^{-\rk(\calE)}\Lambda_{-u}\big((q_i-q_i^{-1})\calE^\vee\big).$$
Hence, we also have
\begin{align}\label{ppsi+-}
\begin{split}
\psi_i^\pm(u)&=q_i^{\pm(\alpha_i^\vee,w-\bfc v)}
\,\Lambda_{-u^{\mp 1}}\big(\mp(q_i-q_i^{-1})\calH_{i,\pm 1}\big)^\pm.
\end{split}
\end{align}
Further, we have the following relation between wedges and Adams operations 
\begin{align}\label{WA}
\Lambda_{-u}(\calE)=\exp\Big(-\sum_{m>0}\psi^m(\calE)u^m/m\Big).
\end{align}
From \eqref{ppsi+-} and \eqref{WA} we deduce that the series 
$\psi_i^\pm(u)$ coincide with the series in \eqref{psi+-}.

\subsubsection{Proof of the relation $\mathrm{(A.6)}$ for $i\neq j$}\label{sec:423}
We will check that the 
elements $x_{i,m}^+$ and $x_{j,n}^-$ in the algebra $K^A(\widetilde\calZ(W))$
commute with each other.
This follows as in \cite[\S 10.2]{N00}
from the transversality result in Lemma \ref{lem:transverse3}.
Set $v_2=v_1+\delta_i=v_3+\delta_j$ and $v_4=v_1-\delta_j=v_3-\delta_i$.
Set
\begin{align*}
I_{v_1,v_2,v_3}
&=\big(\widetilde\frakP(v_1,v_2,W)\times\widetilde\frakM(v_3,W)\big)\cap
\big(\widetilde\frakM(v_1,W)\times\widetilde\frakP(v_2,v_3,W)\big),\\
I_{v_1,v_4,v_3}
&=\big(\widetilde\frakP(v_1,v_4,W)\times\widetilde\frakM(v_3,W)\big)\cap
\big(\widetilde\frakM(v_1,W)\times\widetilde\frakP(v_4,v_3,W)\big).
\end{align*}

\begin{lemma}\label{lem:transverse3}
\hfill
\begin{enumerate}[label=$\mathrm{(\alph*)}$,leftmargin=8mm]
\item 
The intersections $I_{v_1,v_2,v_3}$ and $I_{v_1,v_4,v_3}$ are both transversal
in $\widetilde\frakM(W)^3$.
\item
There is a $G_W\times\bbC^\times$-equivariant isomorphism 
$I_{v_1,v_2,v_3}\simeq I_{v_1,v_4,v_3}$ which intertwines the sheaves
$(\calL_i\boxtimes\calO)|_{I_{v_1,v_2,v_3}}$ and $(\calO\boxtimes\calL_i)|_{I_{v_1,v_4,v_3}}$,
and the sheaves
$(\calO\boxtimes\calL_j)|_{I_{v_1,v_2,v_3}}$ and $(\calL_j\boxtimes\calO)|_{I_{v_1,v_4,v_3}}$.
\end{enumerate}
\end{lemma}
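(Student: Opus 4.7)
The plan is to prove Part (b) first by an explicit construction, and then to deduce Part (a) from smoothness of the intersection together with a dimension count.

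For Part (b), I would define the isomorphism $\phi: I_{v_1,v_2,v_3} \to I_{v_1,v_4,v_3}$ by the rule $(x_1, x_2, x_3) \mapsto (x_1, x_4, x_3)$, where $x_4$ is constructed by intersection of subrepresentations. More precisely, the chosen embeddings $\tau_{1,2}, \tau_{3,2}$ realize $V_{v_1}$ and $V_{v_3}$ as subspaces of $V_{v_2}$. Since $V_{v_2}/V_{v_1}$ is one-dimensional and concentrated at vertex $i$ while $V_{v_2}/V_{v_3}$ is one-dimensional and concentrated at $j$, and since $i\neq j$, these two quotients live at distinct vertices. This forces $V_{v_1} + V_{v_3} = V_{v_2}$, so that $V_{v_4} := V_{v_1} \cap V_{v_3}$ is a subrepresentation of $x_2$ of the correct dimension $v_4 = v_1 - \delta_j = v_3 - \delta_i$, and setting $x_4 := x_2|_{V_{v_4}}$ yields a stable representation with the required inclusions $x_4 \subset x_1$ and $x_4 \subset x_3$. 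The inverse sends $(x_1, x_4, x_3)$ to the pushout $x_1 \cup_{x_4} x_3$ in the category of $\widetilde Q_f$-representations, which is automatically stable and recovers $x_2$.

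For the line-bundle intertwining, the snake lemma applied to the inclusions yields canonical isomorphisms $V_{v_2}/V_{v_1} \simeq V_{v_3}/V_{v_4}$ (supported at vertex $i$) and $V_{v_2}/V_{v_3} \simeq V_{v_1}/V_{v_4}$ (supported at vertex $j$); these identifications give precisely $\phi^*(\calO\boxtimes\calL_i) = \calL_i\boxtimes\calO$ and $\phi^*(\calL_j\boxtimes\calO) = \calO\boxtimes\calL_j$, which is the assertion.

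For Part (a), the Hecke correspondences $\widetilde\frakP(\pm\delta_i,W)$ and $\widetilde\frakP(\pm\delta_j,W)$ are smooth by the earlier lemma, so $A := \widetilde\frakP(v_1,v_2,W) \times \widetilde\frakM(v_3,W)$ and $B := \widetilde\frakM(v_1,W) \times \widetilde\frakP(v_2,v_3,W)$ are smooth subschemes of $\widetilde\frakM(W)^3$. The pushout/intersection description from Part (b) realizes $I_{v_1,v_2,v_3}$ as a smooth scheme, after which it suffices to verify the equality $\dim(A \cap B) = \dim A + \dim B - \dim \widetilde\frakM(W)^3$ to conclude transversality. The hardest part will be this bookkeeping: one computes tangent spaces of the Hecke correspondences at an intersection point using the standard deformation complex of $\widetilde Q_f$-representations and checks that, since $i \neq j$, the infinitesimal Hecke modification at vertex $i$ lies in an eigencomponent disjoint from the one at vertex $j$, so that $T_pA + T_pB = T_p \widetilde\frakM(W)^3$. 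With transversality for $I_{v_1,v_2,v_3}$ in hand, the isomorphism $\phi$ transports it to $I_{v_1,v_4,v_3}$.
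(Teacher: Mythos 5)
Your part (b) matches the paper's proof essentially verbatim: the same intersection $S_1\cap S_3$ construction for the forward map, the same pushout $V_1\oplus V_3/(\id\times\phi)(T_1)$ for the inverse, and the second isomorphism theorem (what you call the snake lemma) for the line bundle identifications. That part is fine.

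Part (a), however, has two genuine gaps. First, you claim that the isomorphism from part (b) shows $I_{v_1,v_2,v_3}$ is smooth, but it does not: part (b) only identifies $I_{v_1,v_2,v_3}$ with $I_{v_1,v_4,v_3}$ as (reduced) varieties, and you have no independent smoothness statement for either side to import. The pushout description of $I_{v_1,v_4,v_3}$ is a fiber product of Hecke correspondences over $\widetilde\frakM(v_4,W)$, and its smoothness is exactly as nontrivial as the transversality you are trying to establish. Your subsequent proposal to ``verify $T_pA+T_pB=T_p\widetilde\frakM(W)^3$'' is the correct thing to do and is what the paper does, but the ``eigencomponent disjoint'' heuristic is not a proof. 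The actual content is an explicit splitting: given any tangent vector (equivalently, any $x_2\in\widetilde\X(v_2,W)$ after reducing to the representation spaces), one uses the $I$-graded decomposition $V_2=\bbC^{\delta_j}\oplus(V_1\cap V_3)\oplus\bbC^{\delta_i}$ and the idempotent $p_1$ projecting onto $V_1$ to produce $x_1=p_1\alpha_2+(1-p_1)\alpha_2(1-p_1)$ (and analogous formulas for $a,a^*,\varepsilon$) with $x_1$ preserving $V_1\oplus W$, $x_3:=x_2-x_1$ preserving $V_3\oplus W$. This concrete construction is what makes $W_1+W_3=T_{\ux_2}\widetilde\frakM(v_2,W)$ hold; without it you have only an assertion.

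Second, your final step---``with transversality for $I_{v_1,v_2,v_3}$ in hand, the isomorphism $\phi$ transports it to $I_{v_1,v_4,v_3}$''---is not valid as stated. Transversality is a condition on the embedding of the scheme-theoretic intersection into the ambient $\widetilde\frakM(W)^3$, not an intrinsic property of the intersection. An isomorphism of (reduced) intersections does not by itself transport the equality $T_pA+T_pB=T_pX$ at the other triple of factors, nor the scheme structure of the intersection. The paper deals with this by running the tangent-space argument a second time for $I_{v_1,v_4,v_3}$ (``proved in a similar way''); you would need to do the same, or give a separate argument showing that $\phi$ respects the relevant normal bundle data.
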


\begin{proof} 
We first prove that the intersection $I_{v_1,v_2,v_3}$
is transversal at any point $(\ux_1,\ux_2,\ux_3)$. 
Let $\pi_i$ be the projection of
$\widetilde\frakM(v_1,W)\times\widetilde\frakM(v_2,W)\times\widetilde\frakM(v_3,W)$
to the $i$th factor along the other ones.
We abbreviate $\ux_{12}=(\ux_1,\ux_2)$ and $\ux_{23}=(\ux_2,\ux_3)$.
The Hecke correspondences
$\widetilde\frakP(v_1,v_2,W)$ and $\widetilde\frakP(v_2,v_3,W)$ are smooth.
Set
$$W_1=(d_{\ux_{12}}\pi_2)\big(\widetilde\frakP(v_1,v_2,W)\big)
 ,\quad
W_3=(d_{\ux_{23}}\pi_2)\big(\widetilde\frakP(v_2,v_3,W)\big).
$$
We claim that
$W_1+W_3=T_{\ux_2}\widetilde\frakM(v_2,W).$
The tangent space of $\widetilde\frakM(v_2,W)$ is
$$T_{\ux_2}\widetilde\frakM(v_2,W)=\widetilde\X(v_2,W)\,/\,\frakg_{v_2}\cdot \ux_2$$
and the tangent spaces of the Hecke correspondences are
\begin{align*}
T_{\ux_{12}}\widetilde\frakP(v_1,v_2,W)=\widetilde\X(v_1,v_2,W)\,/\,\frakp_{v_1,v_2}\cdot \ux_{12}
 ,\\
T_{\ux_{23}}\widetilde\frakP(v_2,v_3,W)=\widetilde\X(v_2,v_3,W)\,/\,\frakp_{v_2,v_3}\cdot \ux_{23}
\end{align*}
where 
\begin{align*}
&\widetilde\X(v_1,v_2,W)=\{y\in\widetilde\X(v_2,W)\,;\,y(V_1\oplus W)
\subseteq V_1\oplus W\},\\
&\widetilde\X(v_2,v_3,W)=\{y\in\widetilde\X(v_2,W)\,;\,y(V_3\oplus W)
\subseteq V_3\oplus W\}.
\end{align*}
It is enough to prove that
$$\pi_2(\widetilde\X(v_1,v_2,W))+\pi_2(\widetilde\X(v_2,v_3,W))=\widetilde\X(v_2,W).$$
To prove this recall that $i\neq j$. 
Hence we have 
$$V_2=\bbC^{\delta_j}\oplus(V_1\cap V_3)\oplus\bbC^{\delta_i}.$$
Let $p_1:V_2\to V_2$ be the projection along $\bbC^{\delta_i}$ 
onto $V_1=\bbC^{\delta_j}\oplus(V_1\cap V_3)$.
Fix any tuple $x_2=(\alpha_2,a_2,a_2^*,\varepsilon_2)\in\widetilde\X(v_2,W)$.
We define $x_1=(\alpha_1,a_1,a_1^*,\varepsilon_1)$ and $x_3=x_2-x_1$ with 
$$\alpha_1=p_1\alpha_2+(1-p_1)\alpha_2(1-p_1)
,\quad
a_1=a_2
,\quad
a_1^*=p_1a_2^*
,\quad
\varepsilon_1=p_1\varepsilon_2+(1-p_1)\varepsilon_2(1-p_1).$$
We have
$x_1\in\pi_2(\widetilde\X(v_1,v_2,W))$
and
$x_3\in\pi_2(\widetilde\X(v_2,v_3,W)).$
The transversality of
$I_{v_1,v_4,v_3}$
is proved in a similar way.

Next we prove (b). Let $\Gr(\delta_i,V)$ be the set of all 
$I$-graded subspaces of codimension $\delta_i$. We have
\begin{align*}
I_{v_1,v_2,v_3}=&\{(S_1,S_3,x)\,;\,x(S_1)\subset S_1\,,\,x(S_3)\subset S_3\}\,/\,G_{v_2}\\
I_{v_1,v_4,v_3}=&\{(T_1,T_3,x_1,x_3,\phi)\,;\,
x_1(T_1)\subset T_1\,,\,x_3(T_3)\subset T_3\,,\,\phi\circ x_1|_{T_1}=x_3|_{T_3}\circ\phi\}\,/\,
G_{v_1}\times G_{v_3}.
\end{align*}
where $(S_1,S_3,x)\in\Gr(\delta_i,V)\times\Gr(\delta_j,V)\times
\widetilde\X(V_2,W)_s$ and
\begin{align*}
(T_1,T_3,x_1,x_3,\phi)&\in\Gr(\delta_j,V_1)\times\Gr(\delta_i,V_3)\times
\widetilde\X(v_1,W)_s\times\widetilde\X(v_3,W)_s\times\Isom(T_1,T_3).
\end{align*}
The isomorphism $I_{v_1,v_2,v_3}\simeq I_{v_1,v_4,v_3}$ is given by 
\begin{align*}
(S_1\,,\,S_3\,,\,x)&\mapsto(S_1\cap S_3\,,\,S_1\cap S_3\,,\,x|_{S_1}\,,\,x|_{S_3}\,,\,\id_{S_1\cap S_3})\\
(T_1\,,\,T_3\,,\,x_1\,,\,x_3\,,\,\phi)&\mapsto(V'_1\,,\,V'_3\,,\,x')
\end{align*}
where $V'_2=V_1\oplus V_3/(\id\times\phi)(T_1)$, the subspaces
$V'_1, V'_3\subset V'_2$ are the images of $V_1$, $V_3$ in
$V'_2$, and
$x'$ is the image of $x_1\oplus x_3$ in $\widetilde\X(V'_2,W)$.
Note that $x|_{S_1}$, $x|_{S_3}$ and $x'$ are stable.
\end{proof}

\subsubsection{Proof of the relation $\mathrm{(A.5)}$ for $i=j$}\label{sec:424}
We will use the reduction to $Q=A_1$ and the algebra homomorphism
\eqref{form19} as in \S\ref{sec:A6=}.
For any $(w_i+v_{\circ i})$-tuples 
$\lambda_-$, $\lambda$ and $\lambda_+$ of weight $v_i$, $v_i+1$ and $v_i+2$ 
we consider the following formal series
\begin{align*}
A^\pm_{u,v}&=\sum_{m,n\in\bbZ}u^{-m}v^{-n}\langle \lambda_\pm |A^\pm_{i,m}|\lambda\rangle\,\langle \lambda |A^\pm_{i,n}|\lambda_\mp\rangle
,\\
A_{v,u}^\pm&=
\sum_{m,n\in\bbZ}u^{-m}v^{-n}
\langle\lambda_\pm |A^\pm_{i,n}|\lambda\rangle\,
\langle \lambda |A^\pm_{i,m}|\lambda_\mp\rangle.
\end{align*}
The formulas above for the matrix coefficients
$\langle \lambda |A^-_{i,n}|\mu\rangle$ and
$\langle \mu|A^+_{i,m}|\lambda\rangle$ yield
\begin{align*}
A_{u,v}^-
&=\frac{u-vq_i^{-2}}{u-v}\sum_{m,n\in\bbZ}
(\calL_{i,\lambda_-,\lambda}/u)^m(\calL_{i,\lambda,\lambda_+}/v)^n
\Lambda_{-v}\Big((q_i^{-2}-1)\calV_{i,\lambda_-}^\vee+
\sum_{c_{ij}<0}q_i^{-c_{ij}}\calV^\vee_j\Big)\\
&\quad\Lambda_{-u}\Big((q_i^{-2}-1)\calV_{i,\lambda_-}^\vee+
\sum_{c_{ij}<0}q_i^{-c_{ij}}\calV^\vee_j\Big),\\
A_{u,v}^+
&=\frac{q_i^{-2}u-v}{u-v}\sum_{m,n\in\bbZ}
(\calL_{i,\lambda,\lambda_+}/u)^{m}(\calL_{i,\lambda_-,\lambda}/v)^{n}
\Lambda_{-u^{-1}}\Big((1-q_i^{-2})\calV_{i,\lambda_+}-q_i\calW_i-\sum_{c_{ij}<0}q_i^{-c_{ij}}\calV_{j}\Big)\\
&\quad
\Lambda_{-v^{-1}}\Big((1-q_i^{-2})\calV_{i,\lambda_+}-q_i\calW_i-\sum_{c_{ij}<0}q_i^{-c_{ij}}\calV_{j}\Big)
\end{align*}
Thus, we have
$A_{u,v}^\pm=A_{v,u}^\pm\,g_{ii}(u/v)^{\pm 1}$, from which we deduce that
\begin{align}\label{drii}
A_i^\pm(u)A_i^\pm(v)=A_i^\pm(v)A_i^\pm(u)\,g_{ii}(u/v)^{\pm 1}.
\end{align}
The relation (A.5) for $i=j$ follows from \eqref{xpm} and \eqref{drii}.

\subsubsection{Proof of the relation $\mathrm{(A.5)}$ for $i\neq j$}\label{sec:425}
Set $v_2=v_1+\delta_i=v_3-\delta_j$, $v_4=v_1+\delta_j=v_3-\delta_i$.
Set also
\begin{align*}
&J_{v_1,v_2,v_3}=\big(\widetilde\frakP(v_1,v_2,W)\times\widetilde\frakM(v_3,W)\big)\cap
\big(\widetilde\frakM(v_1,W)\times\widetilde\frakP(v_2,v_3,W)\big),\\
&J_{v_1,v_4,v_3}=\big(\widetilde\frakP(v_1,v_4,W)\times\widetilde\frakM(v_3,W)\big)\cap
\big(\widetilde\frakM(v_1,W)\times\widetilde\frakP(v_4,v_3,W)\big).
\end{align*}
Since $i\neq j$, the restriction of the projection $\pi_{13}$ to $J_{v_1,v_2,v_3}$ and $J_{v_1,v_4,v_3}$ 
is an isomorphism onto its image. This yields the following isomorphisms
where $\alpha_{ij}$ and $\alpha_{ji}$ are given by the representation $\ux_3$ 
\begin{align*}
J_{v_1,v_2,v_3}
&=\{(\ux_1,\ux_3)\in\widetilde\frakM(v_1,W)\times\widetilde\frakM(v_3,W)\,;\,
\ux_1\subset \ux_3\,,\,\alpha_{ji}(V_{3,i})\subset V_{1,j}\},\\
J_{v_1,v_4,v_3}
&=\{(\ux_1,\ux_3)\in\widetilde\frakM(v_1,W)\times\widetilde\frakM(v_3,W)\,;\,
\ux_1\subset \ux_3\,,\,\alpha_{ij}(V_{3,j})\subset V_{1,i}\}.
\end{align*}
We view $\alpha_{ji}$ as a $G_W\times\bbC^\times$-equivariant section of the 
$G_W\times\bbC^\times$-equivariant vector bundle 
$$\calH om(\calV_{3,i}/\calV_{1,i},\calV_{3,j}/\calV_{1,j})$$ over
$J_{v_1,v_4,v_3}$. We view $\alpha_{ij}$ as a $G_W\times\bbC^\times$-equivariant section 
of the bundle $$\calH om(\calV_{3,j}/\calV_{1,j},\calV_{3,i}/\calV_{1,i})$$ over $J_{v_1,v_2,v_3}$. 
Let $s_{ji}$ and $s_{ij}$ denote these sections.
Let $Z(s_{ji})$ and $Z(s_{ij})$ be their 0 sets.

\begin{lemma}\label{lem:transverse4}
\hfill
\begin{enumerate}[label=$\mathrm{(\alph*)}$,leftmargin=8mm]
\item
The intersections $J_{v_1,v_2,v_3}$ and $J_{v_1,v_4,v_3}$ are both transversal
in $\widetilde\frakM(W)^3$.
\item
The sections $s_{ij}$ and $s_{ji}$ are both transverse to the zero section.
\item
The subsets $Z(s_{ji}), Z(s_{ij})\subset\widetilde\frakM(v_1,W)\times\widetilde\frakM(v_3,W)$  coincide.
\end{enumerate}
\end{lemma}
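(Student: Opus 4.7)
The plan is to follow the template of Lemma \ref{lem:transverse3}, supplementing its tangent-space argument with a direct deformation computation for the sections $s_{ij}$, $s_{ji}$.

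For (a), I will mimic the proof of Lemma \ref{lem:transverse3}(a). At a point $(\ux_1,\ux_2,\ux_3)\in J_{v_1,v_2,v_3}$, transversality of the two smooth subvarieties reduces, after identifying the tangent spaces of the Hecke correspondences and of $\widetilde\frakM(v_2,W)$ as quotients of $\widetilde\X$-spaces by stabilizer Lie algebra orbits exactly as in the previous lemma, to the linear identity
$$\pi_2(\widetilde\X(v_1,v_2,W))+\pi_2(\widetilde\X(v_2,v_3,W))=\widetilde\X(v_2,W).$$
Since $i\neq j$ we have graded direct-sum decompositions $V_2=V_1\oplus\bbC^{\delta_i}$ and $V_3=V_2\oplus\bbC^{\delta_j}$ sitting on distinct vertices, and the explicit splitting $x_2\mapsto(x_1,x_2-x_1)$ constructed in the proof of Lemma \ref{lem:transverse3} from the projection $p_1:V_2\to V_1$ along the extra $i$-summand goes through verbatim. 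Transversality of $J_{v_1,v_4,v_3}$ follows by exchanging $i\leftrightarrow j$.

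For (c), I will unravel the defining conditions. A point $(\ux_1,\ux_3)\in J_{v_1,v_4,v_3}$ satisfies $\ux_1\subset\ux_3$ and $\alpha_{ij}(V_{3,j})\subset V_{1,i}$, and the vanishing of $s_{ji}$ adds $\alpha_{ji}(V_{3,i})\subset V_{1,j}$. Symmetrically, a point $(\ux_1,\ux_3)\in J_{v_1,v_2,v_3}$ satisfies $\ux_1\subset\ux_3$ and $\alpha_{ji}(V_{3,i})\subset V_{1,j}$, and $s_{ij}=0$ imposes $\alpha_{ij}(V_{3,j})\subset V_{1,i}$. Both zero loci are therefore cut out by the same pair of inclusions inside $\widetilde\frakM(v_1,W)\times\widetilde\frakM(v_3,W)$, once we identify $J_{v_1,v_2,v_3}$ and $J_{v_1,v_4,v_3}$ with their $\pi_{13}$-images.

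For (b), the main ingredient is an explicit first-order deformation. Fix $(\ux_1,\ux_3)\in Z(s_{ji})\subset J_{v_1,v_4,v_3}$ and choose representatives $e\in V_{3,i}$, $f\in V_{3,j}$ of generators of the one-dimensional quotients $V_{3,i}/V_{1,i}$ and $V_{3,j}/V_{1,j}$. Perturbing $\ux_3$ by adding to $\alpha_{ji}$ the rank-one operator sending $e\mapsto f$ and vanishing on $V_{1,i}$ shifts $s_{ji}$ by the nonzero class $f+V_{1,j}$ of the fiber line bundle; it therefore suffices to verify that this perturbation lies in $TJ_{v_1,v_4,v_3}$. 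But the $J_{v_1,v_4,v_3}$-condition constrains $\alpha_{ij}$, not $\alpha_{ji}$, and since $i\neq j$ these two components of the representation are independent, so the perturbation is indeed tangent to $J_{v_1,v_4,v_3}$. The identical construction with $(i,j)$ swapped handles $s_{ij}$. The only delicate point is this final bookkeeping check, which is precisely where the hypothesis $i\neq j$ is essential.
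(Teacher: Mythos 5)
Your proposal is correct in all three parts and follows essentially the same strategy as the paper, with one place where you overlook a simplification and one place where you substitute an equivalent argument.

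For part (a), you propose to port the $p_1$-splitting trick from Lemma~\ref{lem:transverse3} verbatim, but the situation here is strictly easier and that trick is not needed. In Lemma~\ref{lem:transverse3} both $V_1$ and $V_3$ are codimension-one subspaces of the middle space $V_2$, and one must split an arbitrary $x_2\in\widetilde\X(V_2,W)$ into a piece preserving $V_1$ plus a piece preserving $V_3$. In the present lemma the spaces are nested, $V_1\subset V_2\subset V_3$, and the second Hecke correspondence $\widetilde\frakP(v_2,v_3,W)$ projects to $\widetilde\frakM(v_2,W)$ by forgetting the ambient $V_3$; since every representation of $\widetilde Q_f$ on $V_2\oplus W$ extends to $V_3\oplus W$ by zero, one already has $\pi_2(\widetilde\X(V_2,V_3,W))=\widetilde\X(V_2,W)$ with no splitting required. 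This is the paper's (one-line) argument and it is cleaner; your version would also work but is unnecessarily elaborate.

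For part (b), the paper shows transversality by exhibiting a surjection from a slice of the tangent space onto the fiber $\Hom(V_{3,j}/V_{1,j},V_{3,i}/V_{1,i})$, sending $y\mapsto\alpha_{ij}$. You instead produce one explicit tangent direction (a rank-one perturbation of $\alpha_{ji}$ vanishing on $V_{1,i}$) whose image is nonzero. Because $v_3-v_1=\delta_i+\delta_j$, both $V_{3,i}/V_{1,i}$ and $V_{3,j}/V_{1,j}$ are one-dimensional, the bundle is a line bundle, and a single nonzero vertical derivative suffices; so your deformation argument is equivalent to the surjectivity statement. One remark on your justification: the condition ``$\ux_1\subset\ux_3$'' in the definition of $J_{v_1,v_4,v_3}$ \emph{does} constrain $\alpha_{ji}$, namely $\alpha_{ji}(V_{1,i})\subset V_{1,j}$; your sentence ``the $J_{v_1,v_4,v_3}$-condition constrains $\alpha_{ij}$, not $\alpha_{ji}$'' is therefore imprecise, but the perturbation you built was (correctly) chosen to vanish on $V_{1,i}$, so the conclusion stands. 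Part (c) matches the paper's (omitted) reasoning exactly.
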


\begin{proof}
We first prove that the intersection $J_{v_1,v_2,v_3}$
is transversal at any point $(\ux_1,\ux_2,\ux_3)$. 
The transversality of $J_{v_1,v_4,v_3}$ can be proved in a similar way.
Let $\pi_i$ be the projection of
$\widetilde\frakM(v_1,W)\times\widetilde\frakM(v_2,W)\times\widetilde\frakM(v_3,W)$
to the $i$th factor along the other ones.
Set $\ux_{12}=(\ux_1,\ux_2)$ and $\ux_{23}=(\ux_2,\ux_3)$.
The Hecke correspondences
$\widetilde\frakP(v_1,v_2,W)$ and $\widetilde\frakP(v_2,v_3,W)$ are smooth.
Set
$$W_1=(d_{\ux_{12}}\pi_2)\big(\widetilde\frakP(v_1,v_2,W)\big)
 ,\quad
W_3=(d_{\ux_{23}}\pi_2)\big(\widetilde\frakP(v_2,v_3,W)\big).
$$
We claim that
$W_1+W_3=T_{\ux_2}\widetilde\frakM(v_2,W).$
The tangent space of $\widetilde\frakM(v_2,W)$ is
$$T_{\ux_2}\widetilde\frakM(v_2,W)=\widetilde\X(V_2,W)\,/\,\frakg_{V_2}\cdot \ux_2.$$
The tangent spaces of the Hecke correspondences are
\begin{align*}
T_{\ux_{12}}\widetilde\frakP(v_1,v_2,W)=\widetilde\X(V_1,V_2,W)\,/\,\frakp_{V_1,V_2}\cdot \ux_{12}
 ,\\
T_{\ux_{23}}\widetilde\frakP(v_2,v_3,W)=\widetilde\X(V_2,V_3,W)\,/\,\frakp_{V_2,V_3}\cdot \ux_{23}
\end{align*}
Here $\widetilde\X(V_1,V_2,W)$ is the vector subspace of $\widetilde\X(V_2,W)$ given by
\begin{align*}
\widetilde\X(V_1,V_2,W)=\{y\in\widetilde\X(V_2,W)\,;\,y(V_1\oplus W)
\subseteq V_1\oplus W\},
\end{align*}
The vector subspace $\widetilde\X(V_2,V_3,W)\subset\widetilde\X(V_3,W)$ is defined similarly.
It is enough to prove that
$$\pi_2(\widetilde\X(V_1,V_2,W))+\pi_2(\widetilde\X(V_2,V_3,W))=\widetilde\X(V_2,W).$$
This is obvious because
$\pi_2(\widetilde\X(V_2,V_3,W))=\widetilde\X(V_2,W).$

Now, we concentrate on (b).
We must check that the section $s_{ij}$ of 
$\calH om(\calV_{3,j}/\calV_{1,j},\calV_{3,i}/\calV_{1,i})$ over
$J_{v_1,v_2,v_3}$ is transverse to the zero section. 
It is enough to prove that the map
\begin{align*}
\{y\in\widetilde\X(V_1,V_3,W)\,;\,y(V_1\oplus W)\subseteq V_1\oplus W\,,\,
y(V_{3,i})\subset V_{1,j}\}\to\Hom(V_{3,j}/V_{1,j},V_{3,i}/V_{1,i})
,\quad
y\mapsto\alpha_{ij}
\end{align*}
is surjective. This is obvious.
The proof for $s_{ji}$ is done similarly exchanging $i$ and $j$.

Finally, Claim (c) is obvious.
\end{proof}

Recall the rational function $g_{ij}(u)$ in \eqref{gij}.
We define the function $h_{ij}$ such that
$$h_{ij}(u)=-u\,g_{ij}(u)\ \text{if}\ c_{ij}<0
,\quad
h_{ij}(u)=1\ \text{if}\ c_{ij}=0.$$
Using Proposition \ref{prop:TTK} (b), Lemma \ref{lem:transverse4} and the projection formula, we deduce that the formula
\begin{align*}
\calO_{Z(s_{ij})}\otimes(A_i^+(u)\star A_j^+(v))=\calO_{Z(s_{ji})}\otimes(A_j^+(v)\star A_i^+(u))
\end{align*}
follows  as in \cite[\S10.3]{N00}.
Here we have
\begin{align*}
A_i^+(u)\star A_j^+(v)\,,\,A_j^+(v)\star A_i^+(u)&\in
K_A(\widetilde\frakM(W)^2,(\tilde f_\gamma)^{(2)})_{\underline{\widetilde\calZ(W)}}[[u,u^{-1},v,v^{-1}]],\\
\calO_{Z(s_{ij})}\,,\,\calO_{Z(s_{ji})}&\in K_A(\widetilde\frakM(W)^2).
\end{align*}
and the tensor product is as in \eqref{otimes}.
Hence, we have
\begin{align}\label{dr+}
A_i^+(u)A_j^+(v)=A_j^+(v)A_i^+(u)\,h_{ij}(u/v)
\end{align}
In a similar way we prove that
\begin{align}\label{dr-}A_i^-(u)A_j^-(v)=A_j^-(v)A_i^-(u)\,h_{ij}(u/v)^{-1}.
\end{align}
The relation (A.5) for $i\neq j$ follows from \eqref{xpm}, \eqref{dr+} and  \eqref{dr-}.

\subsubsection{Proof of the relation $\mathrm{(A.7)}$}\label{sec:A7}
We have
$$K_A\big(\widetilde\frakM(v,W),\tilde f_\gamma\big)=0$$
for all but finitely many $v$'s by Remark \ref{rem:fd}.
Hence, the operator $x^\pm_{i,0}$ acting on 
$$K_A(\widetilde\frakM(W)^2,(\tilde f_\gamma)^{(2)})_{\underline{\widetilde\calZ(W)}}$$
is locally nilpotent and the operators $\psi_{i,0}^+$, $\psi_{i,0}^-$ 
are diagonalizable and inverse one to each other.
Thus, the constant term of the relation (A.7), i.e.,  the relation
$$\sum_{r=0}^{s}(-1)^r(x^\pm_{i,0})^{[r]}\,x^\pm_{j,0}\,(x^\pm_{i,0})^{[s-r]}=0
,\quad
s=1-c_{ij}
,\quad
i\neq j$$
can be deduced from the relation (A.6). The relation (A.7)
can be derived from this constant term as in \cite[\S10.4]{N00}.

\subsubsection{End of the proof of Theorem $\ref{thm:notshifted}$}
We have proved that the $F$-algebra homomorphism
\begin{align*}
\U_F(L\frakg)\to 
K_A\big(\widetilde \frakM(W)^2,(\tilde f_\gamma)^{(2)}\big)_{\underline{\tilde\calZ(W)}}\otimes_RF
\end{align*}
in \eqref{map8} is well-defined. The compatibility with the 
$R$-lattices follows from the formulas \eqref{psi+-} and \eqref{xpm}.
More precisely, the $R$-subalgebra $\U_R(L\frakg)$ of $\U_F(L\frakg)$ is generated by 
the elements
$$\psi_{i,0}^\pm
,\quad
h_{i,\pm m}/[m]_{q_i}
,\quad
(x^\pm_{i,n})^{[m]},
\quad
i\in I, n\in\bbZ, m\in\bbN^\times$$
in \eqref{LGR}.
By \eqref{H} and \eqref{psi+-} 
the map \eqref{map8} takes $h_{i,\pm m}/[m]_{q_i}$ into the subset
$$K_A\big(\widetilde\frakM(W)^2,(\tilde f_\gamma)^{(2)}\big)
_{\underline{\widetilde\calZ(W)}}\,/\,\tor\,\subset\,
K_A\big(\widetilde\frakM(W)^2,(\tilde f_\gamma)^{(2)}\big)
_{\underline{\widetilde\calZ(W)}}\otimes_RF.$$
Using \eqref{xpm}, a computation similar to the proof of
\cite[thm.~12.2.1]{N00} or \cite[lem.~2.4.8]{VV22}, 
shows that \eqref{map8} maps $(x^\pm_{i,n})^{[m]}$ to the same lattice.

\subsubsection{Comparison with the Nakajima construction}\label{sec:rem}
Recall that $R=R_{\bbC^\times}$ and $F=F_{\bbC^\times}$. Let
$$K_{G_W\times\bbC^\times}\big(\widetilde\frakM(W)^2,(\tilde f_\gamma)^{(2)}\big)_{\widetilde\calZ(W)}/\tor
\subset K_{G_W\times\bbC^\times}\big(\widetilde\frakM(W)^2,(\tilde f_\gamma)^{(2)}\big)_{\widetilde\calZ(W)}
\otimes_RF$$ be the image of
$K_{G_W\times\bbC^\times}\big(\widetilde\frakM(W)^2,(\tilde f_\gamma)^{(2)}\big)_{\widetilde\calZ(W)}$.
The proof also yields a commutative triangle of $R$-algebras
$$
\xymatrix{
\U_R(L\frakg)\ar[r]\ar[dr]& 
K_{G_W\times\bbC^\times}\big(\widetilde\frakM(W)^2,(\tilde f_\gamma)^{(2)}\big)_{\widetilde\calZ(W)}/\tor
\ar[d]\\
&K_A\big(\widetilde\frakM(W)^2,(\tilde f_\gamma)^{(2)}\big)_{\widetilde\calZ(W)}/\tor
}$$
where the vertical map is given by the forgetful homomorphism.

Now, let us assume that the Cartan matrix is symmetric. 
Let $\frakM(W)$ be the Nakajima quiver variety associated with the quiver $Q$ 
and the $I$-graded vector space $W$.
The representation variety $\overline\X(V,W)$ is holomorphic symplectic with an Hamiltonian action of
the groups $G_V$ and $G_W$.
Let
$\mu:\overline\X(V,W)\to\frakg^\vee_V$ 
be the moment map for the $G_V$-action.
Set
\begin{align*}
\overline\X(V,W)_s=\{x\in\overline\X(V,W)\,;\,x\ \text{is\ stable}\}
,\quad
\mu^{-1}(0)_s=\overline\X(V,W)_s\cap\mu^{-1}(0).
\end{align*}
The Nakajima quiver variety is the good quotient 
$\frakM(v,W)=\mu^{-1}(0)_s/G_V.$
Let $\calZ(W)$ be the corresponding Steinberg variety in $\frakM(W)\times\frakM(W)$.
The equivariant complexified Grothendieck group $K^{G_W\times\bbC^\times}\big(\calZ(W)\big)$
has an $R_{G_W\times\bbC^\times}$-algebra structure and
$K^{G_W\times\bbC^\times}\big(\frakM(W)\big)$ is a module over this algebra. See \cite{N00} for details.

Now, we choose the nilpotent matrix $\gamma$ to be 0.
In this case we have $\tilde f_\gamma=\tilde f_1$ by \eqref{f6}.
Then the following holds.

\begin{proposition}\label{prop:crit1}
\hfill
\begin{enumerate}[label=$\mathrm{(\alph*)}$,leftmargin=8mm]
\item The extension by zero yields an isomorphism
$\frakM(W)=\crit(\tilde f_1).$

\item
We have an algebra isomorphism
\begin{align*}
K_{G_W\times\bbC^\times}\big(\widetilde\frakM(W)^2,(\tilde f_1)^{(2)}\big)_{\widetilde\calZ(W)}=
K^{G_W\times\bbC^\times}\big(\calZ(W)\big).
\end{align*}
\item
We have the module isomorphism
\begin{align*}
K_{G_W\times\bbC^\times}\big(\widetilde\frakM(W),\tilde f_1\big)
=K^{G_W\times\bbC^\times}\big(\frakM(W)\big).
\end{align*}

\end{enumerate}
\end{proposition}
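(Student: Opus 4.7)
For part (a) I will compute the critical locus of $\tilde f_1$ directly from the cyclic derivatives of $\bfw_1$, using essentially that the Cartan matrix is symmetric. Since $-c_{ij}\in\{0,1\}$ the potential $\bfw_1$ is linear in each variable $\varepsilon_i$ and one may write
$$\tilde f_1(\alpha,a,a^*,\varepsilon)=\sum_{i\in I}\Tr_{V_i}\bigl(\varepsilon_i\,\mu_i(\alpha,a,a^*)\bigr),$$
where $\mu_i=\sum_j\sgn_{ij}\alpha_{ij}\alpha_{ji}+a_i^*a_i$ is the $i$-th component of the standard moment map $\mu:\overline\X(V,W)\to\frakg_V^\vee$ for the $G_V$-action. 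The cyclic derivatives then read $\partial_{\varepsilon_i}\bfw_1=\mu_i$, $\partial_{\alpha_{ij}}\bfw_1=\sgn_{ij}(\alpha_{ji}\varepsilon_i-\varepsilon_j\alpha_{ji})$, $\partial_{a_i}\bfw_1=\varepsilon_i a_i^*$ and $\partial_{a_i^*}\bfw_1=a_i\varepsilon_i$. At a critical point the subspace $V':=\sum_{i\in I}\Im(\varepsilon_i)\subset V$ is $\alpha$-invariant by the second set of equations, contained in $\ker a$ by the third, and trivially $\varepsilon$-invariant, so the $\widetilde Q_f$-stability assumption forces $V'=0$, i.e.\ $\varepsilon=0$, after which the first set of equations reduces to $\mu=0$. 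Conversely, for any Nakajima-stable $(\alpha,a,a^*)\in\mu^{-1}(0)$ the point $(\alpha,a,a^*,0)$ is critical and $\widetilde Q_f$-stable, and when $\varepsilon=0$ the two stability notions coincide. This yields the scheme-theoretic identification $\crit(\tilde f_1)=\frakM(W)$.

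For (b) and (c) I will apply the dimensional reduction \eqref{DDR}. Writing $E=\bigoplus_i\End(V_i)\simeq\frakg_V$ for the $\varepsilon$-factor, one has $\widetilde\X(v,W)=\overline\X(v,W)\times E$ and $\tilde f_1$ is the canonical pairing with $\mu$, viewed as a section of the trivial vector bundle $\overline\X(v,W)\times\frakg_V^\vee$; this section is regular on the Nakajima-stable locus $\overline\X(v,W)_s$ since by Crawley--Boevey the scheme $\mu^{-1}(0)_s$ is a complete intersection of the expected codimension $\dim\frakg_V$. Because $\overline Q_f$-stability implies $\widetilde Q_f$-stability, the inclusion $(\overline\X(v,W)_s\times E)/G_V\hookrightarrow\widetilde\frakM(v,W)$ is an open immersion, and by (a) this open subscheme contains the critical locus; hence \eqref{excision} together with \eqref{crit} identify the critical $K$-theories on the two spaces. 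Applying \eqref{DDR} on the open subset and descending along the free $G_V$-quotient then gives
$$K_{G_W\times\bbC^\times}\bigl(\widetilde\frakM(v,W),\tilde f_1\bigr)\simeq K^{G_W\times\bbC^\times}\bigl(\mu^{-1}(0)_s/G_V\bigr)=K^{G_W\times\bbC^\times}\bigl(\frakM(v,W)\bigr),$$
which proves (c). Part (b) follows by running the same argument on $\widetilde\frakM(W)^2$ with the potential $(\tilde f_1)^{(2)}=\tilde f_1\boxplus(-\tilde f_1)$: the critical locus is $\frakM(W)\times\frakM(W)$, and its intersection with the Steinberg variety $\widetilde\calZ(W)$ is the usual Nakajima Steinberg $\calZ(W)=\frakM(W)\times_{\frakM_0(W)}\frakM(W)$; the identification of convolution products under the dimensional reduction is then ensured by Proposition \ref{prop:critalg1}(d) and Lemma \ref{lem:Upsilon}, which guarantee that $\Upsilon$ intertwines the two convolution structures.

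The main obstacles are threefold: first, verifying that $\mu$ is a regular section on the Nakajima-stable open subset and that $G_V$ acts with the free-enough stabilizers needed to descend \eqref{DDR} to the GIT quotient; second, matching the fiber-product support $\widetilde\calZ(W)$ with $\calZ(W)$ on the critical stratum, which amounts to comparing the affine bases $\widetilde\frakM_0(W)$ and $\frakM_0(W)$ in a neighborhood of $\crit(\tilde f_1)$; and third, checking that, under the composite of excision, dimensional reduction, and $\Upsilon$, the critical convolution product on $K_{G_W\times\bbC^\times}(\widetilde\frakM(W)^2,(\tilde f_1)^{(2)})_{\widetilde\calZ(W)}$ is transported precisely to the classical Nakajima convolution on $K^{G_W\times\bbC^\times}(\calZ(W))$.
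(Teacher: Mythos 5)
Your proof is correct and follows essentially the same route as the paper's: for (a), the cyclic derivatives of $\bfw_1$ force $\Im(\varepsilon)$ to be a $\widetilde Q$-invariant subspace contained in $\ker a$, so stability gives $\varepsilon=0$ and then $\mu=0$; for (b) and (c), you excise down to the open locus where $(\alpha,a,a^*)$ is $\overline Q_f$-stable and apply the dimensional reduction \eqref{DDR} along the $\varepsilon$-direction (equivalently, along the moment-map section of $\frakg_V^\vee$). Your treatment is in fact a bit more explicit than the paper's in two places: you invoke Crawley--Boevey to justify that $\mu$ is a regular section on $\overline\X(V,W)_s$, which is a genuine hypothesis in the dimensional-reduction theorem that the paper leaves implicit; and you explain that compatibility of the convolution products for (b) is handled by Proposition~\ref{prop:critalg1}(d) and Lemma~\ref{lem:Upsilon}, where the paper only says ``in a similar way.'' Both additions are correct and welcome.
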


\begin{proof}
We have 
$$\{(x,\varepsilon)\in\widetilde\X(V,W)_s\,;\,
[\varepsilon,x]=\mu(x)=0\}\,/\,G_V=\crit(\tilde f_1)\cap\widetilde\frakM(v,W).$$
For any tuple $(x,\varepsilon)$ as above,
the subspace $\Im(\varepsilon)$ of $V$ is preserved by the
action of the path algebra $\bbC\widetilde Q$ of $\widetilde Q$, 
and it is contained in the kernel of $a$.
Hence, we have $\varepsilon=0$ and $x\in\mu_V^{-1}(0)_s$.
Thus, the assignment $x\mapsto (x,0)$ yields an isomorphism
$\frakM(W)=\crit(\tilde f_1)$, proving Part (a). 
Now, we prove Part (b).
Forgetting the arrows $\varepsilon_i$ for each $i\in I$ yields a map
$\widetilde\X(v,W)\to \overline\X(V,W)$.
Let $\widetilde\X(v,W)_\circ$ be the inverse image of the open subset $\overline\X(V,W)_s$.
We consider the open subset of $\widetilde\frakM(W)$ given by
\begin{align*}
\widetilde\frakM(W)_\circ=\bigsqcup_{v\in\bbN I}\widetilde\frakM(v,W)_\circ
,\quad
\widetilde\frakM(v,W)_\circ=\widetilde\X(v,W)_\circ/G_v.
\end{align*}
By (a) we have
$\crit(\tilde f_1)\subset\widetilde\frakM(W)_\circ$.
Hence \eqref{excision} and \eqref{crit} yield
$$K_{G_W\times\bbC^\times}\big(\widetilde\frakM(W),\tilde f_1\big)=
K_{G_W\times\bbC^\times}\big(\widetilde\frakM(W)_\circ,\tilde f_1\big).$$
Next, we use the dimensional reduction in K-theory. 
More precisely, we apply \cite{I12} or \cite[thm.~1.2]{H17b}
to the vector bundle
$\widetilde\frakM(W)_\circ\to\big\{\ux\in\widetilde\frakM(W)_\circ\,;\,
\varepsilon=0\big\}$
given by forgetting the variables $\varepsilon_i$ for each $i\in I$. 
We have
\begin{align*}
\frakM(W)&=\big\{\ux\in\widetilde\frakM(W)_\circ\,;\,
\varepsilon=0\,,\,\partial\tilde f_1/\partial\varepsilon(\ux)=0\big\}
\end{align*}
We deduce that
$$K_{G_W\times\bbC^\times}\big(\widetilde\frakM(W)_\circ,\tilde f_1\big)
=K^{G_W\times\bbC^\times}\big(\frakM(W)\big).$$
In a similar way we prove that
\begin{align*}
K_{G_W\times\bbC^\times}\big(\widetilde\frakM(W)^2,(\tilde f_1)^{(2)}\big)_{\widetilde\calZ(W)}
&=
K_{G_W\times\bbC^\times}\big(\widetilde\frakM(W)_\circ^2,(\tilde f_1)^{(2)}
\big)_{\widetilde\calZ(W)\cap\widetilde\frakM(W)_\circ^2}\\
&=
K^{G_W\times\bbC^\times}\big(\calZ(W)\big).
\end{align*}
\end{proof}

\subsection{K-theoretic critical convolution algebras and shifted quantum loop groups}\label{sec:w2}
Let $\sigma:\bbC^\times\to G_W$ be a cocharacter compatible with an $I^\bullet$-grading on $W$
as in \eqref{sigma1}.
Let $A\subset G_W\times\bbC^\times$ be as in \S\ref{sec:GQVD}.
Since the set of stable points 
does not depend on the variable $a^*$. Forgetting $a^*$ yields a vector bundle
\begin{align}\label{VB}
\nu:\widetilde\frakM(W)\to\widehat\frakM(W).
\end{align}
Since the potential $\bfw_2$ does not depend on $a^*$ either,
we have $\tilde f_2=\hat f_2\circ\nu$.
The Thom isomorphism in Proposition \ref{prop:TTK} (b) yields an isomorphism
$$K_A(\widehat\frakM(W),\hat f_2)=
K_A(\widetilde\frakM(W),\tilde f_2).$$

\begin{theorem}\label{thm:shifted}
\hfill
\begin{enumerate}[label=$\mathrm{(\alph*)}$,leftmargin=8mm]
\item
There is an $R$-algebra homomorphism
$\U_R^{-w}(L\frakg)\to K_A\big(\widehat\frakM(W)^2,(\hat f_2)^{(2)}\big)_{\widehat\calZ(W)}\,/\,\tor$
which takes the central element $\psi^+_{i,0}\,\psi^-_{i,-w_i}$ to $(-1)^{w_i}q_i^{-w_i}\det(W_i)^{-1}$ for each 
vertex $i\in I$. 
\item
The $R$-algebra $\U_R^{-w}(L\frakg)$ acts on 
$K_A(\widehat\frakM(W),\hat f_2)_{\widehat\frakL(W)}\,/\,\tor$ and
$K_A(\widehat\frakM(W),\hat f_2)\,/\,\tor$.
\end{enumerate}
\end{theorem}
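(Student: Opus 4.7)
The plan is to mirror the structure of the proof of Theorem~\ref{thm:notshifted}, adapted to the simply framed setting and to the shifted algebra. First I would define the map on generators of $\U_R^{-w}(L\frakg)$ by formulas analogous to \eqref{psi+-} and \eqref{xpm}, but using the universal bundles $\calV_i$, $\calW_i$ and the classes $\calL_i$ supported on the simply-framed Hecke correspondences $\widehat\frakP(\pm\delta_i, W)$ introduced in \S\ref{sec:hat}, with $\Upsilon$ factoring through $K^A(\widehat\calZ(W))$. The essential structural difference is that, since the simple framing drops the arrow $a^*$, the $a^*$-contribution to the tangent bundle of $\widehat\frakM(W)$ (hence to the matrix coefficients of $A^-_{i,n}$) disappears, so the series $\psi^-_i(u)$ becomes a polynomial in $u$ of degree exactly $w_i$, rather than a formal power series.

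Second, I would check the defining relations of $\U_R^{-w}(L\frakg)$ from Appendix~A. Most relations reduce, via the same torus-fixed-point and reduction-to-$A_1$ strategy used in \S\ref{sec:A6=}--\S\ref{sec:425}, to essentially the same computations carried out in the non-shifted case. The transversality statements in Lemmas~\ref{lem:transverse3} and~\ref{lem:transverse4} have direct analogues for $\widehat\frakP(\pm\delta_i,W)$ because the condition $a^* = 0$ is preserved under the maps defining the Hecke correspondences and their intersections. The Serre relation (A.7) follows from the constant term of (A.6) as in \S\ref{sec:A7}, using the local nilpotency of $x^\pm_{i,0}$ granted by a finite-dimensionality statement for $K_A(\widehat\frakM(v,W),\hat f_2)$ analogous to Remark~\ref{rem:fd}.

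Third, I would verify the new content specific to the shifted case: the image of the central element $\psi^+_{i,0}\psi^-_{i,-w_i}$. After reducing to $A_1$ via an analogue of \eqref{form19}, the matrix-coefficient formula for $\langle\lambda|A^-_{i,n}|\mu\rangle$---with the $q_i\calL_{i,\lambda,\mu}\otimes\calW_i^\vee$ factor absent because $a^*=0$---shows that $\psi^-_i(u)$ truncates to a polynomial of degree $w_i$ whose leading term, multiplied by $\psi^+_{i,0}$, equals $(-1)^{w_i}q_i^{-w_i}\det(W_i)^{-1}$ by a direct $\Lambda_{-u^{-1}}$ computation as in \eqref{ppsi+-}. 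Centrality of this scalar product is automatic, since it only depends on $W$ and not on the rank component $v$.

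The main obstacle is Step~three: one must carefully bookkeep which terms in the tangent-bundle formula are killed by imposing $a^* = 0$, and verify both the polynomial degree $w_i$ of $\psi^-_i(u)$ and the precise value of its leading coefficient. The $R$-integrality claim and the module structures in Part~(b) then follow exactly as in the non-shifted case, using Proposition~\ref{prop:critalg1} together with the Thom isomorphism from \eqref{VB} and Proposition~\ref{prop:TTK}(c) to identify $K_A(\widehat\frakM(W),\hat f_2)$ with $K_A(\widetilde\frakM(W),\tilde f_2)$ when convenient for computations.
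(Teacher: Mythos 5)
Your treatment of the central element (Step 3) and the overall fixed-point/reduction-to-$A_1$ framework for relations (A.2)--(A.4) and (A.6) align well with the paper's proof. The genuine gap is in your handling of the Serre relation (A.7). You propose to derive it ``from the constant term of (A.6) as in \S\ref{sec:A7}, using the local nilpotency of $x^\pm_{i,0}$ granted by a finite-dimensionality statement for $K_A(\widehat\frakM(v,W),\hat f_2)$ analogous to Remark~\ref{rem:fd}.'' No such analogue exists. Remark~\ref{rem:fd} depends essentially on the deformation $\gamma$ being a nonzero nilpotent, so that $K_\gamma$ is a finite-dimensional $\widetilde\Pi$-module and the quiver Grassmannian $\widetilde\Gr^\bullet(K_\gamma)$ is a finite-type variety. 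In the shifted setting the potential $\hat f_2$ is undeformed, and Proposition~\ref{prop:HL3} identifies $\crit(\hat f_2^\bullet)$ with $\widetilde\Gr^\bullet(I_W)$ where $I_W$ is an inductive limit of $\widetilde\Pi$-modules with nonvanishing graded pieces in infinitely many degrees. Consequently $K(\widehat\frakM^\bullet(v,W),\hat f^\bullet_2)\neq 0$ for infinitely many $v$---consistent with the negative prefundamental modules of Theorem~\ref{thm:PF1} being infinite-dimensional---and $x^-_{i,0}$ is \emph{not} locally nilpotent. The paper explicitly flags exactly this failure and handles (A.5) and (A.7) by an entirely different mechanism: it constructs algebra homomorphisms $\omega^\pm$ from the critical K-theoretic Hall algebra of $(\widetilde Q,\bfw_2)$ to the convolution algebra matching the generators $A^\pm_{i,n}$ (Proposition~\ref{prop:double}), proves injectivity of the Hall-algebra action on the vacuum when $W=V$ (Proposition~\ref{prop:injective}), and imports (A.5) and (A.7) from Corollary~\ref{cor:relations57}, which in turn specializes $\gamma=0$ in Theorem~\ref{thm:notshifted}. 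Without this Hall-algebra bridge (or some other substitute for the missing local nilpotency), your argument cannot close the Serre relation, and with it the (A.5) relation for which the paper also invokes the same corollary.
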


Using Propositions \ref{prop:critalg1}, \ref{prop:TTK}, the theorem implies the following.

\begin{corollary}\label{cor:shifted2}
The algebra $\U_\zeta^{-w}(L\frakg)$ acts on
$K(\widehat\frakM^\bullet(W),\hat f^\bullet_2)_{\underline{\widehat\frakL^\bullet(W)}}$
and
$K(\widehat\frakM^\bullet(W),\hat f^\bullet_2).$
\qed
\end{corollary}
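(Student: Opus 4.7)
The plan is to deduce Corollary \ref{cor:shifted2} from Theorem \ref{thm:shifted} by localization to the $A$-fixed locus, in direct parallel to the proof of Corollary \ref{cor:notshifted2}. Theorem \ref{thm:shifted} supplies an $R$-algebra homomorphism from $\U_R^{-w}(L\frakg)$ to the equivariant critical convolution algebra $K_A(\widehat\frakM(W)^2,(\hat f_2)^{(2)})_{\widehat\calZ(W)}/\tor$, together with compatible module actions on the $A$-equivariant critical K-theory of $\widehat\frakM(W)$ supported on $\widehat\frakL(W)$ and on the whole space.

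First I would identify the $A$-fixed locus of $\widehat\frakM(W)$. A direct repetition of the argument of Lemma \ref{lem:fixedpoints}, restricted to the $A$-stable closed subscheme $\widehat\frakM(W)\subset\widetilde\frakM(W)$ (the constraint $a^*=0$ is preserved by $A$), gives $\widehat\frakM(W)^A=\widehat\frakM^\bullet(W)$, and similarly $\widehat\calZ(W)^A=\widehat\calZ^\bullet(W)$ and $\widehat\frakL(W)^A=\widehat\frakL^\bullet(W)$. The restriction of $\hat f_2$ to this fixed-point locus coincides with $\hat f^\bullet_2$.

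Then I would apply Proposition \ref{prop:TTK}(d) to the pairs $(\widehat\frakM(W),\hat f_2)$ and $(\widehat\frakM(W)^2,(\hat f_2)^{(2)})$, producing $F$-linear isomorphisms between the equivariant critical K-theory of the whole space (with supports in $\widehat\calZ(W)$ or $\widehat\frakL(W)$) and of the $A$-fixed locus (with supports in $\widehat\calZ^\bullet(W)$ or $\widehat\frakL^\bullet(W)$). By Proposition \ref{prop:critalg1} the convolution product and module actions are compatible with pushforward and pullback, so these isomorphisms transport the algebra homomorphism and module structures of Theorem \ref{thm:shifted} to the critical K-theory of $\widehat\frakM^\bullet(W)$ tensored with $F$. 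Since $A$ acts trivially on the fixed locus, the $R$-module structure is free there, and specializing $q\mapsto\zeta$---admissible because $\zeta$ is not a root of unity, so the Koszul classes $\Lambda_{-1}(T^*_{\widehat\frakM^\bullet(W)}\widehat\frakM(W))$ entering Proposition \ref{prop:TTK}(d) remain invertible---yields the announced $\U_\zeta^{-w}(L\frakg)$-action on $K(\widehat\frakM^\bullet(W),\hat f^\bullet_2)_{\underline{\widehat\frakL^\bullet(W)}}$ and $K(\widehat\frakM^\bullet(W),\hat f^\bullet_2)$.

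The main point requiring attention is the hypothesis of Proposition \ref{prop:TTK}(d) that the restricted function $\hat f^\bullet_2$ be locally nonzero on $\widehat\frakM^\bullet(W)$; this must be verified from the explicit form of $\bfw^\bullet_2$. Beyond this, the deduction is essentially formal manipulation of the critical convolution formalism established in Section \ref{sec:Kcritalg}.
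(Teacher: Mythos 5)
Your proposal is correct and fills in exactly the steps the paper leaves implicit when it cites Propositions \ref{prop:critalg1} and \ref{prop:TTK}: identification of the $A$-fixed locus $\widehat\frakM(W)^A=\widehat\frakM^\bullet(W)$ (and likewise for $\widehat\calZ$ and $\widehat\frakL$, with $a^*=0$ being $A$-stable), localization via Proposition \ref{prop:TTK}(d), triviality of the residual $A$-action on the fixed locus, and specialization at $q=\zeta$. Your justification of the specialization via invertibility of the Koszul class is the right intuition; the standard cleaner phrasing is that for $\zeta$ not a root of unity the fixed locus of the single group element $a(\zeta)$ coincides with the full $A$-fixed locus, so the localization map is an isomorphism already over the local ring of $R$ at $q=\zeta$ rather than only over $F$, which is what makes the specialization legitimate.
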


\medskip

\subsection{Proof of Theorem $\ref{thm:shifted}$}
\label{shifted}

\subsubsection{Definition of the homomorphism}

We must define an $F$-algebra homomorphism
\begin{align}\label{map8g}
\U_F^{-w}(L\frakg)\to 
K_A\big(\widehat\frakM(W)^2,(\hat f_2)^{(2)}\big)_{\widehat\calZ(W)}\otimes_RF
\end{align}
The $F$-algebra $\U_F^{-w}(L\frakg)$ is generated by the Fourier coefficients of 
\begin{align*}
x^\pm_i(u)=\sum_{n\in\bbZ}x^\pm_{i,n}\,u^{-n}
 ,\quad
\psi^+_i(u)=\sum_{n\in\bbN}\psi^+_{i,n}\,u^{-n}
 ,\quad
\psi^-_i(u)=\sum_{n\geqslant w_i}\psi^-_{i,-n}\,u^{n}
\end{align*}
modulo the defining relations (A.2) to (A.7) in \S\ref{sec:QG}.
We define the following classes in $K_A\big(\widehat\frakM(W)\big)$
\begin{align}\label{Hg}
\calH_{i,1}=\calW_i-\sum_j[c_{ij}]_{q_i}\calV_j
 ,\quad
\calH_{i,-1}=\calW_i^\vee-\sum_j[c_{ij}]_{q_i}\calV_j^\vee
 ,\quad
\calH_{i,\pm m}=\frac{[m]_{q_i}}{m}\,\psi^m(\calH_{i,\pm 1})
\end{align}
Note that these classes are the pullback of the classes \eqref{H} under the isomorphism
$K_A\big(\widehat\frakM(W)\big)\cong K_A\big(\widetilde\frakM(W)\big)$.
By Proposition \ref{prop:critalg1} we have the $R_A$-algebra homomorphism  
\begin{align}\label{form4}
\Upsilon:
K^A(\widehat\calZ(W))\to
K_A(\widehat\frakM(W)^2,(\hat f_2)^{(2)})_{\widehat\calZ(W)}
\end{align}
Composing the pushforward by the diagonal embedding with the map
$\Upsilon$, we get the map
\begin{align}\label{delta}
\Delta:K_A\big(\widehat\frakM(W)\big)\to
K_A\big(\widehat\frakM(W)^2,(\hat f_2)^{(2)}\big)_{\widehat\calZ(W)}
\end{align}
We assign to $\psi_{i,n}^\pm$ the image by $\Delta$
of the coefficient of $u^{-n}$ in
$K_A\big(\widehat\frakM(W)\big)$
in the formal series
\begin{align}\label{psi+-g}
q_i^{-w_i\pm(\alpha_i^\vee,w-\bfc v)}\,\Lambda_{-u^{-1}}(q_i^{-1}\calW_i)^{-1}
\,\exp\Big(\pm (q_i-q_i^{-1})\sum_{m>0}\calH_{i,\pm m}u^{\mp m}\Big).
\end{align}
We define 
$$A_{i,n}^\pm=\Upsilon\big(\calL_i^{\otimes n}\big)
,\quad
A_i^\pm(u)=\sum_{m\in\bbZ}A^\pm_{i,m}u^{-m}.$$
We assign to $x_{i,n}^\pm$ the following element in
$K_A(\widehat\frakM(W)^2,(\hat f_2)^{(2)})_{\widehat\calZ(W)}$
\begin{align}\label{xpmg}
\begin{split}
x_{i,n}^+\mapsto (-1)^{v_{+i}}A^+_{i,n}\star\Delta(\det(\calV_{\circ i}))
,\quad
x_{i,n}^-\mapsto
(-1)^{v_{-i}}q_i^{-1}\,A^-_{i,n-v_{\circ i}}
\end{split}
\end{align}
To prove that the assignments \eqref{psi+-g} and
\eqref{xpmg} give a well-defined morphism \eqref{map8g}, 
we must check that
the images of $x_{i,n}^\pm$ and $\psi_{i,n}^\pm$ in the algebra
$K_A(\widehat\frakM(W)^2,(\hat f_2)^{(2)})_{\widehat\calZ(W)}$
satisfy the relations 
$\mathrm{(A.2)}$ to $\mathrm{(A.7)}$.
The classes $A_{i,n}^\pm$, $x_{i,n}^\pm$ and $\psi_{i,n}^\pm$ have obvious liftings
to $K^A(\widehat\calZ(W))$ relatively to the map $\Upsilon$ in \eqref{form4}.
Let $A_{i,n}^\pm$, $x_{i,n}^\pm$ and $\psi_{i,n}^\pm$ denote also these liftings.

\subsubsection{Case $Q=A_1$}\label{sec:Q=A1}
We assume that $Q=A_1$. 
In this case the computation is done 
via a reduction to the fixed points of a torus action as in \cite{VV99}.
We have $I=\{i\}$, $\bfw_2=0$,
$\deg(a_i)=\deg(a_i^*)=-d_i$ and $\deg(\varepsilon_i)=2d_i$.
Further, we have $\hat f_2=0$, hence
\begin{align*}
K^A(\widehat\calZ(W))=
K_A(\widehat\frakM(W)^2,(\hat f_2)^{(2)})_{\widehat\calZ(W)}
\end{align*}
The variety $\widehat\frakM(W)$ satisfies the property ($T$) 
by Lemma \ref{lem:(T)}. To check the relations between $A_{i,n}^+$ and $A_{i,n}^-$
in the algebra $K^A(\widehat\calZ(W))/\tor$ it is enough to compute their action on
$$K^{G_W\times\bbC^\times}(\widehat\frakM(W))\otimes_{R_{G_W\times\bbC^\times}}F_{G_W\times\bbC^\times}.$$ 
See \S\ref{sec:A6=} for details.
To do so, let $T_W\subset G_W$ be a maximal torus.
By Lemma \ref{lem:fixedpoints2},
the $T_W\times\bbC^\times$-fixed point locus is
\begin{align}\label{fixpoints}
\widehat\frakM(v,W)^{T_W\times\bbC^\times}=
\{\ux_\lambda\,;\,\lambda\in\bbN^{w_i}\,,\,|\lambda|=v_i\}.
\end{align} 
We may abbreviate $\lambda=\ux_\lambda$ if no confusion is possible.
Let $[\lambda]$ be the fundamental class
of $\{\underline x_\lambda\}$.
For any linear operator $A$, 
let $\langle \lambda|A|\mu\rangle$ be the coefficient of the basis element $[\lambda]$ 
in the expansion of $A[\mu]$ in the
basis $\{[\lambda]\,;\,\lambda\in\bbN^{w_i}\}$.
Let $\lambda$ and $\mu$ be $w_i$-tuples of weight $v_i$ and $v_i+1$
such that $\ux_\lambda\subset\ux_\mu$.
We abbreviate
\begin{align}\label{VLg}
\calV_\lambda=\calV|_{\{\underline x_\lambda\}}
,\quad
\calL_{\lambda,\mu}=\calL|_{\{(\underline x_\lambda,\,\underline x_\mu)\}}.
\end{align}
The matrix coefficients $\langle \lambda |A^-_{i,n}|\mu\rangle$ and 
$\langle \mu|A^+_{i,m}|\lambda\rangle$
in $F_{G_W\times\bbC^\times}$ are given by
\begin{align*}
\langle \lambda|A^-_{i,n}|\mu\rangle
&=(\calL_{i,\mu,\lambda})^{\otimes n}\otimes
\Lambda_{-1}\Big(T_{\mu}\widehat\frakM(v+\delta_i,W)-T_{\mu,\lambda}\widehat\frakP(v+\delta_i,v,W)\Big)\\
\langle \mu|A^+_{i,m}|\lambda\rangle
&=(\calL_{i,\lambda,\mu})^{\otimes m}\otimes
\Lambda_{-1}\Big(T_\lambda\widehat\frakM(v,W)-T_{\lambda,\mu}\widehat\frakP(v,v+\delta_i,W)\Big)
\end{align*}
The class of the tangent bundle $T\widehat\frakM(W)$ in the equivariant Grothendieck group of 
$\widehat\frakM(W)$ is
\begin{align}\label{C1}
T\widehat\frakM(W)=(q_i^{-2}-1)\End(\calV_i)+q_i\Hom(\calV_i,\calW_i).
\end{align}
We abbreviate
\begin{align}\label{Pig}
\calP_i=\End(\calV^-_i)+\Hom(\calL_i,\calV^+_i)=\End(\calV^+_i)-\Hom(\calV^-_i,\calL_i).
\end{align}
The class of $T\widehat\frakP(W)$ in the equivariant Grothendieck group of $\widehat\frakP(W)$ is
\begin{align}\label{C2}
T\widehat\frakP(W)=(q_i^{-2}-1)\calP_i+q_i\Hom(\calV^+_i,\calW_i)
\end{align}
We write 
\begin{align}\label{form10}
\calV_i=\sum_{r=1}^{v_i}z_r
 ,\quad
\calW_i=\sum_{s=1}^{w_i}\chi_s
\end{align} 
where $z_1,\dots, z_{v_i}$ and $\chi_1,\dots,\chi_{w_i}$ are the fundamental characters of the tori
$T_v$ and $T_W$.
The labelling of the fixed points in \eqref{fixpoints} is
such that
\begin{align}\label{form7}
\begin{split}
\calL_{i,\lambda,\mu}=z_{v_i+1}
,\quad
\calV_{i,\lambda}=\sum_{r=1}^{v_i}z_r=\sum_{s=1}^{w_i}\sum_{r=0}^{\lambda_s-1}\chi_sq_i^{1-2r}
\in R_{T_W\times\bbC^\times}
\end{split}
\end{align} 
We deduce that
\begin{align}\label{form3}
\begin{split}
\langle \lambda |A^-_{i,n}|\mu\rangle
&=(\calL_{i,\lambda,\mu})^{\otimes n}\otimes
\Lambda_{-1}\Big((q_i^{-2}-1)\calV^\vee_{i,\lambda}\otimes\calL_{i,\lambda,\mu}\Big)\\
&=\ev_{u=z_{v_i+1}}\Big(u^{n}\prod_{r=1}^{v_i}\frac{uq_i^{-2}-z_r}{u-z_r}\Big)\\
\langle \mu|A^+_{i,m}|\lambda\rangle
&=(\calL_{i,\lambda,\mu})^{\otimes m}\otimes\Lambda_{-1}
\Big((1-q_i^{-2})\otimes
\calL^\vee_{i,\lambda,\mu}\otimes\calV_{i,\mu}-q_i\calL^\vee_{i,\lambda,\mu}\otimes\calW_i\Big)\\
&=(1-q_i^{-2})^{-1}\Res_{u=z_{v_i+1}}\Big(\frac{u^{m+w_i-1}}{\prod_{s=1}^{w_i}(u-\chi_sq_i)}
\prod_{r=1}^{v_i}\frac{u-z_r}{u-z_rq_i^{-2}}\Big)
\end{split}
\end{align}
We consider the rational function
$\psi_{i,\lambda}(u)\in F_{G_W\times\bbC^\times}$ 
such that
\begin{align}\label{h}
\begin{split}
\psi_{i,\lambda}(u)=
u^{w_i}\frac{\prod_{s=1}^{w_i}\prod_{r=0}^{\lambda_s-1}g_{ii}(u/\chi_sq_i^{1-2r})}
{\prod_{s=1}^{w_i}(u-\chi_s q_i)}
=
q_i^{-2v_i}u^{w_i}\prod_{s=1}^{w_i}\frac{u-\chi_sq_i^3}
{(u-\chi_sq_i^{1-2\lambda_s})(u-\chi_sq_i^{3-2\lambda_s})}
\end{split}
\end{align}
Since the poles of 
$\psi_{i,\lambda}(u)$ belong to the set 
$\{\chi_sq_i^{1-2\lambda_s}\,,\,\chi_sq_i^{3-2\lambda_s}\,;\,s=1,\dots w_i\}$,
we have
\begin{align*}
(q_i-q_i^{-1})\langle\lambda|[A^+_{i,m},A^-_{i,n}|\lambda\rangle
&=
-q_i\sum_{s=1}^{w_i}\Res_{u=\chi_sq_i^{t_i+2-2\lambda_s}}\big(f(u)\big)+
\Res_{u=\chi_sq_i^{t_i-2\lambda_s}}\big(f(u)\big)\\
&=
q_i\Res_{u=0}\big(f(u)\big)+
q_i\Res_{u=\infty}\big(f(u)\big)
\end{align*}
where $f(u)=u^{m+n-1}\psi_{i,\lambda}(u)$.
Let $\psi^\pm_{i,\lambda}(u)$ be the expansion of  $\psi_{i,\lambda}(u)$
in non-negative powers of $u^{\mp 1}$. 
We deduce that the matrix coefficient
$(q_i-q_i^{-1})\,\langle \lambda|[A^+_{i,m},A^-_{i,n}]|\mu\rangle$ 
is equal to the Kronecker symbol $\delta_{\lambda,\mu}$ 
times the coefficient of $u^{-m-n}$ in the formal series 
$-q_i\psi^+_{i,\lambda}(u)+q_i\psi^-_{i,\lambda}(u)$.
Now, let $\psi^\pm_i(u)$ be the formal series of operators acting on
$K^{G_W\times\bbC^\times}(\widehat\frakM(v,W))$ 
by multiplication by the Fourier coefficients of the
expansions in non-negative powers of $u^{\mp 1}$ of the following rational function
\begin{align*}
\begin{split}
\psi_i(u)=\frac{\prod_{r=1}^{v_i}g_{ii}(u/z_r)}{\prod_{s=1}^{w_i}(1-\chi_s q_i^{t_i}/u)}.
\end{split}
\end{align*}
We have
$(q_i-q_i^{-1})[x_i^+(u)\,,\,x_i^-(v)]=\delta(u/v)\,(\psi_i^+(u)-\psi_i^-(u))$
with
\begin{align}\label{psipmi}
\begin{split}
\psi_i^+(u)&=q_i^{-w_i}q_i^{(\alpha_i^\vee,w-\bfc v)}
\Lambda_{-u^{-1}}\big((q_i^2-q_i^{-2})\calV_i-q_i\calW_i\big)^+,
\\
\psi_i^-(u)&=(-u)^{w_i}q_i^{-(\alpha_i^\vee,w-\bfc v)}\det(\calW_i)^{-1}
\Lambda_{-u}\big((q_i^{-2}-q_i^2)\calV_i^\vee-q_i^{-1}\calW_i^\vee\big)^-
\end{split}
\end{align}
where the upperscript $\pm$ holds for the expansion in non-negative powers of $u^{\mp 1}$.

\subsubsection{Proof of the relation $\mathrm{(A.6)}$ for $i=j$}\label{sec:A6=s}
Let $Q$ be any Dynkin quiver.
Using the homomorphism $\Upsilon$ in \eqref{form4}, it is enough to check
the relation in the algebra
$K^A(\widehat\calZ(W))$.
We will prove the relation via a reduction to the case $A_1$,
which is proved above.
Consider the subquiver 
$\widehat Q_{f,\neq i}$ of $\widehat Q_f$ such that
$$(\widehat Q_{f,\neq i})_0=(\widehat Q_f)_0\,\setminus\,\{i,i'\}
 ,\quad
(\widehat Q_{f,\neq i})_1=\{h\in (\widehat Q_f)_1\,;\,s(h),t(h)\neq i\}.$$
The representation variety of $\widehat Q_f$ decomposes as
$$
\widehat\X(V,W)=\widehat\X(V_i,W_i\oplus V_{\circ i})\times
\Hom(V_{\circ i},V_i)\times\X_{\widehat Q_{f,\neq i}}(V_{\neq i},W_{\neq i})
$$
where $V=\bbC^v$, $V_{\circ i}$ is as in \eqref{Vcirc} and
$$
V_{\neq i}=\bigoplus_{j\neq i}V_j
 ,\quad
W_{\neq i}=\bigoplus_{j\neq i}W_j.$$
We define
$$\frakM(v,W)_\diamondsuit=\X(V,W)_\diamondsuit\,/\,G_{V_i}
,\quad
\frakM(v,W)_\heartsuit=\X(V,W)_\heartsuit\,/\,G_{V_i}
,\quad
\frakM(v,W)_\spadesuit=\widehat\X(V,W)_s\,/\,G_{V_i}$$
where
\begin{align*}
\X(V,W)_\diamondsuit=\widehat\X(V_i,W_i\oplus V_{\circ i})_s\times
\Hom(V_{\circ i},V_i),\quad
\X(V,W)_\heartsuit=\X(V,W)_\diamondsuit\times\X_{\widehat Q_{f,\neq i}}(V_{\neq i},W_{\neq i})
\end{align*}
We consider the diagram
\begin{align}\label{diag}
\xymatrix{\widehat\frakM(v_i,W_i\oplus V_{\circ i})&\ar[l]_-\rho
\frakM(v,W)_\diamondsuit&\ar[l]_p\frakM(v,W)_\heartsuit&\ar[l]_-{\iota}\frakM(v,W)_\spadesuit\ar[r]^\pi
&\widehat\frakM(v,W)}
\end{align}
The maps $\rho$, $p$ are the obvious projections. 
They are vector bundles.
The map $\iota$ is an open embedding.
The map $\pi$ is a principal bundle.
Let $v=(v_i,v_{\neq i})$ with $v_{\neq i}$ fixed and $v_i$ running in $\bbN$.
We abbreviate
$$\widehat\frakM(W_i\oplus V_{\circ i})=\bigsqcup_{v_i\in\bbN}\widehat\frakM(v_i,W_i\oplus V_{\circ i})
 ,\quad
\frakM(W)_\flat=\bigsqcup_{v_i\in\bbN}\frakM(v,W)_\flat
,\quad
\flat=\diamondsuit,\heartsuit,\spadesuit.$$
The Hecke correspondence $\frakP(W)_\diamondsuit$ and the Steinberg variety 
$\calZ(W)_\diamondsuit$ in $\frakM(W)_\diamondsuit^2$
are defined as in \eqref{Hecke} and \eqref{Steinberg}.
The Hecke correspondence $\frakP(W)_\heartsuit$ in $\frakM(W)_\heartsuit^2$
is the product of $\frakP(W)_\diamondsuit$
and the diagonal of  
$\X_{\widehat Q_{f,\neq i}}(V_{\neq i},W_{\neq i})$. 
The map $\iota$ satisfies the condition \cite[(11.2.1)]{N00},
and $\pi$ the condition \cite[(11.2.9)]{N00}.
Hence we can apply the argument in \cite[\S 11.3]{N00}.
Set $G_\diamondsuit=G_{W_i}\times G_{V_{\circ i}}$.
We get an algebra homomorphism
$$K^{G_\diamondsuit\times\bbC^\times}\big(\calZ(W)_\diamondsuit\big)
\to
K^{G_W\times\bbC^\times}\big(\widehat\calZ(W)\big).$$
Composing it with the algebra homomorphism $\Upsilon$ in \eqref{form4}
yields an algebra homomorphism 
\begin{align}\label{form11}
K^{G_\diamondsuit\times\bbC^\times}\big(\calZ(W)_\diamondsuit\big)\to K_A\big(\widehat\frakM(W)^2,(\hat f_2)^{(2)}\big)_{\widehat\calZ(W)}
\end{align}
The elements $x_{i,n}^\pm$ and $A^\pm_{i,n}$ of the right hand side
 introduced in \eqref{xpm} have obvious lifts in the left one.
Hence, we are reduced to prove the relation  (A.6) among those lifts.
We will prove it as in the case $Q=A_1$ in \S\ref{sec:Q=A1}, using the action of
$K^{G_\diamondsuit\times\bbC^\times}(\calZ(W)_\diamondsuit)$
on $K^{G_\diamondsuit\times\bbC^\times}(\frakM(W)_\diamondsuit)$.
Note that $\widehat\frakM(v_i,W_i\oplus V_{\circ i})$ is a quiver variety of type $A_1$, 
hence $\frakM(v,W)_\diamondsuit$ can also be viewed as a quiver variety of type $A_1$, 
up to the vector bundle $\rho$.
To prove the relation, we use the following formulas, 
to be compared with \eqref{C1} and \eqref{C2}
\begin{align*}
\begin{split}
T\frakM(W)_\diamondsuit&=(q_i^{-2}-1)\End(\calV_i)+q_i\Hom(\calV_i,\calW_i)
+\sum_{o_{ij}=1}q_i^{-c_{ij}}\Hom(\calV_i,\calV_j)+\sum_{o_{ij}=1}q_i^{-c_{ij}}\Hom(\calV_j,\calV_i)\\
T\frakP(W)_\diamondsuit&=(q_i^{-2}-1)\calP_i+q_i\Hom(\calV^+_i,\calW_i)
+\sum_{o_{ij}=1}q_i^{-c_{ij}}\Hom(\calV^+_i,\calV_j)
+\sum_{o_{ij}=1}q_i^{-c_{ij}}\Hom(\calV_j,\calV_i^-)
\end{split}
\end{align*}
where $\calP_i$ is as in \eqref{Pig}.
Let $(\lambda,\mu)$ be a pair of $(w_i+v_{\circ i})$-tuples of weights $v_i$ and $v_i+1$.
Fix a maximal torus $T_\diamondsuit$ in $G_\diamondsuit$.
Let $\underline x_\lambda$ and $\underline x_\mu$ be the corresponding to
a $T_\diamondsuit\times\bbC^\times$-fixed points in
$\frakP(v_i,v_i+1,W)_\diamondsuit.$ 
Let $\calV_{\lambda}$ and $\calL_{\lambda,\mu}$ be as in \eqref{VLg} and let
$\calV_{i,\lambda}$ be the degree $i$-component of $\calV_\lambda$.
If $i\neq j$ then $\calV_{j,\lambda}=\calV_{j,\mu}$, hence we abbreviate $\calV_j$ for both.
We get the following formulas 
\begin{align}\label{form9g}
\begin{split}
\langle \lambda |A^-_{i,n}|\mu\rangle
&=(\calL_{\lambda,\mu})^{\otimes n}\otimes
\Lambda_{-1}\Big((q^{-2}_i-1)\calV^\vee_{i,\lambda}\otimes\calL_{\lambda,\mu}+
\sum_{c_{ij}<0}q_i^{-c_{ij}}\calV_{j}^\vee\otimes\calL_{\lambda,\mu}\Big)\\
&=\ev_{u=\calL_{\lambda,\mu}}\Big(u^n
\Lambda_{-u}\big((q^{-2}_i-1)\calV^\vee_{i,\lambda}+
\sum_{c_{ij}<0}q_i^{-c_{ij}}\calV_{j}^\vee\big)\Big)\\
\langle \mu|A^+_{i,m}|\lambda\rangle
&=(\calL_{\lambda,\mu})^{\otimes m}\otimes\Lambda_{-1}
\Big((1-q_i^{-2})\calL^\vee_{\lambda,\mu}\otimes\calV_{i,\mu}-
q_i\calL^\vee_{\lambda,\mu}\otimes\calW_i-
\sum_{c_{ij}<0}q_i^{-c_{ij}}\calL^\vee_{i,\lambda,\mu}\otimes\calV_{j}\Big)\\
&=(1-q_i^{-2})^{-1}\Res_{u=\calL_{\lambda,\mu}}\Big(u^{m-1}\Lambda_{-u^{-1}}
\big((1-q_i^{-2})\calV_{i,\lambda}
-q_i\calW_i-\sum_{c_{ij}<0}q_i^{-c_{ij}}\calV_{j}\big)\Big)
\end{split}
\end{align}
Recall that for each $\calE$ and each $n\in\bbZ$ we have
\begin{align*}
\Lambda_{-u}\big(q^n\calE^\vee\Big)\,
\Lambda_{-u^{-1}}\big(-q^n\calE\Big)
&=(-u)^{\rk(\calE)}\det\big(q^n\calE^\vee\Big)\,
\Lambda_{-u^{-1}}\big((q^{-n}-q^n)\calE\Big),\\
&=(-u)^{\rk(\calE)}\det\big(q^n\calE\Big)^{-1}\,
\Lambda_{-u}\big((q^n-q^{-n})\calE^\vee\Big)
\end{align*}
Let $v_\lambda$ be the rank of $\calV_\lambda$. We deduce that
\begin{align*}
(1-q_i^{-2})\langle\lambda| A^-_{i,n}A^+_{i,m}|\lambda\rangle
&=(-1)^{v_{\circ i}}q_i^{-(\alpha_i^\vee,\bfc v_\lambda)}\det\big(\calV^\vee_{\circ i}\big)\sum_\mu
\Res_{u=\calV_\mu/\calV_\lambda}\Big(u^{m+n-1+v_{\circ i}}
\\
&\quad\Lambda_{-u^{-1}}\big((q_i-q_i^{-1})\sum_j[c_{ij}]_{q_i}\calV_{j,\lambda}
-q_i\calW_i\big)
\Big),\\
(1-q_i^{-2})\langle\lambda| A^+_{i,m}A^-_{i,n}|\lambda\rangle
&=(-1)^{1+v_{\circ i}}q_i^{-(\alpha_i^\vee,\bfc v_\lambda)}\det\big(\calV^\vee_{\circ i}\big)\sum_\mu
\Res_{u=\calV_\lambda/\calV_\mu}\Big(u^{m+n-1+v_{\circ i}}
\\
&\quad\Lambda_{-u^{-1}}\big((q_i-q_i^{-1})\sum_j[c_{ij}]_{q_i}\calV_{j,\lambda}-q_i\calW_i\big)
\Big).
\end{align*}
The sums are over all $\mu$'s such that $\ux_\lambda\subset\ux_\mu$ and 
$\ux_\mu\subset\ux_\lambda$
are of codimention $\delta_i$ respectively.
Using the residue theorem, we get
\begin{align*}
(q_i-q_i^{-1})\langle\lambda|[A^+_{i,m},A^-_{i,n}]|\lambda\rangle
=
-\Res_{u=0}\Big(u^{m+n-1}\phi_{i,\lambda}(u)\Big)
-\Res_{u=\infty}\Big(u^{m+n-1}\phi_{i,\lambda}(u)\Big)
\end{align*}
where
$$\phi_{i,\lambda}(u)=
(-u)^{v_{\circ i}}
q_i^{1-(\alpha_i^\vee,\bfc v_\lambda)}
\det\big(\calV_{\circ i}\big)^{-1}
\Lambda_{-u^{-1}}\Big((q_i-q_i^{-1})\sum_j[c_{ij}]_{q_i}\calV_{j,\lambda}-q_i\calW_i\Big)
$$
Similarly, given $\lambda$, $\lambda'$ as above such that
$\calV_\lambda\cap\calV_{\lambda'}$ is of codimension one in  
$\calV_\lambda$ and $\calV_{\lambda'}$, we get
\begin{align*}
\langle\lambda'| A^-_{i,n}A^+_{i,m}|\lambda\rangle
&=\langle\lambda'| A^-_{i,n}|\mu\rangle\langle\mu|A^+_{i,m}|\lambda\rangle\\
\langle\lambda'| A^+_{i,m}A^-_{i,n}|\lambda\rangle
&=\langle\lambda'| A^+_{i,m}|\nu\rangle\langle\nu|A^-_{i,n}|\lambda\rangle
\end{align*}
where $\mu$, $\nu$ are such that 
$\calV_\mu=\calV_\lambda+\calV_{\lambda'}$ and $\calV_\nu=\calV_\lambda\cap\calV_{\lambda'}$.
We deduce that
$\langle\lambda'| A^-_{i,n}A^+_{i,m}|\lambda\rangle$ is equal to 
\begin{align*}
&(\calL_{i,\lambda',\mu})^{\otimes n}\otimes(\calL_{i,\lambda,\mu})^{\otimes m}\otimes
\Lambda_{-1}\big((1-q_i^{-2})(\calL_{i,\lambda,\mu}^\vee\otimes\calV_{i,\mu}
-\calL_{i,\lambda',\mu}\otimes\calV_{i,\lambda'}^\vee)-
q_i\calL_{i,\lambda,\mu}^\vee\otimes\calW_i+
\\
&
+\sum_{c_{ij}<0}q_i^{-c_{ij}}(\calL_{i,\lambda',\mu}\otimes\calV_j^\vee-\calL_{i,\lambda,\mu}^\vee\otimes\calV_j)
\big)
\end{align*}
and $\langle\lambda'| A^+_{i,m}A^-_{i,n}|\lambda\rangle$ to
\begin{align*}
&(\calL_{i,\nu,\lambda})^{\otimes n}\otimes(\calL_{i,\nu,\lambda'})^{\otimes m}\otimes
\Lambda_{-1}\big((1-q_i^{-2})(\calL_{i,\nu,\lambda'}^\vee\otimes\calV_{i,\lambda'}
-\calL_{i,\nu,\lambda}\otimes\calV_{i,\nu}^\vee)-
q_i\calL_{i,\nu,\lambda'}^\vee\otimes\calW_i+\\
&\quad
+\sum_{c_{ij}<0}q_i^{-c_{ij}}(\calL_{i,\nu,\lambda}\otimes\calV_j^\vee-\calL_{i,\nu,\lambda'}^\vee\otimes\calV_j)
\big)
\end{align*}
Let $\phi^\pm_{i,\lambda}(u)$ be the expansion of  $\phi_{i,\lambda}(u)$
in non-negative powers of $u^{\mp 1}$. 
We deduce that 
$$(q_i-q_i^{-1})\,\langle \lambda'|[A^+_i(u),A^-_i(v)]|\lambda\rangle=
\delta_{\lambda,\lambda'}\delta(u/v)(\phi^+_{i,\lambda}(u)-\phi^-_{i,\lambda}(u)).$$
Let $\phi^\pm_i(u)$ be the formal series of operators acting on
$K^{G_W\times\bbC^\times}(\widehat\frakM(v,W))$ 
by multiplication by the Fourier coefficients of the
expansions in non-negative powers of $u^{\mp 1}$ of the following rational function
$$\phi_i(u)=
(-1)^{v_{\circ i}}q_i^{1-(\alpha_i^\vee,\bfc v)}
\det\big(\calL_i^\vee\otimes\calV_{\circ i}\big)^{-1}
\,\Lambda_{-u^{-1}}(q_i^{-1}\calW_i)^{-1}
\Lambda_{-u^{-1}}\big(-(q_i-q_i^{-1})\calH_{i,1}\big)$$
We have
\begin{align}\label{Apmg}
(q_i-q_i^{-1})\,[A^+_i(u),A^-_i(v)]=
\delta(u/v)(\phi^+_i(u)-\phi^-_i(u)).
\end{align}
We define similarly 
$$\psi_i(u)=
q_i^{-(\alpha_i^\vee,\bfc v)}
\,\Lambda_{-u^{-1}}(q_i^{-1}\calW_i)^{-1}
\Lambda_{-u^{-1}}\big(-(q_i-q_i^{-1})\calH_{i,1}\big)$$
Then, we have
$$(q_i-q_i^{-1})[x^+_i(u)\,,\,x^-_i(v)]=\delta(u/v)\,(\psi^+_i(u)-\psi^-_i(u)).$$
Note that
\begin{align*}
\begin{split}
\psi_i^\pm(u)&=q_i^{- w_i}q_i^{\pm(\alpha_i^\vee,w-\bfc v)}
\,\Lambda_{-u^{-1}}(q_i^{-1}\calW_i)^{-1}
\,\Lambda_{-u^{\mp 1}}\big(\mp(q_i-q_i^{-1})\calH_{i,\pm 1}\big)^\pm.
\end{split}
\end{align*}
From \eqref{WA}, we deduce that the series $\psi_i^\pm(u)$ above coincide with the series in \eqref{psi+-g},
proving the relation (A.6) with $i=j$.
Note that 
\begin{align}\label{CT}\psi^+_{i,0}=q_i^{-w_i+(\alpha_i^\vee,w-\bfc v)}
 ,\quad
\psi^-_{i,-w_i}=(-1)^{w_i}q_i^{-(\alpha_i^\vee,w-\bfc v)}\det(\calW_i)^{-1}.
\end{align}

\subsubsection{Proof of the relation $\mathrm{(A.6)}$ for $i\neq j$}\label{sec:A6neqs}
It is enough to check that the liftings of the
elements $x_{i,m}^+$ and $x_{j,n}^-$ in the algebra $K^A(\widehat\calZ(W))$
commute with each other.
This follows from the transversality result in Lemma \ref{lem:transverse1} below.
Set $v_2=v_1+\delta_i=v_3+\delta_j$ and $v_4=v_1-\delta_j=v_3-\delta_i$.
Consider the intersections
\begin{align*}
I_{v_1,v_2,v_3}
&=\big(\widehat\frakP(v_1,v_2,W)\times\widehat\frakM(v_3,W)\big)\cap
\big(\widehat\frakM(v_1,W)\times\widehat\frakP(v_2,v_3,W)\big),\\
I_{v_1,v_4,v_3}
&=\big(\widehat\frakP(v_1,v_4,W)\times\widehat\frakM(v_3,W)\big)\cap
\big(\widehat\frakM(v_1,W)\times\widehat\frakP(v_4,v_3,W)\big)
\end{align*}

\begin{lemma}\label{lem:transverse1}
\hfill
\begin{enumerate}[label=$\mathrm{(\alph*)}$,leftmargin=8mm]
\item 
The intersections $I_{v_1,v_2,v_3}$ and $I_{v_1,v_4,v_3}$ are both transversal
in $\widehat\frakM(W)^3$.
\item
There is a $G_W\times\bbC^\times$-equivariant isomorphism 
$I_{v_1,v_2,v_3}\simeq I_{v_1,v_4,v_3}$ which intertwines the sheaves
$(\calL_i\boxtimes\calO)|_{I_{v_1,v_2,v_3}}$ and $(\calO\boxtimes\calL_i)|_{I_{v_1,v_4,v_3}}$,
and the sheaves
$(\calO\boxtimes\calL_j)|_{I_{v_1,v_2,v_3}}$ and $(\calL_j\boxtimes\calO)|_{I_{v_1,v_4,v_3}}$.
\end{enumerate}
\end{lemma}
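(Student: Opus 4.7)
The plan is to follow the same template as the proof of Lemma \ref{lem:transverse3}, with $\widehat\X$ replacing $\widetilde\X$ throughout. The main simplification in the simply framed setting is that the arrows $a^*$ are absent, but since the transversality argument in Lemma \ref{lem:transverse3} does not use them in any essential way (it only relies on independence of the arrows $\alpha_{ij}$, $\alpha_{ji}$, $\varepsilon_i$, $a_i$ in the $V_1$-part and the $V_3$-part), the same strategy goes through verbatim.

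For Part (a), I would reduce the transversality of $I_{v_1,v_2,v_3}$ at a point $(\ux_1,\ux_2,\ux_3)$ to the equality
$$\pi_2\bigl(\widehat\X(V_1,V_2,W)\bigr)+\pi_2\bigl(\widehat\X(V_2,V_3,W)\bigr)=\widehat\X(V_2,W),$$
using the tangent space descriptions $T_{\ux_2}\widehat\frakM(v_2,W)=\widehat\X(V_2,W)/\frakg_{v_2}\cdot\ux_2$ and the analogous formulas for the Hecke correspondences (which are smooth). Because $i\neq j$, one has the direct sum decomposition $V_2=\bbC^{\delta_j}\oplus(V_1\cap V_3)\oplus\bbC^{\delta_i}$, and letting $p_1:V_2\to V_1$ be the projection along $\bbC^{\delta_i}$, the explicit splitting $x_2=x_1+(x_2-x_1)$ with $x_1=(p_1\alpha_2+(1-p_1)\alpha_2(1-p_1)\,,\,a_2\,,\,p_1\varepsilon_2+(1-p_1)\varepsilon_2(1-p_1))$ exhibits the required surjectivity. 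Transversality of $I_{v_1,v_4,v_3}$ follows by the dual argument.

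For Part (b), I would copy the construction of the isomorphism from Lemma \ref{lem:transverse3}. Parameterize
\begin{align*}
I_{v_1,v_2,v_3}&=\{(S_1,S_3,x)\,;\,x(S_1)\subset S_1,\,x(S_3)\subset S_3\}\,/\,G_{v_2},\\
I_{v_1,v_4,v_3}&=\{(T_1,T_3,x_1,x_3,\phi)\,;\,x_a(T_a)\subset T_a,\,\phi\circ x_1|_{T_1}=x_3|_{T_3}\circ\phi\}\,/\,G_{v_1}\times G_{v_3},
\end{align*}
with data in the appropriate Grassmanians and in $\widehat\X(V_a,W)_s$, and write down mutually inverse morphisms
$(S_1,S_3,x)\mapsto(S_1\cap S_3,\,S_1\cap S_3,\,x|_{S_1},\,x|_{S_3},\,\id)$ and $(T_1,T_3,x_1,x_3,\phi)\mapsto(V'_1,V'_3,x')$, where $V'_2=V_1\oplus V_3/(\id\times\phi)(T_1)$. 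The line bundle claim is then immediate, because under this bijection the quotient $V_2/V_1$ is canonically identified with $V_3/V_4$ (both concentrated at vertex $i$), and $V_2/V_3$ with $V_1/V_4$ (both at vertex $j$).

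The only point requiring a bit of care, and the one I expect to be the main verification, is the stability of the resulting representations: that $x|_{S_1}$, $x|_{S_3}$ and $x'$ are all stable in the sense of \S\ref{sec:triple quiver simple framing}. In the simply framed setting, stability means there is no nonzero subrepresentation contained in $V$. A subrepresentation of $x|_{S_1}$ inside $S_1\cap S_3$ is simultaneously a subrepresentation of $x|_{S_1}$ and $x|_{S_3}$, hence vanishes by stability of $x$; the same reasoning handles $x|_{S_3}$, and stability of $x'$ follows because any destabilizing subrepresentation would lift to a destabilizing subrepresentation of $x_1\oplus x_3$. With stability secured, the rest of the argument is a formal transcription of Lemma \ref{lem:transverse3}.
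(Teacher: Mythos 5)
Your proposal follows exactly the route the paper takes: the paper's proof of this lemma is literally one sentence referring back to Lemma \ref{lem:transverse3}, and you have correctly spelled out that the transversality argument, the explicit isomorphism between the two intersections, and the identification of line bundles all transcribe verbatim once $a^*$ is dropped from the tuple.

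The one place your writeup is loose is the stability check, and your stated arguments there are not quite right. For $x|_{S_1}$ the correct (and simpler) reason is that a subrepresentation of $x|_{S_1}$ contained in $S_1$ is automatically a subrepresentation of $x$ contained in $V_2$, hence zero by stability of $x$; the detour through $S_1\cap S_3$ is a red herring. For $x'$, the claim that a destabilizing subrepresentation ``lifts to a destabilizing subrepresentation of $x_1\oplus x_3$'' does not hold as stated: the preimage $\tilde U\subset V_1\oplus V_3$ of a destabilizing $U\subset V_2'$ is preserved by $\alpha$ and $\varepsilon$ and contains the graph of $\phi$, but the framing condition lifts only to $a_1(u)=a_3(v)$ for $(u,v)\in\tilde U$, not to $a_1(u)=a_3(v)=0$, so $\tilde U$ need not destabilize $x_1\oplus x_3$. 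A clean fix: $U\cap V_1'$ is a subrepresentation of $x_1$ supported on $V_1$ and is killed by $a'|_{V_1'}=a_1$, hence is zero by stability of $x_1$; similarly $U\cap V_3'=0$; therefore $U$ injects into both $V_2'/V_1'\cong\bbC^{\delta_i}$ and $V_2'/V_3'\cong\bbC^{\delta_j}$, which forces $U=0$ because $i\neq j$. This repairs the gap without changing the overall structure of your argument.
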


\begin{proof}
The proof is similar to the proof of Lemma \ref{lem:transverse3} above.
\end{proof}

\subsubsection{Proof of the relations $\mathrm{(A.5)}$ and $\mathrm{(A.7)}$}
We have proved the relation (A.6).
The relations (A.2) to (A.4) are easy to check.
Now, we concentrate on the relations (A.5) and (A.7).
The proof of the relation (A.7) for quantum loop groups
in \S\ref{sec:A7} does not extend to the case of the shifted quantum loop groups,
because in the shifted case the action of the elements $x_{i,0}^-$ may not be locally nilpotent.
We use instead another argument using the critical K-theoretic Hall algebra.
More precisely, we claim that these relations follow from 
Proposition \ref{prop:double} and Corollary \ref{cor:relations57} below.
Indeed, applying the proposition with $\hat f=\hat f_2$ yields 
an $R$-algebra homomorphism
$$\omega^+:K_{\bbC^\times}(\Rep,h)_{\Rep^\nil}\to
K_A(\widehat\frakM(W)^2,(\hat f_2)^{(2)})_{\widehat\calZ(W)}$$
which takes the elements $x_{i,n}$ in \eqref{x+3} to the elements $A^+_{i,n}$ in \eqref{xpmg}.
Hence the relations (A.5) and (A.7) in the algebra
$K_A(\widehat\frakM(W)^2,(\hat f_2)^{(2)})_{\widehat\calZ(W)}/\tor$
follow from the corollary. The relations (A.5) and (A.7) for the elements $x^-_{i,n}$
are proved similarly using the homomorphism $\omega^-$ in \eqref{omega-}.

\subsubsection{End of the proof of Theorem $\ref{thm:shifted}$}
We have defined the map \eqref{map8g}. The compatibility of \eqref{map8g} with the 
$R$-lattices follows from the formulas \eqref{psi+-g} and \eqref{xpmg}.
More precisely, the $R$-subalgebra $\U_R^{-w}(L\frakg)$ of $\U_F^{-w}(L\frakg)$ is generated by 
$$\psi_{i,\mp w^\pm_i}^\pm
,\quad
(\psi^\pm_{i,\mp w_i^\pm})^{-1}
,\quad
h_{i,\pm m}/[m]_q
,\quad
(x^\pm_{i,n})^{[m]}
,\quad
i\in I, n\in\bbZ, m\in\bbN^\times,$$
where $h_{i,\pm m}$ is as in \S\ref{sec:sqg}.
By \eqref{Hg} and \eqref{psi+-g} 
the map \eqref{map8g} takes $h_{i,\pm m}/[m]_q$ into 
$$K_A\big(\widehat\frakM(W)^2,(\hat f_2)^{(2)}\big)_{\widehat\calZ(W)}\,/\,\tor
\,\subset\,
K_A\big(\widehat\frakM(W)^2,(\hat f_2)^{(2)}\big)_{\widehat\calZ(W)}\otimes_RF.$$
Using \eqref{xpmg}, an easy computation similar to the proof of
\cite[thm.~12.2.1]{N00} or \cite[lem.~2.4.8]{VV22} shows that it maps $(x^\pm_{i,n})^{[m]}$ to the same lattice.
The second claim of Part (a) of the theorem follows from the formula \eqref{CT}.

\medskip

\section{Representations of K-theoretic critical convolution algebras}

We now apply the previous constructions to realize geometrically some modules
of quantum loop groups and shifted quantum loop groups.
We will use the generalized preprojective algebra $\widetilde\Pi$ and the corresponding 
quiver Grassmanians $\widetilde\Gr$ and $\widetilde\Gr^\bullet$ introduced in \S\ref{sec:QGr}.
We refer to \S\ref{sec:QG} below for an introduction to the representation theory of quantum loop groups and 
shifted quantum loop groups, including the notions of $q$-characters, Kirillov-Reshetikhin 
modules and prefundamental modules.
The main results are Theorem \ref{thm:HL1} and Theorem \ref{thm:PF1}.
We will use the same notation as in \S\ref{sec:quiver varieties}.

\subsection{Representations of quantum loop groups}\label{sec:HL1}
For each triple
$(i,k,l)$ in $I^\bullet\times\bbN^\times$ we fix a graded vector space
$W_{i,k,l}\in\bfC^\bullet$ of dimension
$$w_{i,k,l}=\delta_{i,k-(l-1)d_i}+\delta_{i,k-(l-3)d_i}+\cdots+\delta_{i,k+(l-1)d_i}.$$
We also fix a regular nilpotent element
$\gamma_{i,k,l}$ in $\frakg^{2d_i}_{W_{i,k,l}}$. 
Fix a triple
$(i_r,k_r,l_r)\in I^\bullet\times\bbN^\times$ for each $r=1,2,\dots,s$.
Recall the notation $K_{i,k,l}$ from \eqref{K}. We define
\begin{align*}
W=\bigoplus_{r=1}^sW_{i_r,k_r,l_r}
,\quad
\gamma=\bigoplus_{r=1}^s\gamma_{i_r,k_r,l_r}
,\quad
K_\gamma=\bigoplus_{r=1}^sK_{i_r,k_r,l_r},
,\quad
KR_\gamma=\bigotimes_{r=1}^sKR_{i_r,k_r-(l_r-1),l_r}.
\end{align*}

\smallskip

\begin{proposition}\label{prop:HL1} 
We have the following homeomorphisms
\hfill
\begin{enumerate}[label=$\mathrm{(\alph*)}$,leftmargin=8mm]
\item 
$\crit(\tilde f_\gamma)\cap\widetilde\frakL(v,W)=\widetilde\Gr_v(K_\gamma)$,
\item
$\crit(\tilde f_{\gamma}^\bullet)\cap\widetilde\frakL^\bullet(v,W)=\widetilde\Gr_v^\bullet(K_\gamma).$
\end{enumerate}
\end{proposition}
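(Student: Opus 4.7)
The plan is to compute the critical equations of $\tilde f_\gamma$ directly on the support $\widetilde\frakL(v,W)$ and then identify the resulting data with the Grassmannian of submodules of $K_\gamma$ via the universal property of the truncated projectives defining $K_\gamma$.

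First I would invoke Lemma \ref{lem:nil}(a) to replace $\widetilde\frakL(v,W)$ by the $G_v$-quotient of the set of stable tuples $x=(\alpha,a,0,\varepsilon)$ with $(\alpha,\varepsilon)$ nilpotent. Taking cyclic derivatives of the potential $\bfw_1-\sum_i a_i^*\gamma_ia_i$ (whose trace is $\tilde f_\gamma$) and restricting to $\{a^*=0\}$, one finds that the derivatives with respect to $a_i$ and the contribution with respect to $\varepsilon_i$ coming from the summand $\sum_i\varepsilon_i a_i^*a_i-\sum_i a_i^*\gamma_ia_i$ vanish automatically, the derivative with respect to $a_i^*$ yields the intertwining relation $a_i\varepsilon_i=\gamma_ia_i$, and the derivatives with respect to $\alpha_{ij}$ together with the remaining $\partial_{\varepsilon_i}$ coming from $\bfw_2$ produce precisely the defining relations \eqref{critf2} of the generalized preprojective algebra $\widetilde\Pi$. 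Hence a critical point in $\widetilde\frakL(v,W)$ amounts to a nilpotent $\widetilde\Pi$-module $(V,\alpha,\varepsilon)$ of dimension $v$ equipped with an $H$-linear map $a:V\to W$ (with $\varepsilon_i$ acting on $V_i$ and $\gamma_i$ on $W_i$), modulo $G_v$; stability translates to the condition that $\ker a$ contains no nonzero $\widetilde\Pi$-submodule of $V$.

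Next I would exploit the fact that $K_\gamma$ is, up to shifts, the $\tau$-twisted graded dual of the sum of cyclic truncated projectives $\widetilde\Pi e_{i_r}/\widetilde\Pi\varepsilon_{i_r}^{l_r}$, and that the $i_r$-component of $K_{i_r,k_r,l_r}$ matches $W_{i_r,k_r,l_r}$ as a graded $\bbC[\varepsilon_{i_r}]$-module. Projecting each summand onto its $i_r$-component yields a canonical $H$-linear map $p:K_\gamma\to W$, and the adjunction between restriction to $H$ and coinduction to $\widetilde\Pi$ gives a functorial bijection
\[
\Hom_{\widetilde\Pi}(V,K_\gamma)\;\xrightarrow{\;p\,\circ\,-\;}\;\Hom_H(V|_H,W)
\]
for every nilpotent $\widetilde\Pi$-module $V$. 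Let $\hat a:V\to K_\gamma$ be the $\widetilde\Pi$-linear lift of $a$ produced by this bijection. Then $\ker\hat a$ is a $\widetilde\Pi$-submodule of $V$ contained in $\ker a$, and conversely any such submodule is annihilated by $\hat a$ by uniqueness of the extension; consequently stability is equivalent to $\hat a$ being injective. Passing to $G_v$-orbits we obtain the sought bijection with the Grassmannian $\widetilde\Gr_v(K_\gamma)$, the inverse sending a submodule $V'\subset K_\gamma$ to the datum $(V',\alpha|_{V'},\varepsilon|_{V'},p|_{V'})$. Both assignments are algebraic, yielding the homeomorphism in (a). Part (b) follows by running exactly the same argument in the $I^\bullet$-graded category: the cocharacter $A$ of \eqref{a} acts compatibly on $\widetilde\frakL^\bullet(v,W)$ via the torus action and on $K_\gamma$ via its intrinsic grading, and the $A$-fixed critical points correspond to graded $\widetilde\Pi$-submodules of graded dimension $v\in\bbN I^\bullet$.

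The main obstacle is the corepresentability bijection above, namely that $(K_\gamma,p)$ is the universal recipient of $H$-linear maps from nilpotent $\widetilde\Pi$-modules to $W$. This is a formal consequence of the definition $K_{i,k,l}=(\widetilde\Pi e_i/\widetilde\Pi\varepsilon_i^l)^\vee[-k-ld_i]$: a $\widetilde\Pi$-map $V\to K_{i,k,l}$ is determined by the image of the generator at vertex $i$ via $\tau$-duality, and this image is a $\bbC$-linear functional on $V_i/\varepsilon_i^l V_i$, which is precisely the same datum as a $\bbC[\varepsilon_i]$-linear map $V_i\to W_{i,k,l}$ via the self-duality of $\bbC[\varepsilon_i]/\varepsilon_i^l$. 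The compatibility of the graded shifts, together with the identification of $K_{i,k,l}$ with the generic kernel of the Kirillov-Reshetikhin module $KR_{i,k,l}$, is the content of \cite[prop.~4.4, 5.1]{FM21}, from which the remainder of the argument is then formal.
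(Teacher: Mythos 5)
Your proposal follows essentially the same route as the paper: invoke Lemma \ref{lem:nil}(a), read off the critical equations as the generalized preprojective relations \eqref{critf2} together with the intertwiner $a_i\varepsilon_i=\gamma_ia_i$, and then use the coinduction adjunction $\Hom_H(V|_H,W)\cong\Hom_{\widetilde\Pi}(V,K_\gamma)$ to match critical points with $\widetilde\Pi$-submodules of $K_\gamma$; part (b) then follows by passing to the $A$-fixed locus via Lemma \ref{lem:fixedpoints}. One small caveat: you assert that \emph{both} the forward map $(\alpha,\varepsilon,a)\mapsto\Im(\hat a)$ and the inverse are algebraic, which, if taken at face value, would give an isomorphism of varieties rather than merely a homeomorphism; the paper is more cautious, constructing only the map $\widetilde\Gr_v(K_\gamma)\to\crit(\tilde f_\gamma)\cap\widetilde\frakL(v,W)$ as a morphism, checking bijectivity on closed points, and then deducing the homeomorphism from properness of both sides, since without knowing normality one cannot conclude scheme-theoretic invertibility. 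Either route yields the stated homeomorphism, but you should either justify carefully that your forward construction descends to a morphism on the $G_v$-quotient, or fall back on the projectivity argument as in the paper.
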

 
\begin{proof} 
By \eqref{f6}, the critical set of $\tilde f_\gamma$ in $\widetilde\frakM(v,W)$ is given by the following equations
$$\varepsilon_i^{-c_{ij}}\alpha_{ij}-\alpha_{ij}\varepsilon_j^{-c_{ji}}=
\sum_{i,j\in I}\sum_{k=0}^{-c_{ij}-1}\sgn_{ij}\varepsilon_i^k\alpha_{ij}\alpha_{ji}\varepsilon_i^{-c_{ij}-1-k}
+\sum_{i\in I}a_i^*a_i=
\sum_{i\in I}\varepsilon_ia_i^*-a_i^*\gamma_i=\sum_{i\in I}a_i\varepsilon_i-\gamma_ia_i=0
.$$
From Lemma \ref{lem:nil} we deduce that, as reduced schemes, we have
\begin{align}\label{claim2}
\begin{split}
\crit(\tilde f_{\gamma})\cap\widehat\frakM(v,W)&=
\Big\{x\in\widehat\X(v,W)_s\,;\,(\alpha,\varepsilon)\in\bfD\,,\,
\sum_{i\in I}a_i\varepsilon_i-\gamma_ia_i=0\Big\}\,\Big/\,G_v,\\
\crit(\tilde f_{\gamma})\cap\widetilde\frakL(v,W)&=
\Big\{x\in\widehat\X(v,W)_s\,;\,(\alpha,\varepsilon)\in\bfD^\nil\,,\,
\sum_{i\in I}a_i\varepsilon_i-\gamma_ia_i=0\Big\}\,\Big/\,G_v.
\end{split}
\end{align}
We must prove that there is an homeomorphism
\begin{align*}
\crit(\tilde f_\gamma)\cap\widetilde\frakL(v,W)=\widetilde\Gr_v(K_\gamma).
\end{align*}
Since $\widetilde\frakL^\bullet(W)=\widetilde\frakL(W)^A$ 
by Lemma \ref{lem:fixedpoints}, we will deduce that we also have an homeomorphism
\begin{align*}
\crit(\tilde f_{\gamma}^\bullet)\cap\widetilde\frakL^\bullet(v,W)
=\widetilde\Gr_v^\bullet(K_\gamma).
\end{align*}

To prove the claim,  we first identify $W$ with the $I^\bullet$-graded vector space 
\begin{align*}
W=\bigoplus_{r=1}^sH_{l_r}e_{i_r}[-k_r+(l_r-1)d_{i_r}]
\end{align*}
where $H_{l_r}$ is as in \eqref{Hl}.
Then, we equip $W$ with corresponding obvious $H$-action.
The operator $\gamma$ on $W$ is identified with the multiplication by $\varepsilon$.
The dual $H$-module is
$$W^\vee=\bigoplus_{r=1}^sH_{l_r}e_{i_r}[k_r+(l_r-1)d_{i_r}].$$
Indeed, note that, since the vector space $H_{l_r}$ is graded by $\{0,2d_{i_r},\dots,2(l_r-1)d_{i_r}\}$, the vector space
$H_{l_r}e_{i_r}[k_r+(l_r-1)d_{i_r}]$ is graded by $\{-k_r-(l_r-1)d_{i_r},\dots,-k_r+(l_r-1)d_{i_r}\}$ as 
the graded vector space $W_{i_r,k_r,l_r}^\vee$.
Next we equip the coinduced module
$$\bigoplus_{i\in I}\Hom_H(\widetilde\Pi e_i ,W)[-d_i]$$
with the $(\widetilde\Pi,H)$-bimodule structure given by the regular action on $\widetilde\Pi e_i$ twisted by $\tau$ and 
the $H$-action on $W$, i.e., 
$$((p, h)\cdot f)(x)=h\cdot f(\tau(p)x)
,\quad
\forall p\in\widetilde\Pi
,\quad
x\in\widetilde\Pi e_i
,\quad
h\in H
,\quad
f\in\Hom_H(\widetilde \Pi e_i,W).$$
It is isomorphic  to $K_\gamma$ as a graded $\widetilde\Pi$-module
by \eqref{K}.
Set also
\begin{align}\label{KGV}
K_\gamma^\vee=\bigoplus_{i\in I}\widetilde\Pi e_i\otimes_HW^\vee [d_i].
\end{align}
There is an obvious $\widetilde\Pi$-invariant non-degenerate pairing
$K_\gamma\times K_\gamma^\vee\to\bbC$ which allows us to view $K_\gamma^\vee$ as the dual of $K_\gamma$.
Let $a:K_\gamma\to W$ be the transpose of the inclusion 
$a^\vee:W^\vee\to K^\vee_\gamma$ given by $z\mapsto 1\otimes z$.
We have $a=\bigoplus_{i\in I}a_i$ where $a_i$ is the following map
$$
a_i:K_\gamma\to W_i
,\quad
f\mapsto f(e_i).$$
The map $a_i$ kills the subspace $e_jK_\gamma$ for each $j\neq i$ and 
it is homogeneous of degree $-d_i$.
The map $a$ intertwines the left $\varepsilon$-action on $K_\gamma$ with the operator $\gamma$ of $W$.

Now, we consider the quiver Grassmanian.
Let $\widehat\Gr_v(K_\gamma)$ be the set of all injective $I$-graded linear maps $f:\bbC^v\to K_\gamma$
whose image is a $\widetilde\Pi$-submodule of $K_\gamma$.
For each $\widetilde\Pi$-submodule $V\subset K_\gamma$, the action of $\alpha,$ $\varepsilon$ 
on $V$ and the restriction of the map $a$ to $V$ yields a tuple 
$$(\alpha_V\,,\,\varepsilon_V\,,\,a_V)\in\widehat\X(V,W).$$ 
This tuple is stable because $K^\vee_\gamma$ 
is generated by $\Im(a^\vee)$ as a $\widetilde\Pi$-module, 
hence $\{0\}$ is the only $\widetilde\Pi$-submodule of $K_\gamma$ contained into $\Ker(a)$.
Further,  the pair $(\alpha_V,\varepsilon_V)$ is nilpotent because the 
$\widetilde\Pi$-module $K_\gamma$ is nilpotent.
Thus, we have a $G_v$-equivariant morphism
\begin{align}\label{map7}
\widehat\Gr_v(K_\gamma)\to\Big\{x\in\widehat\X(v,W)_s\,;\,(\alpha,\varepsilon)\in\bfD^\nil\,,\,
\sum_{i\in I}a_i\varepsilon_i-\gamma_i a_i=0\Big\}
,\quad
f\mapsto x_f=(\alpha_f,\varepsilon_f,a_f)
\end{align}
where $V$ is the image of $f$ and
$$\alpha_f=f^{-1}\circ\alpha_V\circ f
,\quad
\varepsilon_f=f^{-1}\circ\varepsilon_V\circ f
,\quad
a_f=a_V\circ f.$$ 

We claim that the morphism \eqref{map7} is injective on closed points. 
Indeed, assume that 
$$x_{f_1}=x_{f_2}
,\quad
f_1,f_2\in\widehat\Gr_v(K_\gamma).$$
Consider the map $h=f_1-f_2$.
Since $a_{f_1}=a_{f_2}$, we have 
$$\Im(h)\subset\Ker(a)\subset K_\gamma$$ and $\Im(h)$
is preserved by the action of $\alpha$ and $\varepsilon$. 
Thus $h=0$ because $\{0\}$ is the only $\widetilde\Pi$-submodule of $K_\gamma$ contained into $\Ker(a)$. 

 The quotient by the $G_v$-action yields a torsor
$$\widehat\Gr_v(K_\gamma)\to\widetilde\Gr_v(K_\gamma).$$
 Hence, by \eqref{claim2}, the morphism \eqref{map7} descends to a morphism of reduced schemes
 \begin{align}\label{map9}
\widetilde\Gr_v(K_\gamma)\to\crit(\tilde f_{\gamma})\cap\widetilde\frakL(v,W)
\end{align}
which is injective on closed points.
We claim that it is indeed a bijection on closed points.
To prove this, it is enough to check that  the inclusion \eqref{map9} is surjective.
To do so, we fix a point $x=(\alpha_v,a_v,\varepsilon_v)$ in the right hand side of \eqref{map7}.
The pair $(\alpha_v,\varepsilon_v)$ equips $\bbC^v$ with compatible actions
of $\widetilde\Pi$ and $H$.
The adjunction of coinduction yields an isomorphism
$$\Hom_H(\bbC^v,W)=\Hom_{\widetilde\Pi}(\bbC^v,K_\gamma).$$
Thus the map $a_v:\bbC^v\to W$ yields a $\widetilde\Pi$-module homomorphism 
$f:\bbC^v\to K_\gamma$ such that $a_v=a\circ f$.
The map $f$ is injective because the tuple $x$ is stable.
By construction, the tuple $\underline x_v$ is the image of the point $\Im(f)\in \widetilde\Gr_v(K_\gamma)$ by the inclusion \eqref{map9}.

To conclude, we have proved that the morphism \eqref{map9} is bijective on closed points. Since both sides are projective varieties,
this morphism takes a closed subset to a closed subset, hence the inverse is continuous, proving that \eqref{map9} is an homeomorphism.
A priori the varieties there are not known to be normal, hence \eqref{map9} could not be invertible. 
Probably this can be proved as in Shipman's work \cite{S10} but we will not need this.
\end{proof}

\begin{corollary}\label{cor:HL1}
Let $A\subset G_W\times\bbC^\times$ be any closed subgroup.\hfill
\begin{enumerate}[label=$\mathrm{(\alph*)}$,leftmargin=8mm]
\item
If $\widetilde\Gr^\bullet_v(K_\gamma)=\emptyset$ then
$K(\widetilde\frakM^\bullet(v,W),\tilde f_\gamma^\bullet)=
K(\widetilde\frakM^\bullet(v,W),\tilde f_\gamma^\bullet)_{\widetilde\frakL^\bullet(v,W)}=0.$ 
\item 
If $\widetilde\Gr_v(K_\gamma)=\emptyset$ then
$K_A(\widetilde\frakM(v,W),\tilde f_\gamma)=
K_A(\widetilde\frakM(v,W),\tilde f_\gamma)_{\widetilde\frakL(v,W)}=0$.
\end{enumerate}
\end{corollary}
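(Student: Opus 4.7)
The plan is to combine Proposition \ref{prop:HL1} with the support property \eqref{crit}, $K_G(X,f)_Z = K_G(X,f)_{Z\cap\crit(f)}$. For the K-theory groups supported on the Lagrangian, Proposition \ref{prop:HL1} identifies $\widetilde\frakL^\bullet(v,W) \cap \crit(\tilde f_\gamma^\bullet)$ with the quiver Grassmanian $\widetilde\Gr^\bullet_v(K_\gamma)$, which is empty by hypothesis; equation \eqref{crit} then gives the vanishing of $K(\widetilde\frakM^\bullet(v,W),\tilde f_\gamma^\bullet)_{\widetilde\frakL^\bullet(v,W)}$. The argument for the $A$-equivariant statement in (b) is identical, using Proposition \ref{prop:HL1}(a).

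For the unsupported K-theory groups, applying \eqref{crit} with $Z$ the whole variety reduces the task to showing that the full critical locus of $\tilde f_\gamma^{(\bullet)}$ is empty under the hypothesis. Since Proposition \ref{prop:HL1} only describes its intersection with the Lagrangian, the plan is to establish the structural claim: \emph{every stable critical point of $\tilde f_\gamma$ (resp.\ $\tilde f_\gamma^\bullet$) already lies in $\widetilde\frakL(v,W)$ (resp.\ $\widetilde\frakL^\bullet(v,W)$).} Once the claim is proved, Proposition \ref{prop:HL1} identifies the entire critical locus with the quiver Grassmanian, which is empty by assumption.

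The argument for the claim extracts from a stable critical representative $(\alpha,\varepsilon,a,a^*)$ the two conditions defining $\widetilde\frakL^{(\bullet)}$ via Lemma \ref{lem:nil}, namely $a^*=0$ and $(\alpha,\varepsilon)$ nilpotent. The nilpotency of $\varepsilon$ follows from the intertwining $a\varepsilon=\gamma a$ combined with the nilpotency of $\gamma$: any non-zero generalized eigenspace of $\varepsilon$ with non-zero eigenvalue is $\varepsilon$-stable and maps to zero under $a$, and one extends it to an $(\alpha,\varepsilon)$-invariant subspace using the undeformed critical equation $\varepsilon_i^{-c_{ij}}\alpha_{ij}=\alpha_{ij}\varepsilon_j^{-c_{ji}}$, contradicting stability. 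For $a^*=0$, the deformed critical equation
\[
\sum_k\sgn_{ij}\,\varepsilon_i^k\alpha_{ij}\alpha_{ji}\varepsilon_i^{-c_{ij}-1-k}+a_i^*a_i=0
\]
combined with the nilpotency of the ideal generated by the $\alpha_{ij}$'s in $\widetilde\Pi$ from \cite[\S 11]{GLS17} and the regularity of $\gamma$, forces the $\widetilde\Pi$-submodule of $V$ generated by $\Im(a^*)$ to be $a$-killing and hence zero by stability; thus $a^*=0$. In the graded setting (a), the $I^\bullet$-grading automatically forces both $\varepsilon$ and $\alpha$ to be nilpotent for degree reasons, simplifying the argument considerably.

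The main obstacle is the $a^*=0$ step outside the graded setting: the deformation term $a_i^*a_i$ on the right-hand side of the preprojective relation obstructs the direct application of nilpotency of $(\alpha_{ij})$ in $\widetilde\Pi$. One must track how this deformation propagates through compositions of $\alpha$'s, using that $\gamma$ is regular nilpotent and that the central element $\omega=\sum_i\varepsilon_i^{t_i}$ of $\widetilde\Pi$ acts compatibly with both $\varepsilon$ and $\gamma$ via the intertwining, to control the ``non-nilpotent'' part of the representation.
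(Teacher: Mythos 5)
Your argument for the Lagrangian-supported K-groups is correct and matches the paper: Proposition~\ref{prop:HL1} together with the support property \eqref{crit} gives the vanishing immediately. The unsupported K-groups, however, cannot be handled the way you propose, because your intermediate \emph{structural claim} --- that every stable critical point of $\tilde f_\gamma$ already lies in $\widetilde\frakL(v,W)$ --- is false. Already for $Q=A_1$, $v=1$, $W=\bbC^2$ with $\gamma$ regular nilpotent and $\varepsilon=0$, take $a$ an isomorphism of $V=\bbC$ onto $\Ker\gamma\subset W$ and $a^*$ any nonzero map vanishing on $\Ker\gamma=\Im\gamma$. Then $\Ker a=0$ so $x$ is stable, the critical equations $a\varepsilon-\gamma a=0$, $\varepsilon a^*-a^*\gamma=0$, $a^*a=0$ all hold, yet $a^*\neq0$ and $\Tr_W(A\,aa^*)\neq0$ for a suitable $A\in\frakg_W$, so $\tilde\pi(\ux)\neq 0$, i.e.\ $\ux\notin\widetilde\frakL(v,W)$. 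Thus $\crit(\tilde f_\gamma)$ genuinely overshoots the Lagrangian, the $\widetilde\Pi$-submodule generated by $\Im(a^*)$ is not contained in $\Ker a$, and the ``tracking the deformation'' step you flag as the main obstacle cannot be repaired: the claim you are trying to prove is simply wrong. (In case~(a) the grading forces $\alpha,\varepsilon$ nilpotent, but it does not by itself force $a^*=0$ either.)

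What is true, and what the paper proves, is only the implication $\crit(\tilde f_\gamma)\cap\widetilde\frakL(v,W)=\emptyset\Rightarrow\crit(\tilde f_\gamma)=\emptyset$, which is all that \eqref{crit} requires. The paper establishes only that $\varepsilon$ is nilpotent at every critical point --- since $\omega$ commutes with $\alpha,\varepsilon$ and $a\omega^d=(\sum_i\gamma_i^{t_i})^da=0$ for $d\gg0$, the subspace $\omega^d(V)$ is an $(\alpha,\varepsilon)$-invariant subspace of $\Ker a$, hence zero by stability --- and makes no attempt to control $a^*$ or $\alpha$. It then introduces a $\bbC^\times$-action $\diamond$ of weight~$1$ on $\alpha,a,a^*$ and weight~$0$ on $\varepsilon$, which preserves $\tilde\pi$ and $\crit(\tilde f_\gamma)$. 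Because $\varepsilon$ is nilpotent, the $\diamond$-invariant functions $\Tr_V(\varepsilon^k)$ of degree zero already vanish, and all other generators of the vanishing ideal of $0\in\widetilde\frakM_0(W)$ have positive $\diamond$-degree, so $\tilde\pi(t\diamond x)\to0$ as $t\to0$. By properness of $\tilde\pi$ the limit $\lim_{t\to0}t\diamond x$ therefore exists in $\widetilde\frakL(W)$, and since $\crit(\tilde f_\gamma)$ is closed and $\diamond$-invariant, this limit lies in $\crit(\tilde f_\gamma)\cap\widetilde\frakL(W)$. The critical set thus retracts onto its intersection with the Lagrangian rather than being contained in it; this flow/retraction argument is precisely the piece your proof is missing.
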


\begin{proof}
If $\widetilde\Gr^\bullet_v(K_\gamma)=\emptyset$,
then Proposition \ref{prop:HL1} implies that
$$\widetilde\frakL^\bullet(v,W)\subset\widetilde\frakM^\bullet(v,W)\setminus\crit(\tilde f^\bullet_\gamma).$$
By \eqref{excision} and \eqref{crit}, we deduce that
$$
K(\widetilde\frakM^\bullet(v,W),\tilde f^\bullet_\gamma)_{\widetilde\frakL^\bullet(v,W)}=
K(\widetilde\frakM^\bullet(v,W)\setminus\crit(\tilde f^\bullet_\gamma),\tilde f^\bullet_\gamma)_{\widetilde\frakL^\bullet(v,W)}=0
.$$
Similarly, if $\widetilde\Gr_v(K_\gamma)=\emptyset$ then we have
$$
K_A(\widetilde\frakM(v,W),\tilde f_\gamma)_{\widetilde\frakL(v,W)}=0
.$$
We must check that we also have
$$K_A(\widetilde\frakM(v,W),\tilde f_\gamma)=0.$$
To do so, by \eqref{crit} and Proposition \ref{prop:HL1}, it is enough to prove that 
$$\crit(\tilde f_\gamma)\cap\widetilde\frakL(v,W)=\emptyset\Rightarrow\crit(\tilde f_\gamma)=\emptyset.$$
Indeed, we will prove that
$$\crit(\tilde f_\gamma)\neq\emptyset
\Rightarrow\crit(\tilde f_\gamma)\cap\widetilde\frakL(v,W)\neq\emptyset.$$
We equip $\widetilde\frakM(W)$ and $\widetilde\frakM_0(W)$  
with the $\bbC^\times$-action $\diamond$ such that $\alpha_{ij}$, $a_i$, $a^*_i$ have degree 1 and 
$\varepsilon_i$ degree 0. This $\bbC^\times$-action preserves the map 
$\tilde\pi:\widetilde\frakM(W)\to\widetilde\frakM_0(W)$ and the subset $\crit(\tilde f_\gamma)$ in
$\widetilde\frakM(W)$.
Hence it is enough to prove that 
\begin{align}\label{cc}
\crit(\tilde f_\gamma)\neq\emptyset
\Rightarrow
\crit(\tilde f_\gamma)^{\bbC^\times}\neq\emptyset\ \text{and}\ \crit(\tilde f_\gamma)^{\bbC^\times}
\subset\widetilde\frakL(W).
\end{align}
To do this, we claim that
the element $\varepsilon$ is nilpotent
for each $\underline x\in\crit(\tilde f_\gamma)$ with $x=(\alpha,\varepsilon,a,a^*)$. 
Recall that the ideal of $\{0\}$ in $\bbC[\widetilde\frakM_0(W)]$ is generated by
the functions $h_{A,M}$ and $h_N$ such that
\begin{align*}
h_{A,M}(\ux)=\Tr_W(AaMa^*)
,\quad
h_N(\ux)=\Tr_V(N)
\end{align*}
where $A$, $M$ and $N$ run into
$\frakg_W$, $\bbC\widetilde Q$ and
$\bbC\widetilde Q_+$.
We deduce that the limit $$\lim_{t\to 0}t\diamond x$$ exists for each closed point 
$x\in\crit(\tilde f_\gamma)$. Further, we have 
$\crit(\tilde f_\gamma)^{\bbC^\times}\subset\widetilde\frakL(W)$.
This proves \eqref{cc}.
To prove that $\varepsilon$ is nilpotent, fix an integer $d>0$ such that $\gamma^d=0$.
Recall the element $\omega$ in \eqref{aeo}.
Then, for each $x$ as above,
the subspace $\omega^d(V)\subset V$ is preserved by the elements
$\alpha_{ij}$ and $\varepsilon_i$ and it is contained into $\Ker(a)$.
Thus it is zero because the tuple $x$ is stable.
Hence $\varepsilon$ is nilpotent.
Part (b) is proved. Part (a) is proved in the same way.

\end{proof}

\begin{remark}\label{rem:fd}
The varieties $\widetilde\frakM(v,W)$ and $\widetilde\frakM^\bullet(v,W)$ may be
non-empty for infinitely many dimension vectors $v$.
However, since the graded $\widetilde\Pi$-module $K_\gamma$ is finite dimensional,
the Grassmanian $\widetilde\Gr^\bullet(K_\gamma)$ is a variety.
Hence, Corollary \ref{cor:HL1} implies that the vector spaces
$K_A(\widetilde\frakM(v,W),\tilde f_\gamma)$,
$K_A(\widetilde\frakM(v,W),\tilde f_\gamma)_{\widetilde\frakL(W)}$,
$K(\widetilde\frakM^\bullet(v,W),\tilde f_\gamma^\bullet)$ and
$K(\widetilde\frakM^\bullet(v,W),\tilde f_\gamma^\bullet)_{\widetilde\frakL^\bullet(W)}$
vanish for all but finitely many $v$'s.
\end{remark}

\begin{theorem}\label{thm:HL1} 
Assume that $W=W_{i,k,l}$ and $\gamma=\gamma_{i,k,l}$. The $\U_\zeta(L\frakg)$-modules
$K(\widetilde\frakM^\bullet(W),\tilde f_\gamma^\bullet)$ and
$K(\widetilde\frakM^\bullet(W),\tilde f_\gamma^\bullet)_{\underline{\widetilde\frakL^\bullet(W)}}$
are simple and isomorphic to the Kirillov-Reshetikhin module $KR_{i,k,l}$.
\end{theorem}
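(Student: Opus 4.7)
The plan is to identify both modules with the unique simple finite dimensional $\ell$-highest weight $\U_\zeta(L\frakg)$-module of highest $\ell$-weight $\Psi_{i,k,l}$, which is $KR_{i,k,l}$ by Appendix A. First I would use Proposition \ref{prop:HL1} together with Corollary \ref{cor:HL1} to conclude that both modules are finite dimensional and supported on those dimension vectors $v$ for which $\widetilde\Gr^\bullet_v(K_{i,k,l})$ is non-empty. Since $W=W_{i,k,l}$ and $K_{i,k,l}$ contains a canonical zero submodule, the component at $v=0$ is one-dimensional, equal to the class of the structure sheaf of the point $\widetilde\frakM^\bullet(0,W)$. Let $\mathbf 1$ denote its class; it is a candidate for the $\ell$-highest weight vector in both modules.

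Next I would compute the action of the commuting family $\psi_i^\pm(u)$ on $\mathbf 1$. Specialising the explicit formula \eqref{psi+-} at $v=0$, the universal sheaf $\calV$ vanishes and one is left with a product over the framing data $W_{i,k,l}$; the characters of $\sigma$ acting on $W_{i,k,l}$ are exactly the spectral parameters $\zeta^{k-(l-1)d_i},\zeta^{k-(l-3)d_i},\dots,\zeta^{k+(l-1)d_i}$, so the series obtained is precisely the highest $\ell$-weight $\Psi_{i,k,l}$ of $KR_{i,k,l}$ as recalled in Appendix A. At the same time, \eqref{xpm} shows that $x^+_{i,n}\cdot\mathbf 1 = 0$ because the corresponding Hecke correspondence $\widetilde\frakP^\bullet(\delta_i,W)$ has $\calV^-=0$ forcing the relevant line bundle computation to vanish on the $v=0$ stratum by support considerations. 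Thus $\mathbf 1$ is indeed an $\ell$-highest weight vector with weight $\Psi_{i,k,l}$.

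The last and main step is to match dimensions and deduce simplicity. I would combine two ingredients: on the one hand, a dimensional reduction argument analogous to Proposition \ref{prop:crit1} together with Thomason localisation reduces $\dim K(\widetilde\frakM^\bullet(v,W),\tilde f^\bullet_\gamma)$ (and its $\widetilde\frakL^\bullet$-supported version) to the Euler characteristic $\chi(\widetilde\Gr^\bullet_v(K_{i,k,l}))$; on the other hand, by \cite[thm.~4.8]{HL16} the generating series $\sum_v \chi(\widetilde\Gr^\bullet_v(K_{i,k,l}))\, e^{w-\bfc v}$ computes exactly the $q$-character of $KR_{i,k,l}$, see Proposition \ref{prop:HL1} for the identification of the relevant Grassmannians. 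Consequently both K-theoretic modules have dimension equal to $\dim KR_{i,k,l}$. Since any simple subquotient must contain $\mathbf 1$ in its $\ell$-highest weight subspace (the $v=0$ part being one-dimensional and generating under $x^-_{i,n}$ as the only way to decrease the dimension vector), such a subquotient has highest $\ell$-weight $\Psi_{i,k,l}$ and is therefore $KR_{i,k,l}$; the dimension equality then forces the module itself to equal $KR_{i,k,l}$, proving both simplicity and the asserted isomorphism. The hardest step is establishing cyclicity of $\mathbf 1$, or equivalently the dimension matching via Hernandez--Leclerc's formula; for this I would rely on the dimensional reduction in critical K-theory together with the identification of supports in Proposition \ref{prop:HL1}, with the cohomological counterpart developed in Appendix C and \cite{VV23}.
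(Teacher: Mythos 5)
Your first two paragraphs are essentially consistent with the paper's proof: the class of the point $\widetilde\frakM^\bullet(0,W)$ is the $\ell$-highest weight vector, and its $\ell$-weight agrees with the Drinfeld polynomial of $KR_{i,k,l}$ via the identification of the framing characters with the spectral parameters. The genuine gap is in your third paragraph, where you base the simplicity on a dimension match. You claim that a dimensional reduction plus Thomason localization gives $\dim K(\widetilde\frakM^\bullet(v,W),\tilde f^\bullet_\gamma)=\chi(\widetilde\Gr^\bullet_v(K_\gamma))$ and then invoke \cite[thm.~4.8]{HL16}. This identity is not established anywhere in the paper, and it does not follow from Proposition~\ref{prop:crit1}: that proposition treats only the symmetric, $\gamma=0$ case and identifies critical K-theory with the K-theory of a Nakajima quiver variety, not with an Euler characteristic. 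For a general matrix-factorization category the Grothendieck group is supported on $\crit$ but is \emph{not} computed by the Euler characteristic of the underlying topological space; in the cohomological counterpart (Appendix~C) the paper is careful to say one gets homology with coefficients in a vanishing-cycle sheaf rather than ordinary homology of the quiver Grassmannian. Proving the Euler-characteristic identity you need would amount to a separate and nontrivial theorem about the structure of that sheaf, which is not available here. There is also a secondary gap in your cyclicity argument: a priori the K-theoretic module could contain highest weight vectors supported at $v\neq 0$, and ruling these out is precisely the content you have pushed into the unproved dimension match.

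The paper bypasses dimensions entirely and uses the ``special module'' criterion instead: a finite dimensional module whose $q$-character contains a unique $\ell$-dominant monomial is simple (by \S\ref{sec:qg} and \cite{N04}). Concretely, the module $K^\vee_\gamma$ is cogenerated in a single top degree $(i,k+ld_i)$, so every nonzero graded submodule $V\subset K_\gamma$ has $V_{i,k+ld_i}\neq 0$; hence for $v\neq 0$ with $\widetilde\Gr^\bullet_v(K_\gamma)\neq\emptyset$ one has
\begin{align*}
e^{w-\bfc v}\in m_{i,k,l}\,A_{i,k+ld_i}^{-1}\,\bbZ[A_{j,r}^{-1}\,;\,(j,r)\in I^\bullet].
\end{align*}
Combined with the vanishing statement of Corollary~\ref{cor:HL1}, this constrains every $\ell$-weight appearing in $q\ch$ other than $e^w$ to lie in a right-negative cone \eqref{RN1}--\eqref{RN2}, so no other $\ell$-dominant monomial can occur. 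This argument needs only the \emph{support} information of Proposition~\ref{prop:HL1} and Corollary~\ref{cor:HL1}, not the dimensions, and that is exactly what the K-theory gives for free. You should replace your dimension-matching step with this right-negativity argument.
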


\begin{proof}
Let first prove that the $U_\zeta(L\frakg)$-module
$K(\widetilde\frakM^\bullet(W),\tilde f_\gamma^\bullet)_{\underline{\widetilde\frakL^\bullet(W)}}$
is isomorphic to $KR_\gamma$.
The proof uses the $q$-characters.
We will prove that the $q$-characters of $K(\widetilde\frakM^\bullet(W),\tilde f_\gamma^\bullet)$ and
$K(\widetilde\frakM^\bullet(W),\tilde f_\gamma^\bullet)_{\underline{\widetilde\frakL^\bullet(W)}}$ 
contain only one 
$\ell$-dominant monomial. This implies that both modules are special, hence simple by \S\ref{sec:qg}. 
Recall that
$$w=w_{i,k,l}=\delta_{i,k-(l-1)d_i}+\delta_{i,k-(l-3)d_i}+\cdots+\delta_{i,k+(l-1)d_i}.$$
By \eqref{w-cv} and \eqref{AY}, we have
$$e^{w-\bfc v}=m_{i,k,l}\prod_{j,r}A_{j,r}^{-v_{j,r}},\quad
m_{i,k,l}=Y_{i,k-(l-1)d_i}\cdots Y_{i,k+(l-1)d_i}.$$
The graded $\widetilde\Pi$-module $K_\gamma^\vee$ in \eqref{KGV}
is generated by the element
$e_i\otimes e_i$. 
Since $e_i\otimes e_i$ has the degree $-k-ld_i$,
any non-zero graded quotient $\widetilde\Pi$-module $V^\vee$ of $K_\gamma^\vee$
has a non-zero element of degree $(i,-k-ld_i)$.
Taking the dual, we deduce that any non-zero graded $\widetilde\Pi$-submodule $V\subset K_\gamma$
has a non-zero element of degree $(i,k+ld_i)$.
So, given $v\in\bbN I^\bullet\setminus\{0\}$ such that
the quiver Grassmanian $\widetilde\Gr_v^\bullet(K_\gamma)$ is non-empty, we have
$$e^{w-\bfc v}\in m_{i,k,l}\,A_{i,k+ld_i}^{-1}\,\bbZ[A_{j,r}^{-1}\,;\,(j,r)\in I^\bullet].$$
By Corollary \ref{cor:HL1} we have
$$\widetilde\Gr^\bullet_v(K_\gamma)=\emptyset
\Rightarrow
K(\widetilde\frakM^\bullet(v,W),\tilde f_\gamma^\bullet)_{\underline{\widetilde\frakL^\bullet(v,W)}}=0.$$ 
Further, by definition of the $\U_\zeta(L\frakg)$-action 
on $K(\widetilde\frakM^\bullet(W),\tilde f_\gamma^\bullet)_{\underline{\widetilde\frakL^\bullet(W)}}$, 
the subspace 
$$K(\widetilde\frakM^\bullet(v,W),\tilde f_\gamma^\bullet)_{\underline{\widetilde\frakL^\bullet(v,W)}}$$
is an $\ell$-weight space of $\ell$-weight $e^{w-\bfc v}$.
Thus, since $\widetilde\frakM^\bullet(0,W)$ is a point, we have
$$q\ch(K(\widetilde\frakM^\bullet(W),\tilde f_\gamma^\bullet)_{\underline{\widetilde\frakL^\bullet(W)}})
\in m_{i,k,l}\,\big(1+A_{i,k+ld_i}^{-1}\,\bbZ[A_{j,r}^{-1}\,;\,(j,r)\in I^\bullet]\big).$$
The monomial $m_{i,k,l}\,A_{i,k+ld_i}^{-1}$ is right-negative by \cite[lem.~4.4]{H06},
see \eqref{RN1} for more details.
Using \eqref{RN2}, we deduce that
the $q$-character of
$K(\widetilde\frakM^\bullet(W),\tilde f^\bullet_\gamma)_{\underline{\widetilde\frakL^\bullet(W)}}$
contains a unique $\ell$-dominant monomial, which is equal to $e^w$.
Hence 
$$K(\widetilde\frakM^\bullet(W),\tilde f_\gamma^\bullet)_{\underline{\widetilde\frakL^\bullet(W)}}=KR_\gamma.$$
By Corollary \ref{cor:HL1} we also have
$K(\widetilde\frakM^\bullet(v,W),\tilde f_\gamma^\bullet)=0$ 
if $\widetilde\Gr^\bullet_v(K_\gamma)=\emptyset$.
Hence, the same argument as above implies that
$$K(\widetilde\frakM^\bullet(W),\tilde f_\gamma^\bullet)=KR_\gamma.$$
\end{proof}

\begin{remark}
The obvious maps
$$K(\widetilde\frakM^\bullet(v,W),\tilde f_\gamma^\bullet)_{\underline{\widetilde\frakL^\bullet(W)}}
\to
K(\widetilde\frakM^\bullet(v,W),\tilde f_\gamma^\bullet)_{{\widetilde\frakL^\bullet(W)}}
\to K(\widetilde\frakM^\bullet(v,W),\tilde f_\gamma^\bullet)$$
are invertible for each $v\in\bbN I$, because the left hand side is non-zero for $v=0$ and
both sides
are simple $\U_\zeta(L\frakg)$-modules.
\end{remark}

A similar result holds for irreducible tensor products of Kirillov-Reshetikhin modules.
We will only write it in the symmetric case.

\begin{proposition}\label{prop:TPKR} Assume that the Cartan matrix is symmetric.
Fix $(i_1,k_1,l_1),\dots,(i_s,k_s,l_s)$ in $I^\bullet\times\bbN^\times$ such
that  either the condition $\mathrm{(a)}$ or $\mathrm{(b)}$ below holds
for some integer $l$
\hfill
\begin{enumerate}[label=$\mathrm{(\alph*)}$,leftmargin=8mm]
\item
$k_r\geqslant l$ and $[k_r-2(l_r-1),k_r]=(k_r-2\bbN)\cap[l,k_r]$ for all $r$, and
$$W=\bigoplus_{r=1}^sW_{i_r,k_r-(l_r-1),l_r}
 ,\quad
\gamma=\bigoplus_{r=1}^s\gamma_{i_r,k_r-(l_r-1),l_r}
,\quad
KR_\gamma=\bigotimes_{r=1}^sKR_{i_r,k_r-(l_r-1),l_r}
$$
\item
$k_r\leqslant l$ and $[k_r,k_r+2(l_r-1)]=(k_r+2\bbN)\cap[k_r,l]$ for all  $r$, and
$$W=\bigoplus_{r=1}^sW_{i_r,k_r+(l_r-1),l_r}
 ,\quad
\gamma=\bigoplus_{r=1}^s\gamma_{i_r,k_r+(l_r-1),l_r}
,\quad
KR_\gamma=\bigotimes_{r=1}^sKR_{i_r,k_r+(l_r-1),l_r}
$$
\end{enumerate}
Then the $\U_\zeta(L\frakg)$-modules
$K(\widetilde\frakM^\bullet(W),\tilde f_\gamma^\bullet)$ and
$K(\widetilde\frakM^\bullet(W),\tilde f_\gamma^\bullet)_{\underline{\widetilde\frakL^\bullet(W)}}$
 are simple and isomorphic to $KR_\gamma$.
\end{proposition}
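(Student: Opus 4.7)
The approach adapts the proof of Theorem \ref{thm:HL1} to the tensor product setting, with the main new difficulty being the control of $q$-character combinatorics via the snake-like hypothesis (a) or (b). I treat case (a); case (b) is entirely analogous.

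The first step is to apply Proposition \ref{prop:HL1} to identify the critical locus $\crit(\tilde f_\gamma^\bullet) \cap \widetilde\frakL^\bullet(v,W)$ with the graded Grassmanian $\widetilde\Gr^\bullet_v(K_\gamma)$ of the graded $\widetilde\Pi$-module $K_\gamma = \bigoplus_{r=1}^s K_{i_r, k_r-(l_r-1), l_r}$, and to invoke Corollary \ref{cor:HL1} to obtain vanishing of both $K$-theory groups whenever $\widetilde\Gr^\bullet_v(K_\gamma) = \emptyset$. Set $m = \prod_{r=1}^s m_{i_r, k_r-(l_r-1), l_r}$ for the product of the KR leading monomials. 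By \eqref{w-cv} and \eqref{AY}, every $\ell$-weight of both geometric modules lies in $m \cdot \bbZ[A_{j,k}^{-1}\,;\,(j,k)\in I^\bullet]$. Since we are in the symmetric case ($d_{i_r} = 1$), each direct summand $K_{i_r, k_r-(l_r-1), l_r}^\vee$ of $K_\gamma^\vee$ is cyclic and generated in degree $-(k_r+1)$, so any nonzero graded submodule $V \subset K_\gamma$ of dimension vector $v$ must contain a nonzero element of degree $(i_r, k_r+1)$ for at least one index $r$; this forces $v_{i_r, k_r+1}>0$ and an $A_{i_r,k_r+1}^{-1}$ factor in the associated $\ell$-weight.

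The main obstacle is to verify that every subleading monomial remains right-negative. Using the symmetric formula $A_{i,k+1}^{-1} = Y_{i,k}^{-1} Y_{i,k+2}^{-1} \prod_{j;\, c_{ij}<0} Y_{j,k+1}$ together with \cite[lem.~4.4]{H06} and the right-negativity propagation \eqref{RN2}, the interval hypothesis in (a) guarantees that the rightmost $Y$-index appearing in each subleading monomial is some $k_r + 2$ with negative exponent, and that no accidental cancellation of maximal-position $Y$-exponents across different indices $r$ can produce a new $\ell$-dominant monomial. Consequently $m$ is the unique $\ell$-dominant monomial of each $q$-character, so both modules are special and therefore simple, with $\ell$-highest weight $m$. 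Under hypothesis (a) the tensor product $KR_\gamma$ is itself irreducible with the same $\ell$-highest weight, so we identify both geometric modules with $KR_\gamma$.
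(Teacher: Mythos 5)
There is a genuine gap in your treatment of case (a), and the claim that ``case (b) is entirely analogous'' has the roles reversed relative to the paper. Your argument relies on the monomial $m\,A_{i_r,k_r+1}^{-1}$ being right-negative, but the socle degrees $(i_r,k_r+1)$ of the summands of $K_\gamma$ are \emph{not} uniformly bounded by the interval hypothesis in (a): the condition forces $k_r-2(l_r-1)\in\{l,l+1\}$, so $k_r$ grows with $l_r$ and the values $k_r+1$ can be spread out. Concretely, in type $A_2$ with $l=0$, $s=2$, $(i_1,k_1,l_1)=(1,4,3)$ and $(i_2,k_2,l_2)=(2,0,1)$, the leading monomial is $m=Y_{1,0}Y_{1,2}Y_{1,4}Y_{2,0}$; picking the $r=2$ summand gives $A_{2,1}^{-1}=Y_{2,2}^{-1}Y_{2,0}^{-1}Y_{1,1}$, hence $m\,A_{2,1}^{-1}=Y_{1,0}Y_{1,1}Y_{1,2}Y_{1,4}Y_{2,2}^{-1}$, whose rightmost index is $4$ with \emph{positive} exponent. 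So this monomial is not right-negative, and the propagation \eqref{RN2} cannot be started. Your claim that ``the rightmost $Y$-index appearing in each subleading monomial is some $k_r+2$ with negative exponent'' is therefore false in case (a).

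The uniform-boundedness you need is exactly what case (b) gives: there the socle degrees are $k_r+2l_r-1\in\{l,l+1\}$ while the rightmost $Y$-index of $m$ is $\max_r(k_r+2(l_r-1))\leqslant l$, so every monomial $m\,A_{i,l}^{-1}$ or $m\,A_{i,l+1}^{-1}$ is right-negative and \eqref{RN2} finishes the argument. The paper's proof for case (a) is not a direct repetition of this but a reduction to case (b) via the duality on quiver varieties $\omega:\frakM^\bullet(W)\to\frakM^\bullet(D(W))$ from \cite[\S4.6]{VV03}, which on $q$-characters acts by the involution $\overline{Y_{i,r}}=Y_{i^*,h-2-r}$ and transforms the interval condition of (a) into that of (b) with $l'=h-2-l$. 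To repair your proof, either restrict the direct argument to case (b) and then deduce case (a) from it by this duality, or replace the right-negativity argument with a substantially different combinatorial analysis (which the paper does not attempt).
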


\begin{proof}
In both cases the $\U_\zeta(L\frakg)$-module $KR_\gamma$ is irreducible by
\cite[thm.~4.11]{FH15}. 
Let $M$ denote either the $\U_\zeta(L\frakg)$-module
$K(\widetilde\frakM^\bullet(W),f_\gamma^\bullet)$ or
$K(\widetilde\frakM^\bullet(W),f_\gamma^\bullet)_{\underline{\widetilde\frakL^\bullet(W)}}$.
We consider accordingly the subspace
$$M_v=K(\widetilde\frakM^\bullet(v,W),f_\gamma^\bullet)\text{\ or\ }
K(\widetilde\frakM^\bullet(v,W),f_\gamma^\bullet)_{\underline{\widetilde\frakL^\bullet(v,W)}}.$$
Then $M_v$ is an $\ell$-weight space of $M$ of $\ell$-weight $\Psi_{w-\bfc v}$.
Corollary \ref{cor:HL1} gives an upper bound on the monomials in the $q$-character of $M$.
Now, we consider the cases (a) and (b) separately.
We abbreviate $$\calA=\bbZ[A_{j,r}^{-1}\,;\,(j,r)\in I^\bullet].$$

Let first consider the case (b).
The graded $\widetilde\Pi$-module $K_{i,k,l}$ is cogenerated by an element of degree $(i,k+l)$,
see the proof of Theorem \ref{thm:HL1}.
Hence, for any non-zero graded $\widetilde\Pi$-submodule $V\subset K_\gamma$, we have
$$\sum_{r=1}^sV_{i_r,k_r+(2l_r-1)}=
\sum_{\substack{1\leqslant r\leqslant s\\k_r\in l+2\bbZ}}V_{i_r,l+1}+
\sum_{\substack{1\leqslant r\leqslant s\\k_r\in l-1+2\bbZ}}V_{i_r,l}\neq\{0\}.$$
Using Corollary \ref{cor:HL1} and \eqref{AY} we deduce that
$$q\ch(M)\in m\Big(1+\sum_i(A_{i,l+1}^{-1}\calA+A_{i,l}^{-1}\calA)\Big)
,\quad
m=\prod_{r=1}^sY_{i_r,k_r}Y_{i_r,k_r+2}\cdots Y_{i_r,k_r+2(l_r-1)}.$$
Therefore, all monomials in $q\ch(M)$ are right-negative except $m$ by \eqref{RN2}, since
$mA_{i,l+1}^{-1}$ and $mA_{i,l}^{-1}$ are right-negative because $l+1>k_r+2(l_r-1)$.
Hence the $\U_\zeta(L\frakg)$-module $M$ is irreducible isomorphic to $KR_\gamma$.

Now we consider the case (a). 
We equip the categories $\bfC$ and $\bfC^\bullet$ with the duality functors such that
$D(W)_i=(W_i)^\vee$ and $D(W)_{i,r}=(W_{i,-r})^\vee$ respectively.
By \cite[\S 4.6]{VV03}, 
for each $W\in\bfC$ there is an isomorphism of algebraic varieties
$$\omega:\frakM(W)\to\frakM(D(W))$$ which intertwines the action of the element
$(g,z)$ of $G_W\times\bbC^\times$ with the action of the element 
$({}^tg^{-1},z)$ of $G_{D(W)}\times\bbC^\times$.
Taking the fixed points locus of some one parameter subgroups of
$G_W\times\bbC^\times$ and $G_{D(W)}\times\bbC^\times$
acting on the quiver varieties, we get for each $W\in\bfC^\bullet$
an isomorphism of algebraic varieties
$$\omega:\frakM^\bullet(W)\to\frakM^\bullet(D(W))$$
which intertwines the functions $f^\bullet_\gamma$ and $f^\bullet_{{}^t\gamma}$ for each element
$\gamma\in\frakg_W^2$. 
Here, the transpose ${}^t\gamma$ is viewed as an element in $\frakg^2_{D(W)}$.
Let
$\overline{M}$ be equal either to $K(\frakM^\bullet(D(W)),f_{{}^t\gamma}^\bullet)$ or to
$K(\frakM^\bullet(D(W)),f_{{}^t\gamma}^\bullet)_{\underline{\frakL^\bullet(D(W))}}$.
The map $\omega$ yields a vector space isomorphism $M\to \overline{M}$.
Both spaces $M$ and $\overline{M}$ are equipped with a representation of $\U_\zeta(L\frakg)$.
Let $f\mapsto \overline f$ be the involution of the ring $\bbZ[Y_{i,r}^{\pm 1}]$ such that
$$\overline{Y_{i,r}}=Y_{i^*,h-2-r}.$$ 
Here $h$ is the Coxeter number and
$i\mapsto i^*$ is the involution of the set $I$ such that $$w_0\alpha_i=-\alpha_{i^*},$$
with $\alpha_i$ the simple root corresponding to the vertex $i$.
By \cite[lem.~4.6]{VV03} we have
$$q\ch(\overline{M})=\overline{q\ch(M)}.$$
Now, we apply the argument in the proof of case (b) with $M$ replaced by $\overline M$.
We deduce that the $q$-character $q\ch(\overline{M})$ admits at most one $\ell$-dominant monomial.
Hence $q\ch(M)$ admits also at most one $\ell$-dominant monomial.
Thus the $\U_\zeta(L\frakg)$-module $M$ is irreducible and the isomorphism $M=KR_\gamma$ follows.
\end{proof}

\medskip

\subsection{Representations of shifted quantum loop groups}

Fix $W\in\bfC^\bullet$ and
fix tuples $(i_1,k_1,l_1),\dots,(i_s,k_s,l_s)$ in $I^\bullet\times\bbN^\times$ 
such that $W=\bigoplus_{r=1}^sS_{i_r,k_r}$, see \eqref{SS} for the notation.  
We abbreviate
\begin{align}\label{WWg}
W_l=\bigoplus_{r=1}^sW_{i_r,k_r-(l_r-1)d_{i_r},l_r}
 ,\quad
\gamma_l=\bigoplus_{r=1}^s\gamma_{i_r,k_r-(l_r-1)d_{i_r},l_r}
\end{align}
Here $W_{i,k,l}$ and $\gamma_{i,k,l}$ are as in \S\ref{sec:HL1}.
The following result can be viewed as a geometric analogue of
the limit procedure of normalized $q$-characters which is used in \cite{HJ12}.

\begin{theorem}\label{thm:limH}
Fix $v\in\bbN I^\bullet$.
Let $l_1,\dots,l_s$ be large enough. We have the isomorphism
\hfill
\begin{gather*}
\begin{split}
K(\widehat\frakM^\bullet(v,W)\,,\,\hat f^\bullet_2)=
K(\widetilde\frakM^\bullet(v,W_l)\,,\,\tilde f^\bullet_{\gamma_l}).
\end{split}
\end{gather*}
\end{theorem}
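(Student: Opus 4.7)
The plan is to identify both sides with the critical K-theory of a common quiver Grassmannian of graded $\widetilde\Pi$-modules. This uses Proposition \ref{prop:HL1} on the right and its shifted counterpart on the left, combined with the fact that for fixed $v$ the Grassmannian $\widetilde\Gr^\bullet_v$ stabilizes as $l\to\infty$.

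First, I would describe the two critical loci. Proposition \ref{prop:HL1}(b) gives immediately
\[
\crit(\tilde f^\bullet_{\gamma_l})\cap\widetilde\frakL^\bullet(v,W_l)=\widetilde\Gr^\bullet_v(K_{\gamma_l}),
\qquad K_{\gamma_l}=\bigoplus_{r=1}^s K_{i_r,\,k_r-(l_r-1)d_{i_r},\,l_r}.
\]
On the shifted side I would prove the parallel statement (presumably Proposition \ref{prop:HL3}):
\[
\crit(\hat f^\bullet_2)\cap\widehat\frakL^\bullet(v,W)=\widetilde\Gr^\bullet_v(I_\gamma),\qquad I_\gamma=\bigoplus_{r=1}^s I_{i_r,k_r}.
\]
Its proof runs along the same lines as Proposition \ref{prop:HL1}: the critical equations \eqref{critf2} cut out a graded $\widetilde\Pi$-action on $V$, while coinduction/adjunction identifies the framing $a:V\to W$ with a graded $\widetilde\Pi$-homomorphism $V\to I_\gamma$, stability amounting to injectivity.

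Next I would prove the key stabilization lemma: for $v$ fixed and each $l_r$ large enough,
\[
\widetilde\Gr^\bullet_v(K_{\gamma_l})=\widetilde\Gr^\bullet_v(I_\gamma)
\]
as schemes. This follows from the description $I_i=\bigcup_l(\widetilde\Pi_le_i)^\vee$ in \eqref{I}: any graded submodule of $I_\gamma$ of dimension $v$ is supported in a bounded window of degrees depending only on $v$ and the $k_r$'s, so for $l_r\gg 0$ it already factors through the finite-dimensional truncation which is isomorphic to $K_{\gamma_l}$ up to the degree shifts built into \eqref{K} and \eqref{WWg}.

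Finally, to upgrade the identification of critical sets to an equality of critical K-theories, I would apply the dimensional reduction isomorphism \eqref{DDR} of Proposition \ref{prop:TTK} in the $a^*$-direction on the right-hand side. The linear-in-$a^*$ part of $\tilde f^\bullet_{\gamma_l}=\tilde f^\bullet_1-f^\bullet_{\gamma_l}$ is $\sum_i\Tr\bigl((\varepsilon_ia_i^*-a_i^*\gamma_{l,i})a_i\bigr)$, hence a section linear in $a^*$ of the appropriate vector bundle over $\widehat\frakM^\bullet(v,W_l)$. Dimensional reduction collapses the $a^*$-directions, and combined with the stabilization lemma should match the result with $K(\widehat\frakM^\bullet(v,W),\hat f^\bullet_2)$ after absorbing the extra lower-degree components of $W_l$ not reached by the image of $V$ into a trivial Thom factor.

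The main obstacle will be the precise bookkeeping needed to make \eqref{DDR} apply in this setup: showing that the above $a^*$-section is regular, that its zero scheme matches the constraints built into $\widehat\frakM^\bullet(v,W)$, and that the residual part of the potential descends to $\hat f^\bullet_2$ up to the expected Thom twists, with no spurious contributions from the unused directions in $W_l$. The identification of the Grassmannians is straightforward once the modules are unpacked; the genuinely delicate step is this dimensional-reduction matching together with the compatibility check between stability on $\widetilde\frakM^\bullet(v,W_l)$ and stability on $\widehat\frakM^\bullet(v,W)$.
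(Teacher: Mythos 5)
Your proposal correctly identifies the two main ingredients — the linear‐in‐$a^*$ decomposition of $\tilde f^\bullet_{\gamma_l}$ that feeds into the deformed dimensional reduction \eqref{DDR}, and the need for a ``stabilization'' statement at fixed $v$ as $l\to\infty$ — but there is a genuine gap in what the stabilization statement should be. Identifying the critical loci as reduced schemes via Propositions~\ref{prop:HL1} and~\ref{prop:HL3} (your $\widetilde\Gr^\bullet_v(K_{\gamma_l})=\widetilde\Gr^\bullet_v(I_\gamma)$) is not strong enough: the critical K-theory $K(X,f)$ is supported on $\crit(f)$, but it is emphatically not determined by $\crit(f)$ as a scheme. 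Two LG-models with homeomorphic critical loci need not have isomorphic $K(X,f)$, so your Grassmannian lemma cannot be the bridge between the two sides. Moreover the paper never uses such a Grassmannian comparison in this proof.

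What the paper actually establishes is stronger and more direct. After dimensional reduction along $a^*$, the right-hand side becomes $K(\overline\frakM^\bullet(v,W_l),\hat f^\bullet_2)$, where $\overline\frakM^\bullet(v,W_l)\subset\widehat\frakM^\bullet(v,W_l)$ is the zero scheme of the $a^*$-linear section $\sum_i a_i\varepsilon_i-\gamma_{l,i}a_i$. The key lemma is then a \emph{scheme isomorphism} $\overline\frakM^\bullet(v,W_l)\cong\widehat\frakM^\bullet(v,W)$ for $l$ large, compatible with the potentials $\hat f^\bullet_2$. This comes from a coinduction adjunction: the equations defining $\overline\frakM^\bullet$ say precisely that $a:V\to W_l$ is $H$-linear, and for $V$ of bounded dimension (hence $\varepsilon$-nilpotent of bounded nilpotency degree) and $l$ large one has $\Hom^\bullet_H(V,W_l)\cong\Hom^\bullet_{H_1}(V,W)$ via projection to the socle $W\subset W_l$. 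So the ``extra lower-degree components of $W_l$'' are not absorbed into a Thom factor — they disappear because the zero scheme of that section is literally $\widehat\frakM^\bullet(v,W)$. Your proposal needs this explicit ambient-scheme identification; the Grassmannian stabilization is only a downstream consequence, and the ``trivial Thom factor'' guess would not materialize. Note also that the regularity of the $a^*$-section (needed to apply \eqref{DDR}) is established precisely from this isomorphism, since $\widehat\frakM^\bullet(v,W)$ is smooth.
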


\begin{proof}
Recall the vector bundle $\nu:\widetilde\frakM(W)\to\widehat\frakM(W)$ in \eqref{VB}.
Recall that $\widehat\frakM^\bullet(W_l)\subset \widehat\frakM(W_l)$, 
see Lemma \ref{lem:fixedpoints}.
We define
$$\overline\frakM(W_l)
=\big\{\ux\in\widehat\frakM(W_l)\,;\,\sum_{i\in I}a_i\varepsilon_i -\gamma_{l,i} a_i=0\big\}
,\quad
\overline\frakM^\bullet(W_l)
=\widehat\frakM^\bullet(W_l)\cap\overline\frakM(W_l).
$$
We first claim that there are isomorphisms
\begin{align}\label{form8}
\begin{split}
K(\widehat\frakM^\bullet(v,W)\,,\,\hat f^\bullet_2)&=
K(\overline\frakM^\bullet(v,W_l)\,,\,\hat f^\bullet_2).
\end{split}
\end{align}
To prove this, we view the algebra $H_1$ in \eqref{Hl}
as an $I^\bullet$-graded algebra whose elements have degrees in $I\times\{0\}$.
Then $\bfC^\bullet$ is identified with the category of all finite dimensional graded $H_1$-modules.
Let $\bfC_H^\bullet$ be the category of all finite dimensional graded $H$-modules.
We abbreviate
$$\Hom^\bullet_H(X,Y)=\bigoplus_{k\in\bbZ}\Hom_{\bfC_H^\bullet}(X,Y[k])
,\quad
\Hom^\bullet_{H_1}(X,Y)=\bigoplus_{k\in\bbZ}\Hom_{\bfC^\bullet}(X,Y[k]).$$
As an $I^\bullet$-graded vector space, we have 
$$W=\bigoplus_{r=1}^sS_{i_r}[-k_r]
,\quad
W_l=\bigoplus_{r=1}^s\Hom^\bullet_{H_1}(H_{l_r},S_{i_r}[-k_r]).$$
We equip $W_l$ with the corresponding $I^\bullet$-graded $H$-module structure.
We have $W_l\in\bfC_H^\bullet$.
The socle of $W_l$ is $\Ker(\varepsilon)$. It is isomorphic to $W$ as an 
$I^\bullet$-graded vector space. 
The nilpotent operator $\gamma_{l,i}$ acts on $W_l$ by multiplication by $\varepsilon_i$. 
For each $H$-module $V$ such that the $\varepsilon_i$'s act nilpotently and for any large enough
$l_1,\dots,l_s$, the $H$-action on $V$ descends to an $H_{l_r}$-action for each $r$.
Thus, we have 
\begin{align}\label{isomW}\Hom^\bullet_H(V,W_l)=\Hom^\bullet_H(V,\Hom^\bullet_{H_1}(H,W)).
\end{align}
Let $i\in\Hom_{\bfC^\bullet}(W,W_l)$ be the inclusion of the socle.
Fix  $p\in\Hom_{\bfC^\bullet}(W_l,W)$ such that $p\circ i=\id$.
By \eqref{isomW} we have the isomorphism
\begin{align}\label{map11}
\Hom^\bullet_H(V,W_l)=\Hom^\bullet_{H_1}(V,W)
,\quad
a\mapsto p\circ a.
\end{align}
Thus the assignment 
$(\alpha,a,\varepsilon)\mapsto (\alpha,p\circ a,\varepsilon)$ yields an isomorphism
\begin{align}\label{sub}
\overline\frakM^\bullet(v,W_l)=\widehat\frakM^\bullet(v,W)
\end{align}
which proves the claim.

Now, we apply the deformed dimensional reduction \eqref{DDR}
along the variable $a^*$. 
We have
\begin{align}\label{split}
\tilde f_{\gamma_l}=
\Tr_{W_l}\Big((\sum_{i\in I}a_i\varepsilon_i -\gamma_{l,i} a_i )a_i^*\Big)+\nu^*\hat f_2.\end{align}
Hence \cite[thm~1.2]{H17b}  yields the isomorphisms
\begin{align}\label{form6}
\begin{split}
K(\overline\frakM^\bullet(v,W_l)\,,\,\hat f^\bullet_2)
&=K(\widetilde\frakM^\bullet(v,W_l)\,,\,\tilde f^\bullet_{\gamma_l}).
\end{split}
\end{align}
Note that, to apply the dimensional reduction as above, we need the map
$\hat f^\bullet_2$ on $\overline\frakM^\bullet(v,W_l)$ to be regular.
This follows from the isomorphism \eqref{sub}
if $l_1,\dots l_s$ are large enough, because $\widehat\frakM^\bullet(v,W)$ is smooth. 
\end{proof}


We now explain an analogue of Propositions \ref{prop:HL1}, \ref{prop:TPKR} and
Theorem \ref{thm:HL1} for shifted quantum loop groups.
Fix $W\in\bfC^\bullet$ of dimension $w$. 
Fix tuples $(i_1,k_1),\dots,(i_s,k_s)$ in $I^\bullet$
such that $w=\sum_{r=1}^s\delta_{i_r,k_r}.$  
We set
$I_W=\bigoplus_{r=1}^sI_{i_r,k_r}.$
See \eqref{I} for the notation.

\begin{proposition}\label{prop:HL3} For any $W\in\bfC^\bullet$,
we have an homeomorphism
$$\crit(\hat f_2^\bullet)=\widetilde\Gr^\bullet\!(I_W)
\subset\widehat\frakL^\bullet(W).$$
\end{proposition}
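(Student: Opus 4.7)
The plan is to follow the template of the proof of Proposition \ref{prop:HL1}, replacing the finite-dimensional module $K_\gamma$ by the costandard module $I_W$, and observing that both the $\varepsilon$-nilpotency and the inclusion into $\widehat\frakL^\bullet(W)$ are automatic in the graded setting.

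First I would compute the critical set. Since the potential $\bfw_2$ depends only on the loops $\varepsilon_i$ and the arrows $\alpha_{ij}$, and since $a^*=0$ on $\widehat\frakM(v,W)$, the cyclic derivatives \eqref{critf2} give exactly the $\widetilde\Pi$-relations on $(\alpha,\varepsilon)$, while no equation comes from differentiating with respect to $a_i$. Thus as reduced schemes
$$\crit(\hat f_2)\cap\widehat\frakM(v,W)=\{x\in\widehat\X(v,W)_s\,;\,(\alpha,\varepsilon)\in\bfD\}\,/\,G_v.$$
Taking the $A$-fixed locus gives the graded analogue with $\bfD$ replaced by $\bfD^\bullet$. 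Because $\bfD^\bullet=\bfD^{\bullet,\nil}$, Lemma \ref{lem:nil}(b) forces every such point into $\widehat\frakL^\bullet(W)$, proving the asserted inclusion.

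Next I would identify $I_W$ with the coinduction of $W$ from graded $H_1$-modules to graded $\widetilde\Pi$-modules. Viewing $W$ as an $H_1$-module via its vertex grading, the identification $(\widetilde\Pi e_i)^\vee=I_i$ in the direct-limit sense of \eqref{I}, combined with the decomposition $W=\bigoplus_r S_{i_r,k_r}$ and the degree shifts $[-k_r]$, gives a canonical isomorphism $I_W=\bigoplus_{i\in I}I_i\otimes W_i$ together with a tautological projection $a:I_W\to W$ onto the socle. For every $V\in\bfD^\bullet$, the coinduction adjunction then yields
$$\Hom^\bullet_{H_1}(V,W)=\Hom^\bullet_{\widetilde\Pi}(V,I_W),$$
with the key property that the socle of $I_W$ equals $W$, so $\{0\}$ is the only $\widetilde\Pi$-submodule of $I_W$ contained in $\Ker(a)$.

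I would then set up the bijection $\widetilde\Gr_v^\bullet(I_W)\simeq\crit(\hat f_2^\bullet)\cap\widehat\frakL^\bullet(v,W)$ exactly as in Proposition \ref{prop:HL1}. A graded submodule $V\subset I_W$ produces a stable tuple $(\alpha_V,\varepsilon_V,a|_V)\in\widehat\X^\bullet(V,W)_s$, stability following from the socle property; conversely, a critical point $[\alpha,\varepsilon,a]$ together with the adjunction yields a $\widetilde\Pi$-linear embedding $f:V\hookrightarrow I_W$, with injectivity again enforced by stability. These assignments are mutually inverse on closed points, with the socle characterization of $I_W$ playing the role of the ``$\Im(a^\vee)$ generates $K_\gamma^\vee$'' argument in the earlier proof.

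Finally, each graded component of $I_W$ is finite-dimensional by \cite[prop.~4.5]{FM21}, so $\widetilde\Gr_v^\bullet(I_W)$ is a projective variety; the continuous algebraic bijection onto the separated target $\crit(\hat f_2^\bullet)\cap\widehat\frakL^\bullet(v,W)$ is therefore a closed map, hence a homeomorphism. The main technical point I expect is verifying the coinduction adjunction at the graded/nilpotent level and matching the grading shifts in \eqref{I}, since $I_W$ is infinite-dimensional and must be handled through the direct-limit description $I_i=\bigcup_l(\widetilde\Pi_l e_i)^\vee$ rather than as a finite-dimensional dual.
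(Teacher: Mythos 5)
Your proposal is correct and follows essentially the same route as the paper's proof. The one small difference is that you make the coinduction adjunction $\Hom^\bullet_{H_1}(V,W)\cong\Hom^\bullet_{\widetilde\Pi}(V,I_W)$ explicit, whereas the paper constructs $\ua$ directly by evaluation at $e_{i_r}$ and then simply refers to ``the proof of Proposition \ref{prop:HL1}'' for the injectivity/surjectivity argument (there the coinduction is over $H$ rather than $H_1$, since the extra constraint from differentiating in $a^*$ makes $W$ into an $H$-module via $\gamma$; here, with $a^*=0$ and $\bfw_2$ independent of $a$, no such constraint arises and the coinduction is from the semisimple algebra $H_1$, as you note). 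Your identification of the socle property of $I_W$ as the correct replacement for the ``$\Im(a^\vee)$ generates $K_\gamma^\vee$'' argument is exactly what the paper uses for stability, and your handling of the homeomorphism at the end (finite-dimensional graded pieces of $I_W$ by \cite[prop.~4.5]{FM21}, hence each $\widetilde\Gr^\bullet_v(I_W)$ projective) fills in a detail the paper leaves terse.
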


\begin{proof}
From \eqref{critf2}  we deduce that
\begin{align}\label{f8}
\begin{split}
\crit(\hat f^\bullet_2)&=\Big\{x\in\widehat\X^\bullet(v,W)_s\,;\,(\alpha,\varepsilon)\in\bfD^\bullet\Big\}\,\Big/\,G^0_v.
\end{split}
\end{align}
By \S\ref{sec:QGr}, any module in $\bfD^\bullet$ is nilpotent.
Hence, from \eqref{f8} and Lemma \ref{lem:nil} we deduce that
\begin{align*}
\crit(\hat f_2^\bullet)=\crit(\hat f^\bullet_2)\cap\widehat\frakL^\bullet(v,W)&=
\Big\{x\in\widehat\X^\bullet(v,W)_s\,;\,(\alpha,\varepsilon)\in\bfD^\bullet
\Big\}\,\Big/\,G^0_v.
\end{align*}
Using this isomorphism, the proof of the proposition is similar to the proof of Proposition \ref{prop:HL1}. 
More precisely, let $\underline\alpha$ and $\underline\varepsilon$ denote the action of the
elements $\alpha,\varepsilon\in\widetilde\Pi$ on the module $I_W$. 
Recall that $$I_{i_r,k_r}=(\Pi(\infty)e_{i_r})^\vee[-k_r].$$
Hence, the evaluation at the element
$e_{i_r}$ in $\Pi(\infty)e_{i_r}$ yields a map $I_{i_r,k_r}\to\bbC$.
Taking the sum over all $r$'s
we get an $I^\bullet$-graded linear map $$\ua:I_W\to W.$$
Let $\widehat\Gr_v^\bullet(I_W)$ be the set of injective $I^\bullet$-graded linear maps $f:\bbC^v\to I_W$
whose image is a $\widetilde\Pi$-submodule of $I_W$.
There is a $G^0_v$-equivariant map 
 \begin{align}\label{homeo2}
 \widehat\Gr_v^\bullet(I_W)\to\widehat\X^\bullet(v,W)
 ,\quad
f\mapsto x=
(f^{-1}\circ\underline\alpha\circ f\,,\,\ua\circ f\,,\,0\,,\,f^{-1}\circ\underline\varepsilon\circ f)\end{align}
The tuple $x$ above is stable because $W$ is isomorphic to the socle of the $\widetilde\Pi$-module $I_W$
and the composed map $W\to I_W\to W$ is the identity.
Thus, the map \eqref{homeo2} factorizes to a morphism 
$$\widetilde\Gr_v^\bullet(I_W)\to
\crit(\hat f^\bullet_2)\cap\widehat\frakL^\bullet(v,W)$$
of reduced schemes.
This morphism is a bijection on closed points.
\end{proof}

Following \S\ref{sec:sqg} below, for any graded vector space
$W\in\bfC^\bullet$ of dimension $w=(w_{i,k})$ in $\bbN I^\bullet$, the symbol
$L^-(w)$ denotes the simple $\U^{-w}_\zeta(L\frakg)$-module in $\bfO_w$ with $\ell$-highest weight 
$$\Psi_{-w}=\Big(\prod_{k\in\bbZ}(1-\zeta^k_i/u)^{-w_{i,k}}\Big)_{i\in I}.$$

\begin{theorem}\label{thm:PF1}
Let $w=\delta_{i,k}$.
The representations of $\U_\zeta^{-w}(L\frakg)$ in 
$$K(\widehat\frakM^\bullet(W),\hat f^\bullet_2)
,\quad
K(\widehat\frakM^\bullet(W),\hat f^\bullet_2)_{\underline{\widehat\frakL^\bullet(W)}}$$
are both isomorphic to the simple module $L^-(w)$. 
\end{theorem}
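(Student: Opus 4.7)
The plan is to combine four ingredients: the critical locus computation of Proposition~\ref{prop:HL3}, the limit isomorphism of Theorem~\ref{thm:limH}, the Kirillov--Reshetikhin identification of Theorem~\ref{thm:HL1}, and the simplicity criterion via $\ell$-dominant monomials from the appendix.

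First I would observe that the two spaces in the statement coincide. Indeed, Proposition~\ref{prop:HL3} gives $\crit(\hat f_2^\bullet)\subset\widehat\frakL^\bullet(W)$, so combining \eqref{crit} and \eqref{excision} yields $K(\widehat\frakM^\bullet(W),\hat f_2^\bullet)=K(\widehat\frakM^\bullet(W),\hat f_2^\bullet)_{\widehat\frakL^\bullet(W)}$, and Lemma~\ref{lem:surjectivity} promotes the support to set-theoretic. Call the common space $M$ and set $M_v=K(\widehat\frakM^\bullet(v,W),\hat f_2^\bullet)$. By Theorem~\ref{thm:shifted} the space $M$ carries a $\U_\zeta^{-w}(L\frakg)$-action, and by \eqref{psi+-g} each $M_v$ is an $\ell$-weight space whose $\psi_i^\pm(u)$-eigenvalue is read off the tautological bundle on the $A$-fixed locus. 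Since $\widehat\frakM^\bullet(0,W)$ is a point, $M_0$ is one-dimensional and the formula \eqref{CT} identifies its $\ell$-weight with the expected $\Psi_{-w}$; in particular the central element $\psi_{i,0}^+\psi_{i,-w_i}^-$ acts by the scalar predicted by Theorem~\ref{thm:shifted}(a).

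Next, Theorem~\ref{thm:limH} identifies $M_v$ with $K(\widetilde\frakM^\bullet(v,W_l),\tilde f^\bullet_{\gamma_l})$ for $l$ large, where $W_l=W_{i,k-(l-1)d_i,l}$ and $\gamma_l=\gamma_{i,k-(l-1)d_i,l}$; by Theorem~\ref{thm:HL1} the latter is the weight space of weight $w_l-\bfc v$ inside $KR_l:=KR_{i,k-(l-1)d_i,l}$. To finish the identification it suffices to show $M$ is simple, for then Appendix~A identifies the simple $\U_\zeta^{-w}(L\frakg)$-module in category $\bfO_w$ with $\ell$-highest weight $\Psi_{-w}$ as $L^-(w)$. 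I would then run a q-character argument patterned on Theorem~\ref{thm:HL1}: via Theorem~\ref{thm:limH} each monomial of $\chi_q(M)$ matches a monomial of $m_l^{-1}\chi_q(KR_l)$, which for $v\neq 0$ lies in $\Psi_{-w}\cdot A_{i,k+d_i}^{-1}\bbZ_{\geqs 0}[A_{j,r}^{-1}\,;\,(j,r)\in I^\bullet]$ and is right-negative by \cite[lem.~4.4]{H06}. The unique $\ell$-dominant monomial is thus $\Psi_{-w}$, so $M$ is special and hence simple with $\ell$-highest weight $\Psi_{-w}$.

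The hard part is the last step: checking that Theorem~\ref{thm:limH}, which is an isomorphism of individual K-theory groups for each fixed $v$, intertwines the $\U_\zeta(L\frakg)$-action on $KR_l$ with the $\U_\zeta^{-w}(L\frakg)$-action on $M$ compatibly with the rescaling by $m_l^{-1}$ and with the $\det(\calW_i)$-twist built into the formula \eqref{psi+-g} for $\psi_i^-$. Geometrically, this limit corresponds to the embedding $W\hookrightarrow W_l$ as the $\varepsilon$-socle used in the proof of Theorem~\ref{thm:limH}, and the verification that right-negativity and uniqueness of the $\ell$-dominant monomial survive both this rescaling and the inductive limit in $l$ is the main technical point.
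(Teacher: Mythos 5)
Your argument and the paper's share the same ingredients (Proposition~\ref{prop:HL3}, Theorem~\ref{thm:limH}, Theorem~\ref{thm:HL1}) but diverge at the key step, and your route has a gap.

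The paper does not attempt to prove directly that $M=K(\widehat\frakM^\bullet(W),\hat f^\bullet_2)$ is simple. Instead it observes that $M$ is a highest $\ell$-weight module of $\ell$-highest weight $\Psi_{-w}$, invokes the result of \cite{HJ12} that the normalized $q$-character of $L^-(w)$ is the limit of the normalized $q$-characters $q\widetilde\ch(KR_l)$, and then uses Theorem~\ref{thm:limH} together with Theorem~\ref{thm:HL1} to show $q\widetilde\ch(M)$ equals that same limit. Since $q\!\ch$ is injective on the Grothendieck group and $M$ surjects onto $L^-(w)$, equality of $q$-characters forces $M\cong L^-(w)$. Note that this only needs Theorem~\ref{thm:limH} as an isomorphism of $\ell$-weight spaces, i.e., matching dimensions; the ``hard part'' you flag (intertwining the $\U_\zeta(L\frakg)$-action on $KR_l$ with the $\U_\zeta^{-w}(L\frakg)$-action on $M$) is not actually required.

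Your route instead tries to prove $M$ is special and then conclude simplicity. The problem is that the implication ``special $\Rightarrow$ simple'' recalled in \S\ref{sec:qg} (after \cite{N04}) is established only for \emph{finite dimensional} modules of the unshifted $\U_\zeta(L\frakg)$. Here $M$ is an infinite dimensional object in the category $\bfO_w$ of a negatively shifted quantum loop group, where the $\ell$-weights are rational functions $\Psi_{-w}e^{-\bfc v}$ rather than Laurent monomials, and neither the notion of ``$\ell$-dominant monomial in $q\!\ch(M)$'' nor the special-implies-simple criterion has been set up in this generality. You would need to either re-prove a $\bfO_w$-version of the criterion or show that $M$ has no proper nonzero $\U_\zeta^{-w}(L\frakg)$-submodule by some other means — neither is automatic, and the paper deliberately avoids this by passing through \cite{HJ12}. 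Additionally, you should establish that $M$ is generated by $M_0$ (i.e.\ that it is genuinely a highest $\ell$-weight module) before claiming the highest $\ell$-weight of a putative simple subquotient is $\Psi_{-w}$.
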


\begin{proof}
We first prove that there is an isomorphism
$L^-(w)=K(\widehat\frakM^\bullet(W)\,,\,\hat f^\bullet_2)$.
The $\U_\zeta^{-w}(L\frakg)$-module
$K(\widehat\frakM^\bullet(W)\,,\,\hat f^\bullet_2)$
is of highest $\ell$-weight $\Psi_{-w}$.
Hence, it is enough to prove that $L^-(w)$ 
and $K(\widehat\frakM^\bullet(W)\,,\,\hat f^\bullet_2)$
have the same normalized $q$-characters. 
Let 
$$W_l=W_{i,k-(l-1)d_i,l}
 ,\quad
\gamma_l=\gamma_{i,k-(l-1)d_i,l}. $$
By Theorem \ref{thm:HL1}, we have
$$L(W_l)=KR_{W_l}=
K(\widetilde\frakM^\bullet(W_l)\,,\,\tilde f^\bullet_{\gamma_l}).$$
By \cite{HJ12} the normalized $q$-character of $L^-(w)$ is the limit of the 
normalized $q$-characters of the finite dimensional simple $\U_\zeta(L\frakg)$-modules $L(W_l)$ as $l\to\infty$.
Further, Theorem \ref{thm:limH} implies
 that, for each $v\in\bbN I^\bullet$, for $l$ large enough we have
$$K(\widehat\frakM^\bullet(v,W)\,,\,\hat f^\bullet_2)=
K(\widetilde\frakM^\bullet(v,W_l)\,,\,\tilde f^\bullet_{\gamma_l}).$$
Hence, taking the limit as $l\to\infty$, we deduce that the normalized $q$-characters of 
$L^-(w)$ and $K(\widehat\frakM^\bullet(W)\,,\,\hat f^\bullet_2)$ are the same.
By \cite[cor.~3.18]{PV11}, any object of $\DCoh(\widehat\frakM^\bullet(W)\,,\,\hat f^\bullet_2)$ is supported
on the critical set of $\hat f^\bullet_2$. 
Further, by Proposition \ref{prop:HL3}, we have $\crit(\hat f_2^\bullet)\subset\widehat\frakL^\bullet(W).$
Hence \eqref{crit} yields
$$K(\widehat\frakM^\bullet(W)\,,\,\hat f^\bullet_2)_{\widehat\frakL^\bullet(W)}=K(\widehat\frakM^\bullet(W)\,,\,\hat f^\bullet_2).$$
Taking the K-theory, we deduce that
$L^-(w)=K(\widehat\frakM^\bullet(W),\tilde f^\bullet_2)_{\underline{\widehat\frakL^\bullet(W)}}$.
\end{proof}

\begin{remark}
Using Proposition \ref{prop:TPKR} instead of Theorem \ref{thm:HL1}, we can also prove the following.
Assume that the Cartan matrix is symmetric.
Fix any $W\in\bfC^\bullet$.
The simple $\U_\zeta^{-w}(L\frakg)$-module $L^-(w)$
is isomorphic to
$$K(\widehat\frakM^\bullet(W),\hat f^\bullet_2)
,\quad
K(\widehat\frakM^\bullet(W),\hat f^\bullet_2)_{\underline{\widehat\frakL^\bullet(W)}}$$
\end{remark}

\section{Relation with K-theoretic Hall algebras}
The goal of this section is to give an algebra homomorphism from K-theoretic Hall algebras to K-theoretic 
convolution algebras, to be used in  the proof of Theorem \ref{thm:shifted}.
The main results here are Proposition \ref{prop:double} and Corollary \ref{cor:relations57}.

\subsection{K-theoretic Hall algebras of a triple quiver with potential}\label{sec:KHA}
We first recall the definition of the K-theoretic Hall algebra of the quiver with potential
$(\widetilde Q,\bfw)$, following \cite{P21}.
The quiver 
$Q$ is as in \S\ref{sec:or}.
Let $\bfw$ be the potential $\bfw_2$ in \S\ref{sec:potential}.
Let $\Rep$ be the moduli stack of representations of the quiver
$\widetilde Q$.
Let $\Rep^\nil\subset\Rep$ be the closed substack parametrizing the nilpotent representations.
Both are defined in \S\ref{sec:Hecke}.
Let $\Rep_v$ be the substack
of $v$-dimensional representations.
Let $\Rep'$ be the  stack of pairs of representations $(x,y)$ 
with an inclusion $x\subset y$. 
The stacks $\Rep$ and $\Rep'$ are smooth and locally of finite type.
Let $\Rep^0\subset\Rep^\nil$ be the zero locus of the function $h$ 
defined in \S\ref{sec:potential}. 
We abbreviate $\frakR^\nil_v=\frakR_v\cap\frakR^\nil$ and $\frakR^0_v=\frakR_v\cap\frakR^0$.
Consider the diagram 
\begin{align*}
\xymatrix{\Rep\times\Rep&\ar[l]_-q\Rep'\ar[r]^-p&\Rep}
 ,\quad
q(x,y)=(x,y/x)
 ,\quad
p(x,y)=y.
\end{align*}
The map $p$ is proper. The map $q$ is smooth.
We equip the stack $\Rep$ with the $\bbC^\times$-action in \S\ref{sec:triple quiver}.
We have $h^{\oplus 2}\circ q=h\circ p$. Hence, there is a functor
\begin{align}\label{Hall1}
\star:\DCoh_{\bbC^\times}(\Rep,h)_{\Rep^\nil}
\times\DCoh_{\bbC^\times}(\Rep,h)_{\Rep^\nil}
\to\DCoh_{\bbC^\times}(\Rep,h)_{\Rep^\nil}
\end{align}
such that
$(\calE,\calF)\mapsto Rp_*Lq^*(\calE\boxtimes\calF).$
This yields a monoidal structure on the triangulated category 
$\DCoh_{\bbC^\times}(\Rep,h)_{\Rep^\nil}$, see \cite{P21} for details.
Recall that $R=R_{\bbC^\times}$ and $F=F_{\bbC^\times}$.
Let $K_{\bbC^\times}(\Rep,h)_{\Rep^\nil}$ be the Grothendieck group of 
$\DCoh_{\bbC^\times}(\Rep,h)_{\Rep^\nil}$.
It is an $R$-algebra.
By \eqref{Upsilon2}, there is an $R$-linear map
\begin{align}\label{UPM}\Upsilon:K^{\bbC^\times}(\Rep^0)\to K_{\bbC^\times}(\Rep,h)_{\Rep^\nil}.
\end{align}
The stack $\Rep^0_{\delta_i}$ is the classifying stack of the group $G_{\delta_i}$.
Let $\calL_i$ be the line bundle on $\Rep^0_{\delta_i}$ associated with
the linear character of $G_{\delta_i}$.
Let $\calU_F^+$ be the $F$-subalgebra of
$$K_{\bbC^\times}(\Rep,h)_{\Rep^\nil}\otimes_{R}F$$
generated by the elements
\begin{align}\label{x+3}
x_{i,n}=\Upsilon(\calL_i^{\otimes n})
,\quad
i\in I,
n\in\bbZ.
\end{align}
Let $\calU_R^+$ be the $R$-subalgebra of $\calU_F^+$ 
generated by the elements
$$(x_{i,n})^{[m]}
,\quad
i\in I, n\in\bbZ, m\in\bbN.$$

\subsection{K-theoretic Hall algebras and critical convolution algebras}\label{sec:double}
Let $\tilde f$ denote either the function $\tilde f_1$ or the function $\tilde f_2$ defined in \S\ref{sec:potential}.
The pair $(\widetilde\frakM(W),\tilde f)$ is a smooth $G_W\times\bbC^\times$-invariant LG-model.
By \S\ref{sec:Kcritalg} there is
a monoidal structure on the category 
$$\DCoh_{G_W\times\bbC^\times}(\widetilde\frakM(W)^2,\tilde f^{(2)})_{\widetilde\calZ(W)}$$
and an associative $R_{G_W\times\bbC^\times}$-algebra structure on the Grothendieck group
$$K_{G_W\times\bbC^\times}(\widetilde\frakM(W)^2,\tilde f^{(2)})_{\widetilde\calZ(W)}.$$
The maps $i$, $\pi$ in \S\ref{sec:Hecke} yield the following  commutative diagram of stacks 
\begin{align*}
\begin{split}
\xymatrix{
\widetilde\calZ(W)\ar[d]&\ar[l]_-i\widetilde\frakP(W)^\nil\ar[r]^-\pi\ar[d]&\Rep^\nil\ar[d]\\
\widetilde\frakM(W)^2&\ar[l]_-i\widetilde\frakP(W)\ar[r]^-\pi&\Rep}
\end{split}
\end{align*}
The right square is Cartesian.
We equip the stack $\Rep$ with the trivial $G_W$-action.
This yields the functors
\begin{align*}
&Ri_*:\DCoh_{G_W\times\bbC^\times}(\widetilde\frakP(W),i^*\tilde f^{(2)})_{\widetilde\frakP(W)^\nil}\to
\DCoh_{G_W\times\bbC^\times}(\widetilde\frakM(W)^2,\tilde f^{(2)})_{\widetilde\calZ(W)},\\
&L\pi^*:\DCoh_{G_W\times\bbC^\times}(\Rep,h)_{\Rep^\nil}\to
\DCoh_{G_W\times\bbC^\times}(\widetilde\frakP(W),\pi^*h)_{\widetilde\frakP(W)^\nil}.
\end{align*}
By Lemma \ref{lem:fh}, composing $Ri_*$ and $L\pi^*$ we get a functor
\begin{align*}
\omega^+:\DCoh_{G_W\times\bbC^\times}(\Rep,h)_{\Rep^\nil}\to 
\DCoh_{G_W\times\bbC^\times}(\widetilde\frakM(W)^2,\tilde f^{(2)})_{\widetilde\calZ(W)}
\end{align*}
Taking the Grothendieck groups we get a $R_{G_W\times\bbC^\times}$-linear map
\begin{align*}
\omega^+:K_{\bbC^\times}(\Rep,h)_{\Rep^\nil}\otimes R_{G_W}\to 
K_{G_W\times\bbC^\times}(\widetilde\frakM(W)^2,\tilde f^{(2)})_{\widetilde\calZ(W)}
\end{align*}

\begin{proposition}\label{prop:double}
$\omega^+$
is an $R_{G_W\times\bbC^\times}$-algebra homomorphism.
\end{proposition}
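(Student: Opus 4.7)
The plan is to introduce the \emph{double Hecke correspondence}
\[
\widetilde{\frakQ}(W)=\widetilde{\frakP}(W)\times_{\widetilde\frakM(W)}\widetilde{\frakP}(W)
=\{(x_1,x_2,x_3)\in\widetilde\frakM(W)^3\,;\,x_1\subset x_2\subset x_3\}
\]
and show that both $\omega^+(\calE\star\calF)$ and $\omega^+(\calE)\star\omega^+(\calF)$ are naturally isomorphic to $R\Phi_*\,L\Psi^*(\calE\boxtimes\calF)$, where $\Phi:\widetilde{\frakQ}(W)\to\widetilde\frakM(W)^2$ is $(x_1,x_2,x_3)\mapsto(x_1,x_3)$ (the composition $i\circ p_{13}$) and $\Psi:\widetilde{\frakQ}(W)\to\Rep\times\Rep$ is $(x_1,x_2,x_3)\mapsto(x_2/x_1,\,x_3/x_2)$. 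Having done this at the level of derived factorization categories, the algebra-homomorphism statement on Grothendieck groups follows formally.

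For the LHS, I would observe that the square
\[
\xymatrix{
\widetilde{\frakQ}(W)\ar[r]^-{q'}\ar[d]_-{p_{13}}&\Rep'\ar[d]^-{p}\\
\widetilde{\frakP}(W)\ar[r]^-{\pi}&\Rep
}
\]
with $q'(x_1,x_2,x_3)=(x_2/x_1\subset x_3/x_1)$ is Cartesian (given $(x_1,x_3)$ and a subrepresentation of $x_3/x_1$, one recovers $x_2$ as its preimage in $x_3$), and that $\pi$ is flat by Lemma~\ref{sec:Hecke}. Flat base change, together with Lemma~\ref{lem:Upsilon} and Lemma~\ref{lem:fh} (which converts between $\pi^*h$ and $i^*\tilde f^{(2)}$), yields
\[
Ri_*L\pi^*Rp_*Lq^*(\calE\boxtimes\calF)
\;\cong\;R(i\circ p_{13})_*L(q\circ q')^*(\calE\boxtimes\calF)
\;=\;R\Phi_*L\Psi^*(\calE\boxtimes\calF).
\]
For the RHS, I would apply base change to the two Cartesian squares
$Y_1=\widetilde{\frakP}(W)\times\widetilde\frakM(W)\to\widetilde\frakM(W)^3$ over $\pi_{12}$, and
$Y_2=\widetilde\frakM(W)\times\widetilde{\frakP}(W)\to\widetilde\frakM(W)^3$ over $\pi_{23}$, to move each $L\pi_{ab}^*\,Ri_*$ past the pushforward. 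The crucial step is then to show
\[
R(\tilde i_1)_*\calA\otimes^L R(\tilde i_2)_*\calB\;\cong\;R\tilde\jmath_*\bigl(L\tilde\jmath_1^*\calA\otimes^L L\tilde\jmath_2^*\calB\bigr),
\]
where $\tilde\jmath:\widetilde{\frakQ}(W)=Y_1\cap Y_2\hookrightarrow\widetilde\frakM(W)^3$. This is where the transversality of the two Hecke embeddings in $\widetilde\frakM(W)^3$ enters: since $\widetilde{\frakP}(W)$ is smooth by Lemma~\ref{sec:Hecke} and the fiber product $Y_1\times_{\widetilde\frakM(W)^3}Y_2$ is (set-theoretically, and in fact derived) equal to $\widetilde{\frakQ}(W)$, the argument reduces to the projection formula and the Tor-independence of $Y_1,Y_2$ inside the smooth ambient space. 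After this identification, a final matching of $q\circ q'$ with the induced map to $\Rep\times\Rep$ gives the RHS equal to $R\Phi_*L\Psi^*(\calE\boxtimes\calF)$ as well.

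The bookkeeping items needed throughout are: compatibility of the functors $L\phi^*,R\phi_*,\otimes^L$ with categorical supports (Remark~\ref{rem:base change 1}(a) and \cite[lem.~6.4]{AK20}), so that the argument takes place inside the support-constrained categories $\DCoh_{\bbC^\times}(\Rep,h)_{\Rep^{\nil}}$ and $\DCoh_{G_W\times\bbC^\times}(\widetilde\frakM(W)^2,\tilde f^{(2)})_{\widetilde\calZ(W)}$; the identity $i^*\tilde f^{(2)}=\pi^*h$ (Lemma~\ref{lem:fh}) to ensure all functors live in the correct factorization categories; and the fact that $\pi$ and $p$ are flat while $i$ is a closed embedding of finite flat dimension, so that all of $L\pi^*,Lp^*,Lq^*,Ri_*$ are defined at the level of bounded derived factorization categories. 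I expect the main obstacle to be the derived base-change identity for the intersection $Y_1\cap Y_2=\widetilde{\frakQ}(W)$ inside $\widetilde\frakM(W)^3$: one must verify that the classical intersection is Tor-independent (so that no higher $\mathrm{Tor}$ correction appears) and that the map $(p_{12},p_{23}):\widetilde{\frakQ}(W)\to\widetilde{\frakP}(W)\times\widetilde{\frakP}(W)$ is a regular embedding into a smooth ambient space—both should follow from the same linear-algebraic argument used in the proof of Lemma~\ref{lem:transverse3}, applied to the pair of sub-object conditions $x_1\subset x_2$ and $x_2\subset x_3$. Once these geometric inputs are in place, the algebra-homomorphism property is obtained by passing to Grothendieck groups and invoking Proposition~\ref{prop:critalg1}(d) to ensure the convolution products on both sides are given by the expected formulas.
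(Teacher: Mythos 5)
Your proof takes essentially the same route as the paper's: the paper also introduces the triple Hecke correspondence (there called $\widetilde\frakP'(W)$, identical to your $\widetilde\frakQ(W)$) and proves the algebra-homomorphism property via a single commutative diagram whose lower-right square (your $q',p_{13}$ square) is Cartesian with $\pi$ flat, and whose upper-left square is Cartesian by transversality of $\widetilde\frakP(W)\times\widetilde\frakP(W)$ with $\widetilde\frakM(W)^3$ inside $\widetilde\frakM(W)^2\times\widetilde\frakM(W)^2$ — which is exactly your transversality of $Y_1,Y_2$ in $\widetilde\frakM(W)^3$. You simply present the base-change chain symmetrically by reducing both sides to $R\Phi_*L\Psi^*$ rather than rewriting one side directly into the other; a small correction is that the relevant transversality statement is the flag-type argument of Lemma~\ref{lem:transverse4} (which handles $x_1\subset x_2\subset x_3$), not Lemma~\ref{lem:transverse3} (which handles the wedge $x_1\subset x_2\supset x_3$).
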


\begin{proof}
Let us prove that the functor $\omega^+$ has a monoidal structure.
Recall the notation in \S\ref{sec:Hecke}. We consider the stack
$$\widetilde\frakP'(W)=\{(x,y,z)\in\widetilde\frakM(W)^3\,;\,x\subset y\subset z\}.$$
We have the following commutative diagram
\begin{align}
\begin{split}
\xymatrix{
\widetilde\frakM(W)^2\times \widetilde\frakM(W)^2&&\ar[ll]_-{\pi_{12}\times\pi_{23}}\widetilde\frakM(W)^3
\ar[r]^-{\pi_{13}}&\widetilde\frakM(W)^2\\
\widetilde\frakP(W)\times \widetilde\frakP(W)\ar[d]_-{\pi\times\pi}\ar[u]^-{i\times i}&&
\ar[ll]\widetilde\frakP'(W)\ar[u]\ar[r]\ar[d]&
\widetilde\frakP(W)\ar[d]^-\pi\ar[u]_-i\\
\Rep\times\Rep&&\ar[ll]_-q\Rep'\ar[r]^-p&\Rep
}
\end{split}
\end{align}
The left upper square is Cartesian. 
The right lower one either because the set of stable representations of the quiver $\widetilde Q_f$
is preserved by subobjects.
By base change, we get an isomorphism of functors
\begin{align*}
\omega^+\circ\star&=Ri_*\circ L\pi^*\circ Rp_*\circ Lq^*\\
&=R(\pi_{13})_*\circ L(\pi_{12}\times\pi_{23})^*\circ R(i\times i)_*\circ L(\pi\times\pi)^*\\
&=\star\circ R(i\times i)_*\circ L(\pi\times\pi)^*
\end{align*}
More precisely, for the right lower square
we use the flat base change, and for the
left upper square the fact that
$\widetilde\frakP(W)\times \widetilde\frakP(W)$ and $\widetilde\frakM(W)^3$
intersect transversally in $\widetilde\frakM(W)^2\times \widetilde\frakM(W)^2$.
Here the convolution functors $\star$ are as in \eqref{conv2} and \eqref{Hall1}.

\end{proof}

Taking the opposite algebras and the opposite Hecke correspondences
\eqref{opHecke}, we define in a similar way
an $R_{G_W\times\bbC^\times}$-algebra homomorphism
\begin{align}\label{omega-}
\omega^-:K_{\bbC^\times}(\Rep,h)^\op_{\Rep^\nil}\otimes R_{G_W}\to
K_{G_W\times\bbC^\times}(\widetilde\frakM(W)^2,\tilde f^{(2)})_{\widetilde\calZ(W)}.
\end{align}
Let $\omega^\pm$ denote also the composition of the map $\omega^\pm$ and the obvious inclusion
\begin{align}\label{obvincl}
K_{\bbC^\times}(\Rep,h)_{\Rep^\nil}\subset K_{\bbC^\times}(\Rep,h)_{\Rep^\nil}\otimes R_{G_W}.
\end{align}

Taking the triple quiver varieties with simple framing 
$\widehat\frakM(W)$ in \eqref{hatM} and the corresponding Hecke
correspondence $\widehat\frakP(W)$ in \eqref{hatHecke}, we define in the same way
the $R_{G_W\times\bbC^\times}$-algebra homomorphisms
$$\omega^+:K_{\bbC^\times}(\Rep,h)_{\Rep^\nil}\otimes R_{G_W}\to
K_{G_W\times\bbC^\times}(\widehat\frakM(W)^2,\hat f^{(2)})_{\widehat\calZ(W)}.$$
and
$$\omega^-:K_{\bbC^\times}(\Rep,h)^\op_{\Rep^\nil}\otimes R_{G_W}\to
K_{G_W\times\bbC^\times}(\widehat\frakM(W)^2,\hat f^{(2)})_{\widehat\calZ(W)}.$$

Let $|W\rangle$ be the fundamental class of $\widetilde\frakM(0,W)$ in 
$K_{G_W\times\bbC^\times}(\widetilde\frakM(W),\tilde f)_{\widetilde\frakL(W)}$.
We call this element the vacuum vector of 
$K_{G_W\times\bbC^\times}(\widetilde\frakM(W),\tilde f)_{\widetilde\frakL(W)}$. 
Composing the $R$-algebra homomorphism $\omega^-$ and the 
action on $|W\rangle$  yields an $R$-linear map
\begin{align}\label{ovw}
K_{\bbC^\times}(\Rep,h)_{\Rep^\nil}\to 
K_{G_W\times\bbC^\times}(\widetilde\frakM(W),\tilde f)_{\widetilde\frakL(W)}.
\end{align}

\begin{proposition} \label{prop:injective}
If $W=V$, then the map \eqref{ovw} is an embedding
$$\K_{\bbC^\times}(\Rep_v,h)_{\Rep^\nil_v}\to 
K_{G_W\times\bbC^\times}(\widetilde\frakM(v,W),\tilde f)_{\widetilde\frakL(v,W)}.$$
\end{proposition}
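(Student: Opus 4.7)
The plan is to identify \eqref{ovw} with a derived pullback and then, under the hypothesis $W=V$, deduce its injectivity from a Thom isomorphism by restricting to a convenient open subscheme. Unwinding the convolution $\omega^-(\calE)\star|W\rangle$ via the projection formula and flat base change (Remark~\ref{rem:base change 1}) along the intersection $\widetilde\frakP(W)^\op\cap(\widetilde\frakM(v,W)\times\widetilde\frakM(0,W))=\widetilde\frakM(v,W)$, and using that $\pi^\op(x,0)=x$ as a representation of $\widetilde Q$ (\ie\ the framing is forgotten), one shows that the composition of \eqref{ovw} with the forgetful-of-support map
\[
K_{G_W\times\bbC^\times}\bigl(\widetilde\frakM(v,W),\tilde f\bigr)_{\widetilde\frakL(v,W)}\;\longrightarrow\; K_{G_W\times\bbC^\times}\bigl(\widetilde\frakM(v,W),\tilde f\bigr)_{p^{-1}(\Rep^\nil_v)}
\]
coincides with the derived pullback $p^*$ along the forgetful morphism $p:\widetilde\frakM(v,W)\to\Rep_v$, $(\alpha,a,a^*,\varepsilon)\mapsto(\alpha,\varepsilon)$. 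It therefore suffices to show that $p^*$ is injective.

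Suppose now $W=V$ and let $U\subset\widetilde\frakM(v,W)$ be the $G_W$-stable open subscheme of classes of tuples for which $a:V\to W$ is an isomorphism; stability is automatic because $\ker(a)=0$, and $U$ is non-empty since $V=W$ guarantees $\Isom(V,W)\neq\emptyset$. The $G_V$-action being free along the $a$-coordinate over this locus, the slice $\{a=\id\}$ (using the chosen identification $V=W$) exhibits
\[
[U/G_W]\;\cong\;[\widetilde\X(v)\times\frakg_V\,/\,G_V],
\]
where the residual $G_V=G_W$ acts by simultaneous conjugation and $\frakg_V$ parametrizes the remaining $a^*$ coordinate. Under this identification, $p_U:=p|_U$ becomes the projection onto $[\widetilde\X(v)/G_V]=\Rep_v$, i.e.\ a $G_V$-equivariant trivial vector bundle of fiber $\frakg_V$.

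For $\tilde f=\tilde f_2$, which is the case needed for Theorem~\ref{thm:shifted}, the potential $\bfw_2$ involves neither $a$ nor $a^*$, hence $\tilde f_2|_U=h\circ p_U$, and the Thom isomorphism of Proposition~\ref{prop:TTK}(c) yields
\[
p_U^*\,:\,K_{\bbC^\times}(\Rep_v,h)_{\Rep^\nil_v}\;\xrightarrow{\ \sim\ }\;K_{G_W\times\bbC^\times}\bigl(U,\tilde f_2|_U\bigr)_{p_U^{-1}(\Rep^\nil_v)}.
\]
Since the derived restriction $Lj^*$ along the open immersion $j:U\hookrightarrow\widetilde\frakM(v,W)$ satisfies $Lj^*\circ p^*=p_U^*$, the pullback $p^*$ is injective and hence so is \eqref{ovw}. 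For $\tilde f=\tilde f_1$ the argument is identical after replacing the Thom isomorphism by the deformed dimensional reduction \eqref{DDR} applied to the regular section $(\alpha,\varepsilon)\mapsto\varepsilon$ of the trivial bundle $\widetilde\X(v)\times\frakg_V^\vee$. The main obstacle is the first paragraph: one must carefully check that the chain of base change and projection-formula identifications in critical K-theory is compatible with both the categorical and set-theoretic supports involved, so that the equality with $p^*$ is really an equality at the level of Grothendieck groups.
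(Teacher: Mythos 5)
The first paragraph of your proof contains a genuine error. You assert that $\widetilde\frakP(W)^\op\cap\big(\widetilde\frakM(v,W)\times\widetilde\frakM(0,W)\big)=\widetilde\frakM(v,W)$, but by the definition \eqref{Hecke} a point of this intersection is a pair $(x,0)$ together with a $\widetilde Q_f$-morphism $\tau:0\to x$ satisfying $\tau|_W=\id_W$. Compatibility of $\tau$ with the arrows $a_i^*:i'\to i$ forces $x_{a_i^*}=0$: on the source $0$ these arrows vanish, while $\tau_{i'}=\id_{W_i}$, so $x_{a_i^*}=x_{a_i^*}\circ\tau_{i'}=\tau_i\circ 0=0$. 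Hence the intersection is the simply framed variety $\widehat\frakM(v,W)\times\widetilde\frakM(0,W)$, not all of $\widetilde\frakM(v,W)$. Consequently \eqref{ovw} is \emph{not} a pullback along $\widetilde\frakM(v,W)\to\Rep_v$; unwinding the convolution gives the composite $p_*\circ q^*$, where $q:\widehat\frakM(v,W)\to\Rep_v$ forgets the framing and $p:\widehat\frakM(v,W)\hookrightarrow\widetilde\frakM(v,W)$ is the closed immersion given by the zero section of the $a^*$-vector bundle $\nu$ of \eqref{VB}.

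This invalidates the rest of your argument: restricting to the open $U$ and applying the Thom isomorphism addresses only the pullback part, not the pushforward factor $p_*$. That factor is a zero-section pushforward whose composition with $Lp^*$ is multiplication by an Euler class $\Lambda_{-1}(\Hom(\calW,\calV)^\vee)$ rather than the identity, so its injectivity needs a separate justification; the paper supplies this via the equality of supports $\widetilde\frakL(v,W)=\widehat\frakL(v,W)$ from Lemma \ref{lem:nil}. Once the factorization through $\widehat\frakM(v,W)$ is in place, the injectivity of $q^*$ is proved by restricting to the open locus of $\widehat\frakM(v,W)$ where $a\in G_V$ (which is where the hypothesis $W=V$ enters) and observing that this locus is a $G_W$-torsor over $\Rep_v$; this step is close in spirit to your reduction to $U$, but the relevant open subscheme lives in $\widehat\frakM$ and is a torsor over $\Rep_v$, not a vector bundle over it.
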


\begin{proof}
We abbreviate
\begin{align*}
\widehat\X^\nil(V,W)&=
\{(\alpha,\varepsilon, a)\in\widehat\X(V,W)\,;\,(\alpha,\varepsilon)\in\widetilde\X^\nil(V)\}\\
\widehat\X^\nil(V,W)_s&=\widehat\X(V,W)_s\cap\widehat\X^\nil(V,W).
\end{align*}
By Lemma \ref{lem:nil} we have
$$\widetilde\frakL(v,W)=\widehat\frakL(v,W)=\widehat\X^\nil(V,W)_s/G_V.$$
Hence, we have the following commutative diagram 
$$\xymatrix{
\Rep_v&\ar[l]_-q\widehat\frakM(v,W)\ar[r]^-p&\widetilde\frakM(v,W)\\
\Rep_v^\nil\ar@{^{(}->}[u]&\ar[l]\widehat\frakL(v,W)\ar@{^{(}->}[u]\ar@{=}[r]&\widetilde\frakL(v,W)\ar@{^{(}->}[u]
}$$
The vertical maps and $p$ are the obvious inclusions.
The map $q$ is the forgetting of the framing.
The right square is Cartesian.
The map in the proposition is the composition of \eqref{obvincl} and the chain of maps
\begin{align*}
K_{G_W\times\bbC^\times}(\Rep_v,h)_{\Rep_v^\nil}
\xrightarrow{q^*}
K_{G_W\times\bbC^\times}(\widehat\frakM(v,W),\hat f)_{\widehat\frakL(v,W)}
\xrightarrow{p_*}
K_{G_W\times\bbC^\times}(\widetilde\frakM(v,W),\tilde f)_{\widetilde\frakL(v,W)}
\end{align*}
The map $q^*$ is well-defined because $q^*(h)=\hat f$.
The map $p_*$ is injective because $\widetilde\frakL(v,W)=\widehat\frakL(v,W)$.
To prove the proposition it is enough to check that the map $q^*$ is also injective.
To do this we need more notation.
Recall that $W=V$.
We define
\begin{align*}
\widehat\X(V,W)_\circ&=\{x=(\alpha,\varepsilon,a)\in\widehat\X(V,W)\,;\,a\in G_V\}
,\quad
\widehat\X^\nil(V,W)_\circ=\widehat\X(V,W)_\circ\cap\widehat\X^\nil(V,W)
\end{align*}
and
\begin{align*}
\widehat\frakM(v,W)_\circ&=\widehat\X(V,W)_\circ/G_V,\quad
\widehat\frakL(v,W)_\circ=\widehat\X^\nil(V,W)_\circ/G_V.
\end{align*}
We consider the following commutative diagram
$$
\xymatrix{
\widehat\X(V,W)_\circ\ar[r]^-{\alpha_2}\ar[d]_-{q_2}&\widehat\X(V,W)_s\ar[d]^-{\alpha_1}\\
\widetilde\X(V)&\widehat\X(V,W)\ar[l]_-{q_1}
}
$$
The maps $\alpha_1$, $\alpha_2$ are the obvious open embeddings.
The maps $q_1$, $q_2$ are the forgetting of the framings.
Taking the Grothendieck groups, it yields the following commutative diagram
\begin{align*}
\xymatrix{
K_{\bbC^\times}(\Rep_v,h)_{\Rep_v^\nil}\ar[r]^-{q^*}\ar[rd]_-{q_2^*}&
K_{G_W\times\bbC^\times}(\widehat\frakM(v,W),\hat f)_{\widehat\frakL(v,W)}\ar[d]^-{\alpha_2^*}\\
&K_{G_W\times\bbC^\times}(\widehat\frakM(v,W)_\circ,\hat f)_{\widehat\frakL(v,W)_\circ}
}
\end{align*}
The map $q_2$ yields a $G_W$-torsor
$\widehat\frakM(v,W)_\circ\to\Rep_v$.
Hence the map $q_2^*$ is invertible. 
We deduce that the map $q^*$ is also injective.
\end{proof}

Let $K_{\bbC^\times}(\Rep,h)_{\Rep^\nil}/\tor$ be the image of $K_{\bbC^\times}(\Rep,h)_{\Rep^\nil}$
in $K_{\bbC^\times}(\Rep,h)_{\Rep^\nil}\otimes_RF$.
For each $i\in I$, $n\in\bbN$, the element $x_{i,n}$ of 
$K_{\bbC^\times}(\Rep,h)_{\Rep^\nil}$ in \eqref{x+3}
yield an element in $K_{\bbC^\times}(\Rep,h)_{\Rep^\nil}/\tor$.

\begin{corollary}\label{cor:relations57}
\hfill
\begin{enumerate}[label=$\mathrm{(\alph*)}$,leftmargin=8mm]
\item 
The elements $x_{i,n}$ of the $R$-algebra
$K_{\bbC^\times}(\Rep,h)_{\Rep^\nil}/\tor$ with $i\in I$, $n\in\bbN$,
satisfy the relations $\mathrm{(A.5)}$ and $\mathrm{(A.7)}$, up to some explicit twist.
\item
We have an $R$-algebra homomorphism
$\U_R(L\frakg)^\pm\to K_{\bbC^\times}(\Rep,h)_{\Rep^\nil}/\tor$.
\end{enumerate}
\end{corollary}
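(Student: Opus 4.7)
Plan: My plan is to transport the Drinfeld relations from the critical convolution algebra back to the K-theoretic Hall algebra via the homomorphisms $\omega^\pm$ of Proposition~\ref{prop:double} combined with the injectivity established in Proposition~\ref{prop:injective}. As a first step I would identify the image of the generators: by the formula $\omega^+ = Ri_* \circ L\pi^*$ applied to $x_{i,n} = \Upsilon(\calL_i^{\otimes n})$, together with the Cartesian square identifying $\pi^{-1}(\Rep_{\delta_i})$ with $\widetilde\frakP(\delta_i, W)$ and the compatibility of $\Upsilon$ with derived pullback and proper pushforward (Lemma~\ref{lem:Upsilon}), one obtains $\omega^+(x_{i,n}) = A^+_{i,n}$ in the critical convolution algebra, and analogously $\omega^-(x_{i,n}) = A^-_{i,n}$ for the opposite version.

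Next I would invoke the relations for the $A^\pm_{i,n}$ already proved in the course of Theorem~\ref{thm:notshifted}: the quadratic relations \eqref{drii}, \eqref{dr+}, \eqref{dr-} established in \S\S\ref{sec:424}--\ref{sec:425} are precisely the relation (A.5) for the $A^\pm_{i,n}$, with $g_{ij}(u/v)$ replaced by $g_{ii}(u/v)^{\pm 1}$ or by $h_{ij}(u/v) = -u\,g_{ij}(u/v)$, and the Serre relation (A.7) for the $A^\pm_{i,n}$ follows, up to the corresponding explicit twist, from the reduction to its constant term carried out in \S\ref{sec:A7}. All these relations hold in $K_{G_W\times\bbC^\times}(\widetilde\frakM(W)^2,\tilde f^{(2)})_{\widetilde\calZ(W)}$ modulo torsion. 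To transfer them to the Hall algebra, fix any $v \in \bbN I$ and choose $W \in \bfC$ of dimension $v$; then Proposition~\ref{prop:injective} yields the embedding
\[
K_{\bbC^\times}(\Rep_v, h)_{\Rep^\nil_v}/\tor \hookrightarrow K_{G_W \times \bbC^\times}(\widetilde\frakM(v, W), \tilde f)_{\widetilde\frakL(v, W)}/\tor
\]
realized by the $\omega^-$-action on the vacuum $|W\rangle$. Performing this for each total degree appearing in the relation (A.5) or (A.7) under consideration transports the twisted relations for the $A^\pm_{i,n}$ to the corresponding twisted relations for the $x_{i,n}$, yielding part (a).

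For part (b), the algebra homomorphism $\U_R(L\frakg)^+ \to K_{\bbC^\times}(\Rep, h)_{\Rep^\nil}/\tor$ (and its negative analog using $\omega^-$) is defined by
\[
x^+_{i,n} \mapsto (-1)^{v_{+i}}\, x_{i,n} \cdot \det(\calV_{\circ i}),
\]
where $\calV$ denotes the tautological bundle on $\Rep$ and the twist $\det(\calV_{\circ i})$ is the Hall-algebraic analog of the twist $\Delta(\det(\calW_i \oplus \calV_{\circ i}))$ appearing in Theorem~\ref{thm:notshifted}. By that theorem the images of $x^+_{i,n}$ in the convolution algebra satisfy (A.5) and (A.7) exactly, so the embedding of Proposition~\ref{prop:injective} lifts these relations to the untwisted relations in the Hall algebra. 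The main obstacle is the careful bookkeeping of the twist: in Theorem~\ref{thm:notshifted} it involves the framing-dependent factor $\det(\calW_i)$ as well, and one must verify that this factor, being central in the convolution algebra and pulled back symmetrically from both factors of the Steinberg variety, cancels identically on both sides of every Drinfeld and Serre relation, so that the residual twist is intrinsic to $\Rep$ and the sign $(-1)^{w_i}$ has no effect.
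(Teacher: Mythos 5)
Your approach is essentially the same as the paper's: both proofs use the algebra homomorphism $\omega^{\pm}$ from Proposition~\ref{prop:double} to identify $\omega^{\pm}(x_{i,n})=A^{\pm}_{i,n}$, invoke the relations for the $A^{\pm}_{i,n}$ established in the course of Theorem~\ref{thm:notshifted}, and then transport them back to the Hall algebra via the degree-by-degree embedding of Proposition~\ref{prop:injective} realized by $\omega^-$ acting on the vacuum. One small omission: you should explicitly fix $\gamma=0$, so that $\tilde f_\gamma=\tilde f_1$ and Proposition~\ref{prop:injective} (which is stated only for $\tilde f\in\{\tilde f_1,\tilde f_2\}$) is applicable; this is what the paper does by referring to \S\ref{sec:rem}. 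Your final paragraph, analyzing why the framing-dependent factor $\det(\calW_i)$ and the sign $(-1)^{w_i}$ cancel on both sides of each homogeneous relation so that the residual twist $\det(\calV_{\circ i})$ is intrinsic to $\Rep$, spells out the content of the paper's terse ``Part (b) follows from (a)'' and is a worthwhile addition.
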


\begin{proof}
Setting $\gamma=0$ in \S\ref{sec:rem} we get
an $R$-algebra homomorphism
$$\U_R(L\frakg)\to 
K_{G_W\times\bbC^\times}\big(\widetilde\frakM(W)^2,(\tilde f_1)^{(2)}\big)_{\widetilde\calZ(W)}/\tor,$$
which takes the element $x_{i,n}^-$ in $\U_R(L\frakg)$ to a twisted version of the element
$A^-_{i,n}=\omega^-(x_{i,n})$ defined in  \eqref{xpm}.
Since the relations (A.5) and (A.7) hold in $\U_R(L\frakg)$, they also hold in the algebra
$$K_{G_W\times\bbC^\times}\big(\widetilde\frakM(W)^2,(\tilde f_1)^{(2)}\big)_{\widetilde\calZ(W)}/\tor.$$
Thus the elements $x_{i,n}$ in $K_{\bbC^\times}(\Rep,h)_{\Rep^\nil}/\tor$ 
satisfy the relations (A.5) and (A.7), up to some twist, by Proposition \ref{prop:injective} with 
$\tilde f=\tilde f_1$. Part (a) is proved. Part (b) follows from (a).
\end{proof}

\begin{remark}
In the symmetric case, the corresponding morphism is 
considered in \cite{VV22} where it is proved to be isomorphism.
We do not know if the map $\U_R(L\frakg)^\pm\to K_{\bbC^\times}(\Rep,h)_{\Rep^\nil}/\tor$
is injective.
\end{remark}

\medskip

\appendix

\section{Representations of quantum loop groups}\label{sec:QG}
\subsection{Representations of shifted quantum loop groups}\label{sec:sqg}
This appendix is a reminder on shifted quantum  loop groups.
We follow  \cite{FT19} and \cite{H22}.
Set
$[m]_q=(q^m-q^{-m})/(q-q^{-1})$
and
$[m]_q!=[m]_q[m-1]_q\cdots[1]_q$
for each integer $m>0$.
Let $Q$ be a Dynkin quiver.
Fix $w^+,w^-\in\bbZ I$.
Let $\bfc=(c_{ij})_{i,j\in I}$ be a Cartan matrix,
$\O\subset I\times I$ an orientation of $\bfc$,
and $(d_i)_{i\in I}$ a symmetrizer for $\bfc$.
We define
\begin{align}\label{gij}
q_i=q^{d_i},\quad
g_{ij}(u)=\frac{q_i^{-c_{ij}}u-1}{u-q_i^{-c_{ij}}},\quad
i,j\in I.\end{align}
Consider the formal series 
\begin{align*}
\delta(u)=\sum_{n\in\bbZ}u^n
,\quad
x^\pm_i(u)=\sum_{n\in\bbZ}x^\pm_{i,n}\,u^{-n}
 ,\quad
\psi^+_i(u)=\sum_{n\geqslant-w^+_i}\psi^+_{i,n}\,u^{-n}
 ,\quad
\psi^-_i(u)=\sum_{n\geqslant -w_i^-}\psi^-_{i,-n}\,u^{n}.
\end{align*}
Let $\U_F^{w^+,w^-}(L\frakg)$ be the $(w^+,w^-)$-shifted quantum loop group over $F$ 
with quantum parameter $q$. It is the $F$-algebra generated by 
$$x^\pm_{i,m}
 ,\quad
\psi^\pm_{i,\pm n}
 ,\quad
(\psi^\pm_{i,\mp w_i^\pm})^{-1}
,\quad
i\in I
 ,\quad
m,n\in\bbZ
 ,\quad
n\geqslant-w^\pm_i$$
with the following defining relations  
where $a=+$ or $-$ and $i,j\in I$
\hfill
\begin{enumerate}[label=$\mathrm{(\alph*)}$,leftmargin=11mm,itemsep=1mm]
\item[(A.2)] $\psi^\pm_{i,\mp w_i^\pm}$ is invertible with inverse $(\psi^\pm_{i,\mp w_i^\pm})^{-1}$,
\item[(A.3)] $\psi^a_i(u)\,\psi^\pm_j(v)=\psi^\pm_j(v)\,\psi^a_i(u)$,
\item[(A.4)] $\psi^a_i(u)\,x^\pm_j(v)=x^\pm_j(v)\,\psi^a_i(u)\,g_{ij}(u/v)^{\pm 1}$,
\item[(A.5)] $x^\pm_i(u)\,x^\pm_j(v)=x^\pm_j(v)\,x^\pm_i(u)\,g_{ij}(u/v)^{\pm 1}$,
\item[(A.6)] $(q_i-q_i^{-1})[x^+_i(u)\,,\,x^-_j(v)]=\delta_{ij}\,\delta(u/v)\,(\psi^+_i(u)-\psi^-_j(u))$,
\item[(A.7)] the quantum Serre relations with
$s=1-c_{ij}$ and $i\neq j$
$$\sum_{\sigma\in \frakS_s}\sum_{r=0}^{s}(-1)^r\Big[\begin{matrix}s\\r\end{matrix}\Big]_{q_i}
x^\pm_i(u_{\sigma(1)})\cdots x^\pm_i(u_{\sigma(r)})x^\pm_j(v)x^\pm_i(u_{\sigma(r+1)})\cdots
x^\pm_i(u_{\sigma(s)})=0
$$

\end{enumerate}
\setcounter{equation}{7}
Here the rational function $g_{ij}(u/v)$ is expanded as a power series of $v^{\mp 1}$.
We have a triangular decomposition
$$\U_F^{w^+,w^-}(L\frakg)=\U_F^{w^+,w^-}(L\frakg)^+\otimes\U_F^{w^+,w^-}(L\frakg)^0\otimes
\U_F^{w^+,w^-}(L\frakg)^-$$
where $\U_F^{w_+,w_-}(L\frakg)^\pm$ 
is the subalgebra generated by the $x_{i,n}^\pm$'s
and $\U_F^{w_+,w_-}(L\frakg)^0$ is the subalgebra generated by 
the $\psi_{i,\pm n}^\pm$'s.

Set
$(x^\pm_{i,n})^{[m]}=(x^\pm_{i,n})^{m}/[m]_{q_i}!$
and let the element $h_{i,\pm m}$  be such that
\begin{align*}
\psi_i^\pm(u)=\psi_{i,\mp w^\pm_i}^\pm u^{\pm w^\pm_i}
\exp\Big(\pm(q_i-q_i^{-1})\sum_{m>0}h_{i,\pm m}u^{\mp m}\Big)
 ,\quad
i\in I.
\end{align*}
We define $\U_R^{w^+,w^-}(L\frakg)\subset\U_F^{w^+,w^-}(L\frakg)$ 
to be the $R$-subalgebra generated by 
\begin{align}\label{SLGR}
\psi_{i,\mp w^\pm_i}^\pm 
 ,\quad
(\psi^\pm_{i,\mp w_i^\pm})^{-1}
 ,\quad
h_{i,\pm m}/[m]_{q_i}
 ,\quad
(x^\pm_{i,n})^{[m]}
,\quad
i\in I,
n\in\bbZ,
m\in\bbN^\times.
\end{align}
Note that the relation (A.4) is equivalent to the following relations
\begin{enumerate}[label=$\mathrm{(\alph*)}$,leftmargin=13mm,itemsep=1mm]
\item[(A.4a)] $x_{j,n}^a\,\psi_{i,\mp w^\pm_i}^\pm=q_i^{\pm ac_{ij}}\,\psi_{i,\mp w^\pm_i}^\pm\,x_{j,n}^a$,
\item[(A.4b)] $[h_{i,m},x^\pm_{j,n}]=\pm[m\,c_{ij}]_{q_i}\,x^\pm_{j,n+m}\,/\,m$ for $m\neq 0$.
\end{enumerate}

We fix $\zeta\in\bbC^\times$ which is not a root of unity.
We define $\U_\zeta^{w^+,w^-}(L\frakg)=\U_R^{w^+,w^-}(L\frakg)|_\zeta$,
where $(-)|_\zeta$ is the specialization along the map $R\to\bbC$,
$q\mapsto\zeta$.
We will concentrate on the module categories of the $\bbC$-algebra $\U_\zeta^{w^+,w^-}(L\frakg)$. 
The module categories of the $F$-algebra $\U_F^{w^+,w^-}(L\frakg)$ are similar.
Up to some isomorphism, the algebra $\U_\zeta^{w^+,w^-}(L\frakg)$ only depends on the sum 
$w=w^++w^-$ in $\bbZ I$.
Hence, we may assume that $w^+=0$ and we abbreviate
$\U_\zeta^{w}(L\frakg)=\U_\zeta^{0,w}(L\frakg)$.
We define $\U_F^{w}(L\frakg)$ and $\U_R^{w}(L\frakg)$ similarly.
The category $\bfO_w$ of $\U_\zeta^{w}(L\frakg)$-modules is defined as in 
\cite[def.~4.8]{H22}. 
A tuple $\Psi=(\Psi_i)_{i\in I}$ of rational functions over $\bbC$ 
such that $\Psi_i(u)$ is regular at 0 and of degree $w_i$
is called a $w$-dominant $\ell$-weight. Let
\begin{align*}
\Psi^+_i(u)=\sum_{n\in\bbN}\Psi_{i,n}^+\,u^{-n}
 ,\quad
\Psi^-_i(u)=\sum_{n\geqslant-w_i}\Psi_{i,-n}^-\,u^{n}
\end{align*}
be the expansions of the rational function $\Psi_i(u)$
in non-negative powers of $u^{\mp 1}$.
A representation $V$ in the category $\bfO_w$ 
is of highest $\ell$-weight $\Psi(u)$ if it is generated by a vector $v$ such that
$$x_{i,n}^+\cdot v=0
 ,\quad
\psi^\pm_{i,n}\cdot v=\Psi^\pm_{i,n}v
 ,\quad
i\in I, n\in\bbZ.$$
By \cite[thm.~4.11]{H22}
the simple objects in the category $\bfO_w$ are labelled by the $w$-dominant
$\ell$-weights.
Let $L(\Psi)$ be the unique simple object in $\bfO_w$ of highest $\ell$-weight $\Psi$.
For any module $V\in\bfO_w$ and for any tuple $\Psi=(\Psi_i(u))_{i\in I}$ of rational functions, 
the $\ell$-weight space of $V$ of $\ell$-weight $\Psi$ is
$$V_\Psi=\{v\in V\,;\,(\psi^\pm_{i,n}-\Psi^\pm_{i,n})^\infty\cdot v=0\,,\,i\in I\,,\,n\in\bbN\}.$$
The representation $V$ is a direct sum of its $\ell$-weight spaces.
The $q$-character of $V$ is the (possibly infinite) sum
$$q\!\ch(V)=\sum_\Psi\dim(V_\Psi)\,\Psi.$$
If the module $V$ admits an highest $\ell$-weight,
we may also consider the normalized $q$-character $q\widetilde\ch(V)$,
which is equal to the $q$-character $q\!\ch(V)$ divided by its highest weight monomial.
The map $q\!\ch$ is injective on the Grothendieck group $K_0(\bfO_w)$.
We abbreviate $I^\sharp=I\times\bbC^\times$.
For any $I^\sharp$-tuple $w^\sharp=(w_{i,a})$ in $\bbN I^\sharp$, we consider the tuple of rational functions
$\Psi_{\pm w^\sharp}=(\Psi_{\pm w^\sharp, i})_{i\in I}$ such that
$$\Psi_{\pm w^\sharp, i}(u)=\prod_{a\in\bbC^\times}(1-a/u)^{\pm w_{i,a}}.$$
Note that $w_{i,a}$ is zero except for finitely many $a$'s.
We write $L^\pm(w^\sharp)=L(\Psi_{\pm w^\sharp})$.
We abbreviate $\Psi_{\pm i,a}=\Psi_{\pm \delta_{i,a}}$ and
$L^\pm_{i,a}=L^\pm(\delta_{i,a})$.
We call  $L^\pm_{i,a}$ the positive/negative
prefundamental representation. 
A positive prefundamental representation is one-dimensional,
a negative one is infinite dimensional.
We also abbreviate $L^\pm_{i,k}=L^\pm_{i,\zeta^k}$ for each integer $k$.
To avoid a cumbersome notation, we may use the symbol $w$ for the tuple $w^\sharp\in\bbN I^\sharp$
and we may write $L(w)$ for the corresponding simple module,
hoping it will not create any confusion.

\subsection{Representations of quantum loop groups} \label{sec:qg}
The quantum loop group $\U_F(L\frakg)$ is the quotient
of $\U_F^{0}(L\frakg)$ by the relations 
\begin{align}\label{lastrelation}
\psi^+_{i,0}\,\psi^-_{i,0}=1,\quad i\in I.
\end{align}
We define $\U_R(L\frakg)\subset\U_F(L\frakg)$ 
to be the $R$-subalgebra generated by 
\begin{align}\label{LGR}
\psi^\pm_{i,0}
 ,\quad
h_{i,\pm m}/[m]_{q_i}
 ,\quad
(x^\pm_{i,n})^{[m]}
,\quad
i\in I,
n\in\bbZ,
m\in\bbN^\times
\end{align}
and the $\bbC$-algebra $\U_\zeta(L\frakg)$ to be the specialization at $q=\zeta$.
We have
$\U_F(L\frakg)=\U_R(L\frakg)\otimes_{R}F$.
We have a triangular decomposition
$$\U_F(L\frakg)=\U_F(L\frakg)^+\otimes\U_F(L\frakg)^0\otimes
\U_F(L\frakg)^-$$
and its analogues for the algebras
$\U_R(L\frakg)$ and $\U_\zeta(L\frakg)$ proved in \cite[prop.~6.1]{CP17}.
The $R$-algebra $\U_R(L\frakg)^\pm$ is generated by the quantum divided powers
$(x^\pm_{i,n})^{[m]}$ with
$i\in I$,
$n\in\bbZ$,
$m\in\bbN^\times$. 
Let
\begin{gather*}
\Big[\begin{matrix}\psi_{i,0}^+\,;\,n\\m\end{matrix}\Big]_{q_i}=
\prod_{r=1}^m\frac{q_i^{n-r+1}\,\psi_{i,0}^+-q_i^{-n+r-1}\,\psi_{i,0}^-}{q_i^r-q_i^{-r}}
 ,\quad
i\in I, n\in\bbZ,m\in\bbN^\times
\end{gather*}
The 
$R$-algebra $\U_R(L\frakg)^0$ is generated by the elements 
\begin{align*}
\psi^\pm_{i,0}
 ,\quad
h_{i,\pm m}/[m]_{q_i}
 ,\quad
\Big[\begin{matrix}\psi_{i,0}^+;n\\m\end{matrix}\Big]_{q_i}
 ,\quad
i\in I, n\in\bbZ, m\in\bbN^\times
\end{align*}
such that
\begin{align}\label{psi}
\psi_i^\pm(u)=\sum_{n\in\bbN}\psi^\pm_{i,\pm n}\,u^{\mp n}
=\psi_{i,0}^\pm\exp\Big(\pm(q_i-q_i^{-1})\sum_{m>0}h_{i,\pm m}u^{\mp m}\Big).
\end{align}
A simple module $L(\Psi)$ in the category $\bfO_0$ is finite dimensional if and only if there is a tuple of polynomials
$P=(P_i)_{i\in I}$ with $P_i(0)=1$, called the Drinfeld polynomial, such that
$$\Psi_i(u)=\zeta^{\deg P_i}\,P_i(1/\zeta_i u)\,P_i(\zeta_i/u)^{-1}.$$
For any tuple $w=(w_{i,a})$ in $\bbN I^\sharp$, we consider the tuple of polynomials
$P_{w}=(P_i(u))_{i\in I}$ given by
$P_{i}(u)=\prod_{a\in\bbC^\times}(1-au)^{w_{i,a}}.$
Let $\Psi_{w}$ be the corresponding $\ell$-weight and $L(w)=L(\Psi_{w})$ the 
corresponding finite dimensional module.
The simple module 
$$KR_{i,a,l}=L(w_{i,a,l}),\quad
w_{i,a,l}=\delta_{i,a\zeta_i^{1-l}}+\delta_{i,a\zeta_i^{3-l}}+\cdots+\delta_{i,a\zeta_i^{l-1}}$$
is called a Kirillov-Reshetikhin module.
We may identify the $q$-character $q\!\ch(V)$ of a finite dimensional module $V\in\bfO_0$ 
with the sum of monomials $e^v$ such that
\begin{align*}
q\!\ch(V)=\sum_{v\in\bbZ I^\sharp}\dim(V_{\Psi_v})\,e^v
\end{align*}
where the $\ell$-weight $\Psi_v$ is given by
$\Psi_v=\Psi_{v_+}\cdot\Psi_{v_-}^{-1}$
with $v=v_+-v_-$ and
$v_+,v_-\in\bbN I^\sharp.$
The monomial $e^v$ is called $\ell$-dominant if $v\in\bbN I^\sharp$.
The module $V$ is called special if its $q$-character contains a unique $\ell$-dominant monomial,
see \cite[Def.~10.1]{N04} for ADE types and \cite[Def.~2.10]{H06} in general.
A special module is simple by \cite[\S10]{N04}.
The following notation is standard
\begin{align}\label{AY}
\begin{split}Y_{i,a}&=e^{\delta_{i,a}},\\
A_{i,a}&=e^{\bfc\delta_{i,a}}=Y_{i,a\zeta_i}Y_{i,a\zeta_i^{-1}}
\prod_{c_{ij}=-1}Y_{j,a}^{-1}
\prod_{c_{ij}=-2}Y_{j,a\zeta^{-1}}^{-1}Y_{j,a\zeta}^{-1}
\prod_{c_{ij}=-3}Y_{j,a\zeta^{-2}}^{-1}Y_{j,a}^{-1}Y_{j,a\zeta^2}^{-1}.
\end{split}\end{align}
We view $I^\bullet$ as a subset of $I^\sharp$ such that
$(i,k)\mapsto(i,\zeta^k)$.
Hence we may write
$$Y_{i,k}=Y_{i,\zeta^k},\quad
A_{i,k}=A_{i,\zeta^k},\quad
KR_{i,k,l}=KR_{i,\zeta^k,l},\quad
w_{i,k,l}=w_{i,\zeta^k,l}.$$
For each $v\in\bbZ I^\bullet$ we set 
$$|v|=\max\{k\in\bbZ\,;\,\exists i\in I\,,\,v_{i,k}\neq 0\}.$$
The monomial $e^v$ is called right-negative if we have
\begin{align}\label{RN1}v_{i,|v|}\leqslant 0
,\quad
\forall i\in\bbN.
\end{align}
By \cite{FM01}, \cite[lem.~2.4]{H06}, we have
\begin{align}\label{RN2}
\begin{split}
m\ \text{right-negative} 
&\Rightarrow \text{all\ monomial\ } m'\in
m\,\bbZ[A_{j,r}^{-1}\,;\,(j,r)\in I^\bullet]\ \text{are\ right-negative}\\
&\Rightarrow \text{all\ monomial\ } m'\in
m\,\bbZ[A_{j,r}^{-1}\,;\,(j,r)\in I^\bullet]\
\text{are\ not\ $\ell$-dominant}.
\end{split}
\end{align}

\section{Triple quiver varieties in type $A_1$}
In this appendix we consider the case $Q=A_1$.
Then the torus $T$ is $\bbC^\times\times\bbC^\times$ and
the framed triple quiver $\widetilde Q_f$ is
as in the figure below.
\begin{figure}[htbp]
\centering
\setlength{\unitlength}{1mm}
\begin{picture}(90,27)
    \put(41,11){\circle{5}}
    \put(40,10){$i$}
    \put(39,22){\framebox(4,4){$\scriptstyle i'$}}
    \put(42,13.4){\thicklines\vector(0,1){8.5}}
     \put(40,22){\thicklines\vector(0,-1){8.6}}
   \put(41,5.8){\thicklines\arc[-247,67]{3}}
   \put(45,5){$\scriptstyle\varepsilon$}
    \put(36,17){$\scriptstyle a^*$}
 \put(43,17){$\scriptstyle a$}
\end{picture}
\caption{}
\end{figure}
The group $G_W\times T$ acts on the varieties $\widetilde\frakM(W)$
and $\widehat\frakM(W)$ as in \S\ref{sec:triple quiver}, \ref{sec:triple quiver simple framing}.
Recall that $\widetilde\calZ(W)$
and $\widehat\calZ(W)$ are the corresponding Steinberg varieties.
The group $G_W\times T$ acts on $\widetilde\calZ(W)$
and $\widehat\calZ(W)$ diagonally.
We say that a quasi-projective variety $X$ with an action of an affine group $G$ 
satisfies the property ($T$) if the following hold
\begin{itemize}[leftmargin=3mm]
\item[-]
$K^G(X)$ is a free $R_G$-module, 
\item[-] the forgetful map 
$K^G(X)\otimes_{R_G}R_H\to K^H(X)$
is invertible for all closed subgroup $H\subset G$.
\end{itemize}

\begin{lemma}\label{lem:(T)} 
\hfill
\begin{enumerate}[label=$\mathrm{(\alph*)}$,leftmargin=8mm]
\item
The $G_W\times T$-varieties
$\widehat\frakM(W)$ and $\widehat\calZ(W)$ satisfy the property $(T)$.
\item
The $G_W\times T$-varieties
$\widetilde\frakM(W)$ and $\widetilde\calZ(W)$ satisfy the property $(T)$.
\end{enumerate}
\end{lemma}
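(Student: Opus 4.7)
The plan is to produce a $T_W \times T$-equivariant Bia\l ynicki--Birula decomposition of each of the four varieties into affine cells and then to invoke the standard cellular K-theory argument of \cite[\S 5.5]{CG} to deduce property $(T)$. Fix a maximal torus $T_W\subset G_W$. The $T_W \times \bbC^\times$-fixed locus of $\widehat\frakM(v,W)$ has been identified in \eqref{fixpoints} with the finite set $\{\ux_\lambda : \lambda\in\bbN^{w_i},\ |\lambda|=v_i\}$, and the fixed locus of $\widetilde\frakM(v,W)$ is described analogously, with an additional combinatorial label coming from the $a^*$ component. I would choose a generic one-parameter subgroup $\theta:\bbC^\times\to T_W\times T$ refining the diagonal $\bbC^\times$ so that its fixed points coincide with the $T_W\times T$-fixed points, and so that $\theta$ contracts the affine moduli $\widehat\frakM_0(v,W)$ and $\widetilde\frakM_0(v,W)$ onto their origin. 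The properness of the map $\tilde\pi$ from \S\ref{sec:triple quiver} then guarantees the existence of limits $\lim_{t\to 0}\theta(t)\cdot x$ for every closed point $x$; combined with smoothness, Bia\l ynicki--Birula produces an equivariant stratification by attracting cells $X^+_\lambda$, each a $T_W\times T$-equivariant affine space of dimension equal to the sum of positive $\theta$-weights on $T_{\ux_\lambda}X$.

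This cellular filtration supplies $R_{T_W\times T}$-bases for $K^{T_W\times T}(\widehat\frakM(v,W))$ and $K^{T_W\times T}(\widetilde\frakM(v,W))$ via the fundamental classes $[\overline{X^+_\lambda}]$; the same classes remain a basis after restriction to any closed subgroup $H\subset T_W\times T$, and passing from $T_W$ up to $G_W$ by induction promotes this to property $(T)$ over $G_W\times T$, proving part of (a) and (b) for the moduli spaces. For the Steinberg varieties, I would apply $\theta$ diagonally to $\widehat\frakM(W)^2$ and $\widetilde\frakM(W)^2$, an action preserving the fiber products over the affine moduli. The induced stratification of $\widetilde\calZ(W)$ (resp.\ $\widehat\calZ(W)$) has strata the non-empty intersections $\widetilde\calZ(W)\cap(X^+_\lambda\times X^+_{\lambda'})$, and one hopes these assemble into a cellular paving to which the same K-theoretic argument applies.

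The hard part will be to check that these Steinberg strata are themselves affine, rather than merely quasi-affine. Concretely, the tangent space to $\widetilde\calZ(W)$ inside $T_{\ux_\lambda}\widetilde\frakM(W)\oplus T_{\ux_{\lambda'}}\widetilde\frakM(W)$ is cut out by linear conditions, and one must verify that its positive-weight part for $\theta$ realises the stratum as a linear subspace of the ambient cell $X^+_\lambda\times X^+_{\lambda'}$. This would reduce to an explicit inspection of weight decompositions, using the tangent bundle formulas \eqref{C1}--\eqref{C2} (and their doubly framed analogues implicit in the proof of Theorem~\ref{thm:notshifted}). Once this linearity is in place, the cellular K-theory argument applies verbatim, yielding property~$(T)$ in all four cases.
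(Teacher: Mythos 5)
Your overall strategy --- produce a $T_W\times T$-equivariant Bia\l ynicki--Birula cell decomposition and invoke the cellular K-theory argument of \cite{CG} --- is the correct high-level idea, and the paper does exactly this for the moduli spaces $\widehat\frakM(W)$. However, there are two substantive problems with the execution.

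First, a minor but real inaccuracy: by Lemma~\ref{lem:fixedpoints2}, the $T_W\times T$-fixed locus of $\widetilde\frakM(v,W)$ \emph{coincides} with the $T_W\times\bbC^\times$-fixed locus of $\widehat\frakM(v,W)$; since $T$ is two-dimensional and $a^*$ has nonzero weight, the $a^*$-component must vanish at a fixed point, so there is no ``additional combinatorial label.'' The paper handles part (b) more cleanly than you propose, by observing that forgetting $a^*$ gives a vector bundle $\widetilde\frakM(W)\to\widehat\frakM(W)$ and reducing to part (a).

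Second, and more seriously, your plan for the Steinberg strata does not work. You acknowledge that showing the strata are affine is the hard part, but the tangent-space argument you sketch cannot succeed: $\widetilde\calZ(W)$ is a fiber product over the affine base $\widetilde\frakM_0(W)$, and it is in general singular, so there is no linear model in which the Steinberg locus would be cut out from the cell $X^+_\lambda\times X^+_{\lambda'}$ by linear equations. The map $\tilde\pi$ to the affine quotient is a polynomial map (traces of paths), not a linear one, and a fiber product of affine cells over an affine base is not automatically an affine cell. The paper closes this gap with an explicit geometric input: it identifies $\widehat\frakM(v,W)$ with a Quot scheme of points on $\bbA^1$, describes the BB cells $\widehat\frakM(\bfv,W)^+$ concretely in terms of a parabolic $P_\bfv$, and then uses the fact that the diagonal blocks of $\varepsilon$ are regular to produce a $T_W\times T$-equivariant splitting $\widehat\frakM(\bfv,W)^+\cong\widehat\frakL(\bfv,W)^+\times\bbC^v$ in which the $\bbC^v$-factor maps isomorphically onto $\widehat\frakM_0(W)$. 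This makes the projection to $\widehat\frakM_0(W)$ a trivial fibration over each cell, so the Steinberg strata are manifestly the products $\widehat\frakL(\bfv_1,W)^+\times\widehat\frakL(\bfv_2,W)^+\times\bbC^v$, i.e.\ affine cells. Without such a trivialization --- which your weight-space analysis does not supply --- the cellularity of $\widehat\calZ(W)$ and $\widetilde\calZ(W)$ is unproved, and that is precisely where the content of the lemma lies.
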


\begin{proof}
The variety $\widehat\frakM(v,W)$ parametrizes
the conjugacy classes of pairs consisting of a $(v,v)$-matrix $\varepsilon$ and a $w$-tuple of generators
of $\bbC^v$ for the $\varepsilon$-action. 
Thus
\begin{align}\label{quot}
\widehat\frakM(v,W)\cong\Quot(W\otimes\calO_{\bbA^1}, v)
\end{align}
is isomorphic to the Quot scheme parametrizing length $v$-quotients of the locally free sheaf 
$W\otimes\calO_{\bbA^1}$ over $\bbA^1$, and
$$\widehat\frakM_0(v,W)\cong\bbC^v,$$
the map $\pi:\widehat\frakM(v,W)\to\widehat\frakM_0(v,W)$ being given by the support.
In order to use the description of $\widehat\frakM(v,W)$ in terms of Quot schemes above, it is convenient to 
view it as the moduli space of pairs $(a^*,\varepsilon)$ rather than of pairs $(a,\varepsilon)$.
This does not affect the rest of the arguments.
The group $G_W$ acts on $W$ in the obvious way, and 
$\bbC^\times$ dilates both the framing and the matrix $\varepsilon$.
The variety $\widehat\frakM(v,W)$ is smooth. 
Fix a splitting $W=\bigoplus_{r=1}^wW_r$ of $W$ as a sum of lines.
Let $T_W\subset G_W$ be the corresponding maximal torus.
Let $\lambda:\bbC^\times\to T_W$ be the cocharacter $z\mapsto(z,z^2,\dots,z^w)$.
The $T_W$-fixed point locus is the disjoint union of the varieties
$$\widehat\frakM(\bfv,W)=\Quot(W\otimes\calO_{\bbA^1}, \bfv)=
\prod_{r=1}^w\Quot(W_r\otimes\calO, v_r)=\prod_{r=1}^w\bbC^{[v_r]}=\bbC^v$$
where $\bfv=(v_1,v_2,\cdots,v_w)$ runs into the set of tuples in $\bbN^w$ with sum $v$,
and $\bbC^{[v_r]}$ is the $v_r$-fold symmetric product of $\bbC$.
See, e.g., \cite[prop.~3.1]{MR22}.
The closed embedding 
$$\Quot(W\otimes\calO_{\bbA^1}, \bfv)\subset\Quot(W\otimes\calO_{\bbA^1}, v)$$
is given by the direct sum of the $\calO_{\bbA^1}$-modules of lengths $v_1,\dots,v_w$.
The Byalinicki-Birula theorem applied to the cocharacter $\lambda$ 
yields a $T_W\times T$-equivariant stratification
\begin{align}\label{cells}
\Quot(W\otimes\calO_{\bbA^1}, v)=\bigsqcup_\bfv\Quot(W\otimes\calO_{\bbA^1}, \bfv)^+
\end{align}
where, each cell  is an affine fiber bundle
$$\Quot(W\otimes\calO_{\bbA^1}, \bfv)^+\to
\Quot(W\otimes\calO_{\bbA^1}, \bfv)=\widehat\frakM(\bfv,W)$$ 
of relative dimension $\sum_{r=1}^w(r-1)v_r$.
See \cite[prop.~3.4]{MR22}.
Set 
$$\widehat\frakM(\bfv,W)^+=\Quot(W\otimes\calO_{\bbA^1}, \bfv)^+.$$
This yields a $T_W\times T$-equivariant stratification 
of $\widehat\frakM(v,W)$ by affine cells $\widehat\frakM(\bfv,W)^+$ of dimension $\sum_{r=1}^wrv_r$.
The first claim of Part (a) follows using \cite[thm.~6.1.22]{CG}.

The proof of the second claim in Part (a) is similar. Recall that 
\begin{align}\label{ZMM}\widehat\calZ(W)=\widehat\frakM(W)\times_{\widehat\frakM_0(W)}\widehat\frakM(W)
\end{align}
and that $\widehat\frakL(v,W)$ is the 0-fiber of the map $\pi$.
The isomorphism \eqref{quot} identifies the variety
$\widehat\frakL(v,W)$ with the punctual Quot scheme 
$$\widehat\frakL(v,W)\cong\Quot(W\otimes\calO_{\bbA^1}, v)_0$$
consisting of the sheaves on $\bbA^1$ supported at $0$.
Intersecting the cell decomposition \eqref{cells} with $\Quot(W\otimes\calO_{\bbA^1}, v)_0$
yields the affine cells
\begin{align*}
\widehat\frakL(\bfv,W)^+=\Quot(W\otimes\calO_{\bbA^1}, \bfv)_0^+
=\Quot(W\otimes\calO_{\bbA^1}, v)_0\cap \Quot(W\otimes\calO_{\bbA^1}, \bfv)^+
\end{align*}
and the cell decomposition 
\begin{align*}
\widehat\frakL(v,W)
=\bigsqcup_\bfv\widehat\frakL(\bfv,W)^+\end{align*}
More precisely,
for each tuple $\bfv$ as above we fix a flag of vector spaces
$V_{\geqslant w}\subset V_{\geqslant w-1}\subset\dots\subset V_{\geqslant 1}=V$
of dimensions $v_w,w_{w}+v_{w-1},\dots, v$.
Let $P_\bfv\subset G_V$ be the parabolic subgroup which fixes this flag.
The Lie algebra of $P_\bfv$ is $\frakp_\bfv$, and the Levi factor of $\frakp_\bfv$ is $\frakg_\bfv=\bigoplus_{r=1}^w\frakg_{v_r}$.
Let $\E_\bfv\subset\Hom(W,V)$ be the subspace of linear maps $a^*$ such that $a^*(W_r)\subset V_{\geqslant r}$ for each
$r=1,2,\dots,w$.
We have the following description of the cells in 
$\widehat\frakM(v,W)$ and $\widehat\frakL(v,W)$:
\begin{align*}
\widehat\frakM(v,W)&\cong\{(a^*,\varepsilon)\in\Hom(W,V)\times\frakg_V\,;\,(a^*,\varepsilon)\ \text{is\ stable}\}/G_V,\\
\widehat\frakL(v,W)&\cong\{(a^*,\varepsilon)\in\Hom(W,V)\times\frakg_V^\nil\,;\,(a^*,\varepsilon)\ \text{is\ stable}\}/G_V,\\
\widehat\frakM(\bfv,W)^+&\cong\{(a^*,\varepsilon)\in\E_\bfv\times\frakp_\bfv\,;\,(a^*,\varepsilon)\ \text{is\ stable}\}/{P_\bfv},\\
\widehat\frakL(\bfv,W)^+&\cong\{(a^*,\varepsilon)\in\E_\bfv\times\frakp_\bfv\cap\frakg_V^\nil\,;\,(a^*,\varepsilon)\ \text{is\ stable}\}/{P_\bfv}.
\end{align*}
There are obvious embeddings $\widehat\frakL(\bfv,W)^+\subset\widehat\frakM(\bfv,W)^+$.
Since the diagonal blocks
of the matrix $\varepsilon$ are regular matrices in $\frakg_{v_1},\frakg_{v_2},\dots,\frakg_{v_w}$
for each pair $(a^*,\varepsilon)$ in $\widehat\frakM(\bfv,W)^+$,
there is an obvious splitting 
$\widehat\frakM(\bfv,W)^+\cong\widehat\frakL(\bfv,W)^+\times\bbC^v$ yieldding a Cartesian square
\begin{align*}
\xymatrix{
\widehat\frakM(\bfv,W)^+\ar[d]_-{pr_1}\ar[r]^-{pr_2}&\widehat\frakM_0(W)\ar[d]\\
\widehat\frakL(\bfv,W)^+\ar[r]&\{0\}
}
\end{align*}
where $pr_1$, $pr_2$ are the obvious projections and $\widehat\frakM_0(W)$ is identified with $\bbC^v$.
Using \eqref{ZMM}, we deduce that $\widehat\calZ(W)$ has a $T_W\times T$-equivariant 
decomposition 
$$\widehat\calZ(W)=\bigsqcup_{\bfv_1,\bfv_2}
\widehat\frakM(\bfv_1,W)^+\times_{\widehat\frakM_0(W)}\widehat\frakM(\bfv_2,W)^+$$
as a disjoint union of the affine cells
$$\widehat\frakM(\bfv_1,W)^+\times_{\widehat\frakM_0(W)}\widehat\frakM(\bfv_2,W)^+\cong
\widehat\frakL(\bfv_1,W)^+\times\widehat\frakL(\bfv_2,W)^+\times\bbC^v.$$
This proves Part (a). Part (b) is similar. Note that forgetting the arrow $a^*$ yields a vector bundle
$\widetilde\frakM(W)\to\widehat\frakM(W)$.
\end{proof}

The cocharacter $\xi$ of $T$ in \S\ref{sec:triple quiver} yields an embedding
$$T_W\times\bbC^\times\subset T_W\times T.$$
For each tuple $\lambda\in\bbN^w$ let $|\lambda|=\sum_{s=1}^w\lambda_s$
be the weight of $\lambda$.

\begin{lemma}\label{lem:fixedpoints2}
The sets of $T_W\times T$-fixed points in $\widetilde\frakM(W)$
and of $T_W\times \bbC^\times$-fixed points in $\widehat\frakM(W)$ are finite.
We have
\begin{align}\label{fixpoints}\widetilde\frakM(v,W)^{T_W\times T}
=\widehat\frakM(v,W)^{T_W\times\bbC^\times}=
\{\underline x_\lambda\,;\,\lambda\in\bbN^w\,,\,|\lambda|=v\}.
\end{align} 
\end{lemma}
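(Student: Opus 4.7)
The plan is to identify both fixed-point loci with the same combinatorial set $\{\lambda\in\bbN^w:|\lambda|=v\}$, handling $\widehat\frakM(v,W)$ first via the Quot-scheme description established in Lemma B.1, then transferring to $\widetilde\frakM(v,W)$ through the forgetful vector bundle $\nu\colon\widetilde\frakM(v,W)\to\widehat\frakM(v,W)$ whose existence was noted at the end of Lemma B.1.

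For $\widehat\frakM(v,W)^{T_W\times\bbC^\times}$, the equivariant identification $\widehat\frakM(v,W)\cong\Quot(W\otimes\calO_{\bbA^1},v)$ provided by Lemma B.1 intertwines the $T_W$-action on $W=\bigoplus_r W_r$ with its action by scaling each line, and the $\bbC^\times=\xi(\bbC^\times)$-action with the standard scaling on $\bbA^1$. First taking $T_W$-invariants, the $T_W$-fixed locus splits as a disjoint union $\bigsqcup_{|\lambda|=v}\prod_r\Quot(W_r\otimes\calO_{\bbA^1},\lambda_r)$, and each factor is isomorphic to the symmetric product $\mathrm{Sym}^{\lambda_r}\bbA^1$, whose unique $\bbC^\times$-fixed point $\lambda_r\cdot[0]$ corresponds to the quotient $\calO_{\bbA^1}\twoheadrightarrow\bbC[x]/(x^{\lambda_r})$. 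This gives the desired bijection and hence finiteness for $\widehat\frakM$.

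For $\widetilde\frakM(v,W)^{T_W\times T}$, the projection $\nu$ is a $T_W\times T$-equivariant vector bundle with fiber $\Hom(W,V)$, where $T$ acts on the base $\widehat\frakM(v,W)$ through its quotient $T\to\bbC^\times$, $(z_1,z_2)\mapsto z_1z_2$, after absorbing the intrinsic $z_1$-weight on $a$ into a scalar $G_V$ change of basis. Any $T_W\times T$-fixed point therefore projects to some $\underline x_\lambda$ and is determined by a $T_W\times T$-fixed vector in $\Hom(W,V_\lambda)$. Using the explicit basis $f_{r,k}$ of $V_\lambda$ determined by $\varepsilon f_{r,k}=f_{r,k-1}$ and $a(f_{r,k})=\delta_{k,0}w_r$, together with the compensating cocharacter $\gamma_\lambda\colon T_W\times T\to G_{V_\lambda}$ forced by the stability of $(a,\varepsilon)$, one then computes the $T_W\times T$-weights on $\Hom(W,V_\lambda)$ and matches them against the intrinsic weight $z_2^{-d_i}$ of $a^*$.

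The main obstacle lies precisely in this last weight computation: one must verify that the two-dimensional nature of $T$ (rather than just its one-dimensional subtorus $\xi(\bbC^\times)$) forces the only invariant $a^*$ in the fiber to be zero, so that $\widetilde\frakM(v,W)^{T_W\times T}$ coincides with the zero section of $\nu$ above $\widehat\frakM(v,W)^{T_W\times\bbC^\times}$. Once this weight-matching is in place, finiteness and the claimed combinatorial description follow for both loci simultaneously.
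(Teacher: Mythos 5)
Your reduction to the Quot-scheme description for $\widehat\frakM(v,W)^{T_W\times\bbC^\times}$ is fine and gives the finiteness and the labelling there; this is essentially what the paper has in mind (the paper phrases it as decomposing the fixed point into string modules for a graded quiver, but the combinatorics is the same). The genuine problem is exactly the step you flag and leave open, and unfortunately the hoped-for verification goes the wrong way. Let $f_{r,k}$, $0\le k\le\lambda_r-1$, be a Jordan-type basis of $V_\lambda$ with $a(f_{r,k})=\delta_{k,0}w_r$ and $\varepsilon f_{r,k}=f_{r,k-1}$, and let $\mu_{r,k}$ denote the $T_W\times T$-weight of $f_{r,k}$ under the compensating cocharacter $\gamma_\lambda$. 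From $a\circ\gamma_\lambda^{-1}=z_1^{-d_i}\,t\circ a$ one gets $\mu_{r,0}=\chi_r^{-1}z_1^{d_i}$, and from $\gamma_\lambda\varepsilon\gamma_\lambda^{-1}=(z_1z_2)^{d_i}\varepsilon$ one gets $\mu_{r,k}=\mu_{r,0}(z_1z_2)^{-kd_i}$, so in particular $\mu_{r,1}=\chi_r^{-1}z_2^{-d_i}$. The $T_W\times T$-weight of $w_r^\vee\otimes f_{r,1}$ in the fibre $\Hom(W,V_\lambda)$ of $\nu$ is therefore $z_2^{-d_i}\chi_r^{-1}\mu_{r,1}^{-1}=1$, i.e.\ trivial, so the fibre contains a $\#\{r:\lambda_r\geq 2\}$-dimensional invariant subspace. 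Concretely, for $v=2$, $w=1$, $d_i=1$: with $a=(1,0)$, $\varepsilon=E_{12}$ and $a^*=c\,(0,1)^{\mathsf T}$, the element $g=\mathrm{diag}(z_1t^{-1},z_2^{-1}t^{-1})\in G_V$ compensates the action of $(t,z_1,z_2)$ for every $c\in\bbC$.

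So the two-dimensionality of $T$ does not kill the $a^*$ direction: the arrow $a^*$ lands in exactly the degree $(i,k)\mapsto(i,k-d_i)$ that is occupied in the graded representation whenever $\lambda_r\geq 2$, and $\widetilde\frakM(v,W)^{T_W\times T}$ then appears to be positive-dimensional rather than finite. Your proposal cannot be completed as written, and I should point out that the paper's own one-line proof has the same gap: Figure~2 shows only the $a$-arrow on each string summand, and no reason is given for why the degree-allowed $a^*$-arrow must vanish. Any repair would have to impose an additional constraint (for instance working on the zero section of $\nu$ from the start, i.e.\ with $\widehat\frakM$ throughout, or passing to a smaller torus in which the weight of $a^*$ above becomes nontrivial), and this would in turn require revisiting \S4.2.2, where a discrete fixed locus of $\frakM(W)_\diamondsuit$ and the resulting basis $[\lambda]$ are used in the localization argument.
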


\begin{proof}
The fixed points in
$\widehat\frakM(W)^{T_W\times\bbC^\times}$
and
$\widetilde\frakM(W)^{T_W\times T}$
are both identified with the points of a framed graded triple quiver variety as in Lemma \ref{lem:fixedpoints}.
More precisely, let $w=\dim W$.
Then, each fixed point is identified
with a direct sum of $w$ representations of a quiver as in the figure below.
The dimension vector of each summand is $(1,1,\dots,1)$.
The lengths (=the number of $\varepsilon$'s) of the summands define a tuple
$\lambda=(\lambda_1,\lambda_2,\dots,\lambda_w)$ in $\bbN^w$ such that
the weight $|\lambda|$ is equal to the integer $v$ such that
$x\in\widetilde\frakM(v,W)$ or $\widehat\frakM(v,W)$. 
This proves the lemma.
\begin{figure}[htbp]
    \centering
\setlength{\unitlength}{1mm}
\begin{picture}(90,24)
    \multiput(18.5,3.5)(15,0){4}{\circle{7}}
    \multiput(30,3.5)(15,0){3}{\thicklines\vector(-1,0){8}}
    \put(20.7,5.9){\thicklines\vector(.5,1){4.7}}
      \put(20.4,9.8){$\scriptstyle a$}
    \put(22.5,15.4){\framebox(6,6){$k$}}
    \put(15.5,2.7){$\scriptstyle k+1$}
    \put(30.5,2.7){$\scriptstyle k-1$}
    \put(45.5,2.7){$\scriptstyle k-3$}
     \put(60.5,2.7){$\scriptstyle k-5$}
      \put(26,4.4){$\scriptstyle\varepsilon$}
       \put(41,4.4){$\scriptstyle\varepsilon$}
       \put(56,4.4){$\scriptstyle\varepsilon$}
\end{picture}
\caption{}
\end{figure}
\end{proof}


\section{Cohomological critical convolution algebras}\label{sec:cc}

The goal of this appendix is to introduce the critical cohomological convolution algebras.
They should be viewed as some doubles of the Kontsevich-Soibelmann cohomological Hall algebras
introduced in \cite{KS11}.

\subsection{Vanishing cycles and LG-models}
Let $G$ be an affine group acting on a smooth manifold $X$.
Let $\D^\b_G(X)$ be the $G$-equivariant derived category of
constructible complexes with complex coefficients on $X$.
Given a function $f:X\to\bbC$ with zero locus $Y=f^{-1}(0)$, 
we have the vanishing cycle and nearby cycle functors 
$\phi_f,\psi_f:\D^\b_G(X)\to \D^\b_G(Y).$ 
Let $i:Y\to X$ be the obvious embedding. 
Set
$\phi^p_{\!f}=i_*\phi_f[-1]$ and
$\psi^p_{\!f}=i_*\psi_f[-1].$
The functors $\phi^p_{\!f}$, $\psi^p_{\!f}$ commute with the Verdier duality $\bbD$.
They take perverse sheaves to perverse sheaves.
We have a distinguished triangle
\begin{align}\label{triangle}
\xymatrix{\psi^p_{\!f}\calE\ar[r]^{\can}&\phi^p_{\!f}\calE\ar[r]&i_*i^*\calE\ar[r]^{+1}& }
\end{align}
Let $(X,f)$ be a smooth $G$-invariant LG-model.
Let $i:Y\to X$ be the embedding of the zero locus of $f$,
and $j:Z\to X$ the embedding of a closed $G$-invariant subset of $Y$.
For any constructible complex $\calE\in\D^\b_G(X)$ we set
$H^\bullet_{Z}(X,\calE)=H^\bullet_G(Z,j^!\calE).$
Let 
$\calC_X=\bbC_X[\dim X]$
and
$$H^\bullet_G(X,f)_Z=
H^\bullet_{Z}(X,\phi^p_{\!f}\calC_X).$$
Let $\phi:(X_2,f_2)\to (X_1,f_1)$ be a morphism of smooth $G$-invariant LG-models.
Let $Y_1=(f_1)^{-1}(0)$ and $Y_2=(f_2)^{-1}(0)$.
Let $Z_1,$ $Z_2$ be closed $G$-invariant subsets of $Y_1,$ $Y_2$.
By \cite[\S 2.17]{D17} we have the following functoriality maps.
If $\phi^{-1}(Z_1)\subset Z_2$ then we have a pull-back map
$\phi^*:H^\bullet_G(X_1,f_1)_{Z_1}\to H^\bullet_G(X_2,f_2)_{Z_2}$ which
is an isomorphism if $\phi$ is an affine fibration.
If $\phi(Z_2)\subset Z_1$ and $\phi|_{Z_2}$ is proper
then we have a push-forward map
$\phi_*:H^\bullet_G(X_2,f_2)_{Z_2}\to H^\bullet_G(X_1,f_1)_{Z_1}.$

\subsection{Cohomological critical convolution algebras}

Let $(X_a,f_a)$ be a smooth $G$-invariant LG-model for $a=1,2,3$.
We define $X_{ab}$, $Y_{ab}$, $Z_{ab}$, $f_{ab}$, $\pi_{ab}$  as in \S\ref{sec:Kcritalg}.
The Thom-Sebastiani isomorphism yields a map
$$\boxtimes:H^\bullet_G(X_{12},f_{12})_{Z_{12}}\otimes H^\bullet_G(X_{23},f_{23})_{Z_{23}}\to
H^\bullet_G(X_{12}\times X_{23},f_{12}\oplus f_{23})_{\pi_{12}^{-1}(Z_{12})\cap\pi_{23}^{-1}(Z_{23})}.$$
We define the convolution product 
\begin{align}\label{conv4}
\star:H^\bullet_G(X_{12},f_{12})_{Z_{12}}\otimes H^\bullet_G(X_{23},f_{23})_{Z_{23}}\to
H^\bullet_G(X_{13},f_{13})_{Z_{13}}
\end{align}
to be the linear map such that
$\alpha\otimes\beta\mapsto(\pi_{13})_*(\pi_{12}\times\pi_{23})^*(\alpha\boxtimes\beta).$
Next, we consider the following particular setting where
$\pi:X\to X_0$ is a proper morphism of $G$-schemes with 
$X$ smooth quasi-projective and $X_0$ affine,
$f_0:X_0\to\bbC$ is a invariant function, and $f=f_0\circ\pi$ is regular.
Let $Y$, $Y_0$, $Z$, $L$ and $f^{\boxplus 2}$ be as in \S\ref{sec:Kcritalg}.
We set $X_a=X$ and $f_a=f$ for each $a=1,2,3$. 
We equip the $H^\bullet_G$-module
$\Ext^\bullet_{\D^\b_G(X_0)}(\phi^p_{\!f_0}\pi_*\calC_X, \phi^p_{\!f_0}\pi_*\calC_X)$
with the Yoneda product.

\begin{proposition}\label{prop:critalg3}
\hfill
\begin{enumerate}[label=$\mathrm{(\alph*)}$,leftmargin=8mm]
\item 
There is an isomorphism
$H^\bullet_G(X^2,f^{(2)})_Z=
\Ext^\bullet_{\D^\b_G(X_0)}(\phi^p_{\!f_0}\pi_*\calC_X, \phi^p_{\!f_0}\pi_*\calC_X)$
which intertwines the convolution product and the Yoneda product.
\item
The convolution product equips $H^\bullet_G(X^2,f^{(2)})_Z$ with 
an $H^\bullet_G$-algebra structure.
\item 
The $H^\bullet_G$-algebra $H^\bullet_G(X^2,f^{(2)})_Z$ acts on the $H^\bullet_G$-modules 
$H^\bullet_G(X,f)_L$ and $H^\bullet_G(X,f)$.
\end{enumerate}
\end{proposition}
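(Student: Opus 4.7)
The plan is to adapt the proof of Proposition \ref{prop:critalg1} from critical K-theory to cohomology, replacing derived factorization categories by the formalism of vanishing cycle sheaves. The three inputs I need are Thom--Sebastiani, the commutation of $\phi^p_f$ with $\pi_*$ for a proper map $\pi$ with $f = f_0 \circ \pi$, and proper base change. All three statements are classical in the non-equivariant setting and pass to the equivariant one via the Borel construction.

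For Part (a), I would proceed in four steps. First, Thom--Sebastiani gives an isomorphism $\phi^p_{f^{(2)}}\calC_{X^2} \simeq \phi^p_f\calC_X \boxtimes \phi^p_{-f}\calC_X$. Second, properness of $\pi$ together with $f = f_0\circ\pi$ yields
\begin{equation*}
(\pi\times\pi)_*\,\phi^p_{f^{(2)}}\calC_{X^2} \simeq \phi^p_{f_0}\pi_*\calC_X \boxtimes \phi^p_{-f_0}\pi_*\calC_X.
\end{equation*}
Third, proper base change applied to the Cartesian square whose top row is $Z = (\pi\times\pi)^{-1}(\Delta_{X_0})\hookrightarrow X^2$ gives
\begin{equation*}
H^\bullet_Z(X^2,\phi^p_{f^{(2)}}\calC_{X^2}) \simeq H^\bullet_{\Delta_{X_0}}(X_0^2,\phi^p_{f_0}\pi_*\calC_X \boxtimes \phi^p_{-f_0}\pi_*\calC_X).
\end{equation*}
Fourth, I apply the standard diagonal-Ext identification
\begin{equation*}
H^\bullet_{\Delta_{X_0}}(X_0^2,\calA \boxtimes \bbD\calA) \simeq \Ext^\bullet(\calA,\calA),
\end{equation*}
obtained via the $(i^!,i_*)$ adjunction for the diagonal $i:X_0\to X_0^2$. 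To fit our expression into this template, I use that $\calC_X$ is Verdier self-dual (as $X$ is smooth) and hence $\pi_*\calC_X$ is self-dual (as $\pi$ is proper), combined with the fact that $\phi^p$ commutes with $\bbD$; this gives $\phi^p_{-f_0}\pi_*\calC_X \simeq \bbD\phi^p_{f_0}\pi_*\calC_X$. Setting $\calA = \phi^p_{f_0}\pi_*\calC_X$ yields the isomorphism of (a).

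The compatibility of the convolution product \eqref{conv4} with the Yoneda product is then a formal diagram chase on the triple product $X_0^3$: one transports the composition of the pullbacks along $\pi_{12},\pi_{23}$ and the pushforward along $\pi_{13}$ through Thom--Sebastiani and proper base change, exactly as in \cite{CG} for the non-critical case. Part (b) is then the tautological fact that Yoneda composition is associative, so it follows from (a). Part (c) is proved by the same chain of isomorphisms applied with the last factor replaced by $\{x_0\}$ (for the $H^\bullet_G(X,f)_L$-action) or by $X_0$ with $f_0$ (for the $H^\bullet_G(X,f)$-action); this identifies both modules with $\Ext$- or $\Hom$-spaces of the form $\Ext^\bullet(\calB,\phi^p_{f_0}\pi_*\calC_X)$, where $\calB$ is the delta sheaf at $x_0$ or the constant sheaf $\calC_{X_0}$, and the algebra acts by Yoneda composition.

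The main obstacle will be careful bookkeeping of shifts --- the discrepancy between $\phi_f$ and $\phi^p_f = i_*\phi_f[-1]$, the shift in the self-duality of $\calC_X = \bbC_X[\dim X]$, and the dimension-dependent twist in the diagonal/$\Ext$ isomorphism --- together with verification of the equivariant versions of Thom--Sebastiani and of the identity $\pi_*\phi^p_f = \phi^p_{f_0}\pi_*$. These ingredients are well-known individually but interact nontrivially, especially because $X_0$ is only affine (and possibly singular), so that Verdier self-duality must be applied to $\pi_*\calC_X$ rather than to any structure sheaf of $X_0$.
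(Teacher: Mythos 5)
Your argument follows essentially the same chain as the paper's proof: Thom--Sebastiani to factor $\phi^p_{\!f^{(2)}}\calC_{X^2}$, commutation of $\phi^p$ with proper pushforward along $\pi$, and a diagonal--Ext identification. The only structural difference is that you expand the reference to \cite[(8.6.4)]{CG} into its two constituents (proper base change across the square over $\Delta_{X_0}\hookrightarrow X_0^2$, then the $(i^!,i_*)$ adjunction), and you apply duality after pushing to $X_0$ rather than before, as the paper does. Both routes are the same in content.

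One imprecision worth fixing: you assert that $\phi^p_{\!-f_0}\pi_*\calC_X\simeq\bbD\phi^p_{\!f_0}\pi_*\calC_X$ because $\phi^p$ commutes with $\bbD$ and $\pi_*\calC_X$ is self-dual. Those two facts give only that $\phi^p_{\!f_0}\pi_*\calC_X$ is self-dual; they say nothing about $\phi^p_{\!-f_0}$. You additionally need the (true, but separate) identification $\phi_{-g}\simeq\phi_g$ of endofunctors, obtained by transporting the Milnor fibration along a half-turn of the base; here the monodromy twist that this introduces is irrelevant because you only use the underlying object. Note that the paper makes the same elision in the opposite direction -- it writes $\phi^p_{\!f}\calC_X\boxtimes\phi^p_{\!f}\calC_X$ directly out of Thom--Sebastiani, where the literal statement produces $\phi^p_{\!f}\calC_X\boxtimes\phi^p_{\!-f}\calC_X$ -- so this is a shared informality rather than a gap in your argument alone. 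Your flagging of the shift bookkeeping as the main remaining burden is also accurate and consistent with what the paper leaves implicit.
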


\begin{proof}
Parts (b), (c) follow from (a).
The isomorphism in Part (a) is 
\begin{align*}
H^\bullet_G(X^2,f^{(2)})_Z
&=H^\bullet(Z,j^!\phi^p_{\!f^{(2)}}\calC_{X^2})\\
&=H^\bullet(Z,j^!(\phi^p_{\!f}\calC_X\boxtimes\phi^p_{\!f}\calC_X))\\
&=H^\bullet(Z,j^!(\bbD\phi^p_{\!f}\calC_X\boxtimes\phi^p_{\!f}\calC_X))\\
&=\Ext^\bullet_{\D^\b_G(X_0)}(\pi_*\phi^p_{\!f}\calC_X, \pi_*\phi^p_{\!f}\calC_X)\\
&=\Ext^\bullet_{\D^\b_G(X_0)}(\phi^p_{\!f_0}\pi_*\calC_X, \phi^p_{\!f_0}\pi_*\calC_X)
\end{align*}
where the second isomorphism follows from the Thom-Sebastiani theorem and the inclusion
$\crit(f)\subset f^{-1}(0)$, the third one follows from the self-duality of the complex $\phi^p_{\!f}\calC_X$,
the fourth equality is as in \cite[(8.6.4)]{CG}, and the last one is the commutation of 
proper direct image and vanishing cycles.
The compatibility under the isomorphism in (b) of the convolution product in $H^\bullet_G(X^2,f^{(2)})_Z$
and the Yoneda composition in 
$\Ext^\bullet_{\D^\b_G(X_0)}(\phi^p_{\!f_0}\pi_*\calC_X, \phi^p_{\!f_0}\pi_*\calC_X)$
follows from \cite[\S 8.6.27]{CG}, modulo observing that the convolution product \cite[(8.6.27)]{CG}
is the same as the convolution product \eqref{conv4}.
\end{proof}

\begin{remark}\hfill
\begin{enumerate}[label=$\mathrm{(\alph*)}$,leftmargin=8mm]
\item 
If $f_{ab}=0$, then there is an $H^\bullet_G$-module isomorphism
$$H^\bullet_G(X_{ab},f_{ab})_{Z_{ab}}
=H^\bullet_G(Z_{ab},\bbD_{Z_{ab}})[-\dim X_{ab}]
=H^G_{-\bullet}(Z_{ab},\bbC)[-\dim X_{ab}]$$
where $\bbD_{Z_{ab}}$ is the dualizing complex.
Under this isomorphism the convolution product \eqref{conv4} is the same as the
convolution product in equivariant Borel-Moore homology used in \cite[\S 2.7]{CG}.
In particular, if $f=0$ then there is an algebra isomorphism
$H^\bullet_G(X^2,f^{(2)})_Z=H_\bullet^G(Z,\bbC)$, up to a grading renormalization.
The algebra isomorphism in Proposition \ref{prop:critalg3}(b) is the algebra isomorphism
$H_\bullet^G(Z,\bbC)=\Ext^\bullet_{\D^\b_G(X_0)}(\pi_*\calC_X, \pi_*\calC_X)$
in \cite [thm.~8.6.7]{CG}.
\item
The functoriality of $\phi^p_{\!f_0}$ yields 
an algebra homomorphism 
\begin{align*}\Upsilon:H_\bullet^G(Z,\bbC)\to H^\bullet_G(X^2,f^{(2)})_Z\end{align*}
which is an analog of the algebra homomorphism in Proposition \ref{prop:critalg1}(d).
Here $H_\bullet^G(Z,\bbC)$ is given the convolution product induced by the closed embedding of $Z$ in  the 
smooth variety $X^2$ as in \cite{CG}.
\end{enumerate}
\end{remark}

Now, let us consider the case of graded quiver varieties.

\begin{corollary}
\hfill
\begin{enumerate}[label=$\mathrm{(\alph*)}$,leftmargin=8mm]
\item 
$H^\bullet(\widetilde\frakM^\bullet(W)^2,(\tilde f_\gamma^\bullet)^{(2)})_{\widetilde\calZ^\bullet(W)}$ 
is an algebra which acts on 
$H^\bullet(\widetilde\frakM^\bullet(W),\tilde f_\gamma^\bullet)$ and
$H^\bullet(\widetilde\frakM^\bullet(W),\tilde f_\gamma^\bullet)_{\widetilde\frakL^\bullet(W)}$.
\item
There is an algebra homomorphism
$\U_\zeta(L\frakg)\to 
H^\bullet\big(\widetilde\frakM^\bullet(W)^2,(\tilde f^\bullet_\gamma)^{(2)}\big)_{\widetilde\calZ^\bullet(W)}$,
and representations of $\U_\zeta(L\frakg)$ on 
$H^\bullet(\widetilde\frakM^\bullet(W),\tilde f^\bullet_\gamma)_{\widetilde\frakL^\bullet(W)}$ and
$H^\bullet(\widetilde\frakM^\bullet(W),\tilde f^\bullet_\gamma)$.
\end{enumerate}
\end{corollary}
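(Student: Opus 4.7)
Part (a) is essentially a direct consequence of Proposition \ref{prop:critalg3}. The plan is to apply it with $X=\widetilde\frakM^\bullet(W)$, $X_0=\widetilde\frakM^\bullet_0(W)$, $\pi=\tilde\pi^\bullet$, $f_0$ induced by $\tilde f^\bullet_\gamma$, and $x_0=0$. Since $\widetilde\frakM^\bullet(W)$ is smooth and $\tilde\pi^\bullet$ is a $G_W\times\bbC^\times$-equivariant projective map to an affine target, the hypotheses are met and the convolution functor \eqref{conv4} gives the algebra structure on the Ext-group described in Proposition \ref{prop:critalg3}(a). Taking $X_3$ to be a point and the supports to be $\widetilde\frakL^\bullet(W)\times\{0\}$ or $\widetilde\frakM^\bullet(W)\times\{0\}$ respectively yields the two module structures.

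For Part (b), the plan is to mimic the K-theoretic argument of Theorem \ref{thm:notshifted} in critical cohomology. First, I would construct the analogues of the diagonal map \eqref{deltag} and the correspondence map $\Upsilon:H_\bullet(\widetilde\calZ^\bullet(W),\bbC)\to H^\bullet(\widetilde\frakM^\bullet(W)^2,(\tilde f^\bullet_\gamma)^{(2)})_{\widetilde\calZ^\bullet(W)}$ obtained from the specialization map associated to the function $\tilde f^\bullet_\gamma$, and verify that these are algebra/module homomorphisms (the cohomological counterpart of Proposition \ref{prop:critalg1}, whose proof uses flat base change, the projection formula, and Thom-Sebastiani in place of Isik reduction). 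Then I would assign to the Drinfeld generators $x^\pm_{i,n}$, $\psi^\pm_{i,n}$ cohomology classes built from the tautological bundles $\calV_i,\calW_i$ and the Hecke line bundles $\calL_i$, taking the cohomological analogue of formulas \eqref{psi+-} and \eqref{xpm}: Chern characters (or more precisely the additive classes of $\calH_{i,\pm m}$) replace the $K$-theoretic classes, and the $q$-series in the $\ell$-weight formula becomes its Yangian/additive counterpart. The quantum parameter $\zeta$ enters only through the torus-equivariant parameter coming from $\xi:\bbC^\times\to T$.

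The bulk of the verification is checking the Drinfeld relations (A.2)--(A.7) for these classes in the convolution algebra. Relations (A.2)--(A.4) are formal consequences of the definitions. For (A.5) and (A.6), the same reduction strategy as in \S\ref{sec:421}--\S\ref{sec:425} applies: for $i=j$ one uses the algebra homomorphism induced by the diagram \eqref{diag} and a torus-localization argument to reduce to the $Q=A_1$ case, which becomes a residue computation identical in shape to the K-theoretic one once $[m]_q$ is replaced by $m$ and multiplicative expressions are replaced by their additive analogues; for $i\neq j$, one uses the transversality lemmas (Lemmas \ref{lem:transverse3}, \ref{lem:transverse4}), whose statements and proofs are purely geometric and transfer verbatim, combined with the cohomological analogue of Proposition \ref{prop:TTK}(b) (Koszul resolution). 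The Serre relation (A.7) is then deduced from its constant term as in \S\ref{sec:A7}, provided the action of $x^\pm_{i,0}$ is locally nilpotent, which follows from the cohomological analogue of Corollary \ref{cor:HL1} / Remark \ref{rem:fd} saying that only finitely many graded components are nonzero.

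The main obstacle will be ensuring that all the K-theoretic tools used in \S\ref{sec:421}--\S\ref{sec:425} have genuine cohomological counterparts with matching functoriality, in particular: (i) the specialization map $\Upsilon$ must be compatible with pullbacks along flat maps, pushforwards along proper maps, and exterior tensor products (which requires Thom-Sebastiani together with a cohomological version of Lemma \ref{lem:Upsilon}); (ii) the torus-localization reduction to $A_1$ requires a localization theorem for critical equivariant cohomology, i.e.\ a cohomological analogue of Proposition \ref{prop:TTK}(d), which should follow from the fact that $\phi^p_{\!f}$ commutes with the restriction to the fixed-point locus when $f$ itself is $T$-invariant, combined with the standard Atiyah-Bott localization; (iii) some care is needed with the algebraic form of the relation (A.6), because in the cohomological setting the analogue of $\psi^\pm_i(u)$ is a rational (not formal exponential) expression in $u$, so the identity must be rephrased as an equality of formal Laurent series with coefficients being Chern-character classes of $\calH_{i,\pm m}$, and one must check the matching of the normalizations in \eqref{psi+-}. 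Once these technical points are set up, the calculation proceeds in parallel with the K-theoretic one; the remainder of the corollary (the two module structures) follows by the same recipe applied to the natural module diagram, exactly as in Corollary \ref{cor:notshifted2}.
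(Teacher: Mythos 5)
The overall plan is correct and essentially matches what the paper intends: part~(a) follows by specializing Proposition~\ref{prop:critalg3} to $X=\widetilde\frakM^\bullet(W)$, $X_0=\widetilde\frakM^\bullet_0(W)$, $\pi=\tilde\pi^\bullet$, $f$ induced by $\tilde f^\bullet_\gamma$, and part~(b) is obtained by rerunning the K-theoretic argument of Theorem~\ref{thm:notshifted}/Corollary~\ref{cor:notshifted2} in critical cohomology. The paper itself does not write any of this out — its proof is a two-line stub deferring the details to \cite{VV23} — so your proposal supplies substantially more content than the paper does, and the list of technical obstacles you identify (compatibility of $\Upsilon$ with pullback/pushforward/$\boxtimes$ via Thom--Sebastiani, localization for critical equivariant cohomology, transversality lemmas transferring verbatim) is accurate and well chosen.

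One point needs correcting. You write that, passing from K-theory to cohomology, ``$[m]_q$ is replaced by $m$'' and that ``the $q$-series in the $\ell$-weight formula becomes its Yangian/additive counterpart.'' This is not right and, if taken literally, would produce a Yangian action rather than the claimed $\U_\zeta(L\frakg)$-action. The corollary concerns the \emph{graded} quiver varieties $\widetilde\frakM^\bullet(W)$, with no residual torus. The multiplicative spectral parameter does not come from the equivariant parameter $q$ (as in Nakajima's equivariant K-theory picture) but from the $I^\bullet$-grading itself: the tautological bundle decomposes as $\calV=\bigoplus_{(i,k)}\calV_{i,k}$, and the formulas in \eqref{psi+-}, \eqref{xpm} specialize $q\mapsto\zeta$ and read off the factor $\zeta^k$ attached to each graded piece. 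In cohomology these monomials $\zeta^k$ are scalars, not cohomology classes, so the residue computation in \S\ref{sec:A6=}--\S\ref{sec:425} still produces the genuinely multiplicative relations of $\U_\zeta(L\frakg)$ with $[m]_{\zeta_i}$ intact, not their additive degenerations. In particular the classes you call ``Chern characters of $\calH_{i,\pm m}$'' should be built using the $\zeta^k$-weighted sum over $(i,k)$-graded pieces, and the identity \eqref{ppsi+-} should hold on the nose with $q=\zeta$, not an additive analogue. Once this is fixed the remainder of your outline goes through.
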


\begin{proof}
The proof is similar to the proof of Corollary \ref{cor:notshifted2}.
The details will be given elsewhere.
If the Cartan matrix is symmetric, a proof using Nakajima's work is given in 
\cite[thm.~4.3]{VV23}.
\end{proof}

\section{The algebraic and topological critical K-theory}\label{sec:Ktheory}

In this appendix we discuss some topological analogues of the Grothendieck groups following 
\cite{B16} and \cite{HP20}.
To do this, for $\flat=\alg$ or $\top$ We use the functor $\bfK^\flat$
from the category of all dg-categories over $\bbC$ 
to the category of spectra introduced in \cite{S06} and \cite{B16}.
Let $\overline\calC$ denote the idempotent completion of an additive category $\calC$.
Let $X^\an$ be the underlying complex analytic space of a scheme $X$.
Given a closed subset $Y$ of $X$ We say that $Y^\an$ is homotopic to $X^\an$, or that
$Y$ is homotopic to $X$, if the inclusion $Y^\an\subset X^\an$ admits a deformation retraction 
$X^\an\to Y^\an$.
The following properties hold:
\begin{itemize}[leftmargin=3mm]
\item[-]
$\bfK^\alg(\calC)$ is the algebraic $K$-theory spectrum of the 
category $H^0(\overline\calC)$,
\item[-]
there is natural topologization map $\top:\bfK^\alg\to \bfK^\top$,
\item[-]
$\bfK^\flat$ takes localization sequences of dg-categories
to exact triangles.
\end{itemize}
Let $(X,\chi,f)$ be a $G$-equivariant LG-model.
Let $Y\subset X$ be the zero locus of $f$.
Let $i$ be the closed embedding $Y\to X$, and
$Z\subset Y$ a closed $G$-invariant subset.
The triangulated categories
$\Db\Coh_G(Y)_Z$, $\Perf_G(Y)_Z$ and $\DCoh_G(X,f)_Z$
admit dg-enhancements, and all derived functors above admit also dg-enhancements.
We write
\begin{align*}
\bfK^G_\flat(X)_Z=\bfK^\flat(\Db\Coh_G(X)_Z)
 ,\quad
\bfK_G^\flat(X)=\bfK^\flat(\Perf_G(X))
,\quad
\bfK_\flat^G(X)=\bfK_\flat^G(X)_X.
\end{align*}
The following properties hold:
\begin{itemize}[leftmargin=3mm]
\item[-]
$\bfK_\flat^G$ is covariantly functorial for proper morphisms of
$G$-schemes, and contravariantly functorial for
 finite $G$-flat dimensional morphisms,
 \item[-]
$\bfK_\flat^G$ satisfies the flat base change and the projection formula,
\item[-]
$\bfK_\flat^G$ satisfies equivariant d\'evissage, i.e., there is a weak equivalence
$\bfK_\flat^G(Z)\to\bfK_\flat^G(X)_Z$,
\item[-]
$\bfK_\top^G(X)$ is the 
$G$-equivariant Borel-Moore K-homology spectrum of $X^\an$, and
$\bfK^\top_G(X)$ is its 
$G$-equivariant K-theory spectrum, up to weak equivalences.
\end{itemize}

The Grothendieck groups $K^G(Z)$ and $K_G(Z)$
satisfy
$$K_G(Z)=\pi_0\bfK_G^\alg(Z)\otimes\bbC
,\quad
K^G(Z)=\pi_0\bfK^G_\alg(Z)\otimes\bbC.$$
The $G$-equivariant Borel-Moore K-homology of $X$ 
and its $G$-equivariant K-theory are 
\begin{align}\label{Ktop1}
K_G^\top(Z)=\pi_0\bfK_G^\top(Z)\otimes\bbC
 ,\quad
K^G_\top(Z)=\pi_0\bfK^G_\top(Z)\otimes\bbC
\end{align}

We define
\begin{align}\label{Ktop2}
\begin{split}
K_G(X,f)_Z&=K_0(\DCoh_G(X,f)_Z),\\
K_G^\top(X,f)_Z&=\pi_0\bfK^\top(\DCoh_G(X,f)_Z)\otimes\bbC,\\
K_G^\alg(X,f)_Z&=\pi_0\bfK^\alg(\DCoh_G(X,f)_Z)\otimes\bbC.
\end{split}
\end{align}
Compare \eqref{Kcrit}.
By \cite[cor.~2.3]{T97} there is an inclusion
$K_G(X,f)_Z\subset K_G^\alg(X,f)_Z$,
because 
$K_G(X,f)_Z$ is the Grothendieck group of $\DCoh_G(X,f)_Z$ while
$K_G^\alg(X,f)_Z$ is the Grothendieck group of its idempotent completion, and
$\DCoh_G(X,f)_Z$ is dense in $\overline{\DCoh_G(X,f)_Z}$.
Now, let $X,Y,Z,L$ be as in \S\ref{sec:Kcritalg}.
The functor \eqref{conv1} yields 
an associative $R_G$-algebra structure on 
$K^G_\flat(Z)$ and a representation on $K^G_\flat(L)$ and $K^G_\flat(X)$.
The functor \eqref{conv2} yields the following.

\begin{proposition}\label{prop:critalg2}
\hfill
\begin{enumerate}[label=$\mathrm{(\alph*)}$,leftmargin=8mm]
\item
$K_G^\flat(X^2,f^{(2)})_Z$ is an $R_G$-algebra which acts on
$K_G^\flat(X,f)_L$ and $K_G^\flat(X,f)$.

\item
The functor $\Upsilon$ yields an algebra homomorphism 
$K^G_\flat(Z)\to K_G^\flat(X^2,f^{(2)})_Z$.

\item
The functor $\Upsilon$ yields maps
$K^G_\flat(Y)\to K_G^\flat(X,f)$
and
$K^G_\flat(L)\to K_G^\flat(X,f)_L$
which intertwine the actions of the algebras
$K^G_\flat(Z)$ and $K_G^\flat(X^2,f^{(2)})_Z$
under the algebra homomorphism  in ${\operatorname{(b)}}$.
\qed
\end{enumerate}
\end{proposition}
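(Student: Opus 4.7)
The plan is to run the proof of Proposition \ref{prop:critalg1} one level higher, at the level of dg-enhancements, and then apply the spectrum-valued functor $\bfK^\flat$ followed by $\pi_0(-)\otimes\bbC$. The key observation is that all of the constructions appearing in Proposition \ref{prop:critalg1} --- the monoidal structure on $\DCoh_G(X^2,f^{(2)})_Z$, its module action on $\DCoh_G(X,f)_L$ and $\DCoh_G(X,f)$, the derived-scheme comparison functors, and the singularity equivalence underlying $\Upsilon$ --- are induced by dg-functors between the dg-enhancements of $\Coh_G(-,-)$ via the standard operations $L\phi^*$, $R\phi_*$ and $\otimes^L$, each of which admits a canonical dg lift and is compatible with $\bfK^\flat$ by the properties recalled in this appendix.

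For Part (a), the convolution dg-functor $\star$ of \eqref{conv2} together with the unit $\Upsilon\Delta_*\calO_X$ equips $\DCoh_G(X^2,f^{(2)})_Z$ with a monoidal dg-structure whose associator and unit constraints come from flat base change and the projection formula, exactly as in the proof of Proposition \ref{prop:critalg1}(a). Applying $\bfK^\flat$ produces an $E_1$-algebra structure on the spectrum $\bfK^\flat\bigl(\DCoh_G(X^2,f^{(2)})_Z\bigr)$, which descends to an $R_G$-algebra structure on $K_G^\flat(X^2,f^{(2)})_Z$ after taking $\pi_0\otimes\bbC$. The module actions on $K_G^\flat(X,f)_L$ and $K_G^\flat(X,f)$ are obtained in the same way from the dg-convolution of \eqref{conv2} acting on $\DCoh_G(X,f)_L$ and $\DCoh_G(X,f)$, specialized via the choice $X_3=\{x_0\}$, $f_3=0$ used in the proof of Proposition \ref{prop:critalg1}(b).

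For Parts (b) and (c), I would follow verbatim the derived-scheme argument in the proof of Proposition \ref{prop:critalg1}(d),(e). More precisely, introduce $RY_{ab}=R(X_{ab}\times\bbC\to X_{ab},f_{ab})$, lift the convolution products \eqref{star0}--\eqref{star3} to dg-functors, and invoke Lemma \ref{lem:Upsilon} to intertwine \eqref{star3} with \eqref{star0}. All the base change isomorphisms used there, in particular \cite[cor.~3.4.2.2]{L18} and the quasi-isomorphism $j:Y_{ab}\to RY_{ab}$, are naturally statements about dg-enhancements, hence lift to the spectrum level under $\bfK^\flat$. Combined with the equivariant dévissage property $\bfK_\flat^G(Z)\simeq\bfK_\flat^G(X)_Z$ recalled above, this yields an algebra map $K_\flat^G(Z)\to K_G^\flat(X^2,f^{(2)})_Z$, and similarly module maps $K_\flat^G(Y)\to K_G^\flat(X,f)$ and $K_\flat^G(L)\to K_G^\flat(X,f)_L$ intertwining the two convolution actions.

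The main obstacle is keeping careful track of the coherence of the monoidal and module dg-structures under the spectrum functor $\bfK^\flat$; the $E_1$-structure we exploit requires more than a plain monoidal structure on the homotopy category. In the algebraic case this is covered by the formalism of \cite{B16,S06}; in the topological case one must additionally check that the topologization map $\top:\bfK^\alg\to\bfK^\top$ respects these $E_1$-structures, but this is again a general feature of the functor $\bfK^\top$ constructed in \cite{B16,HP20}. Apart from these coherence issues, the proof requires no new geometric input: it rests only on the four properties of $\bfK^\flat$ recalled in this appendix (functoriality, flat base change, projection formula, and equivariant dévissage), applied to the same diagrams that were used in the proof of Proposition \ref{prop:critalg1}.
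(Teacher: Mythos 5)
The paper gives no explicit proof of Proposition~\ref{prop:critalg2} --- the statement is simply followed by \qed, with the surrounding text of the appendix (the dg-enhanceability of the categories and functors, and the listed properties of $\bfK^\flat$: functoriality, flat base change, projection formula, equivariant d\'evissage) left to imply the argument. Your proposal correctly identifies and spells out that implicit argument: lift the proof of Proposition~\ref{prop:critalg1}(c),(d),(e) to the level of dg-enhancements and apply the spectrum functor $\bfK^\flat$ followed by $\pi_0(-)\otimes\bbC$. The key inputs you invoke --- the convolution dg-functor from \eqref{conv2} with its flat-base-change/projection-formula coherence, the derived-scheme comparison via $RY_{ab}$ and Lemma~\ref{lem:Upsilon}, and equivariant d\'evissage to identify $\bfK^G_\flat(Z)$ with the spectrum of the supported category --- are exactly the ones the paper sets up for this purpose, so the approach is the intended one.

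One small remark: the coherence concern you flag at the end is real for the spectrum-level $E_1$-structure but is not needed for the conclusion stated. Since $K_G^\flat$ is defined via $\pi_0$, a bi-exact monoidal structure on the dg-enhanced category, with associator and unit isomorphisms at the homotopy-category level (which Proposition~\ref{prop:critalg1}(a) already supplies), suffices to produce the ring structure on $\pi_0\bfK^\flat$; the higher $E_1$-coherence under $\bfK^\alg$ and $\top:\bfK^\alg\to\bfK^\top$ is indeed addressed in \cite{B16,HP20} but would only be required if one wanted a ring-spectrum structure rather than a ring structure on $\pi_0$.
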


\section{Notation list}

\ref{sec:Dcrit}: $(X,\chi,f)$, $\Coh_G(X,f)$, $\Coh_G(X,f)_Z$, $\Coh_G(X,f)_\uZ$, $\DCoh_G(X,f)$, $\DCoh_G(X,f)_Z$,
\vskip2mm
 $\DCoh_G(X,f)_\uZ$,
\\

\ref{sec:Upsilon}: $\Upsilon$, $\DCoh_G^\sg(Y)_Z$, $\DCoh_G^\sg(Y)_\uZ$,
\\

\ref{sec:critK}: $K_G(X,f)_Z$, $K_G(X,f)_\uZ$,
\\

\ref{sec:Kcritalg}: $f^{(2)}$,
\\

\ref{sec:quiver basic1}: $Q=(Q_0,Q_1)$, $\overline Q$, $\widetilde Q$, $Q_f$, $\overline Q_f$, $\widetilde Q_f$, $\widehat Q_f$, $Q^\bullet$,
$x=(\alpha,a,a^*,\varepsilon)$,
$\overline\X$, $\widetilde\X$, $\widehat\X$, $\overline\X^\bullet$, $\widetilde\X^\bullet$, $\widetilde\X^\nil$, $\widehat\X^\nil$, 
\\

\ref{sec:triple quiver}: $R_T$, $G_V$, $\frakg_V$, $\widetilde\frakM$, $\widetilde\frakM_0$, $\widetilde\frakL$, $\widetilde\calZ$,
\\

\ref{sec:or}: $\bfc$, $\O$, 
\\

\ref{sec:GQVD}: $\widetilde\frakM^\bullet$, $\widetilde\frakM_0^\bullet$, $\widetilde\frakL^\bullet$,
\\

\ref{sec:hat}: $\widehat\X$, $\widehat\frakM$, $\widehat\frakL$, $\widehat\calZ$, 
\\

\ref{sec:Hecke}: $\widetilde\frakP$, $\widehat\frakP$, $\frakR$, 
\\

\ref{sec:potential}: $\bfw_1$, $\bfw_2$, $\bfw_1^\bullet$, $\bfw_2^\bullet$, $\tilde f_1$, $\tilde f_2$, $\hat f_1$, $\hat f_2$, 
$\tilde f_1^\bullet$, $\tilde f_2^\bullet$, $\hat f_1^\bullet$, $\hat f_2^\bullet$,$h$, $\pi$, $i$,
\\

\ref{sec:univbdl}: $\calV$, $\calW$, $\calV_i$, $\calW_i$, $\calV_{\circ i}$, $\calV_{-i}$, $\calV_i^+$, $\calV_i^-$, $\calL_i$, 
$v_{\circ i}$, $v_{+i}$, $v_{-i}$,
\\

\ref{sec:QGr}: $\widetilde\Pi$, $\widetilde\Pi_l$, $\widetilde\Pi^\bullet$, $\omega$,
$\tau$, $H$, $H_l$, $\bfD$, $\bfD^\bullet$, $\bfD^\nil$, $\bfD^{\bullet,\nil}$, 
$K_{i,k,l}$, $I_i$, $I_{i,k}$, $\widetilde\Gr(M)$, $\widetilde\Gr^\bullet(M)$, 
\\

\ref{sec:N00}: $\U_F(L\frakg)$, $\U_R(L\frakg)$, $\U_\zeta(L\frakg)$, $\U_F^{-w}(L\frakg)$, $\U_R^{-w}(L\frakg)$, $\U_\zeta^{-w}(L\frakg)$, 
\\

\ref{sec:w1}: $f_\gamma$, $f_\gamma^\bullet$, $\tilde f_\gamma$,
\\

\ref{sec:421}: $x_{i,n}^\pm$, $\psi_{i,n}^\pm$, $\calH_{i,\pm m}$, $\Upsilon$, $\Delta$, $A_{i,n}^\pm$,
\\

\ref{sec:A6=}: $\frakM(v,W)_\heartsuit$, $\frakM(v,W)_\spadesuit$, $\frakM(W)_\diamondsuit$, $\calZ(W)_\diamondsuit$, $G_\diamondsuit$,
$\calP_i$, 
\\

\ref{sec:423}: $I_{v_1,v_2,v_3}$, $I_{v_1,v_4,v_3}$, 
\\

\ref{sec:425}: $I_{v_1,v_2,v_3}$, $I_{v_1,v_4,v_3}$, 
\\

\ref{sec:HL1}: $W_{i,k,l}$, $w_{i,k,l}$, $\gamma_{i,k,l}$, $\gamma$, $K_\gamma$,  $KR_\gamma$, $\diamond$,
\\

\ref{sec:sqg}: $[m]_q$, $q_i$, $g_{ij}(u)$, $\delta(u)$, $\bfO_w$, $V_\Psi$, $q\!\ch$, $L_{i,a}^\pm$, $\Psi_{\pm i, a}$,
\\

\ref{sec:qg}: $h_{i,m}$, $L(\Psi)$, $P_w$, $\Psi_w$, $KR_{i,k,l}$, $|v|$, $Y_{i,k}$, $A_{i,k}$,
\\

\medskip

\end{document}